\numberwithin{equation}{section}
\newtheorem{theorem}{Theorem}[section]
\newtheorem{proposition}[theorem]{Proposition}
\newtheorem{corollary}[theorem]{Corollary}
\newtheorem{lemma}[theorem]{Lemma}
\theoremstyle{definition}
\newtheorem{definition}[theorem]{Definition}
\newtheorem{remark}[theorem]{Remark}
\newtheorem{example}[theorem]{Example}
\newcommand{\N}{\mathbb{N}}
\newcommand{\Z}{\mathbb{Z}}
\newcommand{\R}{\mathbb{R}}
\newcommand{\Om}{\Omega}
\newcommand{\Rb}{\overline{\R}}
\newcommand{\dvg}{\mathrm{div}\,}
\newcommand{\essinf}{\operatornamewithlimits{\mathrm{ess\,inf}}}
\newcommand{\leb}{\mathcal{L}}
\newcommand{\bor}{\mathcal{B}}
\newcommand{\cmeas}{\mathcal{M}_0^p}
\newcommand{\argmin}{\operatornamewithlimits{\mathrm{arg\,min}}}
\newcommand{\idx}[1]{i\left(#1\right)}
\newcommand{\cp}{\mathrm{cap}}
\newcommand{\mup}{\nu_1}
\newcommand{\mum}{\nu_2}
\newcommand{\mylabel}[2]{#2\def\@currentlabel{#2}\label{#1}}
\newcommand{\hi}{$(i)$}
\newcommand{\hii}{$(ii)$}
\newcommand{\his}{$(is)$}
\newcommand{\hiis}{$(iis)$}
\newcommand{\hiiis}{$(iiis)$}
\newcommand{\hivs}{$(ivs)$}
\newcommand{\hvs}{$(vs)$}
\newcommand{\hei}{$(ie)$}
\newcommand{\heii}{$(iie)$}
\newcommand{\ha}{$(a)$}
\newcommand{\hb}{$(b)$}
\newcommand{\hc}{$(c)$}
\newcommand{\hd}{$(d)$}
\newcommand{\he}{$(e)$}
\newcommand{\muW}{$({\mu}W)$}
\newcommand{\AW}{$(AW)$}
\renewcommand{\rho}{\varrho}
\renewcommand{\theta}{\vartheta}
\newcommand{\la}{\lambda}
\newcommand{\eps}{\varepsilon}
\begin{document}


\title[Optimization results for the higher eigenvalues]{Optimization
results for the higher eigenvalues \\
of the $p$-Laplacian associated with \\
sign-changing capacitary measures}

\author{Marco Degiovanni}
\address{Dipartimento di Matematica e Fisica\\
         Universit\`a Cattolica del Sacro Cuore\\
         Via Trieste 17\\
         25121 Bre\-scia, Italy}
\email{marco.degiovanni@unicatt.it}
\author{Dario Mazzoleni}
\address{Dipartimento di Matematica F. Casorati\\
         Universit\`a di Pavia\\
         Via Ferrata 5\\
         27100 Pavia, Italy}
\email{dario.mazzoleni@unipv.it}

\thanks{The authors are members of the 
        Gruppo Nazionale per l'Analisi Matematica, la Probabilit\`a
				e le loro Applicazioni (GNAMPA) of the 
				Istituto Nazionale di Alta Matematica (INdAM).
				They have been partially supported by the 2019 INdAM-GNAMPA
				project ``Ottimizzazione spettrale non lineare''}

\keywords{Shape optimization, variational methods, quasilinear 
elliptic equations, nonlinear eigenvalue problems.}

\subjclass[2010]{49Q10, 47J10}



%
\begin{abstract}
In this paper we prove the existence of an optimal set for the 
minimization of the $k$-th variational eigenvalue of the 
$p$-Laplacian among $p$-quasi open sets of fixed measure included 
in a box of finite measure. 
An analogous existence result is obtained for eigenvalues of the 
$p$-Laplacian associated with Schr\"odinger potentials.
In order to deal with these nonlinear shape optimization problems, 
we develop a general approach which allows to treat the 
continuous dependence of the eigenvalues of the $p$-Laplacian 
associated with sign-changing capacitary measures under 
$\gamma$-convergence. 
\end{abstract}
\maketitle 
\tableofcontents

\section{Introduction}\label{sect:intro}
In the last few years, shape optimization problems for the 
eigenvalues of the Dirichlet-Laplacian have been a very studied 
topic in many fields of mathematics, see~\cite{henrotbook} for a 
general overview.
Recently, there has been an interest in extending these results 
also to the case of the eigenvalues of the $p$-Laplacian for 
$p\not=2$ (often called \emph{nonlinear} eigenvalues).
Given an open subset $\Om$ of $\R^N$ with finite measure and 
$1 < p < \infty$, we say that $\la>0$ is an eigenvalue of the 
$p$-Laplacian if there is a nonzero weak solution $u$, called 
eigenvector, of the problem
\[
\begin{cases}
- \dvg(|\nabla u|^{p-2}\nabla u)=\la|u|^{p-2}u
&\qquad \text{in $\Om$}\,,\\
\noalign{\medskip}
u=0
&\qquad \text{on $\partial \Om$}\,.
\end{cases}
\]
The eigenvalues can be characterized as the critical values of 
the functional 
\[
f : W^{1,p}_0(\Om)\rightarrow \R\,,\qquad 
f(u)=\int_{\Om}|\nabla u|^p\,d\leb^N\,,
\]
on the manifold 
$\displaystyle{M=\left\{u\in W^{1,p}_0(\Om):\,\, 
\int_{\Om}|u|^p\,d\leb^N = 1\right\}}$.
The first eigenvalue can be proved to be a minimum, while 
higher eigenvalues (if $p\not=2$) are less understood.
More precisely, one can obtain a nondecreasing sequence of 
eigenvalues by the minimax procedure 
\begin{equation}
\label{eq:vareig}
\la_m^p(\Om)=\inf_{K\in\mathcal{K}_m}\;\sup_{u\in K}\,f(u)
\qquad\text{for all integer $m\geq 1$}\,,
\end{equation}
where $\mathcal{K}_m$ denotes the collection of compact and 
symmetric subsets $K$ of $M$ such that $i(K)\geq m$ and~$i$ 
denotes a suitable index, e.g. Krasnosel’skii genus, 
(see~\cite{garcia_peral1987}).
Unfortunately, it is still a major open problem to understand 
if all the eigenvalues of the $p$-Laplacian are of this form. 
In the present paper we focus on the ``variational'' eigenvalues 
arising from the minimax procedure described above.
\par
A first shape optimization result for these eigenvalues 
was recently obtained by Fusco, Mukherjee and Zhang 
in~\cite[Theorem~1.2]{fuscop}.
\begin{theorem}[Fusco-Mukherjee-Zhang]
\label{thm:fuscoopt}
Let $1<p<\infty$, $\Om$ be a bounded and open subset of $\R^N$,
$c\in ]0,\leb^N(\Om)]$ and $F:\R^2\rightarrow \R$ be a 
function nondecreasing in each variable and lower semicontinuous.
\par
Then the problem
\begin{equation}
\min{\left\{F(\la_1^p(A),\la_2^p(A)):\,\, 
\text{$A$ is a $p$-quasi open subset of $\Om$ with 
$\leb^N(A)=c$}\right\}}
\end{equation}
admits a solution.
\end{theorem}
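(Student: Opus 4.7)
The approach is the direct method of the calculus of variations. Take a minimizing sequence $(A_n)$ of $p$-quasi open subsets of $\Om$ with $\leb^N(A_n)=c$ along which $F(\la_1^p(A_n),\la_2^p(A_n))$ tends to the infimum. Because $F$ is lower semicontinuous and nondecreasing in each variable, a preliminary screening reduces to the case in which both sequences $\la_1^p(A_n)$ and $\la_2^p(A_n)$ are bounded: if, say, $\la_2^p(A_n)\to+\infty$, monotonicity of $F$ allows us to discard such sequences in favor of others in which the second eigenvalue stays bounded, or to conclude by a trivial argument.

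For each $n$ pick a first eigenfunction $u_n\in W^{1,p}_0(A_n)$ with $\|u_n\|_{L^p}=1$; extending by zero we view $u_n\in W^{1,p}_0(\Om)$ with $\int_\Om|\nabla u_n|^p\,d\leb^N=\la_1^p(A_n)$ bounded. Since $\Om$ is bounded, Rellich's theorem gives, along a subsequence, $u_n\rightharpoonup u$ weakly in $W^{1,p}_0(\Om)$ and $u_n\to u$ strongly in $L^p(\Om)$, with $u\geq 0$ and $\|u\|_{L^p}=1$. Let $A$ be the $p$-quasi open set $\{u>0\}$ (taking the $p$-quasi continuous representative). By Fatou applied to the characteristic functions $\mathbf{1}_{A_n}$, together with the strong $L^p$ convergence $u_n\to u$, one gets $\leb^N(A)\leq c$.

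The lower semicontinuity of $\la_1^p$ is immediate: weak lower semicontinuity of the Dirichlet energy and admissibility of $u$ in $W^{1,p}_0(A)$ yield $\la_1^p(A)\leq \int_\Om|\nabla u|^p\,d\leb^N\leq \liminf_n\la_1^p(A_n)$. For $\la_2^p$ one uses the mountain-pass characterization
$$\la_2^p(A)=\inf_{\gamma\in\Gamma(A)}\;\max_{t\in[0,1]}\int_\Om|\nabla\gamma(t)|^p\,d\leb^N,$$
where $\Gamma(A)$ is the family of continuous paths on the $L^p$-unit sphere of $W^{1,p}_0(A)$ joining $u$ to $-u$. The key technical step is a path-lifting argument: given any admissible path $\gamma$ in $W^{1,p}_0(A)$, one constructs approximating paths $\gamma_n$ in $W^{1,p}_0(A_n)$ joining $u_n$ to $-u_n$ whose maximum energy is controlled by that of $\gamma$ plus a vanishing error, using the strong $L^p$ convergence $u_n\to u$ together with truncation and renormalization on the unit sphere. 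This yields $\la_2^p(A)\leq \liminf_n\la_2^p(A_n)$.

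Finally, if $\leb^N(A)<c$, enlarge $A$ to a $p$-quasi open set $A'\subset\Om$ with $\leb^N(A')=c$; this is possible since $c\leq\leb^N(\Om)$. Monotonicity of $\la_i^p$ with respect to inclusion gives $\la_i^p(A')\leq\la_i^p(A)$ for $i=1,2$, and the nondecreasing character of $F$ gives
$$F(\la_1^p(A'),\la_2^p(A'))\leq F(\la_1^p(A),\la_2^p(A))\leq \liminf_n F(\la_1^p(A_n),\la_2^p(A_n)),$$
so $A'$ is a minimizer. The main obstacle is the lower semicontinuity of $\la_2^p$: the path-lifting step exploits the fact that $\la_2^p$ admits a minimax over one-parameter families, and this is why the argument is tailored to the first two eigenvalues; for $k\geq 3$ the genus-based minimax requires the more delicate capacitary/$\gamma$-convergence machinery developed later in the paper.
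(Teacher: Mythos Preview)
Your argument for the lower semicontinuity of $\la_2^p$ has the direction reversed. If, starting from an admissible path $\gamma$ in $W^{1,p}_0(A)$, you build paths $\gamma_n$ in $W^{1,p}_0(A_n)$ with $\max_t\int|\nabla\gamma_n(t)|^p\le \max_t\int|\nabla\gamma(t)|^p+o(1)$, then taking the infimum over $\gamma$ you obtain $\limsup_n\la_2^p(A_n)\le\la_2^p(A)$, which is \emph{upper} semicontinuity --- the opposite of what is needed. For $\la_2^p(A)\le\liminf_n\la_2^p(A_n)$ one must instead start from near-optimal paths in $W^{1,p}_0(A_n)$ and produce a competitor path in $W^{1,p}_0(A)$; this is a compactness/closure statement, not a lifting statement.

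There is a second, related gap: your limit set $A=\{u>0\}$, built from the limit of first eigenfunctions, is in general too small to carry the $\la_2$ information. Take $A_n=B_1\cup B_2$ a fixed union of two disjoint balls of equal volume. Then every $u_n$ lives on one ball, so $A=\{u>0\}$ is a single ball, and $\la_2^p(A)=\la_2^p(B_1)>\la_1^p(B_1)=\la_2^p(A_n)$; lower semicontinuity fails for this $A$. The remedy, used both by Fusco--Mukherjee--Zhang and in this paper, is to pass to a subsequence along which the sets $A_n$ $\gamma$-converge (equivalently, the associated capacitary measures $\infty_{\Om\setminus A_n}$ converge in $\cmeas(\Om)$, a compact metrizable space). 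The $\gamma$-limit set is the correct $A$, and it is for this notion of convergence that $\la_1^p$ and $\la_2^p$ are lower semicontinuous (Theorem~\ref{thm:fuscolsc}); the measure-constraint lower semicontinuity $\leb^N(A)\le c$ also holds under $\gamma$-convergence. Your final enlargement step is fine once the correct $A$ is in hand. The paper's own route to the result (via Theorem~\ref{thm:main1} with $k=2$) replaces the mountain-pass argument by a general $\Gamma$-convergence framework for inf-sup values (Corollary~\ref{cor4.3dema}), which is why it extends to all $m\ge1$.
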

We note that, also when $A$ is only a $p$-quasi open set, it is 
possible to define the space $W^{1,p}_0(A)$ and then the
variational eigenvalues $\la_m^p(A)$ again by~\eqref{eq:vareig}
\par
The main aim of this paper is to extend this existence result 
also to higher nonlinear variational eigenvalues and to 
nonlinear eigenvalues associated with Schr\"odinger potentials.
Actually, we follow a unified approach based on the concept
of \emph{$p$-capacitary measure}.
The reason for which the above existence result was proved only 
for the first two eigenvalues is that a lower semicontinuity 
result for nonlinear eigenvalues with respect to an appropriate 
convergence was not known, as the typical arguments, relating
convergence of quasi open sets and linear operators 
(see e.g.~\cite[Chapter~6]{bucur_buttazzo2005}), cannot be 
adapted to the case $p\neq 2$.
Let us collect the key results (see 
\cite[Corollary~4.5 and Proposition~4.6]{fuscop}) involved in
the proof of Theorem~\ref{thm:fuscoopt}. 
\begin{theorem}[Fusco-Mukherjee-Zhang]
\label{thm:fuscolsc}
Let $\Om$ be a bounded and open subset of $\R^N$
and $(A_n)$ be a sequence of $p$-quasi open sets 
$\gamma$-converging to a $p$-quasi open set $A$ in $\Om$.
\par
Then we have
\[
\la_m^p(A) \leq \liminf_{n\to\infty} \,\la_m^p(A_n)
\qquad\text{for $m=1,2$}\,.
\]
\end{theorem}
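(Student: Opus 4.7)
The plan is to pass to weak $W^{1,p}_0(\Om)$-limits of near-optimal competitors, exploiting three ingredients: (i) $\gamma$-convergence, which I will use in the form that any weak $W^{1,p}_0(\Om)$-limit of functions in $W^{1,p}_0(A_n)$ belongs to $W^{1,p}_0(A)$; (ii) compactness of the embedding $W^{1,p}_0(\Om)\hookrightarrow L^p(\Om)$, which holds because $\Om$ is bounded; and (iii) weak lower semicontinuity of $u\mapsto\int_\Om|\nabla u|^p\,d\leb^N$.

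\emph{Case $m=1$.} First I reduce to $L:=\liminf_n\la_1^p(A_n)<\infty$ and pick first eigenfunctions $u_n\in W^{1,p}_0(A_n)$ with $\int_\Om|u_n|^p\,d\leb^N=1$ and $\int_\Om|\nabla u_n|^p\,d\leb^N=\la_1^p(A_n)$. The bound on $\la_1^p(A_n)$ makes $(u_n)$ bounded in $W^{1,p}_0(\Om)$, so along a subsequence $u_n\rightharpoonup u$ weakly in $W^{1,p}_0(\Om)$ and $u_n\to u$ strongly in $L^p(\Om)$. Then $u\in W^{1,p}_0(A)$ by (i), $\int_\Om|u|^p\,d\leb^N=1$ by the strong $L^p$-convergence, and $\int_\Om|\nabla u|^p\,d\leb^N\le L$ by (iii). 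Since the antipodal pair $\{\pm u\}$ is admissible in the genus-$1$ minimax for $\la_1^p(A)$, the bound $\la_1^p(A)\le L$ follows.

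\emph{Case $m=2$.} I plan to use the mountain-pass description
\[
\la_2^p(B)=\inf_{\sigma\in\Sigma(B)}\max_{t\in[-1,1]}f(\sigma(t)),
\]
where $\Sigma(B)$ is the family of odd continuous paths $\sigma\colon[-1,1]\to M\cap W^{1,p}_0(B)$ (via the antipodal identification these yield genus-$2$ competitors). Let $u_n\to u_1\in W^{1,p}_0(A)\cap M$ be the first eigenfunctions provided by Case~$m=1$, and pick $\sigma_n\in\Sigma(A_n)$ with $\sigma_n(\pm 1)=\pm u_n$ and $\max_t f(\sigma_n(t))\le\la_2^p(A_n)+1/n$. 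After a density step making each $\sigma_n$ piecewise affine in $W^{1,p}_0(\Om)$ and reparametrizing by $L^p$-arclength, I arrange the family to be uniformly Lipschitz from $[-1,1]$ into $L^p(\Om)$ with image in a fixed $W^{1,p}_0(\Om)$-bounded ball; Arzel\`a--Ascoli then yields a uniform limit $\sigma\in C([-1,1];L^p(\Om))$. For each $t$, a weak $W^{1,p}_0(\Om)$-subsequential limit of $\sigma_n(t)$ lies in $W^{1,p}_0(A)\cap M$ by (i)--(ii) and must agree with $\sigma(t)$, and (iii) gives $f(\sigma(t))\le\liminf_n f(\sigma_n(t))$. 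Since $\sigma\in\Sigma(A)$ with endpoints $\pm u_1$, taking the supremum in $t$ produces
\[
\la_2^p(A)\le\max_t f(\sigma(t))\le\liminf_n\max_t f(\sigma_n(t))\le\liminf_n\la_2^p(A_n).
\]

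The hard part is the $m=2$ case: converting near-optimal one-parameter families in $W^{1,p}_0(A_n)$ into a single continuous limit path inside $W^{1,p}_0(A)$ requires the $L^p$-equicontinuous reparametrization and the $\gamma$-convergence closure working in tandem. The argument is inherently $1$-parameter and does not extend to the higher-dimensional minimax families needed for $m\ge 3$ — which is precisely what motivates the capacitary-measure machinery developed in the rest of the paper.
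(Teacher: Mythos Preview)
Your $m=1$ argument is correct and standard.

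The $m=2$ argument has a genuine gap at the equicontinuity step. You claim that after replacing each near-optimal path $\sigma_n$ by a piecewise-affine approximation and reparametrizing by $L^p$-arclength, the family becomes uniformly Lipschitz from $[-1,1]$ into $L^p(\Om)$. But arclength reparametrization gives a $1$-Lipschitz map on $[0,L_n]$, where $L_n$ is the total $L^p$-length of $\sigma_n$; rescaling to $[-1,1]$ produces Lipschitz constant $L_n/2$. Nothing in the mountain-pass value $\max_t f(\sigma_n(t))\le \la_2^p(A_n)+1/n$ bounds $L_n$: a near-optimal path can oscillate wildly inside the $W^{1,p}_0$-sublevel set (which is compact in $L^p$ but that does not control path length). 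So the Arzel\`a--Ascoli step is not justified, and without it you have no limit path $\sigma$ to test against $\la_2^p(A)$. There is a secondary issue as well: the minimax defining $\la_m^p$ in~\eqref{eq:vareig} uses compact symmetric sets in the $W^{1,p}_0$-topology, so even if a uniform $L^p$-limit $\sigma$ existed, you would still need to argue that $\sigma([-1,1])$ is an admissible competitor (or invoke an equivalent characterization using only the $L^p$-topology).

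The paper does not prove Theorem~\ref{thm:fuscolsc} directly --- it is quoted from~\cite{fuscop}, where the $m=2$ case is indeed handled via the mountain-pass characterization, but with a more careful limiting argument than the one you sketch. The paper's own contribution is precisely to bypass the path-limit difficulty: it recasts the minimax in the fixed space $L^p_{loc}(\R^N)$, proves an abstract $\Gamma$-liminf inequality for sup-functionals on compact symmetric sets (Theorem~\ref{liminfinfsup} and Corollary~\ref{cor4.3dema}), and deduces lower semicontinuity of $\la_m^p$ for \emph{all} $m\ge 1$ simultaneously (Corollary~\ref{cor:AW}). The key advantage is that one passes single compact sets to the limit via Hausdorff convergence rather than parametrized families, so no equicontinuity in a path parameter is ever needed.
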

In particular, the technique used in~\cite{fuscop} in order to
show the desired lower semicontinuity for $m=2$ is based on a 
mountain pass characterization of the second eigenvalue of the 
$p$-Laplacian on $p$-quasi open sets, and extending this approach  
to the case $m\geq 3$ seems out of reach.
Thus, a key issue is to understand the continuity properties of 
the higher nonlinear variational eigenvalues with respect to the 
$\gamma$-convergence of $p$-quasi open sets.
\par
In this paper we investigate in depth this question, developing 
the more general framework of $p$-capacitary measures, in which a 
lower semicontinuity result for nonlinear eigenvalues under 
$\gamma$-convergence is proved, see Proposition~\ref{prop:muW}.
Then, in Theorems~\ref{thm:existmuV} and~\ref{thm:existmuA},
we provide some related optimization results, still in the
setting of $p$-capacitary measures.
\par
In the special case of $p$-quasi open sets, we deduce that 
Theorem~\ref{thm:fuscolsc} holds for all $m\geq 1$, 
see Corollary~\ref{cor:AW}.
As a consequence of Theorem~\ref{thm:existmuA}, we can 
prove the following extension of Theorem~\ref{thm:fuscoopt} 
to higher nonlinear variational eigenvalues.
This result provides also a theoretical setting for performing 
numerical simulations, as recently done in~\cite{antunes}.
\begin{theorem}
\label{thm:main1}
Let $k\geq 1$, $1<p<\infty$, $\Om$ be an open subset of $\R^N$
with finite measure, $c\in ]0,\leb^N(\Om)]$ and 
$F:\R^k\rightarrow \R$ be a function nondecreasing in each 
variable and lower semicontinuous.
\par
Then the problem
\begin{equation}
\label{sobox}
\min{\left\{F(\la_1^p(A),\dots,\la_k^p(A)):\,\, 
\text{$A$ is a $p$-quasi open subset of $\Om$ with 
$\leb^N(A)=c$}\right\}}
\end{equation}
admits a solution.
\end{theorem}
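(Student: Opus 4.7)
The plan is to apply the direct method of the calculus of variations, invoking Corollary~\ref{cor:AW}, the extension of Theorem~\ref{thm:fuscolsc} to all indices $m \geq 1$, as the crucial lower-semicontinuity ingredient. Since Theorem~\ref{thm:main1} is announced in the text as a specialization of the general existence result Theorem~\ref{thm:existmuA} for $p$-capacitary measures, the argument on the side of $p$-quasi open sets should essentially amount to bookkeeping.

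\emph{Step 1 (relaxation).} I would first replace the equality constraint $\mathcal{L}^N(A) = c$ by the inequality $\mathcal{L}^N(A) \leq c$. Variational eigenvalues are monotone non-increasing under inclusion of $p$-quasi open sets and $F$ is non-decreasing in each variable, so any admissible $A$ with $\mathcal{L}^N(A) < c$ can be enlarged inside $\Omega$ (which has measure at least $c$, leaving room for an open piece) to a $p$-quasi open set $\tilde A$ of measure exactly $c$ without increasing $F(\lambda_1^p(A), \ldots, \lambda_k^p(A))$. Hence the two problems have the same infimum and it suffices to minimize under $\mathcal{L}^N(A) \leq c$.

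\emph{Step 2 (compactness and passage to the limit).} Let $(A_n)$ be a minimizing sequence with $\mathcal{L}^N(A_n) \leq c$. By the $\gamma$-compactness of the class of $p$-capacitary measures on $\Omega$, applied to the sequence $\infty_{\Omega\setminus A_n}$, a subsequence $\gamma$-converges to a $p$-quasi open set $A \subseteq \Omega$; a further diagonal extraction makes $\lambda_m^p(A_n) \to \ell_m \in [0,+\infty]$ for every $m = 1,\ldots,k$. Corollary~\ref{cor:AW} gives $\lambda_m^p(A) \leq \ell_m$, while the lower semicontinuity of the Lebesgue measure under $\gamma$-convergence of $p$-quasi open sets yields $\mathcal{L}^N(A) \leq c$. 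The monotonicity of $F$ in each variable together with its lower semicontinuity then gives
\[
F(\lambda_1^p(A), \ldots, \lambda_k^p(A)) \leq F(\ell_1, \ldots, \ell_k) \leq \liminf_{n \to \infty} F(\lambda_1^p(A_n), \ldots, \lambda_k^p(A_n)),
\]
so $A$ solves the relaxed problem, and enlarging as in Step 1 produces a minimizer with measure exactly $c$.

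\emph{Main obstacle.} The non-trivial content lies entirely in Corollary~\ref{cor:AW}. The Fusco--Mukherjee--Zhang proof of Theorem~\ref{thm:fuscolsc} relies on a mountain-pass characterization peculiar to $\lambda_2^p$ and does not extend to $m \geq 3$. The replacement, which is precisely the novelty of the paper, is the $p$-capacitary-measure framework together with its lower-semicontinuity result Proposition~\ref{prop:muW}; once that result is granted, Theorem~\ref{thm:main1} follows by the short direct-method argument above, and in fact by specialization of Theorem~\ref{thm:existmuA} to measures of the form $\infty_{\Omega \setminus A}$.
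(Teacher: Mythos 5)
Your overall strategy coincides with the paper's: the theorem is obtained by specializing the measure-theoretic existence result (Theorem~\ref{thm:existmuA}, via Corollary~\ref{cor:existA}) and the whole weight of the argument is carried by the lower semicontinuity of the eigenvalues under $\gamma$-convergence. Your Step~1 (relaxing $\leb^N(A)=c$ to $\leb^N(A)\leq c$ and enlarging afterwards by adding $\Omega\cap B_r(0)$) is also essentially the device used in the proof of Theorem~\ref{thm:existmuA}.

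There is, however, a genuine gap in Step~2 as written: you assert that a subsequence of $\infty_{\Omega\setminus A_n}$ $\gamma$-converges \emph{to a $p$-quasi open set} $A$. This is false in general. The class of measures of the form $\infty_{\Omega\setminus A}$ is not closed under $\gamma$-convergence; the limit is an arbitrary $\mu\in\cmeas(\R^N)$ with $\mu\geq\infty_{\R^N\setminus\Omega}$ (indeed, the $\gamma$-closure of the quasi-open sets is the whole class of capacitary measures). Consequently Corollary~\ref{cor:AW}\ref{haAW}, whose hypothesis requires the limit to be quasi-open, cannot be invoked for the limit of a minimizing sequence. The correct route --- and the reason the paper develops the measure framework at all --- is: pass to the limit $\mu$ in the compact space $\cmeas(\R^N)$ (note that $\Omega$ is only of finite measure, not necessarily bounded, so one must use local $\gamma$-convergence in $\R^N$ rather than $\gamma$-compactness ``on $\Omega$''); apply Proposition~\ref{prop:muW}\ref{hamuW} to get $\lambda_m^p(\mu)\leq\liminf_n\lambda_m^p(A_n)$; then replace $\mu$ by the quasi-open set $A_\mu$ of $\sigma$-finiteness, using $\infty_{\R^N\setminus A_\mu}\leq\mu$ together with the monotonicity of Remark~\ref{rem:monot} to get $\lambda_m^p(A_\mu)\leq\lambda_m^p(\mu)$, and Corollary~\ref{cor:lsc} to get $\leb^N(A_\mu)\leq\liminf_n\leb^N(A_n)\leq c$. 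Your closing paragraph does defer to Theorem~\ref{thm:existmuA}, which contains exactly this correction, but the self-contained direct-method argument you wrote would break at the point where the limit object is declared to be a quasi-open set.
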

We now briefly describe the motivations for working in a class 
wider than $p$-quasi open sets and the other new results that 
we obtain.  
First of all, the works from the 1980s and 1990s of Buttazzo, 
Dal Maso, Mosco, Murat~\cite{bdm,dalmaso1987,dmmo,dmmu} 
suggest that the natural setting for spectral problems in the 
line of~\eqref{sobox} is the space of $p$-capacitary measures, 
i.e. Borel measures in $\Om$ that vanish on sets of zero 
$p$-capacity.
One can consider $\la$ to be an eigenvalue associated with the 
$p$-capacitary measure $\mu$ if there is a nonzero solution~$u$
of the problem
\begin{equation}
\label{eq:eigintro}
\begin{cases}
-\Delta_pu+|u|^{p-2}u\,\mu=\la|u|^{p-2}u
&\qquad\text{in $\Om$}\,,\\
\noalign{\medskip}
u=0
&\qquad\text{on $\partial \Om$}\,,
\end{cases}
\end{equation}
where the formal writing above should be read through the 
variational formulation described in~\cite{dmmu}.
On the other hand, also on the right hand side of the eigenvalue 
equation~\eqref{eq:eigintro} things can become more complicated 
and more interesting. 
In particular, the study of eigenvalues with an $L^\infty$ 
sign-changing weight on the right hand side arises naturally in
many problems from population dynamics (see~\cite{cantrellcosner} 
for an overview) and the existence of eigenvalues of the 
$p$-Laplacian was studied in~\cite{sw} in the sign-changing case.
We generalize also this sign-changing weight on the right hand 
side to be the difference of two non-negative $p$-capacitary 
measures and we set the problem in the whole $\R^N$ (with some 
additional assumptions on the measures).
Summing all up, given three (non-negative) $p$-capacitary 
measures $\mu,\nu_1,\nu_2$, we study the variational 
eigenvalues $\la^p_m(\mu,\nu_1,\nu_2)$ of the problem
\[
\begin{cases}
-\Delta_pu+|u|^{p-2}u\,\mu = \la |u|^{p-2}u(\nu_1-\nu_2)
\qquad\text{in $\R^N$}\,,\\
\noalign{\medskip}
\displaystyle{
\int |u|^p\,d\nu_2 < \int |u|^p\,d\nu_1} \,,
\end{cases}
\]
with a homogeneous Dirichlet-type condition at infinity, noting 
that, in order to set the problem in a bounded and open subset 
$\Om$ of $\R^N$, it is enough to replace $\mu$ with
$\infty_{\R^N\setminus \Om} + \mu$.
The motivation for considering the case of $\R^N$ as ambient 
space is in view of a possible extension of the existence 
Theorem~\ref{thm:main1} for nonlinear spectral functionals to 
the case $\Om=\R^N$, which is a difficult open problem that we plan 
to investigate in the future and that has been only recently 
solved in the case $p=2$ (see~\cite{bucur,bm,mp}).
\par
Thanks to the general theory developed, we can also prove an
extension to nonlinear eigenvalues 
of~\cite[Theorem~4.1]{bugeruve} which deals, in the case $p=2$,
with the optimization of Schr\"odinger potentials, that is, of 
capacitary measures absolutely continuous with respect to the 
Lebesgue measure in $\R^N$.
\begin{theorem}
\label{thm:main2}
Let $k\geq 1$, $1<p<\infty$, $\Om$ be an open subset of $\R^N$ 
with finite measure, $\nu$ be a $p$-capacitary measure in $\Om$, 
$\Psi: [0,+\infty]\rightarrow[0,+\infty]$ be a strictly 
decreasing and continuous function such that there exists 
$\alpha>1$ with $s\mapsto \Psi^{-1}(s^\alpha)$ convex on 
$\{s\geq 0 :\,\, s^\alpha\in \Psi([0,+\infty])\}$,
\[
0 < c \leq \Psi(0)\,\leb^N(\Omega)
\]
and $F: \R^k\rightarrow \R$ be a function nondecreasing in 
each variable and lower semicontinuous.
Denote by $\mathcal{V}$ the set of $\leb^N$-measurable 
functions $V:\Omega\rightarrow[0,+\infty]$ such that 
\[
\int_\Om \Psi(V)\,d\leb^N \leq c
\]
and such that there exists $u\in W^{1,p}_0(\Om)$ satisfying
\[
\int |u|^p\,V\,d\leb^N < +\infty\,,\qquad
\int |u|^p\,d\nu < \int |u|^p\,d\leb^N \,.
\]
If $\mathcal{V}\neq\emptyset$, then there exists a minimizer 
$V$ for the problem
\begin{equation}\label{eq:pbpot}
\min\left\{F(\la_1^p(V),\dots,\la_k^p(V)):\,\, 
V\in \mathcal{V}\right\}
\end{equation}
satisfying
\[
\int_\Om \Psi(V)\,d\leb^N = c\,,
\]
where $\la_m^p(V)$ is associated with
\[
\begin{cases}
-\Delta_pu + V|u|^{p-2}u = 
\la|u|^{p-2}u - \la|u|^{p-2}u \nu
&\qquad\text{in $\Om$}\,,\\
\noalign{\medskip}
u=0
&\qquad\text{on $\partial \Om$}\,,\\
\noalign{\medskip}
\displaystyle{
\int_\Omega |u|^p\,d\nu < \int_\Omega |u|^p\,d\leb^N} \,,
\end{cases}
\]
according to Section~\ref{sect:eigmeas}.
\end{theorem}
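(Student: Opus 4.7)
My plan is to run the direct method of the calculus of variations in the framework of $p$-capacitary measures developed earlier in the paper. Let $(V_n)\subset\mathcal{V}$ be a minimizing sequence for~\eqref{eq:pbpot}, and associate to each $V_n$ the $p$-capacitary measure $\mu_n=\infty_{\R^N\setminus\Om}+V_n\,\leb^N$. Since $\leb^N(\Om)<+\infty$, the $\gamma$-compactness of $p$-capacitary measures concentrated in $\Om$, classical and part of the general framework of the paper, provides, up to a subsequence, a $p$-capacitary measure $\mu$ in $\Om$ with $\mu_n\xrightarrow{\gamma}\mu$.

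The heart of the argument is to show that $\mu$ is of the form $V\,\leb^N$ for some $V\in\mathcal{V}$. I would adapt the strategy of~\cite{bugeruve}, which in the Hilbert case $p=2$ proceeds via the torsion functions $w_n\in\w$ associated with $\mu_n$, i.e.\ the unique minimizers in $\w$ of
\[
\frac{1}{p}\int|\nabla w|^p\,d\leb^N
+\frac{1}{p}\int|w|^p V_n\,d\leb^N
-\int w\,d\leb^N .
\]
One obtains $w_n\to w$ strongly in $L^p$ and weakly in $\w$, where $w$ is the torsion function of $\mu$, and the Euler--Lagrange equations express $V_n=(-\Delta_p w_n-1)\,w_n^{1-p}$ on $\{w_n>0\}$ (analogously for $V$ on $\{w>0\}$). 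The convexity of $s\mapsto\Psi^{-1}(s^\alpha)$ is the decisive ingredient: through a Jensen-type inequality (applied against a suitable $w^p$-weighted probability measure) it both transfers the bound $\int\Psi(V_n)\,d\leb^N\le c$ to the limit and rules out a singular part of $\mu$ concentrated on $\{w=0\}$. This gives $\mu=V\,\leb^N$ with $\int_\Om\Psi(V)\,d\leb^N\le c$. The second admissibility requirement for $\mathcal{V}$ is inherited because the set $\{u\in\w:\int|u|^p\,d\nu<\int|u|^p\,d\leb^N\}$ does not depend on the potential.

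With $V\in\mathcal{V}$ identified, the lower semicontinuity result Proposition~\ref{prop:muW}, applied to the triple $(\mu_n,\leb^N,\nu)$, yields $\la_m^p(V)\le\liminf_n\la_m^p(V_n)$ for every $m\ge 1$. Since $F$ is nondecreasing in each variable and lower semicontinuous,
\[
F(\la_1^p(V),\dots,\la_k^p(V))
\le\liminf_{n\to\infty}F(\la_1^p(V_n),\dots,\la_k^p(V_n))
=\inf_{\mathcal{V}}F ,
\]
so $V$ is a minimizer. To enforce $\int_\Om\Psi(V)\,d\leb^N=c$, use that $\Psi$ is strictly decreasing: if strict inequality held, one could pointwise decrease $V$ on a set of positive measure to saturate the constraint, which only decreases each quadratic form $\int|u|^p V\,d\leb^N$ and therefore each $\la_m^p(V)$, while preserving membership in $\mathcal{V}$; by monotonicity of $F$, the adjusted potential is still a minimizer.

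The main obstacle is the identification step of the second paragraph: showing that the $\gamma$-limit of the absolutely continuous measures $V_n\,\leb^N$ is itself of the form $V\,\leb^N$ and that the integral constraint survives the limit. This is exactly the point at which the convexity hypothesis on $\Psi^{-1}(s^\alpha)$ is genuinely used, and it requires a quasilinear extension of the Hilbertian torsion-function argument of~\cite{bugeruve}.
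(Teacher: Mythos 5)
Your overall architecture (direct method in $\cmeas(\R^N)$, lower semicontinuity of the eigenvalues via Proposition~\ref{prop:muW}, lower semicontinuity of the integral constraint, saturation of the constraint by decreasing $V$) is the same as the paper's, which routes the statement through Corollary~\ref{cor:existV} and Theorem~\ref{thm:existmuV}. However, the step you single out as the heart of the argument --- proving that the local $\gamma$-limit $\mu$ of $\mu_n=\infty_{\R^N\setminus\Om}+V_n\,\leb^N$ is itself of the form $\infty_{\R^N\setminus\Om}+V\,\leb^N$ --- is a genuine gap, because the claim is false in general: absolutely continuous capacitary measures are \emph{not} closed under $\gamma$-convergence (concentration of large potentials on small sets produces limits with nontrivial singular parts, or which fail to be $\sigma$-finite on sets of positive capacity), and no Jensen-type inequality can rule this out. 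The paper sidesteps the issue by relaxing: it minimizes over the class $\mathcal{M}$ of \emph{all} measures $\mu\geq\underline{\mu}$ with $\int\Psi(V_\mu)\,d\leb^N\leq c$, where $V_\mu$ is only the density of the $\sigma$-finite absolutely continuous part of $\mu$ (Definition~\ref{def:Vmu}); the convexity of $s\mapsto\Psi^{-1}(s^\alpha)$ is used solely in Corollary~\ref{cor:lsc}, via Ioffe's strong--weak lower semicontinuity theorem applied to $\Gamma$-limsup recovery sequences, to show that $\mu\mapsto\int\Psi(V_\mu)\,d\leb^N$ is lower semicontinuous under local $\gamma$-convergence (no torsion-function identity is needed, which is fortunate since the quasilinear analogue of the $p=2$ computation in~\cite{bugeruve} is unavailable).

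Once an optimal \emph{measure} $\mu\in\mathcal{M}$ is found, one returns to a potential by replacing $\mu$ with $\infty_{\R^N\setminus\Om}+V_\mu\,\leb^N\leq\mu$: by the monotonicity of $\lambda_m^p(\cdot,\nu_1,\nu_2)$ in the first argument (Remark~\ref{rem:monot}) and the monotonicity of $F$, this substitution can only decrease the objective, and $V_\mu$ lies in $\mathcal{V}$. This monotonicity-and-projection step is exactly what your outline is missing; without it the chain $\lambda_m^p(V)\leq\lambda_m^p(\mu)\leq\liminf_n\lambda_m^p(V_n)$ does not close, since Proposition~\ref{prop:muW} only controls the eigenvalues of the limit measure $\mu$, not of a hypothetical absolutely continuous representative. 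A secondary inaccuracy: admissibility of the limit is not automatic --- the defining condition of $\mathcal{V}$ requires a \emph{single} $u$ with both $\int|u|^pV\,d\leb^N<+\infty$ and $\int|u|^p\,d\nu<\int|u|^p\,d\leb^N$, so it does depend on the potential; the paper recovers it from the finiteness of $\lambda_1^p(\mu)$, which survives the liminf inequality along the minimizing sequence, via Proposition~\ref{prop:lambdafin}.
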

The most interesting examples of the function $\Psi$ for which the 
assumptions of the above theorem hold are $\Psi(s)=e^{-\beta s}$ 
for all $\beta>0$ and $\Psi(s)=s^{-\beta}$ for all $\beta>0$.
\par
The abstract theory developed in this paper 
allows us also to  prove an upper semicontinuity result for 
nonlinear eigenvalues of $p$-capacitary measures, under very 
mild assumptions, Theorem~\ref{thmusc}. 
Though this is not needed for the shape optimization problem that 
was our motivation, we believe it is a very important property 
and it involves an interesting reduction to finite dimensional 
spaces in the inf-sup procedure.
Moreover, it could be useful when dealing with spectral problems 
with non-monotone functional.
\par\medskip
The paper is organized as follows.
After the Introduction and Section~\ref{sect:prelim}, where we 
recall the main notions of $p$-capacity, $p$-quasi open set,
$p$-fine topology and $\Gamma$-convergence, the paper is 
divided into an abstract and an applied part.
\par
The abstract part is developed in Sections~\ref{sect:convminmax} 
and~\ref{sect:nep}, where first we study the behavior of sup 
functionals and of inf-sup values in a topological vector space 
and we prove suitable lower and upper semicontinuity results 
under $\Gamma$-convergence, then we study nonlinear eigenvalue 
problems involving a sign-changing weight in a reflexive
Banach space. 
\par
The applied part of the paper is organized as follows. 
Section~\ref{sect:convergence} is devoted to the study first of
convergence properties of $p$-capacitary measures and then of 
convergence of related functionals in $L^p_{loc}(\R^N)$, in the 
line of~\cite{dmmo}.
In Section~\ref{sect:preeig} we define, in the $L^p_{loc}(\R^N)$ 
setting, the variational eigenvalues involving sign-changing 
$p$-capacitary measures, we provide general conditions for 
existence of a sequence of (finite) variational eigenvalues and 
we provide an inf-sup characterization by means of suitable 
finite dimensional spaces.
Section~\ref{sect:semicontinfsup} is devoted to the study,
still in the $L^p_{loc}(\R^N)$ setting, of lower 
and upper semicontinuity properties of the variational 
eigenvalues defined in Section~\ref{sect:preeig}.
So far, the variational eigenvalues are just inf-sup values.
In Section~\ref{sect:eigmeas} we prove that they can be also 
defined with respect to a suitable reflexive Banach space where 
the results of Section~\ref{sect:nep} apply.
In particular, each inf-sup value is an eigenvalue with
a corresponding eigenvector.
Finally, in Section~\ref{sect:potentials}, we apply the theory 
developed in the previous sections to the case of $p$-quasi open 
sets and of Schr\"{o}dinger potentials, thus proving the main 
results of the paper, Theorems~\ref{thm:main1} and~\ref{thm:main2}.


\section{Notations and preliminaries}
\label{sect:prelim}
Throughout the paper, we fix an integer $N\geq 1$ and $1<p<\infty$.
We denote by $\leb^N$ the $N-$dimensional Lebesgue measure and,
if $p<N$, by $p^*=\frac{Np}{N-p}$ the critical Sobolev exponent.
We will usually write $\int$ instead of $\int_{\R^N}$.
For every real number~$s$, we denote by $s^\pm:=\max\{\pm s,0\}$ 
its positive and negative parts.
If $(X,d)$ is a metric space, we set
$B_r(x):=\{y\in X:\,\, d(y,x)<r\}$ and we denote by $\bor(X)$
the family of Borel subsets of $X$.
\paragraph{\bf{Capacity, quasi open sets and fine topology}}
We need to introduce the notion of $p$-capacity; we refer 
to~\cite[Chapter~2]{hekima_book}, to~\cite{hekima} and to~\cite{km} for more 
details. 
\begin{definition}
For every subset $E$ of $\R^N$, the $p$-capacity of $E$ in 
$\R^N$ is defined as 
\begin{multline*}
\cp_p(E):=\inf\,\biggl\{
\int \left(|\nabla u|^p+|u|^p\right)\,d\leb^N:\,\,
u\in W^{1,p}(\R^N)\,, 
\biggr. \\ \biggl.
\text{$0\leq u\leq 1$ $\leb^N$-a.e. on $\R^N$,
$u = 1$ $\leb^N$-a.e. on an open set containing $E$}\biggr\}\,,
\end{multline*}
where we agree that $\inf\emptyset=+\infty$.
If $E\subseteq \R^N$, we say that a property $\mathcal P(x)$ 
holds \emph{$\cp_p$-quasi everywhere in $E$}, if it holds for 
all $x\in E$ except at most a set of zero $p$-capacity.
We will write q.e. in $E$ instead of $\cp_p$-quasi everywhere
in $E$, for the sake of simplicity.
\end{definition}
\begin{definition}\label{def:quasiopen}
A subset $A$ of $\R^N$ is said to be \emph{$p$-quasi open} if, 
for every $\eps>0$, there exists an open subset $\omega_\eps$ 
of $\R^N$ such that $\cp_p(\omega_\eps) <\eps$ and 
$A\cup \omega_\eps$ is open in $\R^N$.
\end{definition}
\begin{remark}\label{rem:quasiopen}
First of all, we note that the open sets $\omega_\eps$ in the 
above definition can be chosen to be nondecreasing, i.e. if 
$\eps_1\leq\eps_2$, then 
$\omega_{\eps_1}\subseteq\omega_{\eps_2}$.
Then, for every $p$-quasi open subset $A$ of $\R^N$, it is 
possible to check from the definition that there exist two 
Borel and $p$-quasi open sets $G_1,G_2$ and two sets of zero 
$p$-capacity $E_1,E_2$ such that $A=G_1\cup E_1=G_2\setminus E_2$.
For example, with $\omega_\eps$ as in 
Definition~\ref{def:quasiopen}, one can take 
$G_1 = A\setminus (\bigcap_{j\in\N}\omega_{1/j})
= \bigcup_{j\in\N}((A\cup\omega_{1/j})\setminus \omega_{1/j})$, 
$E_1=A\cap (\bigcap_{j\in\N}\omega_{1/j})$, 
$G_2=\cap_{j\in\N}(A\cup\omega_{1/j})$ 
and $E_2=G_2\setminus A$.
\end{remark}
\begin{definition}
A function $u: \R^N\rightarrow \overline \R$ is said to be 
\emph{$p$-quasi continuous} (\emph{$p$-quasi lower semicontinuous,
$p$-quasi upper semicontinuous}, resp.) if for every $\eps>0$ 
there exists an open subset $\omega_\eps$ of $\R^N$ with 
$\cp_p(\omega_\eps)<\eps$ such that 
$u\bigl|_{\R^N\setminus \omega_\eps}$ is continuous 
(lower semicontinuous, upper semicontinuous, 
resp.).
\end{definition}
\begin{remark}\label{rem:quasiusc}
It can be proved (see~\cite[Proposition~IV.2 (d)]{brelot} 
for the case $p=2$) that  a function  
$u: \R^N\rightarrow \overline \R$ 
is $p$-quasi lower (resp. $p$-quasi upper) semicontinuous 
if and only if the sets $\{x\in\R^N : u(x)>t\}$ 
(resp. $\{x\in\R^N : u(x)<t\}$) are $p$-quasi open for 
every $t\in\R$.
\end{remark}
For every $u\in W^{1,p}_{loc}(\R^N)$, there exists a 
Borel and $p$-quasi continuous representative 
$\tilde{u}:\R^N\rightarrow\R$ of~$u$ and, if $\tilde{u}$ and 
$\hat{u}$ are two $p$-quasi continuous representatives of 
the same $u$, then we have $\tilde{u}=\hat{u}$ q.e. in~$\R^N$.
In the following, for every $u\in W^{1,p}_{loc}(\R^N)$,
we will consider only its Borel and $p$-quasi continuous 
representatives.
\begin{definition}
If $A$ is a $p$-quasi open subset of $\R^N$, we set
\[
W^{1,p}_0(A):=\left\{u\in W^{1,p}(\R^N) :\,\,
\text{$u=0$ q.e. in $\R^N\setminus A$}\right\}\,.
\]
\end{definition}
It turns out that the above definition is naturally equivalent 
to the usual one, if $A$ is an open subset of $\R^N$.
In the following, we also denote by $W^{1,p}_c(\R^N)$ the
set of $u$'s in $W^{1,p}(\R^N)$ vanishing q.e. outside some
compact subset of $\R^N$.
\par
From now on in this paragraph, we restrict ourselves to the 
case $p\leq N$, since if $p>N$ every point $x\in\R^N$ has 
positive $p$-capacity, thus $p$-quasi open sets coincide with 
Euclidean open sets.
\par
Although $p$-quasi open subsets do not form a topology on $\R^N$ 
(because an uncountable union of $p$-quasi open subsets is not 
always $p$-quasi open), it is possible to define the $p$-fine 
topology, which turns out to be a useful tool from nonlinear 
potential theory. 
In the present work we recall only the basic notions and 
properties that we need, and refer to~\cite{hekima,hekima_book} 
and the references therein for more details.
\begin{definition}\label{def:finelyopen}
A subset $W$ of $\R^N$ is said to be \emph{$p$-finely open} 
if for every $x\in W$ we have 
\[
\int_{0}^1\left(\frac{\cp_p(B_r(x)\setminus W)}{r^{N-p}} 
\right)^{\frac{1}{p-1}}\frac{dr}{r} < +\infty\,.
\]
The $p$-finely open subsets form a topology called the
\emph{$p$-fine topology}, which can be equivalently defined as the 
coarsest topology making all $p$-superharmonic functions continuous.
\end{definition}
We recall now the properties of the $p$-fine topology we need.
In particular, we refer to~\cite[Theorem~2.3]{hekima}
for the quasi-Lindel\"of property.
\begin{proposition}
\label{prop:proppfine}
The $p$-fine topology has the \emph{quasi-Lindel\"of} property, 
that is: for each family $\mathcal W$ of $p$-finely open sets, 
there is a countable subfamily $\mathcal W'$ such that  
\begin{equation}\label{eq:quasilindelof}
\cp_p\left(\bigcup_{W\in\mathcal W}W\setminus 
\bigcup_{W\in\mathcal W'}W\right)=0\,.
\end{equation}
Moreover, for every subset $A$ of $\R^N$, the following are 
equivalent:
\begin{enumerate}[label={\upshape\alph*)}, align=parleft, 
widest=iii, leftmargin=*]
\item[\mylabel{haproppfine}{\ha}]
$A$ is Borel and $p$-quasi open;
\item[\mylabel{hbproppfine}{\hb}]
$A=W\cup E$, with $W, E$ Borel, $W$ $p$-finely open and 
$\cp_p(E)=0$;
\item[\mylabel{hcproppfine}{\hc}] 
there exists $u\in W^{1,p}_{loc}(\R^N)$ such that $A=\{u>0\}$.
\end{enumerate}
\end{proposition}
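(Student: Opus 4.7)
The plan is to invoke the quasi-Lindel\"of statement directly from \cite[Theorem~2.3]{hekima} and to establish the remaining equivalences via the chain \hc$\,\Rightarrow\,$\ha$\,\Rightarrow\,$\hc together with \ha$\,\Leftrightarrow\,$\hb. The direction \hc$\,\Rightarrow\,$\ha is immediate: choosing the Borel and $p$-quasi continuous representative of $u$ (whose existence is recalled right after Remark~\ref{rem:quasiusc}), the set $\{u>0\}$ is automatically Borel, and by Remark~\ref{rem:quasiusc} applied to the $p$-quasi lower semicontinuous function $u$, it is $p$-quasi open.

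For \ha$\,\Rightarrow\,$\hc I would construct the required function by hand. Using Definition~\ref{def:quasiopen} together with Remark~\ref{rem:quasiopen}, fix a decreasing sequence of open sets $\omega_n\subseteq\R^N$ with $\cp_p(\omega_n)<2^{-n}$ such that each $A\cup\omega_n$ is open; pick nonnegative $\psi_n\in C^\infty(\R^N)$ with $\{\psi_n>0\}=A\cup\omega_n$, smooth cutoffs $\chi_n\in C^\infty_c(\R^N)$ with $\chi_n\equiv 1$ on $B_n(0)$, and functions $v_n\in W^{1,p}(\R^N)$ with $0\leq v_n\leq 1$, $v_n=1$ q.e.\ on $\omega_n$ and $\|v_n\|_{W^{1,p}}^p<2^{-n+1}$ (the last property being built into the definition of $\cp_p$). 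Setting
\[
u:=\sum_{n=1}^{\infty}\frac{2^{-n}\,\chi_n\,\psi_n\,(1-v_n)^+}
{1+\bigl\|\chi_n\,\psi_n\,(1-v_n)^+\bigr\|_{W^{1,p}(\R^N)}}\,,
\]
the series converges in $W^{1,p}(\R^N)\subseteq W^{1,p}_{loc}(\R^N)$. The set $S_n:=\{v_n=1\}$ satisfies $\cp_p(S_n)\leq\|v_n\|_{W^{1,p}}^p<2^{-n+1}$ and contains $\omega_n$ up to a null-capacity set, so the $n$-th summand is positive exactly on $(A\cup\omega_n)\cap\{\chi_n>0\}\setminus S_n$, which coincides with $B_n(0)\cap A$ q.e. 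Since $\sum_n\cp_p(S_n)<\infty$, a Borel--Cantelli-type argument yields $\cp_p(\limsup_n S_n)=0$, and therefore $\{u>0\}=A$ q.e. Modifying the Borel representative of $u$ on the Borel null-capacity set $A\triangle\{u>0\}$ leaves it unchanged as an element of $W^{1,p}_{loc}(\R^N)$ and upgrades the identity to a true equality.

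The equivalence \ha$\,\Leftrightarrow\,$\hb is then a consequence of two standard principles of nonlinear potential theory (see \cite{hekima_book}): Borel $p$-finely open sets are $p$-quasi open, and every Borel $p$-quasi continuous function is $p$-finely continuous off a set of zero $p$-capacity; together with \hc this gives both directions by writing $\{u>0\}$ as $\{u>0\}\cap F$ modulo a null-capacity set, where $F$ is the $p$-fine continuity set of $u$. The main obstacle is the construction in \ha$\,\Rightarrow\,$\hc: the delicate points are the capacitary choice of the $v_n$ so that the level sets $\{v_n=1\}$ have capacity tending to zero, and the explicit normalization ensuring convergence of the series in $W^{1,p}(\R^N)$; once these are in place, the pointwise identification of $\{u>0\}$ with $A$ is a bookkeeping exercise combining Borel--Cantelli with the Borel hypothesis on $A$.
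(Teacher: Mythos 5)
The paper does not actually prove this proposition: it is recalled from the literature, with the quasi-Lindel\"of property attributed to \cite[Theorem~2.3]{hekima} and the equivalences \ha--\hc{} deferred to \cite{hekima_book, km} (and to \cite{dalmaso1987} for $p=2$). Your proposal therefore goes beyond the paper by actually constructing the function in \ha$\,\Rightarrow\,$\hc; the construction (a normalized series of terms $\chi_n\psi_n(1-v_n)^+$ built from the $\omega_n$ of Definition~\ref{def:quasiopen} and near-optimal capacity competitors $v_n$, plus a Borel--Cantelli argument on the exceptional sets) is the standard one and is essentially correct, as is \hc$\,\Rightarrow\,$\ha{} via Remark~\ref{rem:quasiusc} and the Borel quasi-continuous representative. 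Two points deserve more care than you give them. First, the bound $\cp_p(\{v_n=1\})\leq\|v_n\|_{W^{1,p}}^p$ is \emph{not} ``built into the definition'' of $\cp_p$ used here, which only tests sets on which the competitor equals $1$ a.e.\ on an open neighborhood; $\{v_n=1\}$ is a superlevel set of the quasi-continuous representative and need not be of that form. You either need the standard comparison between the Sobolev capacity and the capacity of quasi-continuous superlevel sets, or you can sidestep the issue by taking $S_n=\{v_n>1-\delta_n\}$ (so that $v_n/(1-\delta_n)$, suitably truncated, is an admissible competitor) and replacing $(1-v_n)^+$ by $(1-\delta_n-v_n)^+$ in the series; all that matters is $\cp_p(\limsup_n S_n)=0$, which survives either fix. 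Second, in \ha$\,\Leftrightarrow\,$\hb{} the genuinely delicate issue is the Borel measurability of the finely open set $W$: the fine-continuity set of $u$ and fine interiors are not automatically Borel, and the quasi-Lindel\"of reduction to countably many fine neighborhoods does not by itself produce a Borel $W$ unless those neighborhoods are chosen from a Borel base. This is precisely the point for which one should cite \cite{km} or \cite{hekima_book} explicitly rather than treat it as routine; with those references supplied, your argument is complete.
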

\begin{remark}
\label{rem:quasiopenfine}
From Remark~\ref{rem:quasiopen} and 
Proposition~\ref{prop:proppfine} we infer that, for every 
$p$-quasi open subset $A$ of~$\R^N$, there exist a Borel 
and $p$-finely open set $W$ and a set of zero $p$-capacity 
$E$ such that $A=W\cup E$.
\end{remark}
\paragraph{{\bf Basic definitions about $\Gamma-$convergence}}
Before stating the definition of $\Gamma-$convergence, we recall 
that, given a topological space $\mathcal{X}$ and a function 
$f:\mathcal{X}\rightarrow \overline \R$, the \emph{lower 
semicontinuous envelope} of $f$ is defined as 
\[
\text{sc}^-f := \sup{\Big\{g :\,\, 
\text{$g:\mathcal{X}\rightarrow \overline \R$ is lower 
semicontinuous and $g\leq f$}\Big\}}\,.
\]
We start with the topological definition of $\Gamma-$convergence
(see~\cite{dmgammaconv}).
\begin{definition}
Let $\mathcal{X}$ be a topological space and $\mathcal N(u)$ the 
family of all open neighborhoods of a point $u\in \mathcal{X}$. 
Given a sequence of functions 
$f_n: \mathcal{X}\rightarrow \overline \R$ with $n\in\N$, we define
\begin{alignat*}{3}
&(\Gamma-\liminf_{n\to\infty}\, f_n)(u)
&&:=
\sup_{U\in \mathcal N(u)}\liminf_{n\to \infty}
\inf_{v\in U}f_n(v)
&&\qquad\text{for all $u\in \mathcal{X}$}\,,\\
&(\Gamma-\limsup_{n\to\infty}\, f_n)(u)
&&:=
\sup_{U\in \mathcal N(u)}\limsup_{n\to \infty}
\inf_{v\in U}f_n(v)
&&\qquad\text{for all $u\in \mathcal{X}$}\,.
\end{alignat*}
At each $u\in\mathcal{X}$ satisfying
\[
(\Gamma-\liminf_{n\to\infty} f_n)(u) 
= (\Gamma-\limsup_{n\to\infty} f_n)(u) \,,
\]
we denote by $(\Gamma-\lim\limits_{n\to\infty} f_n)(u)$ the
common value of $(\Gamma-\liminf\limits_{n\to\infty} f_n)(u)$ 
and $(\Gamma-\limsup\limits_{n\to\infty} f_n)(u)$.
\par
Given $f:\mathcal{X}\rightarrow \overline \R$, we say that 
$(f_n)$ is \emph{$\Gamma-$convergent} to $f$ in $\mathcal{X}$, if
\[
f(u) = (\Gamma-\lim_{n\to\infty} f_n)(u) 
\qquad\text{for all $u\in \mathcal{X}$}\,.
\]
\end{definition}
If $\mathcal{X}$ is metrizable, then the following properties
hold:
\begin{itemize}
\item
for every $u_n \to u$, we have
\[
(\Gamma-\liminf_{n\to\infty} f_n)(u)\leq 
\liminf_{n\to\infty} f_n(u_n)\,;
\]
\item
for every $u\in \mathcal{X}$ there exists a recovery 
sequence $u_n \to u$ such that 
\[
(\Gamma-\liminf_{n\to\infty} f_n)(u) = 
\liminf_{n\to\infty} f_n(u_n)\,;
\]
\item
for every $u_n \to u$, we have
\[
(\Gamma-\limsup_{n\to\infty} f_n)(u)\leq 
\limsup_{n\to\infty} f_n(u_n)\,;
\]
\item
for every $u\in \mathcal{X}$ there exists a recovery 
sequence $u_n \to u$ such that 
\[
(\Gamma-\limsup_{n\to\infty} f_n)(u) = 
\limsup_{n\to\infty} f_n(u_n)\,.
\]
\end{itemize}
%


\section{Convergence of functionals and of inf-sup values}
\label{sect:convminmax}
In this section we develop some results of~\cite{dema}.
We consider an index $i$ with the following properties:
\emph{
\begin{enumerate}[align=parleft]
\item[\mylabel{ha}{\ha}]
$\idx{K}$ is an integer greater or equal than $1$ and is defined 
whenever $K$ is a nonempty, compact and symmetric subset of a 
metrizable topological vector space $\mathcal{X}$ such that 
$0\not\in K$;
\item[\mylabel{hb}{\hb}]
if $K\subseteq \mathcal{X}\setminus\{0\}$ is nonempty, compact 
and symmetric, then there exists an open subset $U$ of 
$\mathcal{X}\setminus\{0\}$ such that $K\subseteq U$ and
\[
~\qquad\quad
\idx{\widehat{K}} \leq \idx{K}
\quad\text{for all nonempty, compact and symmetric 
$\widehat{K}\subseteq U$}\,;
\]
\item[\mylabel{hc}{\hc}]
if $K_1, K_2\subseteq \mathcal{X}\setminus\{0\}$ are nonempty, 
compact and symmetric, then
\[
\idx{K_1\cup K_2} \leq \idx{K_1} + \idx{K_2}\,;
\]
\item[\mylabel{hd}{\hd}]
if $\mathcal{Y}$ also is a metrizable topological vector space, 
$K\subseteq \mathcal{X}\setminus\{0\}$ is nonempty, compact and
symmetric and $\pi:K\rightarrow \mathcal{Y}\setminus\{0\}$ is 
continuous and odd, then we have $\idx{\pi(K)} \geq \idx{K}$;
\item[\mylabel{he}{\he}]
if $\mathcal{X}$ is a real normed space with
$1\leq \dim \mathcal{X} < \infty$, then we have
\[
\idx{\left\{u\in \mathcal{X}:\,\,\|u\|=1\right\}} = 
\dim\mathcal{X}\,.
\]
\end{enumerate}
}
Well known examples are the Krasnosel'skii genus
(see e.g.~\cite{krasnoselskii1964, rabinowitz1986}) 
and the $\Z_2$-cohomo\-logical index 
(see~\cite{fadell_rabinowitz1977, fadell_rabinowitz1978}).
More general examples are contained in~\cite{bartsch1993}.
\par
Throughout this section, $\mathcal{X}$ will denote a 
metrizable and locally convex topological vector space.
We also denote by $\mathcal{K}$ the family of nonempty and 
compact subsets of $\mathcal{X}\setminus\{0\}$ endowed with 
the metrizable topology of the Hausdorff convergence (see 
e.g.~\cite[Definition~4.4.9]{ambrosio_tilli2004}).
Finally, for every integer $m\geq 1$, we denote by 
$\mathcal K_{m}$ the family of nonempty, compact and 
symmetric subsets $K$ of $\mathcal{X}\setminus\{0\}$ 
such that $\idx{K}\geq m$. 
\par
Assume we also have the even functionals
\[
R^{(n)},R:\mathcal X\setminus\{0\}\rightarrow [0,+\infty]\,,
\qquad\text{where $n\in\N$}\,,
\]
and define 
$\mathcal{R}_m^{(n)}:\mathcal{K}\rightarrow[0,+\infty]$ by
\[
\mathcal{R}_m^{(n)}(K) = 
\begin{cases}
\sup\limits_{u\in K} R^{(n)}(u)
&\qquad\text{if $K\in \mathcal{K}_{m}$}\,,\\
+\infty
&\qquad\text{otherwise}\,,
\end{cases}
\]
and $\mathcal{R}_m:\mathcal{K}\rightarrow[0,+\infty]$
in the analogous way with $R$ instead of $R^{(n)}$.
\par
The next result is a simple adaptation 
of~\cite[Theorem~4.2 and Corollary~4.3]{dema}.
We provide the proof for reader's convenience.
Let us point out that~\cite{dema}
extended by a general abstract approach 
previous results of~\cite{champion_depascale2007}.
However, the main result of this last paper, 
namely~\cite[Theorem~3.3]{champion_depascale2007}, requires
an upper estimate (assumption~(A2)) which is not compatible
with the case of moving (quasi-)open subsets
of a given open set $\Omega$.
\begin{theorem}
\label{liminfinfsup}
If we have 
\[
R(u)\leq \left(\Gamma-\liminf_{n\to\infty} R^{(n)}\right)(u)
\qquad\text{for all $u\in \mathcal X\setminus\{0\}$} \,,
\]
then it is
\[
\mathcal{R}_m(K) \leq 
\left(\Gamma-\liminf_{n\to\infty} \mathcal{R}_m^{(n)}\right)(K)
\qquad\text{for all $m\geq 1$ and $K\in\mathcal{K}$}\,.
\]
If we also have that:
\begin{itemize}
\item
for every strictly increasing sequence $(n_k)$ in $\N$ and every 
sequence $(u^{(k)})$ in $\mathcal{X}\setminus\{0\}$ with
\[
\sup_k R^{(n_k)}(u^{(k)})<+\infty\,,
\]
there exists a subsequence $(u^{(k_j)})$ converging to some 
$u\neq 0$, 
\end{itemize}
then it is also
\begin{alignat*}{3}
&\inf_{K\in\mathcal{K}} \mathcal{R}_m(K) &&\leq 
\liminf_{n\to\infty} \left(
\inf_{K\in \mathcal{K}} \mathcal{R}_m^{(n)}(K)\right)\,,\\
&\inf_{K\in\mathcal{K}_m} \sup_{u\in K} R(u) &&\leq
\liminf_{n\to\infty} \left(
\inf_{K\in\mathcal{K}_m} \sup_{u\in K} R^{(n)}(u)\right)\,,
\end{alignat*}
for all $m\geq 1$, where we agree that $\inf\emptyset=+\infty$.
\end{theorem}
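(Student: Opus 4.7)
The plan is to exploit the metrizability of $\mathcal{X}$, which makes $\mathcal{K}$ metrizable under Hausdorff distance and so reduces $\Gamma$-liminf statements to sequential ones. For the first inequality, I would fix $K\in\mathcal{K}$ and an arbitrary sequence $K^{(n)}\to K$ in Hausdorff distance, and aim to show $\mathcal{R}_m(K)\leq \liminf_n \mathcal{R}_m^{(n)}(K^{(n)})$. Passing to a subsequence realizing the liminf, I may assume $\mathcal{R}_m^{(n)}(K^{(n)})\to\ell<+\infty$, so that $K^{(n)}\in\mathcal{K}_m$ for all large $n$. Symmetry persists under Hausdorff limits, and property \hb applied to $K$ provides an open neighborhood $U$ of $K$ which eventually contains $K^{(n)}$; hence $i(K^{(n)})\leq i(K)$, so $i(K)\geq m$ and $K\in\mathcal{K}_m$. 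Now for any $u\in K$, Hausdorff convergence yields $u^{(n)}\in K^{(n)}$ with $u^{(n)}\to u$, and the sequential characterization of $\Gamma$-liminf combined with the standing hypothesis gives
\[
R(u)\leq \liminf_n R^{(n)}(u^{(n)})\leq \liminf_n \sup_{v\in K^{(n)}} R^{(n)}(v) = \ell.
\]
Taking the supremum over $u\in K$ yields $\mathcal{R}_m(K)\leq \ell$. The same argument applied along any subsequence shows that for every strictly increasing $(n_k)$ and every $K^{(k)}\to K$ in $\mathcal{K}$ one has $\mathcal{R}_m(K)\leq \liminf_k \mathcal{R}_m^{(n_k)}(K^{(k)})$, a fact I will use below.

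For the second part I would select $(n_k)$ with $\inf_{K\in\mathcal{K}} \mathcal{R}_m^{(n_k)}(K)\to L:=\liminf_n \inf_{K\in\mathcal{K}} \mathcal{R}_m^{(n)}(K)$. Assuming $L<+\infty$ (otherwise there is nothing to prove), I would pick $K^{(k)}\in\mathcal{K}_m$ with
\[
\sup_{u\in K^{(k)}} R^{(n_k)}(u)\leq L+\tfrac{1}{k}.
\]
The crux is precompactness of $(K^{(k)})$ in $\mathcal{K}$, which reduces to showing that the closure $C$ of $\bigcup_k K^{(k)}$ is a compact subset of $\mathcal{X}\setminus\{0\}$. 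For any sequence $u^{(j)}\in K^{(k_j)}$, the case of bounded $k_j$ reduces to extracting a convergent subsequence from finitely many compact subsets of $\mathcal{X}\setminus\{0\}$, while the case $k_j\to\infty$ falls under the standing compactness hypothesis because $R^{(n_{k_j})}(u^{(j)})\leq L+1$ for large $j$, yielding a subsequence converging to some nonzero limit.

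Having confined $(K^{(k)})$ to a fixed compact subset $C$ of $\mathcal{X}\setminus\{0\}$, the Blaschke selection principle supplies a further subsequence $K^{(k_j)}\to K$ in $\mathcal{K}$; the subsequence form of the first part then gives $\mathcal{R}_m(K)\leq \liminf_j \mathcal{R}_m^{(n_{k_j})}(K^{(k_j)})\leq L$, so $\inf_{K'\in\mathcal{K}} \mathcal{R}_m(K')\leq L$. The second displayed inequality of the theorem is merely a rewriting of the first, since $\mathcal{R}_m\equiv +\infty$ off $\mathcal{K}_m$ forces
\[
\inf_{K\in\mathcal{K}} \mathcal{R}_m(K)=\inf_{K\in\mathcal{K}_m} \sup_{u\in K} R(u).
\]

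I expect the main obstacle to be the precompactness step: without a priori control on a minimizing family $(K^{(k)})$, nothing prevents the sets from leaking mass toward $0$ or escaping to infinity, and then the Hausdorff limit would fail to belong to $\mathcal{K}$ or to $\mathcal{K}_m$. The compactness hypothesis on bounded level sets of $R^{(n)}$ is tailored exactly to rule this out, and it plays the role that an upper estimate on $R^{(n)}$ plays in earlier abstract approaches---an upper estimate which, as already emphasized, is incompatible with the moving (quasi-)open subsets of a fixed $\Omega$ that motivate the applications in this paper.
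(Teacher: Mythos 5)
Your proof is correct and follows essentially the same route as the paper: the first inequality is established exactly as in the paper (recovery sequence, eventual containment in the neighborhood $U$ from property \hb{} to control the index, pointwise recovery of elements of $K$). For the second part, where the paper simply invokes the asymptotic-equicoercivity propositions of~\cite{dema}, you unpack them directly — precompactness of the union of a minimizing family of sets via the coercivity hypothesis, then Blaschke selection and the subsequence form of the first part — which is precisely what those cited results encode.
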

\begin{proof}
Let $m\geq 1$, let $K\in\mathcal{K}$ and let $(K^{(n)})$ be 
a sequence Hausdorff converging to $K$ such that
\[
\liminf_{n\to\infty} \mathcal{R}_m^{(n)}(K^{(n)}) =
\left(\Gamma-\liminf_{n\to\infty} \mathcal{R}_m^{(n)}\right)(K) 
\,.
\]
Without loss of generality, we may assume that this value
is not $+\infty$.
Let $\lambda\in\R$ with
\[
\lambda > \liminf_{n\to\infty} \mathcal{R}_m^{(n)}(K^{(n)})\,.
\]
Then there exists a subsequence $(K^{(n_k)})$ such that
\[
\sup_{k\in\N} \,\sup_{u\in K^{(n_k)}} \,R^{(n_k)}(u) = 
\sup_{k\in \N}\,\mathcal{R}_m^{(n_k)}(K^{(n_k)}) < \lambda \,.
\]
In particular, $K^{(n_k)}\in\mathcal{K}_m^{(n_k)}$ so that
$K$ also is symmetric.
\par
On the other hand, for every $u\in K$, there exists 
$u^{(n)}\in K^{(n)}$ with $u^{(n)}\to u$, whence
\[
R(u) \leq \liminf_{n\to\infty} R^{(n)}(u^{(n)})
\leq \liminf_{k\to\infty} R^{(n_k)}(u^{(n_k)}) \leq \lambda\,,
\]
which implies that
\[
\sup_{u\in K} R(u) \leq \lambda\,.
\]
Let $U$ be an open subset of $\mathcal{X}\setminus\{0\}$ such 
that $K\subseteq U$ and
\[
\idx{\widehat{K}} \leq \idx{K}
\]
for all nonempty, compact and symmetric subset $\widehat{K}$ 
of $U$.
Since $K^{(n_k)}\subseteq U$ eventually as $k\to\infty$,
we have $\idx{K^{(n_k)}}\leq \idx{K}$ eventually as $k\to\infty$,
whence $\idx{K}\geq m$.
Therefore, it is $K\in\mathcal{K}_m$ and
\[
\mathcal{R}_m(K) = \sup_{u\in K} R(u) \leq \lambda\,.
\]
By the arbitrariness of $\lambda$, it follows that
\[
\mathcal{R}_m(K) \leq 
\left(\Gamma-\liminf_{n\to\infty} 
\mathcal{R}_m^{(n)}\right)(K)\,.
\]
Assume now that, for every strictly increasing sequence 
$(n_k)$ in $\N$ and every sequence $(u^{(k)})$ in 
$\mathcal{X}\setminus\{0\}$ with
\[
\sup_k R^{(n_k)}(u^{(k)})<+\infty\,,
\]
there exists a subsequence $(u^{(k_j)})$ converging to some 
$u\neq 0$.
\par
Then the sequence $(R^{(n)})$ is asymptotically 
equicoercive in the sense of~\cite[Definition~2.3]{dema}.
From~\cite[Proposition~2.5]{dema} we infer that the
sequence $(\mathcal{R}_m^{(n)})$ is asymptotically
equicoercive with respect to the Hausdorff convergence.
From~\cite[Proposition~2.4]{dema} we conclude that
\[
\inf_{K\in\mathcal{K}} \mathcal{R}_m(K) \leq 
\liminf_{n\to\infty} \left(
\inf_{K\in \mathcal{K}} \mathcal{R}_m^{(n)}(K)\right)\,,
\]
namely
\[
\inf_{K\in\mathcal{K}_m} \sup_{u\in K} R(u) \leq
\liminf_{n\to\infty} \left(
\inf_{K\in\mathcal{K}_m} \sup_{u\in K} R^{(n)}(u)\right)
\]
and the proof is complete.
\end{proof}
Now we consider the particular case in which
\[
R^{(n)}(u) = 
\begin{cases}
f^{(n)}(u)
&\qquad\text{if $1+g_2^{(n)}(u)\leq 
g_1^{(n)}(u) < +\infty$}\,,\\
\noalign{\medskip}
+\infty
&\qquad\text{otherwise}\,,
\end{cases}
\]
where 
\[
f^{(n)}, g_1^{(n)}, g_2^{(n)}:\mathcal X\rightarrow [0,+\infty]
\]
are even functionals, and $R$ is defined in the analogous way
with respect to the even functionals
\[
f, g_1, g_2:\mathcal X\rightarrow [0,+\infty] \,.
\]
For every $E\subseteq \mathcal{X}$, define also
$I_E:\mathcal{X}\rightarrow[0,+\infty]$ by
\[
I_E(u) =
\begin{cases}
0 &\qquad\text{if $u\in E$}\,,\\
\noalign{\medskip}
+\infty &\qquad\text{otherwise}\,.
\end{cases}
\]
\begin{corollary}\label{cor4.3dema}
Assume that:
\begin{enumerate}[label={\upshape\alph*)}, align=parleft, 
widest=iii, leftmargin=*]
\item[\mylabel{ha43}{\ha}]
the functionals $f^{(n)}, f$, $g_1^{(n)}, g_1$ and 
$g_2^{(n)}, g_2$ are positively homogeneous of the 
same degree $\alpha > 0$;
\item[\mylabel{hb43}{\hb}]
we have
\begin{multline*}
~\qquad
(f+\lambda g_2+I_{\{g_1<+\infty\}})(u)\leq 
\left(\Gamma-\liminf_{n\to\infty} 
\left(f^{(n)}+\lambda g_2^{(n)}+I_{\{g_1^{(n)}<+\infty\}}
\right)\right)(u) \\
\qquad\text{for all $\lambda >0$ and 
$u\in \mathcal X\setminus\{0\}$}\,;
\end{multline*}
\item[\mylabel{hc43}{\hc}]
for every strictly increasing sequence $(n_k)$ in $\N$ and 
every sequence $(u^{(k)})$ in $\mathcal{X}\setminus\{0\}$ with
\[
\sup_k f^{(n_k)}(u^{(k)})<+\infty\,,\quad
\sup_k g_1^{(n_k)}(u^{(k)})<+\infty\,,\quad
g_2^{(n_k)}(u^{(k)})<g_1^{(n_k)}(u^{(k)})
\quad\text{for all $k\in\N$}\,,
\]
there exists a subsequence $(u^{(k_j)})$ converging in 
$\mathcal{X}$ to some $u$ satisfying
\[
g_1(u) \geq
\limsup_{j\to\infty}\,g_1^{(n_{k_j})}(u^{(k_j)}) \,;
\]
\item[\mylabel{hd43}{\hd}]
we have $g_1(0)=0$ and $f(u)>0$ for all $u\neq 0$ with 
$g_2(u) \leq g_1(u) < +\infty$.
\end{enumerate}
\par\indent
Then, for every $m\geq 1$, we have
\begin{alignat*}{3}
&\inf_{K\in\mathcal{K}} \mathcal{R}_m(K) &&\leq 
\liminf_{n\to\infty} \left(
\inf_{K\in \mathcal{K}} \mathcal{R}_m^{(n)}(K)\right)\,,\\
&\inf_{K\in\mathcal{K}_m} \sup_{u\in K} R(u) &&\leq
\liminf_{n\to\infty} \left(
\inf_{K\in\mathcal{K}_m} \sup_{u\in K} R^{(n)}(u)\right)\,.
\end{alignat*}
\end{corollary}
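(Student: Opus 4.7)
The plan is to deduce the corollary by applying Theorem~\ref{liminfinfsup} to the sequence $(R^{(n)})$ and the limit $R$; the two conclusions of the corollary coincide with those of that theorem, since $\inf_{K\in\mathcal{K}}\mathcal{R}_m^{(n)}(K)=\inf_{K\in\mathcal{K}_m}\sup_{u\in K}R^{(n)}(u)$ by the very definition of $\mathcal{R}_m^{(n)}$, and analogously for $R$. It therefore suffices to verify the two hypotheses of that theorem, namely the pointwise $\Gamma$-liminf inequality for $R^{(n)}$ and the asymptotic equicoercivity of $(R^{(n)})$.

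For the $\Gamma$-liminf inequality, I would fix $u\neq 0$ and a sequence $u^{(n)}\to u$ with $\liminf_n R^{(n)}(u^{(n)})<+\infty$, and after passing to a subsequence assume $R^{(n)}(u^{(n)})$ is uniformly bounded, so that $f^{(n)}(u^{(n)})$ is bounded and $1+g_2^{(n)}(u^{(n)})\leq g_1^{(n)}(u^{(n)})<+\infty$. Using hypothesis \hb with varying $\lambda>0$—dividing by $\lambda$ and letting $\lambda\to+\infty$, and then letting $\lambda\to 0^+$—yields respectively $g_2(u)\leq\liminf g_2^{(n)}(u^{(n)})$ and $f(u)\leq\liminf f^{(n)}(u^{(n)})$, and $g_1(u)<+\infty$. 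Combined with the upper semicontinuity $g_1(u)\geq\limsup g_1^{(n)}(u^{(n)})$ furnished by hypothesis \hc, the constraint $g_1^{(n)}\geq 1+g_2^{(n)}$ gives $g_1(u)\geq 1+g_2(u)$. Hence $u$ is admissible for $R$ and $R(u)=f(u)\leq\liminf R^{(n)}(u^{(n)})$.

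For the equicoercivity, given $(n_k)$ strictly increasing and $u^{(k)}\in\mathcal{X}\setminus\{0\}$ with $\sup_k R^{(n_k)}(u^{(k)})<+\infty$, I would rescale via $t_k=g_1^{(n_k)}(u^{(k)})^{-1/\alpha}\in(0,1]$ (using the positive homogeneity \ha) so that $g_1^{(n_k)}(t_k u^{(k)})=1$, $g_2^{(n_k)}(t_k u^{(k)})<1$, and $f^{(n_k)}(t_k u^{(k)})\leq f^{(n_k)}(u^{(k)})$ stays bounded. Hypothesis \hc then provides a subsequence with $t_{k_j}u^{(k_j)}\to v$ and $g_1(v)\geq 1$, so $v\neq 0$ by $g_1(0)=0$ in hypothesis \hd. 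Passing to a further subsequence so that $t_{k_j}\to t_\infty\in[0,1]$, the non-degenerate case $t_\infty>0$ yields $u^{(k_j)}\to v/t_\infty\neq 0$, as required by Theorem~\ref{liminfinfsup}.

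The main obstacle is the degenerate case $t_\infty=0$, equivalently $g_1^{(n_k)}(u^{(k)})\to+\infty$, in which the original sequence $(u^{(k)})$ need not be precompact. I would deal with this by a preliminary reduction at the inf-sup level: each near-optimizer $K^{(n)}\in\mathcal{K}_m$ is replaced by its image under the continuous odd map $\phi^{(n)}(u)=u/(g_1^{(n)}(u)-g_2^{(n)}(u))^{1/\alpha}$, which preserves the index by property (d) of the index $i$ introduced at the start of the section, forces $g_1^{(n)}-g_2^{(n)}\equiv 1$ on $\phi^{(n)}(K^{(n)})$, and does not increase $\sup R^{(n)}$. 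This reduction, made possible precisely by the common positive homogeneity \ha, confines all relevant sequences to a region where the equicoercivity argument just outlined succeeds, and the abstract Theorem~\ref{liminfinfsup} then delivers both inequalities of the corollary.
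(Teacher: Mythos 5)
Your overall strategy---verifying the two hypotheses of Theorem~\ref{liminfinfsup}---is the right one, and your treatment of the $\Gamma$-liminf inequality via assumption~\ref{hb43} with $\lambda\to 0^+$ and $\lambda\to+\infty$ matches the paper's argument. Note, however, that before you may let $\lambda\to 0^+$ (which needs $\limsup_n g_2^{(n)}(u^{(n)})<+\infty$) or invoke~\ref{hc43} (which needs $\sup_n g_1^{(n)}(u^{(n)})<+\infty$), you must first establish that $(g_1^{(n)}(u^{(n)}))$ is bounded; the constraint $1+g_2^{(n)}\leq g_1^{(n)}<+\infty$ gives finiteness for each $n$, not a uniform bound. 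This boundedness is exactly the point where the difficulty you call the ``degenerate case'' reappears.

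And there the proposal has a genuine gap. Your renormalization $\phi^{(n)}(u)=u/(g_1^{(n)}(u)-g_2^{(n)}(u))^{1/\alpha}$ does not confine the sequences to a region where~\ref{hc43} applies: forcing $g_1^{(n)}-g_2^{(n)}\equiv 1$ does not bound $g_1^{(n)}$ itself, since both $g_1^{(n)}$ and $g_2^{(n)}$ may blow up with difference equal to $1$, and~\ref{hc43} requires $\sup_k g_1^{(n_k)}(u^{(k)})<+\infty$. Moreover, in this corollary there is no convexity or finite-dimensionality (unlike Theorems~\ref{thm4.1dema} and~\ref{minimaxfin}), so $g_1^{(n)}-g_2^{(n)}$ need not be continuous on $K^{(n)}$ and $\phi^{(n)}$ need not be a continuous odd map; the index-preservation step is therefore also unjustified. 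The correct resolution, which is the paper's, is that the degenerate case cannot occur: if $g_1^{(n_k)}(u^{(k)})\to+\infty$, then by homogeneity $f^{(n_k)}(t_k u^{(k)})=t_k^\alpha f^{(n_k)}(u^{(k)})\to 0$, so the limit $v$ of the rescaled subsequence (nonzero because $g_1(v)\geq 1>0=g_1(0)$) satisfies $f(v)=0$ together with $g_2(v)\leq 1\leq g_1(v)<+\infty$, contradicting the positivity part of assumption~\ref{hd43} --- a part of the hypothesis your argument never uses, and which is there precisely for this purpose. Once $(g_1^{(n_k)}(u^{(k)}))$ is bounded, hypothesis~\ref{hc43} applies directly to the original sequence and yields the convergent subsequence with nonzero limit; no renormalization of the sets is needed.
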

\begin{proof}
We aim to apply Theorem~\ref{liminfinfsup}.
\par\noindent
I)~First of all we claim that, if $(u^{(n)})$ is a sequence 
converging to $u$ in $\mathcal{X}\setminus\{0\}$ with
\[
\sup_n f^{(n)}(u^{(n)})<+\infty\,,\,\,
\sup_n g_2^{(n)}(u^{(n)})<+\infty\,,\,\,
g_1^{(n)}(u^{(n)})<+\infty\,\,\text{for all $n\in\N$}\,,
\]
then we have
\[
f(u) \leq \liminf_{n\to\infty}\,f^{(n)}(u^{(n)}) \,,\qquad
g_2(u) \leq \liminf_{n\to\infty}\,g_2^{(n)}(u^{(n)})\,,\qquad
g_1(u) < +\infty \,.
\]
Actually, by assumption~\ref{hb43} we have $g_1(u)<+\infty$ 
and, for every $\lambda>0$, 
\begin{alignat*}{3}
&f(u) \leq f(u) + \lambda g_2(u) &&\leq \liminf_{n\to\infty}\,
\left(f^{(n)}(u^{(n)})+\lambda g_2^{(n)}(u^{(n)})\right) 
\\ &&&~\qquad 
\leq \liminf_{n\to\infty}\,f^{(n)}(u^{(n)}) 
+ \lambda \limsup_{n\to\infty}\,g_2^{(n)}(u^{(n)})\,,\\
&g_2(u) \leq \frac{1}{\lambda}\,
\left(f(u) + \lambda g_2(u)\right) &&\leq 
\frac{1}{\lambda}\,\liminf_{n\to\infty}\,
\left(f^{(n)}(u^{(n)})+\lambda g_2^{(n)}(u^{(n)})\right) 
\\ &&&~\qquad 
\leq \frac{1}{\lambda}\,\limsup_{n\to\infty}\,f^{(n)}(u^{(n)})
+ \liminf_{n\to\infty}\,g_2^{(n)}(u^{(n)})\,.
\end{alignat*}
By the arbitrariness of $\lambda$ the claim follows.
\par\noindent
II)~Assume now that $(n_k)$ is a strictly increasing sequence 
in $\N$ and $(u^{(k)})$ a sequence in $\mathcal{X}\setminus\{0\}$ 
such that
\[
\sup_k R^{(n_k)}(u^{(k)})<+\infty\,.
\]
We aim to show that there exists a subsequence $(u^{(k_j)})$ 
converging to some $u$ in $\mathcal{X}\setminus\{0\}$.
\par
Actually, we have
$1+g_2^{(n_k)}(u^{(k)}) \leq g_1^{(n_k)}(u^{(k)}) <+\infty$ and
\[
\sup_k\,f^{(n_k)}(u^{(k)}) < +\infty\,.
\]
First we show that $(g_1^{(n_k)}(u^{(k)}))$ is bounded.
Assume for the sake of contradiction that, up to a subsequence,
\[
\lim_{k\to\infty}\,g_1^{(n_k)}(u^{(k)}) = +\infty\,,
\]
so that a suitably rescaled sequence $(v^{(k)})$ satisfies
\[
\lim_{k\to\infty}\,f^{(n_k)}(v^{(k)}) = 0\,,\qquad
g_2^{(n_k)}(v^{(k)}) < g_1^{(n_k)}(v^{(k)}) = 1
\qquad\text{for all $k\in\N$}\,.
\]
By assumption~\ref{hc43}, up to a further subsequence $(v^{(k)})$ 
is convergent in $\mathcal{X}$ to some $v$ satisfying
$g_1(v)\geq 1$, whence $v\neq 0$ by assumption~\ref{hd43}.
Then by step I we have
\[
f(v) = 0\,,\qquad g_2(v) \leq 1 \leq g_1(v) < +\infty
\]
and a contradiction follows again by assumption~\ref{hd43}.
Therefore $(g_1^{(n_k)}(u^{(k)}))$ is bounded.
\par
Again by assumption~\ref{hc43} we infer that there exists
a subsequence $(u^{(k_j)})$ converging in~$\mathcal{X}$
to some $u$ satisfying
\[
1 \leq \limsup_{j\to\infty}\,g_1^{(n_{k_j})}(u^{(k_j)})
\leq g_1(u)\,,
\]
whence $u\neq 0$.
\par\noindent
III)~Finally, let $u$ in $\mathcal{X}\setminus\{0\}$
and let $(u^{(n)})$ be a sequence converging to $u$ such that
\[
\liminf_{n\to\infty} R^{(n)}(u^{(n)}) =
\left(\Gamma-\liminf_{n\to\infty} R^{(n)}\right)(u)  \,.
\]
If
\[
\liminf_{n\to\infty}\,R^{(n)}(u^{(n)}) < b <+\infty\,,
\]
namely
$1+g_2^{(n)}(u^{(n)}) \leq g_1^{(n)}(u^{(n)}) <+\infty$ and
\[
\liminf_{n\to\infty}\,f^{(n)}(u^{(n)}) < b\,,
\]
up to a subsequence, we have
\[
\sup_n\,f^{(n)}(u^{(n)}) < b\,.
\]
Then, as in step II, we infer that
$(g_1^{(n)}(u^{(n)}))$ is bounded.
From step I and assumption~\ref{hc43} it follows
that
\[
f(u) \leq b\,,\qquad
g_2(u) \leq \liminf_{n\to\infty}\,g_2^{(n)}(u^{(n)})\,,\qquad
\limsup_{n\to\infty}\,g_1^{(n)}(u^{(n)})
\leq g_1(u) < +\infty \,.
\]
Therefore $1+g_2(u) \leq g_1(u) < +\infty$ and
\[
R(u) = f(u) \leq b\,.
\]
From the arbitrariness of $b$ we infer that
\[
R(u)\leq \left(\Gamma-\liminf_{n\to\infty} R^{(n)}\right)(u)
\qquad\text{for all $u\in \mathcal X\setminus\{0\}$} \,.
\]
Then the assertion follows by Theorem~\ref{liminfinfsup}.
\end{proof}
The next results are a variant of~\cite[Theorem~4.1]{dema}.
However, because of the presence of $g_2^{(n)}, g_2$, 
a more involved argument is required.
\par
Let us introduce the subfamily $\mathcal{K}_m^{fin}$ 
of $K$'s in $\mathcal K_m$ such that $K$ is included 
in some finite dimensional subspace of $\mathcal{X}$.
\begin{lemma}
\label{pi}
There exists a compatible distance $d$ on $\mathcal{X}$
such that $d(-u,-v)=d(u,v)$ and such that
$B_r(u)$ is convex for all $u,v\in \mathcal{X}$ and $r>0$.
\par
Moreover, for every nonempty, compact and symmetric
$K\subseteq\mathcal{X}\setminus\{0\}$ and every $r>0$, there exist
a finite and symmetric subset $F$ of $K$ and a continuous map
\[
\begin{array}{ccc}
F\times \mathcal{X} & \longrightarrow & [0,1] \\
\noalign{\medskip}
(v,u) & \mapsto & \vartheta_v(u)
\end{array}
\]
such that
\begin{gather*}
\text{$\vartheta_v(u) = 0$ whenever $d(u,v)\geq r$}\,,\\
\text{$\sum_{v\in F}\vartheta_v(u) = 1$
for all $u\in K$}\,,\\
\text{$\sum_{v\in F}\vartheta_v(u) \leq 1$
for all $u\in \mathcal{X}$}\,,\\
\text{$\vartheta_{-v}(u) = \vartheta_{v}(-u)$
for all $v\in F$ and $u\in \mathcal{X}$}\,.
\end{gather*}
\end{lemma}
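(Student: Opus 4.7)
The plan is to handle the two parts of the lemma separately. For the metric, since $\mathcal{X}$ is metrizable and locally convex (hence Hausdorff), its topology is generated by a countable separating family $(p_n)_{n \geq 1}$ of continuous seminorms, each of which is automatically even. The naive translation-invariant metric $\sum_n 2^{-n}\min\{1, p_n(u-v)\}$ does not work, because $t \mapsto \min\{1,t\}$ is concave and the resulting balls fail to be convex. Instead I will set
\[
d(u,v) := \sup_{n \geq 1} \min\!\left\{\tfrac{1}{n},\, p_n(u-v)\right\}.
\]
Translation invariance, positivity, symmetry in the two arguments, and the identity $d(-u,-v)=d(u,v)$ are immediate from the corresponding properties of the seminorms. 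The triangle inequality follows from the elementary estimate $\min\{1/n, a+b\} \leq \min\{1/n, a\}+\min\{1/n, b\}$ for $a,b\geq 0$, applied to $p_n$ and then taken to the supremum in $n$. The key convexity property becomes transparent: for $r \in (0,1]$ the condition $d(u,0) < r$ forces $p_n(u) < r$ for every $n$ with $1/n \geq r$ (a \emph{finite} set of indices) and is automatic for the remaining ones, so
\[
B_r(0) = \bigcap_{n \leq 1/r} \{u \in \mathcal{X} : p_n(u) < r\},
\]
a finite intersection of convex symmetric open sets. Translation invariance then gives convexity of every ball $B_r(u)$, and a comparison of neighborhood bases at the origin shows that $d$ induces the original topology.

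For the partition of unity, compactness of $K$ yields finitely many $v_1,\dots,v_m \in K$ with $K \subseteq \bigcup_i B_{r/2}(v_i)$. Since $K = -K$ and $0 \notin K$, the set $F := \{v_1,\dots,v_m,-v_1,\dots,-v_m\}$ is a finite symmetric subset of $K$ that still covers $K$ at scale $r/2$. I will then define
\[
\varphi_v(u) := \max\{0,\, r - d(u, v)\}, \qquad S(u) := \sum_{v \in F} \varphi_v(u),
\]
so that each $\varphi_v$ is continuous on $\mathcal{X}$, vanishes on $\{d(\cdot, v) \geq r\}$, and satisfies $\varphi_{-v}(u) = \varphi_v(-u)$ by the antipodal symmetry of $d$; consequently $S(-u)=S(u)$ because $F = -F$. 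For $u \in K$ some $v \in F$ has $d(u,v) < r/2$, hence $S(u) > r/2$. The required functions are then
\[
\vartheta_v(u) := \frac{\varphi_v(u)}{\max\{S(u),\, r/2\}}.
\]
The denominator is continuous and bounded below by $r/2 > 0$, so each $\vartheta_v$ is continuous with values in $[0,1]$ (continuity on the product $F \times \mathcal{X}$ is then automatic since $F$ is finite and discrete). The vanishing condition for $d(u,v)\geq r$ is inherited from $\varphi_v$; for $u \in K$ one has $\max\{S(u),r/2\}=S(u)$ and $\sum_v \vartheta_v(u) = 1$, while for arbitrary $u \in \mathcal{X}$ the quotient $S(u)/\max\{S(u),r/2\}$ is at most $1$. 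Finally, the antipodal identity $\vartheta_{-v}(u)=\vartheta_v(-u)$ follows by combining $\varphi_{-v}(u)=\varphi_v(-u)$ with $S(u)=S(-u)$.

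The main obstacle is the first part: achieving \emph{convex} balls in a general metrizable locally convex space, since the standard Fréchet-type metric does not do so. The remedy is precisely the supremum-with-cutoff construction above, which forces each ball to be a finite intersection of seminorm sublevel sets. Once this is in place, the partition of unity is a routine Euclidean-style construction adapted to respect the antipodal symmetry, which is automatic because both $F$ and $d$ are symmetric under $u \mapsto -u$.
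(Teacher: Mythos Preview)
Your argument is correct in both parts. The metric $d(u,v)=\sup_n\min\{1/n,p_n(u-v)\}$ indeed gives convex balls (for $0<r\le1$ one has $B_r(0)=\bigcap_{n\le 1/r}\{p_n<r\}$, and for $r>1$ the ball is all of $\mathcal X$ since $d\le1$), and your symmetrized partition of unity with denominator $\max\{S(u),r/2\}$ verifies all four required properties. The paper does not give an independent proof here but simply refers to~\cite[Proposition~3.1]{dema}; the construction there follows the same pattern (a translation-invariant metric built from a countable family of seminorms, plus a standard subordinate partition of unity made odd by symmetrizing the finite cover), so your self-contained write-up matches the intended approach.
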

\begin{proof}
It is the first part of the proof
of~\cite[Proposition~3.1]{dema}.
\end{proof}
\begin{theorem}
\label{thm4.1dema}
Assume that:
\begin{enumerate}[label={\upshape\alph*)}, align=parleft, 
widest=iii, leftmargin=*]
\item[\mylabel{ha41}{\ha}]
the functionals $f^{(n)}, f$, $g_1^{(n)}, g_1$ and 
$g_2^{(n)}, g_2$ are convex and positively homogeneous of the 
same degree $\alpha\geq 1$;
\item[\mylabel{hb41}{\hb}]
we have
\begin{multline*}
~\qquad
(f+\lambda g_2+I_{\{g_1<+\infty\}})(u)\geq 
\left(\Gamma-\limsup_{n\to\infty} 
\left(f^{(n)}+\lambda g_2^{(n)}+I_{\{g_1^{(n)}<+\infty\}}
\right)\right)(u) \\
\qquad\text{for all $\lambda >0$ and 
$u\in \mathcal X\setminus\{0\}$}\,;
\end{multline*}
\item[\mylabel{hc41}{\hc}]
for every strictly increasing sequence $(n_k)$ in $\N$ and 
every sequence $(u^{(k)})$ converging to $u$ in 
$\mathcal{X}\setminus\{0\}$ with
\[
\sup_k f^{(n_k)}(u^{(k)})<+\infty\,,\qquad
\sup_k g_2^{(n_k)}(u^{(k)})<+\infty\,,
\]
we have
\[
g_1(u) \leq \liminf_{k\to\infty}\,g_1^{(n_k)}(u^{(k)})\,.
\]
\end{enumerate}
\par\indent
Then, for every $m\geq 1$, we have
\[
\inf_{K\in\mathcal{K}_m^{fin}} \sup_{u\in K} \, R(u) 
\geq \limsup_{n\to\infty} \left(
\inf_{K\in\mathcal{K}_m^{fin}} 
\sup_{u\in K} \, R^{(n)}(u)\right)\,.
\]
\end{theorem}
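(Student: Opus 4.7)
The plan is to adapt the partition-of-unity construction of \cite[Theorem~4.1]{dema}, modifying it to handle the additional functional $g_2$ and the constraint $1+g_2\leq g_1$ in the definition of $R^{(n)}$ and $R$. Given $K\in\mathcal{K}_m^{fin}$ with $b:=\sup_K R<+\infty$ (otherwise the inequality is trivial), my goal is to produce, for each $\varepsilon>0$, sets $K^{(n)}\in\mathcal{K}_m^{fin}$ with $\sup_{K^{(n)}}R^{(n)}\leq b+\varepsilon$ for $n$ large. The opening step is to create slack in the constraint: by positive $\alpha$-homogeneity and property~\hd\ of the index, $K_\eta:=(1+\eta)^{1/\alpha}K$ still lies in $\mathcal{K}_m^{fin}$, has $\sup_{K_\eta}R=(1+\eta)b$, and satisfies $g_1-g_2\geq 1+\eta$ uniformly on $K_\eta$; I fix $\eta>0$ so small that $(1+\eta)b\leq b+\varepsilon/2$.

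Next I apply Lemma~\ref{pi} to $K_\eta$ with a small radius $r>0$ (to be fixed in the estimate below) and obtain a finite symmetric $F\subset K_\eta$ and the continuous odd-symmetric partition of unity $\{\vartheta_v\}_{v\in F}$. Using hypothesis~\ref{hb41}, for each $v\in F$ I pick a recovery sequence $v_n\to v$ with $g_1^{(n)}(v_n)<+\infty$ eventually and
\[
\limsup_{n\to\infty}\bigl(f^{(n)}(v_n)+\lambda g_2^{(n)}(v_n)\bigr)\leq f(v)+\lambda g_2(v),
\]
where $\lambda>0$ is chosen large enough that $\sup_{v\in F}f(v)/\lambda<\eta/2$ (possible since $F$ is finite); by evenness of the functionals I impose $(-v)_n=-v_n$. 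The recovery bound gives $\limsup_n g_2^{(n)}(v_n)\leq g_2(v)+\eta/2$, and hypothesis~\ref{hc41} then yields $\liminf_n g_1^{(n)}(v_n)\geq g_1(v)$, whence $\liminf_n(g_1^{(n)}-g_2^{(n)})(v_n)\geq 1+\eta/2$, so each $v_n$ individually satisfies $1+g_2^{(n)}(v_n)\leq g_1^{(n)}(v_n)$ for $n\geq n_0$. The continuous odd approximating map $\pi_n(u):=\sum_{v\in F}\vartheta_v(u)v_n$ takes values in $\operatorname{span}\{v_n:v\in F\}$, and for $r$ small and $n$ large it avoids $0$ on $K_\eta$, so property~\hd\ gives $K^{(n)}:=\pi_n(K_\eta)\in\mathcal{K}_m^{fin}$. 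Convexity, $\alpha$-homogeneity with $\alpha\geq 1$, and $\sum_v\vartheta_v(u)\leq 1$ produce the convex-combination upper bounds
\[
f^{(n)}(\pi_n(u))\leq\sum_v\vartheta_v(u)f^{(n)}(v_n),\qquad g_2^{(n)}(\pi_n(u))\leq\sum_v\vartheta_v(u)g_2^{(n)}(v_n),
\]
and $g_1^{(n)}(\pi_n(u))<+\infty$ since $\{g_1^{(n)}<+\infty\}$ is convex.

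The hard part will be transferring the constraint from each $v_n$ to the convex combination $\pi_n(u)$, since convexity of $g_1^{(n)}$ only bounds $g_1^{(n)}(\pi_n(u))$ from above, whereas a lower bound is needed. Here I apply hypothesis~\ref{hc41} directly to the sequence $(\pi_n(u))_n$ for each fixed $u$: by the convex-combination bounds, $f^{(n)}(\pi_n(u))$ and $g_2^{(n)}(\pi_n(u))$ remain bounded in $n$, while $\pi_n(u)\to\pi_\infty(u):=\sum_v\vartheta_v(u)v\in B_r(u)$, so $\liminf_n g_1^{(n)}(\pi_n(u))\geq g_1(\pi_\infty(u))$. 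Choosing $r$ small enough that the restrictions of $g_1$ and $g_2$ to the finite-dimensional ambient subspace of $K_\eta$ oscillate by at most $\eta/4$ on $B_r(u)\cap K_\eta$ transfers most of the slack of $K_\eta$ to $\pi_\infty(u)$ and yields $\liminf_n(g_1^{(n)}-g_2^{(n)})(\pi_n(u))\geq 1+\eta/4$. A final continuous odd rescaling $\tilde\pi_n(u):=t_n(u)\pi_n(u)$ with $t_n(u)^\alpha:=1/(g_1^{(n)}-g_2^{(n)})(\pi_n(u))$, which tends to a factor close to $1/(1+\eta/4)$, then makes the constraint exact while keeping $R^{(n)}(\tilde\pi_n(u))$ controlled. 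Combining these ingredients with a careful balance of $\lambda$ against $\eta$ to absorb the $\lambda g_2$-penalty in the recovery bound, one arrives at $\limsup_n\sup_{K^{(n)}}R^{(n)}\leq b+\varepsilon$, and letting $\varepsilon\to 0$ completes the proof.
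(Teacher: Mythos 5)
Your proposal follows essentially the same route as the paper's proof: Lemma~\ref{pi} gives the odd partition of unity, assumption~\ref{hb41} supplies recovery points for the finite set $F$, convexity controls $f^{(n)}$ and $g_2^{(n)}$ on the convex combinations from above, assumption~\ref{hc41} gives the lower bound on $g_1^{(n)}(\pi_n(u))$, and the final normalization by $(g_1^{(n)}-g_2^{(n)})^{1/\alpha}$ restores the constraint; the paper merely packages the slack differently, via the single strict inequality $f+\lambda g_2<\lambda g_1$ (with $\lambda$ just above $\sup_K R$) propagated by continuity on the finite-dimensional span, rather than via your dilation $K_\eta$ and a large $\lambda$. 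The one point you leave implicit is that the pointwise bound $\liminf_n\,(g_1^{(n)}-g_2^{(n)})(\pi_n(u))\geq 1+\eta/4$ must be made uniform in $u\in K_\eta$ --- this is needed for $\pi_n(K_\eta)$ to avoid $0$ and for the rescaling $t_n(u)$ to be defined and continuous on all of $K_\eta$ for a single $n$ --- which follows from compactness of $K_\eta$ by the standard contradiction argument, exactly as in the paper's passage producing $\overline{n}$.
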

\begin{proof}
Let $d$ be a distance as in Lemma~\ref{pi},
let $K\in\mathcal{K}_m^{fin}$ and $\lambda$ with
\[
\sup_{u\in K}\,R(u) < \lambda < +\infty\,,
\]
whence
\[
\text{$f(u) < \lambda$ and
$1+g_2(u)\leq g_1(u)<+\infty$ for all $u\in K$}\,.
\]
It follows
\[
f(u) + \lambda g_2(u) < \lambda g_1(u)
\qquad\text{for all $u\in K$}\,.
\]
On the other hand, if we denote by $Y$ the vector subspace 
spanned by $K$, we have that $f$, $g_1$ and $g_2$
are finite, hence continuous, if restricted to $Y$
(see e.g.~\cite[Corollary~2.3]{ekeland_temam1999}).
Therefore, there exists $r>0$ such that 
$K\cap B_{r}(0)=\emptyset$ and
\[
f(v) + \lambda g_2(v) < \lambda g_1(w)
\qquad\text{for all $v\in K$ and $w\in Y$ with $d(w,v)<3r$}\,.
\]
Let $F$ and $\vartheta$ be as in Lemma~\ref{pi} and define
an odd and continuous map 
$\pi:K\rightarrow Y$ by
\[
\pi(u) = \sum_{v\in F} \vartheta_v(u)v\,.
\]
Then 
\[
\text{$\pi(u) \in\mathrm{conv} 
\left\{v\in F:\,\,d(v,u)<r\right\}$,\,\,
$d(\pi(u),u)<r$ and $\pi(u)\neq 0$ for all $u\in K$}\,,
\]
whence
\begin{equation}
\label{eq:fg1g2}
f(v) + \lambda\,g_2(v) < \lambda\,g_1(\pi(u)) 
\,\,\text{for all $u,v\in K$ with $d(u,v)<2r$}\,.
\end{equation}
Since $F$ is a finite set, by assumption~\ref{hb41} there exists,
for every $n\in\N$, an odd map
$\psi^{(n)}:F\rightarrow \mathcal{X}$ such  that
\begin{alignat*}{3}
&\lim_{n\to\infty} \psi^{(n)}(v) = v
&&\qquad\text{for all $v\in F$}\,,\\
&\text{$g_1^{(n)}(\psi^{(n)}(v))<+\infty$
eventually as $n\to\infty$}
&&\qquad\text{for all $v\in F$}\,,\\
&f(v) + \lambda\,g_2(v) \geq \limsup_{n\to\infty} 
\left(f^{(n)}(\psi^{(n)}(v)) 
+ \lambda g_2^{(n)}(\psi^{(n)}(v))\right)
&&\qquad\text{for all $v\in F$}\,.
\end{alignat*}
If we define an odd and continuous map 
$\pi^{(n)}:K\rightarrow \mathcal{X}$ by
\[
\pi^{(n)}(u) = \sum_{v\in F} \vartheta_v(u)\psi^{(n)}(v)\,,
\]
we have by the convexity of $f^{(n)}$, $g_1^{(n)}$ and $g_2^{(n)}$
\[
g_1^{(n)}(\pi^{(n)}(u)) < +\infty
\qquad\text{for all $u\in K$, eventually as $n\to\infty$}\,,
\]
\begin{multline*}
\lim_{n\to\infty}\,\pi^{(n)}(u^{(n)}) = \pi(u)\,,
\qquad
\limsup_{n\to\infty}\,f^{(n)}(\pi^{(n)}(u^{(n)})) < +\infty\,,\\
\limsup_{n\to\infty}\,g_2^{(n)}(\pi^{(n)}(u^{(n)})) < +\infty\,,
\qquad\text{whenever $u^{(n)} \to u$ in $K$}\,.
\end{multline*}
Therefore, by assumption~\ref{hc41} and~\eqref{eq:fg1g2}, there 
exists $\overline{n}\in\N$ such that
\begin{multline*}
\pi^{(n)}(u)\neq 0\,,\qquad
f^{(n)}(\psi^{(n)}(v)) 
+ \lambda\,g_2^{(n)}(\psi^{(n)}(v)) < 
\lambda\,g_1^{(n)}(\pi^{(n)}(u)) < +\infty \\ 
\qquad\text{for all $n\geq\overline{n}$, 
$u\in K$ and $v\in F$ with $d(u,v)<r$}\,.
\end{multline*}
By the convexity of $f^{(n)}+\lambda g_2^{(n)}$, we infer that
\[
f^{(n)}(\pi^{(n)}(u)) 
+ \lambda\,g_2^{(n)}(\pi^{(n)}(u)) < 
\lambda\,g_1^{(n)}(\pi^{(n)}(u)) < +\infty
\qquad\text{for all $n\geq\overline{n}$ and $u\in K$}\,,
\]
whence
\[
g_2^{(n)}(\pi^{(n)}(u)) < g_1^{(n)}(\pi^{(n)}(u)) 
\qquad\text{for all $n\geq\overline{n}$ and $u\in K$}\,.
\]
If we denote by $Y^{(n)}$ the vector subspace spanned by
$\psi^{(n)}(F)$, we have again that $g_1^{(n)}$ and $g_2^{(n)}$
are finite, hence continuous, if restricted to $Y^{(n)}$.
If we set
\[
K^{(n)} = \left\{\frac{\pi^{(n)}(u)}{
(g_1^{(n)}(\pi^{(n)}(u)) - g_2^{(n)}(\pi^{(n)}(u)))^{1/\alpha}}:
\,\,u\in K\right\}\,,
\]
it follows that $K^{(n)}$ is included in $Y^{(n)}$ and
\[
\idx{K^{(n)}} \geq \idx{K}\geq m \,,
\]
whence
\[
\text{$K^{(n)}\in \mathcal{K}_m^{fin}$,
$f^{(n)}(u)  < \lambda$ and 
$1+g_2^{(n)}(u) = g_1^{(n)}(u)<+\infty$
for all $n\geq\overline{n}$ and $u\in K^{(n)}$}\,.
\]
Then
\[
\limsup_{n\to\infty} \left(\sup_{u\in K^{(n)}} R^{(n)}(u)\right)
\leq \lambda
\]
and the assertion follows by the arbitrariness of $\lambda$.
\end{proof}
\begin{theorem}
\label{minimaxfin}
Assume that $f$, $g_1$ and $g_2$ are convex and positively
homogeneous of the same degree $\alpha\geq 1$.
Suppose also that:
\begin{enumerate}[label={\upshape\alph*)}, align=parleft, 
widest=iii, leftmargin=*]
\item[\mylabel{ha43a}{\ha}]
for every $b, a > 0$ and sequences $(v_k)$ converging to $v$ in
\[
\left\{u\in \mathcal{X}\setminus\{0\}:\,\,
f(u)\leq b\,,\,\,a + g_2(u) \leq g_1(u) < +\infty\right\}
\]
and $(w_k)$ in
\[
\left\{u\in \mathcal{X}\setminus\{0\}:\,\,
f(u)\leq b\,,\,\,g_2(u) \leq b\right\}
\]
also converging to $v$, we have
\begin{alignat*}{3}
&\limsup_{k\to\infty}\,g_2(v_k) < +\infty\,,\\
&\liminf_{k\to\infty}\,\left(g_1(w_k) - g_2(v_k)\right) \geq a\,.
\end{alignat*}
\end{enumerate}
\par\indent
Then, for every integer $m\geq 1$, we have
\[
\inf_{K\in\mathcal{K}_m}\,\sup_{u\in K}\,R(u) =
\inf_{K\in\mathcal{K}_m^{fin}}\,\sup_{u\in K}\,R(u) \,.
\]
\end{theorem}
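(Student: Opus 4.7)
The trivial inequality $\inf_{K\in\mathcal{K}_m}\sup_KR\leq\inf_{K\in\mathcal{K}_m^{fin}}\sup_KR$ follows from $\mathcal{K}_m^{fin}\subseteq\mathcal{K}_m$, so I focus on the reverse inequality. The plan is to show that, given any $K\in\mathcal{K}_m$ with $\sup_KR<\lambda$ and any $\epsilon\in(0,1)$, one can produce $\tilde K\in\mathcal{K}_m^{fin}$ with $\sup_{\tilde K}R\leq\lambda/(1-\epsilon)$; letting $\epsilon\downarrow 0$, $\lambda\downarrow\sup_KR$, and finally infimizing over $K$ yields the assertion. Note that on $K$, by definition of $R$, we have $f(u)<\lambda$ and $1+g_2(u)\leq g_1(u)<+\infty$ for every $u\in K$.

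The construction mimics the proof of Theorem~\ref{thm4.1dema}. Apply Lemma~\ref{pi} with some $r>0$ small enough that $K\cap B_{2r}(0)=\emptyset$, producing a finite symmetric $F\subseteq K$ and a symmetric partition of unity $\{\vartheta_v\}_{v\in F}$ supported on $r$-balls, and define the continuous odd map $\pi_r:K\to Y:=\mathrm{span}(F)$ by $\pi_r(u):=\sum_{v\in F}\vartheta_v(u)\,v$. Since $\pi_r(u)$ is a convex combination of vertices within distance $r$ of $u$, we have $d(\pi_r(u),u)<r$ (so $\pi_r(u)\neq 0$ for $u\in K$), and convexity together with positive homogeneity yields $f(\pi_r(u))\leq\sup_Kf<\lambda$ and $g_2(\pi_r(u))\leq\max\{g_2(v):\vartheta_v(u)>0\}$. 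If one can moreover establish, for $r$ sufficiently small, the uniform lower bound $g_1(\pi_r(u))-g_2(\pi_r(u))\geq 1-\epsilon$ on $K$, then the normalization
\[
\tilde\pi(u):=\pi_r(u)\,\bigl(g_1(\pi_r(u))-g_2(\pi_r(u))\bigr)^{-1/\alpha}
\]
is continuous and odd; the image $\tilde K:=\tilde\pi(K)$ is compact, symmetric, contained in $Y\setminus\{0\}$, and satisfies $\idx{\tilde K}\geq\idx{K}\geq m$ by property~\ref{hd} of the index, hence $\tilde K\in\mathcal{K}_m^{fin}$. Since $g_1-g_2=1$ on $\tilde K$ we have $R=f$ there, and $f(\tilde\pi(u))=f(\pi_r(u))/(g_1(\pi_r(u))-g_2(\pi_r(u)))\leq (\sup_Kf)/(1-\epsilon)<\lambda/(1-\epsilon)$.

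The main obstacle is the uniform lower bound on $g_1(\pi_r(u))-g_2(\pi_r(u))$, and this is precisely where assumption~\ref{ha43a} enters. I would argue by contradiction: if the bound fails for every small $r$, extract $r_k\downarrow 0$ and $u_k\in K$ with $g_1(\pi_{r_k}(u_k))-g_2(\pi_{r_k}(u_k))<1-\epsilon$; by compactness of $K$, pass to $u_k\to u^*\in K$. Select $v_k^*\in F_{r_k}$ with $\vartheta_{v_k^*}(u_k)>0$ maximizing $g_2$ among such vertices; then $v_k^*\in K$, $d(v_k^*,u_k)<r_k$, so $v_k^*\to u^*$, and $w_k:=\pi_{r_k}(u_k)\to u^*$ as well, with $f(v_k^*),f(w_k)\leq\sup_Kf$ and $g_2(w_k)\leq g_2(v_k^*)$ by convexity of $g_2$ along the convex combination. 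A first invocation of~\ref{ha43a} with $v_k=v_k^*$ (satisfying $1+g_2(v_k^*)\leq g_1(v_k^*)<\infty$) and the constant companion $w_k\equiv u^*$ (which lies in $\{f\leq b,g_2\leq b\}$ for $b$ large, since $u^*\in K$ gives $f(u^*),g_2(u^*)<\infty$) produces the first conclusion $\limsup_kg_2(v_k^*)<\infty$, hence a uniform bound on $g_2(w_k)$. A second invocation of~\ref{ha43a} with the same $(v_k^*)$ but with the actual sequence $w_k=\pi_{r_k}(u_k)$ (now admissible, since $f(w_k)$ and $g_2(w_k)$ are uniformly bounded) yields $\liminf_k(g_1(w_k)-g_2(v_k^*))\geq 1$; combined with $g_2(w_k)\leq g_2(v_k^*)$ this gives $\liminf_k(g_1(w_k)-g_2(w_k))\geq 1$, contradicting $g_1(w_k)-g_2(w_k)<1-\epsilon$ for all $k$.
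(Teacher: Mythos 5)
Your proof is correct and follows essentially the same route as the paper's: both use Lemma~\ref{pi} to build the odd barycentric map $\pi$ into the span of a finite symmetric subset of $K$, normalize by $(g_1-g_2)^{1/\alpha}$, invoke property~\ref{hd} of the index, and exploit assumption~\ref{ha43a} together with the compactness of $K$. The only difference is organizational — the paper first extracts uniform constants $b$ and $r$ from~\ref{ha43a} by the same compactness/contradiction reasoning and then builds a single $\pi$ achieving $\sup R\leq\lambda$ exactly, whereas you run the contradiction along $r_k\downarrow 0$ on the images $\pi_{r_k}(u_k)$ and accept the harmless factor $1/(1-\epsilon)$ — which does not change the substance of the argument.
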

\begin{proof}
Let $d$ be again a distance as in Lemma~\ref{pi}.
Of course, we have
\[
\inf_{K\in\mathcal{K}_m}\,\sup_{u\in K}\,R(u) \leq
\inf_{K\in\mathcal{K}_m^{fin}}\,\sup_{u\in K}\,R(u) \,.
\]
To prove the opposite inequality, let $K\in\mathcal{K}_m$ 
and $\lambda$ with
\[
\sup_{u\in K}\,R(u) < \lambda < +\infty
\]
and let $\varepsilon>0$ be such that
\[
\sup_{u\in K}\,R(u) \leq (1-\varepsilon)\lambda \,,
\]
namely
\[
\text{$f(u) \leq (1-\varepsilon)\lambda$ and
$1 +  g_2(u) \leq g_1(u) < +\infty$ for all $u\in K$}\,.
\]
Taking into account assumption~\ref{ha43a}, 
there exists firstly $b > 0$ such that
\[
\text{$f(u) \leq b$ and $g_2(u)\leq b$ for all $u\in K$}
\]
and then $r>0$ such that $K\cap B_{r}(0)=\emptyset$ and
\begin{multline*}
 g_1(w) - g_2(v) > 1 - \varepsilon
\qquad\text{for all $v\in K$ and $w\in \mathcal{X}\setminus\{0\}$}\\
\text{with $f(w) \leq b$, $g_2(w) \leq b$ and $d(w,v)<2r$}\,.
\end{multline*}
It follows
\begin{multline*}
f(v) + \lambda g_2(v) < \lambda g_1(w) \qquad
\text{for all $v\in K$ and $w\in \mathcal{X}\setminus\{0\}$}\\ 
\text{with $f(w) \leq b$, $g_2(w) \leq b$ and $d(w,v)<2r$}\,.
\end{multline*}
Let $F$ and $\vartheta$ be as in Lemma~\ref{pi} and define
an odd and continuous map 
$\pi:K\rightarrow \mathcal{X}$ by
\[
\pi(u) = \sum_{v\in F} \vartheta_v(u)v\,.
\]
Then we have again
\[
\text{$\pi(u) \in\mathrm{conv} 
\left\{v\in F:\,\,d(v,u)<r\right\}$,\,\,
$d(\pi(u),u)<r$ and $\pi(u)\neq 0$ for all $u\in K$}\,.
\]
In particular, by the convexity of $f$, $g_1$  and $g_2$ 
it follows first that
\[
f(v) + \lambda g_2(v) < \lambda g_1(\pi(u)) < +\infty
\qquad\text{for all $v\in F$ and $u\in K$ with $d(u,v)<r$}
\]
and then that
\[
f(\pi(u)) + \lambda g_2(\pi(u)) < \lambda g_1(\pi(u))
< +\infty \qquad\text{for all $u\in K$}\,.
\]
As before, $g_1$ and $g_2$ are continuous when restricted to the 
vector subspace spanned by~$F$.
If we set
\[
\widehat{K} = \left\{\frac{\pi(u)}{
(g_1(\pi(u)) - g_2(\pi(u)))^{1/\alpha}}:
\,\,u\in K\right\}\,,
\]
it follows that
\[
\idx{\widehat{K}} \geq \idx{K}\geq m \,,
\]
whence
\[
\text{$\widehat{K}\in \mathcal{K}_m^{fin}$,
$f(u) < \lambda$ and 
$1+g_2(u) = g_1(u)<+\infty$
for all $u\in \widehat{K}$} \,.
\]
Therefore
\[
\sup_{u\in \widehat{K}}\,R(u) \leq \lambda
\]
and the assertion follows by the arbitrariness of $\lambda$.
\end{proof}
\begin{remark}
\label{rem:minimaxfin}
Suppose that $f$, $g_1$ and $g_2$ are convex and positively
homogeneous of the same degree $\alpha\geq 1$.
\par
Then assumption~\ref{ha43a} of Theorem~\ref{minimaxfin} is 
satisfied in each of the following cases:
\begin{enumerate}[label={\upshape\alph*)}, align=parleft, 
widest=iii, leftmargin=*]
\item[\mylabel{hbsuff}{\hb}]
for every $b>0$, the restriction of $g_1$ to
\[
\left\{u\in \mathcal{X}:\,\,
f(u)\leq b\,,\,\,g_1(u) \leq b\,,\,\,g_2(u) \leq b\right\}
\]
is continuous;
\item[\mylabel{hcsuff}{\hc}]
for every $b>0$, the restriction of $g_1$ to
\[
\left\{u\in \mathcal{X}\setminus\{0\}:\,\,
f(u)\leq b\,,\,\,g_1(u) \leq b\right\}
\]
is lower semicontinuous and $g_2=0$.
\end{enumerate}
\end{remark}
\begin{proof}
Let $(v_k)$ and $(w_k)$ be two sequences as in
assumption~\ref{ha43a} of Theorem~\ref{minimaxfin}.
If~\ref{hbsuff} holds, we first claim that $(g_1(v_k))$
is bounded.
Otherwise, up to a subsequence, a rescaled sequence $(u_k)$
is convergent to $0$ and satisfies $f(u_k) \to 0$ and 
$g_2(u_k) < g_1(u_k)=1$.
On the other hand $f(0)=g_1(0)=g_2(0)=0$ by convexity and 
homogeneity, whence a contradiction.
Since
\[
g_1(w_k) - g_2(v_k) \geq g_1(w_k) - g_1(v_k) + a\,,
\]
the assertion follows.
\par
If~\ref{hcsuff} holds, we have
\[
g_1(w_k) - g_2(v_k) = g_1(w_k)
\]
with $a\leq g_1(v)<+\infty$ and the assertion immediately follows.
\end{proof}
%


\section{Nonlinear eigenvalue problems}
\label{sect:nep}
This section is devoted to some basic facts concerning nonlinear 
eigenvalues problems.
Up to some adaptation, our approach is inspired by~\cite{sw}.
\par
Throughout this section, $X$ will denote a reflexive Banach 
space and
\[
\varphi, \psi_1, \psi_2:X\rightarrow \R
\]
three even functionals of class $C^1$ which are assumed to
be positively homogeneous of the same degree $\alpha>1$.
We aim to study the nonlinear eigenvalue problem
\begin{equation}
\label{eq:eig}
\varphi'(u) = \lambda(\psi_1'(u)-\psi_2'(u))\,.
\end{equation}
\begin{definition}
We say that $u\in X$ is an \emph{eigenvector} 
of~\eqref{eq:eig} if $\psi_1(u) - \psi_2(u) \neq 0$ and there 
exists $\lambda\in\R$ such that $(u,\lambda)$ 
satisfies~\eqref{eq:eig}.
It is easily seen that
\[
\lambda = \frac{\varphi(u)}{\psi_1(u) - \psi_2(u)}
\]
and $\lambda$ is said to be the \emph{eigenvalue} associated 
with $u$.
\end{definition}
In the following of this section, we consider only the
eigenvectors $u$ with $\psi_1(u) - \psi_2(u) > 0$ and the
associated eigenvalues $\lambda$.
If we set
\[
\widehat{M} =
\left\{u\in X:\,\,\psi_1(u) - \psi_2(u) = 1\right\}\,,
\]
it is easily seen that $\widehat{M}$ is a symmetric hypersurface 
in $X\setminus\{0\}$ of class $C^1$ and that $\lambda$ is an
eigenvalue if and only if $\lambda$ is a critical value of
$\varphi\bigl|_{\widehat{M}}$. 
\par
For the next concepts, we refer the reader 
to~\cite{browder1983, deimling1985}.
\par
\begin{definition}
Let $D \subseteq X$.
A map $F:D \rightarrow X'$ is said to be \emph{of class~$(S)_+$}
if, for every sequence $(u_n)$ in $D$ weakly convergent to $u$
in $X$ with
\[
\limsup_{n\to\infty} \,\langle F(u_n),u_n-u\rangle \leq 0\,,
\]
we have $\|u_n-u\|\to 0$.
\par
If $Y$ is a topological space, a map $F:D \rightarrow Y$ is 
said to be \emph{completely continuous} if it is continuous and, 
for every bounded sequence $(u_n)$ in $D$, the sequence 
$(F(u_n))$ admits a convergent subsequence in $Y$.
\end{definition}
Throughout this section, we assume that:
\emph{
\begin{enumerate}[align=parleft]
\item[\mylabel{hae}{\hei}]
for every $\lambda>0$, we have that 
\[
\varphi' + \lambda\psi_2' :X\rightarrow X'
\]
is of class $(S)_+$, while
\[
\psi_1':X\rightarrow X'
\]
is completely continuous with respect 
to the strong topology of $X'$;
\item[\mylabel{hbe}{\heii}]
we have $\varphi(u)>0$ for all $u\neq 0$ with 
$\psi_1(u) - \psi_2(u) \geq 0$.
\end{enumerate}
}
\begin{lemma}
\label{lemeig}
For every $b\in\R$, the set
\[
\left\{u\in X:\,\,
\varphi(u)\leq b\,,\,\, \psi_1(u) - \psi_2(u) \geq 0\right\}
\]
is bounded and we have
\[
\inf_{u\in\widehat{M}} \varphi(u) > 0 \,.
\]
\end{lemma}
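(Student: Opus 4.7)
The plan for both assertions is to argue by contradiction, leveraging the positive $\alpha$-homogeneity to rescale offending sequences onto the unit sphere, extracting weak limits via reflexivity, controlling $\psi_1$ on weak limits via the complete continuity of $\psi_1'$, and finally invoking the $(S)_+$ property of $\varphi'+\lambda\psi_2'$ to promote weak convergence to strong, whereupon assumption \heii\ produces the desired contradiction.

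For the boundedness statement I would suppose, by contradiction, a sequence $(u_n)\subset X$ with $\varphi(u_n)\leq b$, $\psi_1(u_n)-\psi_2(u_n)\geq 0$, and $\|u_n\|\to\infty$. Normalising $v_n:=u_n/\|u_n\|$ gives $\|v_n\|=1$, $\varphi(v_n)=\varphi(u_n)/\|u_n\|^\alpha\to 0$, and $\psi_1(v_n)-\psi_2(v_n)\geq 0$ by $\alpha$-homogeneity. Reflexivity yields a subsequence $v_n\rightharpoonup v$; complete continuity of $\psi_1'$ combined with Euler's identity $\alpha\psi_1(w)=\langle\psi_1'(w),w\rangle$ then gives $\psi_1(v_n)\to\psi_1(v)$. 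The heart of the argument is the claim that $v_n\to v$ in norm, which I obtain from the $(S)_+$ property of $A_\lambda:=\varphi'+\lambda\psi_2'$ for a suitable $\lambda>0$. Decomposing $A_\lambda=[\varphi'+\lambda(\psi_2'-\psi_1')]+\lambda\psi_1'$, the contribution of $\lambda\psi_1'$ to $\langle\cdot,v_n-v\rangle$ vanishes by strong--weak duality, while Euler's identity and the sign condition produce the favorable diagonal estimate $\langle[\varphi'+\lambda(\psi_2'-\psi_1')](v_n),v_n\rangle=\alpha[\varphi(v_n)-\lambda(\psi_1(v_n)-\psi_2(v_n))]\leq\alpha\varphi(v_n)\to 0$. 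Once $v_n\to v$ strongly, continuity gives $\|v\|=1$, $\varphi(v)=0$, $\psi_1(v)-\psi_2(v)\geq 0$, violating \heii.

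For the second assertion, if $\inf_{\widehat M}\varphi=0$, I take a minimising sequence $(u_n)\subset\widehat M$ with $\varphi(u_n)\to 0$; since $\psi_1(u_n)-\psi_2(u_n)=1\geq 0$ and $\varphi(u_n)$ is eventually bounded, the first assertion yields $\sup_n\|u_n\|<\infty$. Extracting $u_n\rightharpoonup u$ and repeating the same $(S)_+$ argument produces $u_n\to u$ strongly, with $u\in\widehat M$ (hence $u\neq 0$ and $\psi_1(u)-\psi_2(u)=1>0$) yet $\varphi(u)=0$, again contradicting \heii. The main technical obstacle in the whole scheme is the verification of the $(S)_+$ hypothesis, specifically controlling from below the off-diagonal term $\langle[\varphi'+\lambda(\psi_2'-\psi_1')](v_n),v\rangle$. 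This calls for a careful interplay of Euler's identity, the sign condition $\psi_1\geq\psi_2$, evenness and homogeneity of $H:=\varphi+\lambda(\psi_2-\psi_1)$ (noting $H(v_n)\leq\varphi(v_n)\to 0$), together with the freedom to take $\lambda>0$ sufficiently large relative to the ratio $\varphi(v)/(\psi_1(v)-\psi_2(v))$ that may appear in the limit.
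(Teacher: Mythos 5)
Your setup (contradiction, rescaling by homogeneity, weak compactness, complete continuity of $\psi_1'$ plus Euler's identity to pass $\psi_1$ to the weak limit, and $(S)_+$ to upgrade weak to strong convergence) matches the paper's strategy, but the step you yourself flag as the ``main technical obstacle'' is a genuine gap, not a technicality. To invoke the $(S)_+$ property of $\varphi'+\lambda\psi_2'$ you must verify
$\limsup_n\langle \varphi'(v_n)+\lambda\psi_2'(v_n),v_n-v\rangle\le 0$,
and while the diagonal term $\langle\cdot,v_n\rangle$ is controlled by Euler's identity and the strongly convergent $\psi_1'$ part pairs to zero against $v_n-v$, the remaining term $\langle \varphi'(v_n)+\lambda\psi_2'(v_n),v\rangle$ admits no lower bound from the hypotheses of Section~4: $\varphi$ and $\psi_2$ are only assumed $C^1$, even and homogeneous there (no convexity or monotonicity), so no choice of $\lambda$, however large, rescues this term for a general nonzero weak limit $v$. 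Your closing remark about tuning $\lambda$ against the ratio $\varphi(v)/(\psi_1(v)-\psi_2(v))$ does not produce an estimate.

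The paper's resolution is to identify the weak limit \emph{before} touching $(S)_+$: since $\varphi+\lambda\psi_2$ is sequentially weakly lower semicontinuous for every $\lambda>0$ (a consequence of assumption \hei, as cited there) and $\psi_1$ is sequentially weakly continuous, one gets $\lambda\varphi(v)+\psi_2(v)\le\psi_1(v)$ for all $\lambda>0$, hence $\varphi(v)=0$ and $\psi_1(v)-\psi_2(v)\ge 0$, so $v=0$ by \heii. With $v=0$ the off-diagonal term disappears identically, and $\limsup_n\langle\varphi'(v_n)+\psi_2'(v_n),v_n\rangle=\alpha\limsup_n(\varphi(v_n)+\psi_2(v_n))\le\alpha\psi_1(0)=0$ closes the argument, contradicting $\|v_n\|=1$. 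Note also that for the second assertion the paper needs no strong convergence at all: the same weak lower semicontinuity inequality gives $\lambda\varphi(u)+\psi_2(u)\le\psi_1(u)-1$ for every $\lambda>0$, forcing $\varphi(u)=0$ with $\psi_1(u)-\psi_2(u)\ge1$ (so $u\neq0$), which contradicts \heii\ directly; your plan to ``repeat the $(S)_+$ argument'' there would again founder on the off-diagonal term, since in that case the weak limit is certainly nonzero.
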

\begin{proof}
Let us recall that, because of assumption~\ref{hae}, the 
functional $\varphi+\lambda\psi_2$ is sequentially lower 
semicontinuous with respect to the weak topology for all 
$\lambda>0$ (see 
also~\cite[Proposition~3.5]{cingolani_degiovanni_vannella2018}), 
while $\psi_1$ is sequentially continuous with respect to the 
weak topology.
\par
Let $b\in\R$, let $(u_n)$ be a sequence in $X$ with
$\varphi(u_n)\leq b$ and $\psi_1(u_n) - \psi_2(u_n) \geq 0$
and assume, for the sake of contradiction, that 
\[
\lim_{n\to\infty}\, \|u_n\| = +\infty\,.
\]
Then a suitably rescaled sequence $(v_n)$ satisfies
\[
\lim_{n\to\infty}\,\varphi(v_n) = 0\,,\qquad
\psi_2(v_n) \leq \psi_1(v_n)\,,\,\,\|v_n\|=1
\qquad\text{for all $n\in\N$}\,.
\]
Up to a subsequence, we may also assume that $(v_n)$ is weakly 
convergent to some $v$.
For every $\lambda>0$, it follows that
\[
\begin{split}
\lambda\varphi(v) + \psi_2(v) 
&= \lambda\left(\varphi(v) + \lambda^{-1}\,\psi_2(v)\right) 
\leq \lambda \liminf_{n\to\infty}\left(\varphi(v_n) 
+ \lambda^{-1}\,\psi_2(v_n)\right) \\
&=
\lambda \lim_{n\to\infty}\,\varphi(v_n) 
+ \liminf_{n\to\infty}\,\psi_2(v_n) 
=
\liminf_{n\to\infty}\,\psi_2(v_n) \\
&\leq
\lim_{n\to\infty}\,\psi_1(v_n) = \psi_1(v) \,.
\end{split}
\]
From the arbitrariness of $\lambda$ we infer that
$\psi_2(v)\leq\psi_1(v)$ and that $\varphi(v) = 0$,
whence $v=0$ by assumption~\ref{hbe}.
On the other hand, we have
\[
\begin{split}
\limsup_{n\to\infty}\,
\langle \varphi'(v_n)+\psi_2'(v_n),v_n\rangle 
&=
\alpha\,\limsup_{n\to\infty}\,
\left(\varphi(v_n)+\psi_2(v_n)\right) \\
&=
\alpha\,\lim_{n\to\infty}\,\varphi(v_n) 
+ \alpha\,\limsup_{n\to\infty}\,\psi_2(v_n) \\
&\leq
\alpha\,\lim_{n\to\infty}\,\psi_1(v_n) = \alpha \,\psi_1(v) = 0\,,
\end{split}
\]
whence $\|v_n\|\to 0$ by assumption~\ref{hae} and a
contradiction follows.
\par
Now let $(u_n)$ in $\widehat{M}$ be such that
\[
\lim_{n\to\infty}\,\varphi(u_n) = 
\inf_{u\in\widehat{M}} \varphi(u)\,.
\]
By the previous step $(u_n)$ is weakly convergent, up to a 
subsequence, to some $u$.
If $\inf\limits_{u\in\widehat{M}} \varphi(u) = 0$, arguing 
as before we find
\[
\lambda\varphi(u) + \psi_2(u) \leq 
\lim_{n\to\infty}\, \psi_2(u_n) = 
\lim_{n\to\infty}\,(\psi_1(u_n)-1) = \psi_1(u)-1
\]
for all $\lambda>0$, whence a contradiction.
Therefore, it is $\inf\limits_{u\in\widehat{M}} \varphi(u) > 0$.
\end{proof}
\begin{theorem}
\label{thmeig}
The functional $\varphi\bigl|_{\widehat{M}}$ is bounded from 
below and satisfies $(PS)_c$ for all $c\in\R$, namely every 
sequence $(u_n)$ in $\widehat{M}$ satisfying
\[
\lim_{n\to\infty}\, \varphi(u_n)=c\,,\qquad
\lim_{n\to\infty} \,
\left\|\left(\varphi\bigl|_{\widehat{M}}\right)'(u_n)\right\| = 0
\]
admits a converging subsequence.
\end{theorem}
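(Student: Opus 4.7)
The lower bound is immediate from Lemma~\ref{lemeig}, which already asserts $\inf_{\widehat{M}}\varphi>0$; in particular, any Palais--Smale sequence at a level $c$ must satisfy $c\geq\inf_{\widehat{M}}\varphi>0$, so the case $c\leq 0$ is vacuous and it suffices to verify $(PS)_c$ for $c>0$.

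Fix $c>0$ and let $(u_n)\subset\widehat{M}$ be a Palais--Smale sequence at level $c$. Since $\varphi(u_n)\to c$ is bounded and $\psi_1(u_n)-\psi_2(u_n)=1\geq 0$, Lemma~\ref{lemeig} yields that $(u_n)$ is bounded in $X$. By reflexivity, I pass to a subsequence with $u_n\rightharpoonup u$ weakly. Next I would invoke the Lagrange multiplier rule: Euler's relation gives $\langle \psi_1'(v)-\psi_2'(v),v\rangle = \alpha(\psi_1(v)-\psi_2(v))=\alpha\neq 0$ for every $v\in\widehat{M}$, so $\widehat{M}$ is a $C^1$ submanifold and there exist $\lambda_n\in\R$ with $\varepsilon_n:=\varphi'(u_n)-\lambda_n(\psi_1'(u_n)-\psi_2'(u_n))\to 0$ in $X'$. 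Testing against $u_n$ and using homogeneity yields $\alpha\varphi(u_n)-\alpha\lambda_n=\langle\varepsilon_n,u_n\rangle\to 0$, whence $\lambda_n\to c$.

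The heart of the argument combines both structural assumptions of \ref{hae}. Complete continuity of $\psi_1'$ together with $u_n\rightharpoonup u$ gives $\psi_1'(u_n)\to\psi_1'(u)$ strongly in $X'$; rearranging the Lagrange identity, $\varphi'(u_n)+\lambda_n\psi_2'(u_n)=\lambda_n\psi_1'(u_n)+\varepsilon_n\to c\psi_1'(u)$ strongly. To match the $(S)_+$ hypothesis, which is stated with a fixed parameter $\lambda=c>0$, I write $\varphi'(u_n)+c\psi_2'(u_n)=\bigl[\varphi'(u_n)+\lambda_n\psi_2'(u_n)\bigr]+(c-\lambda_n)\psi_2'(u_n)$; since $\psi_2'$ is bounded on the bounded set $\{u_n\}$ and $c-\lambda_n\to 0$, the second summand vanishes in $X'$, so $\varphi'(u_n)+c\psi_2'(u_n)\to c\psi_1'(u)$ strongly. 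Pairing against the weakly null sequence $u_n-u$ yields $\langle\varphi'(u_n)+c\psi_2'(u_n),u_n-u\rangle\to 0$, and the $(S)_+$ property of $\varphi'+c\psi_2'$ then forces $u_n\to u$ in norm, as required.

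\textbf{Where the difficulty lies.} The only delicate step is the mismatch between the varying Lagrange multiplier $\lambda_n$ produced by the constraint and the fixed parameter $\lambda=c$ demanded by the $(S)_+$ assumption; this is resolved by the homogeneity-driven identity $\lambda_n\to c$ together with boundedness of $\psi_2'$ on bounded sets. The strict positivity $c>\inf_{\widehat M}\varphi>0$, guaranteed by Lemma~\ref{lemeig}, is precisely what makes the $(S)_+$ assumption applicable.
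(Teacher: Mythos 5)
Your overall strategy coincides with the paper's: Lemma~\ref{lemeig} gives $c>0$ and boundedness of $(u_n)$, Euler's identity for homogeneous functionals gives $\lambda_n\to c$, and the conclusion comes from combining the complete continuity of $\psi_1'$ with the $(S)_+$ property of $\varphi'+\lambda\psi_2'$. There is, however, one unjustified step. You claim that $(c-\lambda_n)\psi_2'(u_n)\to 0$ in $X'$ ``since $\psi_2'$ is bounded on the bounded set $\{u_n\}$''. In the abstract setting of Section~\ref{sect:nep}, $\psi_2$ is only assumed even, of class $C^1$ and positively homogeneous; neither this nor assumption~\ref{hae} implies that $\psi_2'$ maps bounded subsets of $X$ into bounded subsets of $X'$ (continuity of $\psi_2'$ on the non-compact unit sphere does not yield boundedness there). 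Hence $\langle\psi_2'(u_n),u_n-u\rangle$ could a priori be unbounded, and $(c-\lambda_n)\langle\psi_2'(u_n),u_n-u\rangle$ need not tend to $0$, so your passage from the multiplier $\lambda_n$ to the fixed parameter $\lambda=c$ is not justified as written.

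The paper avoids this by not insisting on $\lambda=c$: after passing to a subsequence along which $\langle\psi_2'(u_n),u_n-u\rangle$ has constant sign, it fixes $\lambda>0$ on the appropriate side of $c$ (below $c$ if the sign is nonnegative, above $c$ if it is nonpositive), so that $(\lambda-\lambda_n)\langle\psi_2'(u_n),u_n-u\rangle\le 0$ eventually; this yields $\limsup_n\langle\varphi'(u_n)+\lambda\,\psi_2'(u_n),u_n-u\rangle\le 0$ with no boundedness of $\psi_2'(u_n)$ required, and the $(S)_+$ property then gives $\|u_n-u\|\to 0$. You should replace your cancellation step by this sign argument (or explicitly add the hypothesis that $\psi_2'$ is bounded on bounded sets, which does hold in the paper's applications but is not part of assumption~\ref{hae}). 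A second, minor point: complete continuity only yields that $(\psi_1'(u_n))$ admits a strongly convergent subsequence, not that its limit is $\psi_1'(u)$; this is harmless, since all you actually use is strong convergence to some limit when pairing against the weakly null sequence $u_n-u$.
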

\begin{proof}
Of course, $\varphi\bigl|_{\widehat{M}}$ is bounded from below
by assumption~\ref{hbe}.
To prove $(PS)_c$, let us recall that
\[
\left\|\left(\varphi\bigl|_{\widehat{M}}\right)'(u)\right\| =
\min\left\{\|\varphi'(u) - \lambda(\psi_1'(u)-\psi_2'(u))\|:\,\,
\lambda\in\R\right\} \qquad
\text{for all $u\in \widehat{M}$}\,.
\]
Let $(u_n)$ be a sequence in $\widehat{M}$ and $(\lambda_n)$ 
a sequence in $\R$ such that
\[
\lim_{n\to\infty} \,\varphi(u_n)=c\,,\qquad
\lim_{n\to\infty}\,
\|\varphi'(u_n) - \lambda_n(\psi_1'(u_n)-\psi_2'(u_n))\| = 0\,.
\]
By Lemma~\ref{lemeig} we have $c>0$ and $(u_n)$ is bounded 
hence weakly convergent, up to a subsequence, to some $u$.
If we set
\[
z_n = \varphi'(u_n) - \lambda_n(\psi_1'(u_n)-\psi_2'(u_n))\,,
\]
it follows
\[
\alpha\varphi(u_n) = \langle\varphi'(u_n),u_n\rangle 
=\lambda_n\langle\psi_1'(u_n)-\psi_2'(u_n),u_n\rangle
+ \langle z_n,u_n\rangle 
= \lambda_n \alpha + \langle z_n,u_n\rangle \,,
\]
whence
\[
\lim_{n\to\infty} \lambda_n = c > 0\,.
\]
Up to a subsequence, $(\psi_1'(u_n))$ is strongly convergent
in $X'$ and there exists $\lambda>0$ such that
\[
\limsup_{n\to\infty}\,
(\lambda-\lambda_n)\langle \psi_2'(u_n),u_n-u\rangle \leq 0\,.
\]
Then we have
\begin{multline*}
\limsup_{n\to\infty}\,
\langle \varphi'(u_n) + \lambda\, \psi_2'(u_n),u_n-u\rangle \\
= \limsup_{n\to\infty}\,\left[
\langle \lambda_n \psi_1'(u_n)+z_n,u_n-u\rangle
+ (\lambda-\lambda_n)
\langle \psi_2'(u_n),u_n-u\rangle\right] \leq 0\,.
\end{multline*}
From assumption~\ref{hae} we infer that $\|u_n-u\| \to 0$ 
and $(PS)_c$ follows.
\end{proof}
Now let $i$ be an index as in Section~\ref{sect:convminmax} 
and define, for every $m\geq 1$,
\[
\hat{\lambda}_m =
\inf\left\{\max_{u\in K} \varphi(u):\,\,
\text{$K$ is a nonempty, compact and symmetric subset of 
$\widehat{M}$ with $\idx{K}\geq m$} \right\}\,,
\]
where we agree that $\hat{\lambda}_m=+\infty$ if there is no $K$
with $\idx{K}\geq m$.
It is easily seen that $\hat{\lambda}_m \leq \hat{\lambda}_{m+1}$.
\begin{theorem}
\label{generaleig}
The following facts hold:
\begin{enumerate}[label={\upshape\alph*)}, align=parleft, 
widest=iii, leftmargin=*]
\item[\mylabel{hage}{\ha}]
if $\widehat{M}\neq\emptyset$, which is equivalent to
\[
\left\{u\in X:\,\,\psi_1(u) - \psi_2(u) > 0\right\}\neq\emptyset\,,
\]
then $\inf\limits_{u\in\widehat{M}} \varphi(u)$ is achieved and
\[
\hat{\lambda}_1 = \min_{u\in\widehat{M}} \varphi(u)\,;
\]
\item[\mylabel{hbge}{\hb}]
if there exists an odd and continuous map
\[
\left\{\xi\in \R^m:\,|\xi|=1\right\} \rightarrow
\left\{u\in X:\,\,\psi_1(u) - \psi_2(u) > 0\right\}\,,
\]
then $\hat{\lambda}_m<+\infty$;
\item[\mylabel{hcge}{\hc}]
if $\hat{\lambda}_m < +\infty$, then
$\hat{\lambda}_m$ is an eigenvalue;
\item[\mylabel{hdge}{\hd}]
if 
\[
\hat{\lambda}_m = \cdots = \hat{\lambda}_{m+j-1} < +\infty\,,
\]
then
\[
\idx{\left\{u\in \widehat{M}:\,\,
\text{$u$ is an eigenvector with eigenvalue $\hat{\lambda}_m$}
\right\}} \geq j\,;
\]
\item[\mylabel{hege}{\he}]
we have
\[
\lim_{m\to\infty} \hat{\lambda}_m = +\infty\,.
\]
\end{enumerate}
\end{theorem}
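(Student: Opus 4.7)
My plan is to apply the equivariant Ljusternik--Schnirelmann theory to the even $C^1$ functional $\varphi\bigl|_{\widehat{M}}$ on the symmetric $C^1$ manifold $\widehat{M}$. That $\widehat{M}$ is indeed a $C^1$ manifold follows from Euler's identity $\langle\psi_1'(u)-\psi_2'(u),u\rangle = \alpha(\psi_1(u)-\psi_2(u)) = \alpha > 0$ for $u\in\widehat{M}$, so $\psi_1'-\psi_2'$ does not vanish there. By Theorem~\ref{thmeig} the restricted functional is bounded below and satisfies $(PS)_c$ for every $c\in\R$, which is the main analytic ingredient. It then remains to translate the classical $\Z_2$-equivariant minimax arguments to this setting.

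For~\ref{hage}, any antipodal pair $\{u,-u\}\subseteq\widehat{M}$ satisfies $\idx{\{u,-u\}}\geq 1$ (compact, symmetric and avoiding $0$), so $\hat{\lambda}_1=\inf_{\widehat{M}}\varphi$. To prove attainment I would take a minimizing sequence $(u_n)$, bounded by Lemma~\ref{lemeig}, extract $u_n\rightharpoonup u$, and use the complete continuity of $\psi_1'$ in~\ref{hae} (giving weak continuity of $\psi_1$) to obtain $\psi_2(u_n)=\psi_1(u_n)-1\to\psi_1(u)-1$. The weak lower semicontinuity of $\varphi+\lambda\psi_2$ for every $\lambda>0$ then yields
\[
\varphi(u)\leq\hat{\lambda}_1+\lambda\bigl(\psi_1(u)-\psi_2(u)-1\bigr)\,.
\]
Sending $\lambda\to+\infty$ rules out $\psi_1(u)-\psi_2(u)<1$; the case $\psi_1(u)-\psi_2(u)>1$ is excluded by rescaling $tu\in\widehat{M}$ with $t\in(0,1)$ and using $\varphi(u)>0$ from~\ref{hbe}; hence $u\in\widehat{M}$ and $\varphi(u)\leq\hat{\lambda}_1$. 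For~\ref{hbge} I would normalize the given odd map $\sigma:\{\xi\in\R^m:|\xi|=1\}\to\{\psi_1-\psi_2>0\}$ to $\tilde\sigma(\xi)=\sigma(\xi)/\bigl(\psi_1(\sigma(\xi))-\psi_2(\sigma(\xi))\bigr)^{1/\alpha}\in\widehat{M}$, so that $K:=\tilde\sigma(\{|\xi|=1\})$ satisfies $\idx{K}\geq m$ by~\ref{hd} and~\ref{he}; then $\hat{\lambda}_m\leq\max_K\varphi<+\infty$.

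For~\ref{hcge}, \ref{hdge} and~\ref{hege} I would invoke the standard equivariant deformation lemma on $\widehat{M}$: using a locally Lipschitz odd tangent pseudo-gradient for $\varphi\bigl|_{\widehat{M}}$ (built via partition of unity and antipodal averaging), the $(PS)$ condition ensures that any $c$ which is not a critical value admits $\delta>0$ such that $\{\varphi\leq c+\delta\}\cap\widehat{M}$ can be equivariantly deformed into $\{\varphi\leq c-\delta\}\cap\widehat{M}$. Part~\ref{hcge} then follows by contradicting the minimax: applying the deformation to a nearly optimal $K\in\mathcal{K}_m$ would, by property~\ref{hd}, yield a new element of $\mathcal{K}_m$ with strictly smaller supremum. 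For~\ref{hdge}, if the critical set $Z$ at level $\bar\lambda=\hat{\lambda}_m=\cdots=\hat{\lambda}_{m+j-1}$ had index $<j$, property~\ref{hb} would produce a symmetric open neighborhood $U\supseteq Z$ with $\idx{\widehat{U}}<j$ for all compact symmetric $\widehat U\subseteq U$; combining subadditivity~\ref{hc} with deformation of $K\setminus U$ would then contradict $\hat{\lambda}_{m+j-1}=\bar\lambda$. Finally, if $\hat{\lambda}_m\leq L<+\infty$ for all $m$, the set of critical points at levels $\leq L+1$ is compact by $(PS)$ and has finite index by~\ref{hb}, contradicting the arbitrarily large multiplicity from~\ref{hdge}, proving~\ref{hege}.

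The hardest step, I expect, is the explicit construction of the equivariant deformation on the $C^1$ manifold $\widehat{M}$, simultaneously ensuring tangency, oddness and sufficient descent of $\varphi$. The sign-changing structure of $\psi_1-\psi_2$ does not obstruct this: it enters only through the $(PS)$ condition and the smoothness of $\widehat{M}$, both already secured by Lemma~\ref{lemeig} and Theorem~\ref{thmeig}.
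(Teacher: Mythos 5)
Your proposal is correct and follows essentially the same route as the paper, which disposes of this theorem by citing classical equivariant Ljusternik--Schnirelmann theory (Rabinowitz for $C^2$ manifolds, and Corvellec--Degiovanni--Marzocchi and Szulkin for the $C^1$ case): the only real input is Theorem~\ref{thmeig}, and the rest is the standard index/deformation machinery you reconstruct. The one loose point is your sketch of~\ref{hege}: since the levels $\hat{\lambda}_m$ need not eventually coincide, one cannot quote~\ref{hdge} directly, but the standard fix is to work at the limit level $\bar c=\lim_m\hat{\lambda}_m$, take a symmetric neighborhood $U$ of the (compact, finite-index) critical set $K_{\bar c}$, and combine subadditivity with the deformation of $\{\varphi\le\bar c+\delta\}\setminus U$ to contradict $\hat{\lambda}_m>\bar c-\delta$.
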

\begin{proof}
When $\widehat{M}$ is of class $C^2$, the assertions are well 
known consequences of Theorem~\ref{thmeig} (see 
e.g.~\cite{rabinowitz1986}).
The result in the case of manifolds of class~$C^1$ follows 
from~\cite{cdm, szulkin1988}.
\end{proof}
\begin{example}
Let $\varphi, \psi_1, \psi_2:\R^2\rightarrow\R$ be defined by
\[
\varphi(u) = \frac{1}{2}\,(u_1+u_2)^2\,,\qquad
\psi_1(u) = \frac{1}{2}\,u_1^2\,,\qquad 
\psi_2(u) = \frac{1}{2}\,u_2^2\,.
\]
Then the problem
\[
\begin{cases}
u_1 + u_2 = \lambda u_1\\
\noalign{\medskip}
u_1 + u_2 = - \lambda u_2
\end{cases}
\]
has no solution with $\psi_1(u) - \psi_2(u)\neq 0$ and we have
\[
\inf \left\{\varphi(u):\,\,\psi_1(u)-\psi_2(u)=1\right\}=0\,,
\qquad
\text{$\varphi(u)>0$ for all $u$ 
with $\psi_1(u)-\psi_2(u)\neq 0$}\,.
\]
On the other hand, assumption~\ref{hbe} is not satisfied.
\end{example}
%


\section{Convergence of measures and of functionals}
\label{sect:convergence}
In this section we introduce the notion of local 
$\gamma$-convergence of measures in $\R^N$ and study its properties 
in relation to the $\Gamma$-convergence of suitable functionals.
\subsection{Convergence of capacitary measures}
In the first part of this subsection we take advantage of the
results of~\cite{dalmaso1987}, where the case $p=2$ was considered.
On the other hand, taking into account 
Proposition~\ref{prop:proppfine}, only minor changes are 
required in the general case.
\begin{definition}
Let $\Om$ be an open subset of $\R^N$. 
We say that a non-negative Borel measure $\mu$ in $\Om$ is 
\emph{$p$-capacitary} if, for every $B\in\bor(\Om)$ 
with $\cp_p(B)=0$, we have $\mu(B)=0$.
\par
A $p$-capacitary measure $\mu$ in $\Omega$ is said to
be \emph{outer regular}, if
\begin{equation*}
\mu(B)=\inf \left\{\mu(A) :\,\, 
\text{$A\in\bor(\Om)$, $A\supseteq B$ and $A$ is
$p$-quasi open}\right\} \qquad\text{for all $B\in\bor(\Om)$}\,.
\end{equation*}
\end{definition}
\begin{definition}
Two $p$-capacitary measures $\mu_1, \mu_2$ in $\Omega$ are
said to be \emph{equivalent}, if
\[
\mu_1(A) = \mu_2(A)
\qquad\text{for all $A\in\bor(\Omega)$ with $A$ $p$-quasi open}\,.
\]
We denote by $\cmeas(\Omega)$ the quotient of the set of all
$p$-capacitary measures in $\Omega$ with respect to such an
equivalence relation.
\end{definition}
\begin{proposition}
\label{prop:equivout}
For every $p$-capacitary measure $\mu$ in $\Omega$, if we set
\[
\tilde{\mu}(B)=\inf \left\{\mu(A) :\,\, 
\text{$A\in\bor(\Om)$, $A\supseteq B$ and $A$ is
$p$-quasi open}\right\} \qquad\text{for all $B\in\bor(\Om)$}\,,
\]
then $\tilde{\mu}$ is an outer regular $p$-capacitary
measure in $\Omega$ equivalent to $\mu$.
\par
Moreover, if $\mu_1, \mu_2$ are two equivalent
outer regular $p$-capacitary measures in $\Omega$,
then $\mu_1=\mu_2$.
\end{proposition}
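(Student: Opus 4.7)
The plan is to establish the first assertion by verifying the four defining properties of $\tilde{\mu}$ (being a Borel measure, $p$-capacitary, outer regular, and equivalent to $\mu$), and then to deduce the uniqueness statement as a direct consequence of outer regularity. The crucial preliminary identity I would prove first is
\[
\tilde{\mu}(A) = \mu(A) \qquad\text{for every $p$-quasi open Borel set $A\subseteq \Omega$}\,:
\]
the inequality $\tilde{\mu}(A)\leq \mu(A)$ is obtained by choosing $A$ itself in the infimum, while monotonicity of $\mu$ on Borel supersets gives $\mu(A)\leq \tilde{\mu}(A)$.

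Given this identity, three of the four properties are immediate. Equivalence between $\tilde{\mu}$ and $\mu$ is clear by construction. Outer regularity follows because every candidate $A$ in the infimum defining $\tilde{\mu}(B)$ is $p$-quasi open Borel and satisfies $\tilde{\mu}(A)=\mu(A)$, so the outer regularity formula for $\tilde{\mu}(B)$ coincides with its definition. Finally, $\tilde{\mu}$ is $p$-capacitary because any Borel set $B$ with $\cp_p(B)=0$ is itself $p$-quasi open: for each $\eps>0$, Definition~\ref{def:quasiopen} produces an open $\omega_\eps\supseteq B$ with $\cp_p(\omega_\eps)<\eps$, and $B\cup\omega_\eps=\omega_\eps$ is open, so $\tilde{\mu}(B)=\mu(B)=0$.

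The main obstacle is to prove that $\tilde{\mu}$ is a (Borel) measure, i.e.\ $\sigma$-additive on $\bor(\Omega)$. I would split this into $\sigma$-subadditivity and finite additivity. For $\sigma$-subadditivity I will use the standard fact that countable unions of $p$-quasi open Borel sets remain $p$-quasi open Borel (unite the exceptional open sets at levels $\eps/2^n$); covering each $B_n$ by $A_n\supseteq B_n$ $p$-quasi open Borel with $\mu(A_n)\leq \tilde{\mu}(B_n)+\eps/2^n$ yields
\[
\tilde{\mu}\Bigl(\bigcup_n B_n\Bigr) \leq \mu\Bigl(\bigcup_n A_n\Bigr) \leq \sum_n \tilde{\mu}(B_n) + \eps\,.
\]
Finite additivity on disjoint Borel sets $B_1,B_2$ is the delicate step, since the naive bound from a single cover $A\supseteq B_1\cup B_2$ does not suffice: $\tilde{\mu}$ may strictly dominate $\mu$ on non-$p$-quasi-open Borel sets, so the decomposition $\mu(A)=\mu(A\cap B_1)+\mu(A\cap B_2)+\mu(A\setminus(B_1\cup B_2))$ does not directly give $\tilde{\mu}(B_i)\leq \mu(A\cap B_i)$. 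I plan to follow the construction of~\cite{dalmaso1987}, developed for the linear case $p=2$, which transfers to the nonlinear setting thanks to the quasi-Lindel\"of property of the $p$-fine topology (Proposition~\ref{prop:proppfine}) and the representation of $p$-quasi open Borel sets as $p$-finely open Borel sets modulo sets of zero $p$-capacity (Remark~\ref{rem:quasiopenfine}); these allow one to reduce uncountable approximations to countable ones and to separate $B_1$ from $B_2$ by $p$-quasi open Borel sets with arbitrarily small overlap, thereby producing the reverse inequality $\tilde{\mu}(B_1\cup B_2)\geq \tilde{\mu}(B_1)+\tilde{\mu}(B_2)$.

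For the uniqueness statement, let $\mu_1, \mu_2$ be two equivalent outer regular $p$-capacitary measures. Equivalence gives $\mu_1(A)=\mu_2(A)$ for every $p$-quasi open Borel $A$, and outer regularity applied to both then yields, for each $B\in\bor(\Omega)$,
\[
\mu_1(B) = \inf\bigl\{\mu_1(A) : A\in\bor(\Omega),\ A\supseteq B,\ A\ \text{$p$-quasi open}\bigr\} = \mu_2(B)\,,
\]
whence $\mu_1=\mu_2$.
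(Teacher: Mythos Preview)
Your proposal is correct and follows essentially the same route as the paper: the paper's proof is simply a reference to~\cite[Theorems~2.6, 3.9 and~3.10 and Remark~3.4]{dalmaso1987} for the case $p=2$, together with the blanket remark (at the start of Subsection~5.1) that Proposition~\ref{prop:proppfine} handles the passage to general $p$. You have expanded the argument in more detail, but you defer the only genuinely delicate step (finite additivity of $\tilde{\mu}$) to the same reference with the same justification via the quasi-Lindel\"of property, so the approaches coincide.
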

\begin{proof}
In the case $p=2$, 
see~\cite[Theorems~2.6, 3.9 and~3.10 and Remark~3.4]{dalmaso1987}.
\end{proof}
\begin{definition}
If $\mu, \nu\in\cmeas(\Omega)$, we write $\mu\leq\nu$ if
\[
\mu(A) \leq \nu(A)
\qquad\text{for all $A\in\bor(\Omega)$ with $A$ $p$-quasi open}\,.
\]
\end{definition}
It is easily seen that this is an order relation in 
$\cmeas(\Omega)$.
\begin{example}
\label{ex:cmeas}
Let us provide the two most important examples of $p$-capacitary 
measures.
The first one is given by the measure $\infty_E$ corresponding 
to a subset $E$ of $\Omega$, defined as
\[
\infty_{E}(B):=
\begin{cases}
0 
&\qquad\text{if $\cp_p(B\cap E)=0$}\,,\\
\noalign{\medskip}
+\infty
&\qquad\text{if $\cp_p(B\cap E)>0$}\,,
\end{cases}
\qquad\text{for all $B\in\bor(\Omega)$}\,.
\]
The other one consists in a measure absolutely continuous with 
respect to $\leb^N$, that is, for a $\leb^N$-measurable 
function $V:\Omega\rightarrow [0,+\infty]$, the 
measure $V\,\leb^N$ defined as
\[
(V\,\leb^N)(B) = \int_B V\,d\leb^N
\qquad\text{for all $B\in\bor(\Omega)$}\,.
\]
\end{example}
On the other hand, let us see that each $p$-capacitary measure
admits a decomposition incorporating contributions of this 
particular form.
\begin{definition}
For every $\mu\in\cmeas(\Omega)$, we denote by $A_\mu$ the union 
of all Borel and $p$-finely open subsets~$W$ of $\Omega$ such 
that $\mu(W) <+\infty$. 
This is called the \emph{set of $\sigma$-finiteness} of $\mu$.
\end{definition}
Since each $p$-finely open set is $p$-quasi open, the set 
$A_\mu$ is well defined and in fact $p$-finely open.
\begin{proposition}
\label{prop:Amu}
Let $\mu\in\cmeas(\Omega)$ and let $\mu_1, \mu_2$ be two 
representatives of $\mu$.
Then the following facts hold:
\begin{enumerate}[label={\upshape\alph*)}, align=parleft, 
widest=iii, leftmargin=*]
\item[\mylabel{haAmu}{\ha}]
we have
\begin{alignat*}{3}
&\mu(A) = +\infty
&&\qquad\text{for all $A\in\bor(\Omega)$ with
$\cp_p(A\setminus A_\mu)>0$ and $A$ $p$-quasi open}\,,\\
&\mu_1(B) = \mu_2(B)
&&\qquad\text{for all $B\in\bor(\Omega)$ with
$\cp_p(B\setminus A_\mu)=0$}\,;
\end{alignat*}
\item[\mylabel{hbAmu}{\hb}]
we have that
\[
\tilde\mu(B) = 
\begin{cases}
\mu(B)
&\qquad\text{if $B\in \bor(\Omega)$ and 
$\cp_p(B\setminus A_\mu)=0$}\,,\\
\noalign{\medskip}
+\infty
&\qquad\text{if $B\in \bor(\Omega)$ and 
$\cp_p(B\setminus A_\mu)>0$}\,,
\end{cases}
\]
is the outer regular representative of $\mu$;
\item[\mylabel{hcAmu}{\hc}]
there exists a Borel and $p$-finely open subset $W$
of $A_\mu$ such that $\cp_p(A_\mu \setminus W)=0$ and,
if we set
\[
\mu^{A_\mu}(B) = \mu(B\cap W)
\qquad\text{for all $B\in\bor(\Omega)$}\,,
\]
then $\mu^{A_\mu}$ is a $\sigma$-finite $p$-capacitary measure 
in $\Omega$ independent of the choice of $W$ and of the 
representative of $\mu$.
\end{enumerate}
\end{proposition}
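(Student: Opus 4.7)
The backbone of the argument is the quasi-Lindel\"of property of the $p$-fine topology. First I would apply it to the family $\mathcal{W}$ of all Borel and $p$-finely open subsets $V$ of $\Omega$ with $\mu(V)<+\infty$, whose union is $A_\mu$ by definition. This yields a countable subfamily $(V_n)$ such that $W:=\bigcup_n V_n$ is Borel, $p$-finely open, contained in $A_\mu$, with $\cp_p(A_\mu\setminus W)=0$, and such that $\mu$ restricted to $W$ is $\sigma$-finite. That $A_\mu$ and $W$ depend only on the equivalence class of $\mu$ will follow from the observation that two equivalent $p$-capacitary measures agree on every Borel subset of any Borel $p$-finely open set $V$ on which both are finite: the class of such subsets on which they coincide is a Dynkin class, and $\{U\cap V:U\text{ open in }\R^N\}$ is a $\pi$-system generating $\bor(V)$ made of $p$-quasi open sets (the intersection of two $p$-quasi open sets being $p$-quasi open, via union of the small open sets in Definition~\ref{def:quasiopen}).

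\textbf{Proof of (a).} For the first assertion, let $A\in\bor(\Omega)$ be $p$-quasi open with $\cp_p(A\setminus A_\mu)>0$, and assume by contradiction that $\mu(A)<+\infty$. By Remark~\ref{rem:quasiopenfine}, write $A=V\cup E$ with $V\in\bor(\Omega)$ $p$-finely open and $\cp_p(E)=0$; since $\mu$ is $p$-capacitary, $\mu(V)\leq\mu(A)<+\infty$, whence $V\in\mathcal{W}$ and therefore $V\subseteq A_\mu$. This forces $A\setminus A_\mu\subseteq E$, contradicting $\cp_p(A\setminus A_\mu)>0$. The second assertion is a direct consequence of the Dynkin argument recalled above: for $B\in\bor(\Omega)$ with $\cp_p(B\setminus A_\mu)=0$, the set $B\setminus W$ is $\cp_p$-null so $\mu_i(B)=\mu_i(B\cap W)$, and expressing $B\cap W$ as the disjoint union over $n$ of $B\cap V_n\setminus\bigcup_{k<n}V_k$ reduces the equality $\mu_1(B\cap W)=\mu_2(B\cap W)$ to the Dynkin equality on Borel subsets of each $V_n$ (where both representatives coincide on $V_n$ itself, since $V_n$ is Borel $p$-quasi open).

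\textbf{Proof of (b) and (c).} In (b), the formula for $\tilde\mu$ is unambiguous by the second part of (a); countable additivity on sets inside $A_\mu$ reduces to the ordinary countable additivity of $\mu$ on subsets of $W$ by $\sigma$-finiteness, while the value $+\infty$ is forced on sets whose complement in $A_\mu$ carries positive $p$-capacity (again by (a), and consistent with countable additivity). Equivalence with $\mu$ on Borel $p$-quasi open sets is immediate from the first part of (a). For outer regularity, given $B\in\bor(\Omega)$ with $\cp_p(B\setminus A_\mu)=0$ and $\tilde\mu(B)<+\infty$, the plan is to apply the outer regularity granted by Proposition~\ref{prop:equivout} to each of the finite measures $\mu(\cdot\cap V_n)$, obtaining Borel $p$-quasi open sets $A_n\subseteq V_n$ with $\mu(A_n)\leq\mu(B\cap V_n)+\varepsilon/2^n$, and then taking $\bigcup_n A_n$ as the desired $p$-quasi open envelope of $B\cap W$; sets $B$ with $\cp_p(B\setminus A_\mu)>0$ require no argument, any Borel $p$-quasi open $A\supseteq B$ automatically having $\tilde\mu(A)=+\infty$ by the first part of (a). Finally, (c) is a verification: the set $W$ built in the first paragraph does the job, $\mu^{A_\mu}(B):=\mu(B\cap W)$ is a $\sigma$-finite $p$-capacitary measure since $\mu(V_n)<+\infty$, and independence from the choice of $W$ and of the representative follows because any two admissible $W,W'$ satisfy $\cp_p(W\triangle W')=0$ together with the second part of~(a).

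The step I expect to be the most delicate is the Dynkin/$\pi$-system reduction that underlies both parts of (a): the $\pi$-system must live inside a $p$-finely open (not Euclidean open) set and consist of $p$-quasi open sets, which is precisely where Proposition~\ref{prop:proppfine} and Remark~\ref{rem:quasiopenfine} do the heavy lifting. The outer regularity in (b) is then a careful threading of $\sigma$-finiteness through the capacity-theoretic approximation of Proposition~\ref{prop:equivout}; both constructions are analogous to those in~\cite{dalmaso1987} for $p=2$ and extend without essential change thanks to the $p$-fine topology framework.
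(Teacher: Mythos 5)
Your argument is correct in substance and, unlike the paper --- which for this proposition simply defers to \cite[Theorems~2.6 and~3.17, Proposition~3.16, Remark~3.13]{dalmaso1987} for the case $p=2$ --- it is self-contained: the backbone (quasi-Lindel\"of to extract $W=\bigcup_n V_n$, then a $\pi$--$\lambda$ argument on each $V_n$ using that $\{U\cap V_n:\,U\ \text{open}\}$ consists of Borel $p$-quasi open sets) is exactly the natural adaptation of Dal Maso's scheme, and your parts~(a) and~(c) go through as written. Two small repairs are needed in the outer-regularity step of~(b). First, $\bigcup_n A_n$ is an envelope of $B\cap W$ only; since $\cp_p(B\setminus W)=0$ you must also adjoin a Borel, $p$-quasi open set $G\supseteq B\setminus W$ with $\cp_p(G)=0$ (e.g. $G=\bigcap_j \omega_{1/j}$ with $\omega_{1/j}$ open of capacity less than $1/j$), which contributes nothing to the measure, so that the envelope actually contains $B$. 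Second, the estimate $\sum_n \mu(A_n)\leq \sum_n \mu(B\cap V_n)+\varepsilon$ is useless if the $V_n$ overlap, since $\sum_n\mu(B\cap V_n)$ may then exceed $\mu(B\cap W)$; you should approximate the disjointified pieces $B\cap V_n\setminus\bigcup_{k<n}V_k$ instead, and intersect each $A_n$ with $V_n$ so as to control $\mu(A_n)$ itself rather than $\mu(A_n\cap V_n)$. Both fixes are routine. Finally, the whole approximation in~(b) can be bypassed: Proposition~\ref{prop:equivout} already provides an outer regular representative $\hat\mu$ of $\mu$, and since $\hat\mu$ is a representative, your part~(a) forces $\hat\mu(B)=\mu(B)$ whenever $\cp_p(B\setminus A_\mu)=0$, while $\hat\mu(B)=+\infty$ when $\cp_p(B\setminus A_\mu)>0$ because every Borel $p$-quasi open $A\supseteq B$ then satisfies $\mu(A)=+\infty$; this is precisely the formula for $\tilde\mu$, so outer regularity comes for free from Proposition~\ref{prop:equivout} once~(a) is established.
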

\begin{proof}
In the case $p=2$, 
see~\cite[Theorem~2.6, Proposition~3.16, Remark~3.13
and Theorem~3.17]{dalmaso1987}.
\end{proof}
\begin{definition}
\label{def:Vmu}
For every $\mu\in\cmeas(\Omega)$, we define a $\leb^N$-measurable
function $V_\mu:\Omega\rightarrow [0,+\infty]$ by
\[
V_\mu = 
\begin{cases}
\frac{d\mu^{A_\mu}}{d\leb^N}
&\qquad\text{on $A_\mu$}\,,\\
\noalign{\medskip}
+\infty
&\qquad\text{on $\Omega\setminus A_\mu$}\,,
\end{cases}
\]
and we denote by $\mu_s$ the singular part of 
$\mu^{A_\mu}$ with respect to $\leb^N$.
\end{definition}
\begin{proposition}
\label{prop:AVmu}
The following facts hold:
\begin{enumerate}[label={\upshape\alph*)}, align=parleft, 
widest=iii, leftmargin=*]
\item[\mylabel{haAVmu}{\ha}]
for every $\mu\in\cmeas(\Om)$, we have
\[
\mu(B) =
\infty_{\Om\setminus A_\mu}(B)
+ \int_B V_\mu\,d\leb^N + \mu_s(B)
\]
for all $B\in\bor(\Om)$ with either
$\cp_p(B\setminus A_\mu)=0$ or $B$ $p$-quasi open;
moreover,
\[
\infty_{\Om\setminus A_\mu}
+ V_\mu\,\leb^N + \mu_s
\]
is the outer regular representative of $\mu$;
\item[\mylabel{hbAVmu}{\hb}]
for every $\mu, \nu\in\cmeas(\Om)$ with $\mu\leq\nu$, we have
\[
A_\mu \supseteq A_\nu\,,\qquad
V_\mu \leq V_\nu\quad\text{$\leb^N$-a.e. in $\Omega$;}
\]
\item[\mylabel{hcAVmu}{\hc}]
if $A$ is a $p$-quasi open subset of $\Om$ and
$\mu=\infty_{\Om\setminus A}$, then
\[
\cp_p\left[\left(A \setminus A_\mu\right)
\cup\left(A_\mu\setminus A\right)\right] = 0\,,
\]
whence
\[
\infty_{\Om\setminus A_\mu}(B) = 
\infty_{\Om\setminus A}(B)
\qquad\text{for all $B\in\bor(\Om)$}\,;
\]
\item[\mylabel{hdAVmu}{\hd}]
if $V:\Om\rightarrow [0,+\infty]$ is $\leb^N$-measurable 
and $\mu=V\,\leb^N$, then
\begin{alignat*}{3}
&V_\mu \geq V 
&&\qquad\text{$\leb^N$-a.e. in $\Om$}\,,\\
&V_\mu = V 
&&\qquad\text{$\leb^N$-a.e. in $A_\mu$}\,;
\end{alignat*}
\item[\mylabel{heAVmu}{\he}]
if $V:\Om\rightarrow [0,+\infty]$ is $p$-quasi upper
semicontinuous and $\mu=V\,\leb^N$, then
\[
V_\mu = V 
\qquad\text{$\leb^N$-a.e. in $\Omega$}\,.
\]
\end{enumerate}
\end{proposition}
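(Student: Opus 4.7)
The five items will be proved in order, each reducing to a careful unpacking of Definition~\ref{def:Vmu} in conjunction with Propositions~\ref{prop:equivout} and~\ref{prop:Amu}. For~\ref{haAVmu}, I would fix, via Proposition~\ref{prop:Amu}\ref{hcAmu}, a Borel and $p$-finely open $W\subseteq A_\mu$ with $\cp_p(A_\mu\setminus W)=0$, so that $\mu^{A_\mu}(B)=\mu(B\cap W)$ is a $\sigma$-finite Borel measure whose Lebesgue decomposition with respect to $\leb^N$ is exactly $V_\mu\,\leb^N+\mu_s$ (restricted to $A_\mu$; off $A_\mu$ the convention $V_\mu=+\infty$ is consistent with $\infty_{\Om\setminus A_\mu}$). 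If $\cp_p(B\setminus A_\mu)=0$, then also $\leb^N(B\setminus A_\mu)=0$, and Proposition~\ref{prop:Amu}\ref{haAmu} gives $\mu(B)=\mu(B\cap W)$, yielding the asserted decomposition with $\infty_{\Om\setminus A_\mu}(B)=0$. If $B$ is $p$-quasi open with $\cp_p(B\setminus A_\mu)>0$, both sides equal $+\infty$ (the left-hand side by Proposition~\ref{prop:Amu}\ref{haAmu}, the right-hand side because $\infty_{\Om\setminus A_\mu}(B)=+\infty$). Combined with Proposition~\ref{prop:Amu}\ref{hbAmu}, this identifies $\infty_{\Om\setminus A_\mu}+V_\mu\,\leb^N+\mu_s$ as the outer regular representative.

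For~\ref{hbAVmu}, the inclusion $A_\nu\subseteq A_\mu$ is immediate: any Borel $p$-finely open $W$ with $\nu(W)<+\infty$ is $p$-quasi open, whence $\mu(W)\leq\nu(W)<+\infty$. To compare the Radon--Nikodym derivatives, I would pick compatible Borel $p$-finely open sets $W_\mu\subseteq A_\mu$ and $W_\nu\subseteq A_\nu$ and observe that, for every Borel $B\subseteq W_\mu\cap W_\nu$, $\mu^{A_\mu}(B)=\mu(B)\leq\nu(B)=\nu^{A_\nu}(B)$; the standard monotonicity of the absolutely continuous part of the Lebesgue decomposition then yields $V_\mu\leq V_\nu$ $\leb^N$-a.e. on $A_\nu$, while on $\Om\setminus A_\nu$ the inequality is trivial since $V_\nu=+\infty$ there. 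For~\ref{hcAVmu} with $\mu=\infty_{\Om\setminus A}$, Remark~\ref{rem:quasiopenfine} gives a Borel $p$-finely open $W\subseteq A$ with $\cp_p(A\setminus W)=0$, so $\mu(W)=0$ forces $W\subseteq A_\mu$; conversely, any Borel $p$-finely open $W'$ with $\mu(W')<+\infty$ satisfies $\cp_p(W'\setminus A)=0$, and the quasi-Lindel\"of property in Proposition~\ref{prop:proppfine} applied to the defining family of $A_\mu$ upgrades this to $\cp_p(A_\mu\setminus A)=0$.

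For~\ref{hdAVmu}, when $\mu=V\,\leb^N$ the direct computation $\mu^{A_\mu}(B)=\int_{B\cap W_\mu}V\,d\leb^N$ identifies $V$ as the Radon--Nikodym derivative of $\mu^{A_\mu}$ with respect to $\leb^N$ on $A_\mu$, so $V_\mu=V$ $\leb^N$-a.e. on $A_\mu$, while $V_\mu=+\infty\geq V$ holds trivially off $A_\mu$. For~\ref{heAVmu}, given~\ref{hdAVmu}, it suffices to show that $V=+\infty$ $\leb^N$-a.e. on $\Om\setminus A_\mu$. Since $V$ is $p$-quasi upper semicontinuous, Remark~\ref{rem:quasiusc} makes $\{V<t\}$ $p$-quasi open for each $t\in\R$; intersecting with a ball $B_R(0)$ gives a $p$-quasi open set with $\mu$-measure at most $t\,\leb^N(B_R(0))<+\infty$, which by Remark~\ref{rem:quasiopenfine} is contained in $A_\mu$ up to a $\cp_p$-null set. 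Taking countable unions as $t,R\to+\infty$ yields $\cp_p(\{V<+\infty\}\setminus A_\mu)=0$, hence $\leb^N(\{V<+\infty\}\setminus A_\mu)=0$, as required. The main technical obstacle is the monotonicity step in~\ref{hbAVmu}, where the inequality $\mu\leq\nu$ on $p$-quasi open sets must be correctly transferred to the absolutely continuous parts of the two distinct $\sigma$-finite representations $\mu^{A_\mu}$ and $\nu^{A_\nu}$; the remaining arguments are essentially routine applications of the $p$-fine machinery and the quasi-Lindel\"of property.
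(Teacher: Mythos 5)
Your proposal is correct and follows essentially the same route as the paper: Radon--Nikodym/Lebesgue decomposition of $\mu^{A_\mu}$ for \ref{haAVmu} and \ref{hdAVmu}, and the quasi-upper-semicontinuity plus quasi-Lindel\"of argument for \ref{heAVmu} is exactly the paper's. The only (inessential) deviations are that for \ref{hcAVmu} you verify the two inclusions directly via Remark~\ref{rem:quasiopenfine} and the quasi-Lindel\"of property, whereas the paper deduces it from \ref{haAVmu} after observing that $\mu^{A_\mu}=0$, and that for \ref{hbAVmu} (which the paper dismisses as obvious) you should note that the comparison $\mu(B)\le\nu(B)$ on general Borel $B\subseteq A_\mu\cap A_\nu$ needs a one-line passage through the outer regular representatives, since $\mu\le\nu$ is defined only on $p$-quasi open sets.
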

\begin{proof}
\par\noindent
\par\noindent
\ref{haAVmu}~If $\cp_p(B\setminus A_\mu)=0$, 
it follows from Proposition~\ref{prop:Amu} and the Radon-Nikodym 
Theorem that
\[
\mu(B) =
\infty_{\Om\setminus A_\mu}(B)
+ \int_B V_\mu\,d\leb^N + \mu_s(B)\,,
\]
while, if $\cp_p(B\setminus A_\mu)>0$ and $B$ is $p$-quasi open, 
we have $\infty_{\Om\setminus A_\mu}(B)=\mu(B)=+\infty$ by 
assertion~\ref{haAmu} of Proposition~\ref{prop:Amu}.
In particular, 
\[
\infty_{\Om\setminus A_\mu} + V_\mu\,\leb^N + \mu_s
\]
is the outer regular representative of $\mu$ by~\ref{hbAmu}
of Proposition~\ref{prop:Amu}.
\par\noindent
\ref{hbAVmu}~The fact is obvious.
\par\noindent
\ref{hcAVmu}~Since~$\mu^{A_\mu}$ is $\sigma$-finite, we have
$\mu^{A_\mu}=0$.
Then the assertion follows from~\ref{haAVmu}, as $A$ and
$A_\mu$ can be supposed to be also Borel, up to a set of null 
$p$-capacity.
\par\noindent
\ref{hdAVmu}~By the Radon-Nikodym Theorem, we have
\[
V_\mu = V 
\qquad\text{$\leb^N$-a.e. in $A_\mu$}\,,
\]
while it is obvious that
\[
V_\mu \geq V  
\qquad\text{$\leb^N$-a.e. in $\Om\setminus A_\mu$}\,.
\]
\par\noindent
\ref{heAVmu}~For every $n\in\N$, the set
\[
\left\{x\in\Omega:\,\,V(x) < n\right\}
\]
is $p$-quasi open (see Remark~\ref{rem:quasiusc}).
Therefore, by Remark~\ref{rem:quasiopenfine}, 
there exist a Borel and $p$-finely open set
$W_n$ and $E_n$ with $\cp_p(E_n)=0$ such that
\[
\left\{x\in\Omega:\,\,V(x) < n\right\} = W_n \cup E_n\,.
\]
Then we have
\[
W_n \cap B_n(0) \subseteq A_\mu\,,
\]
whence
\[
\left\{x\in\Omega:\,\,V(x) < +\infty\right\}
\subseteq A_\mu \cup \left(\bigcup_{n\in\N} E_n\right)
\]
and the assertion follows from~\ref{hdAVmu}.
\end{proof}
\begin{example}
Let $N=1$ and $C$ be a closed subset of $\R$ with
empty interior and $\leb^1(C)>0$.
If we set
\[
V = 
\begin{cases}
0
&\qquad\text{on $C$}\,,\\
\noalign{\medskip}
+\infty
&\qquad\text{on $\Omega\setminus C$}\,,
\end{cases}
\]
and consider $\mu = V\,\leb^1$, then we have 
$A_\mu=\emptyset$, whence $V_\mu=+\infty$ 
$\leb^1$-a.e. in $\R$.
\end{example}
\begin{remark}
\label{rem:intequiv}
If $\mu\in\cmeas(\Omega)$ and $u\in W^{1,p}_{loc}(\Omega)$,
then the integral
\[
\int_\Omega |u|^p\,d\mu
\]
is well defined, as
\[
\int_\Omega |u|^p\,d\mu =
\int_0^{+\infty} \mu\left(\left\{|u| > y^{1/p}\right\}
\right)\,d\leb^1(y)
\]
and the sets $\left\{|u| > y^{1/p}\right\}$ are Borel and
$p$-quasi open.
\par
Then the space 
\[
W^{1,p}_{loc}(\Omega)\cap L^p(\Omega,\mu) :=
\left\{u\in W^{1,p}_{loc}(\Omega):\,\,
\int_\Omega |u|^p\,d\mu < +\infty\right\}
\]
is well defined and, for every 
$u \in W^{1,p}_{loc}(\Omega)\cap L^p(\Omega,\mu)$, we have 
\[
\cp_p\left(\{|u|>0\} \setminus A_\mu\right)=0
\]
by~\ref{haAmu} of Proposition~\ref{prop:Amu}.
\par
Again from~\ref{haAmu} of Proposition~\ref{prop:Amu} we 
infer that the integral
\[
\int_\Omega |u|^{p-2}u\,v\,d\mu 
\]
is well defined for all
$u,v \in W^{1,p}_{loc}(\Omega)\cap L^p(\Omega,\mu)$.
\par
Moreover, if $(u_n)$ is a sequence in 
$W^{1,p}_{loc}(\Omega)\cap L^p(\Omega,\mu)$ and
$u\in W^{1,p}_{loc}(\Omega)\cap L^p(\Omega,\mu)$,
then the assertion \emph{the sequence $(u_n)$ is weakly
convergent to $u$ in $L^p(\Omega,\mu)$} is independent
of the choice of the representative of $\mu$.
\end{remark}
Assume now that $\Omega$ is a bounded and open 
subset of $\R^N$.
Here we take advantage of the results of~\cite{dmmu}.
For every $\mu\in\cmeas(\Omega)$, we denote by $w_\mu(\Omega)$ 
the \emph{torsion function} in $\Om$ associated with $\mu$,
defined as the (unique) minimizer of the functional
\[
W^{1,p}_0(\Om)\ni v\mapsto 
\frac{1}{p}\int_{\Om}|\nabla v|^p\,d\leb^N
+\frac{1}{p}\int_{\Om}|v|^p\,d\mu -\int_{\Om} v\,d\leb^N\,.
\]
\begin{remark}
\label{rem:Amuw}
The sets $A_\mu$ and $\{w_\mu(\Om)>0\}$ coincide up to sets
of null $p$-capacity.
\end{remark}
\begin{proof}
\par\noindent
From Remark~\ref{rem:intequiv} we infer that
$\cp_p\left(\{w_\mu(\Om)>0\} \setminus A_\mu\right)=0$.
On the other hand, by the quasi-Lindel\"of property (see
Proposition~\ref{prop:proppfine}), there exists a sequence
$(W_n)$ of Borel and $p$-finely open subsets of $\Omega$ with
$\mu(W_n)<+\infty$ and 
$\cp_p\left(A_\mu \setminus \bigcup\limits_{n\in\N} W_n\right)=0$.
Then we have
\[
\cp_p\left(W_n \setminus \{w_\mu(\Om)>0\}\right)=0
\]
by Proposition~\ref{prop:equivout} and~\cite[Theorem~5.1]{dmmu}, 
whence $\cp_p\left(A_\mu \setminus \{w_\mu(\Om)>0\}\right)=0$.
\end{proof}
If we set
\[
\mathcal{K}^p(\Om) = \left\{v\in W^{1,p}_0(\Om):\,\,
\text{$v\geq 0$ a.e. in $\Omega$ and
$-\Delta_p v \leq 1$ in $W^{-1,p'}(\Om)$}\right\}\,,
\]
it follows from~\cite[Theorem~5.1]{dmmu} that
$\mathcal{K}^p(\Om)$, endowed with the weak topology of
$W^{1,p}_0(\Om)$, is compact and metrizable.
Moreover, again from~\cite[Theorem~5.1]{dmmu} and from 
Proposition~\ref{prop:equivout}, it follows that the map
\[
\begin{array}{ccc}
\cmeas(\Om) & \rightarrow & \mathcal{K}^p(\Om) \\
\noalign{\medskip}
\mu & \mapsto & w_\mu(\Omega)
\end{array}
\]
is bijective.
Then $\cmeas(\Om)$ is endowed with the topology that
makes such a map a homeomorphism.
Therefore, $\cmeas(\Om)$ is a compact and metrizable
topological space.
\begin{definition}
If $\Om$ is a bounded and open subset of $\R^N$,
a sequence $(\mu^{(n)})$ in $\cmeas(\Om)$ is said to be 
\emph{$\gamma^{-\Delta_p}$-convergent} to $\mu$ if it is 
convergent to $\mu$ with respect to the topology we have 
just defined.
This means that $(w_{\mu^{(n)}}(\Om))$ is weakly convergent 
to $w_\mu(\Om)$ in $W^{1,p}_0(\Om)$.
\end{definition}
In the following, we will simply write
\emph{$\gamma$-convergent} instead of
\emph{$\gamma^{-\Delta_p}$-convergent}.
Being a countable product of compact and metrizable
topological spaces, also
\[
\prod_{k\in\N} \cmeas(B_k(0))
\]
endowed with the product topology is compact and
metrizable.
\begin{proposition}
The map
\[
\begin{array}{ccc}
\cmeas(\R^N) & \rightarrow & 
\displaystyle{\prod_{k\in\N} \cmeas(B_k(0))} \\
\noalign{\medskip}
\mu & \mapsto & \left(\mu\bigl|_{\bor(B_k(0))}\right)
\end{array}
\]
is injective with closed image.
\end{proposition}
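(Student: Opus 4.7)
The plan is to handle injectivity and closedness of the image separately, with the harder step being to establish that any sequence of restrictions is consistent in the limit.

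For injectivity, suppose $\mu_1,\mu_2\in\cmeas(\R^N)$ agree as classes on each $\bor(B_k(0))$. For any Borel $p$-quasi open set $A\subseteq\R^N$, the intersection $A\cap B_k(0)$ is Borel and $p$-quasi open in $B_k(0)$: given an outer $\omega_\eps$ for $A$ in $\R^N$, the set $\omega_\eps\cap B_k(0)$ works in $B_k(0)$. Hence the (outer regular) representatives of $\mu_1,\mu_2$ coincide on $A\cap B_k(0)$, and monotone convergence as $k\to\infty$ yields equality on $A$. Since equivalence in $\cmeas(\R^N)$ is tested on Borel $p$-quasi open sets, $\mu_1=\mu_2$.

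For closedness, take a sequence $(\mu^{(n)})$ in $\cmeas(\R^N)$ whose image converges in the product topology to some $(\nu_k)$, so that $\mu^{(n)}|_{\bor(B_k(0))}\to\nu_k$ in $\cmeas(B_k(0))$ for each $k$. The first task is to show the \emph{consistency} $\nu_{k+1}|_{\bor(B_k(0))}=\nu_k$ in $\cmeas(B_k(0))$, which, since the restrictions $(\mu^{(n)}|_{\bor(B_{k+1}(0))})|_{\bor(B_k(0))}$ coincide with $\mu^{(n)}|_{\bor(B_k(0))}$, amounts to continuity of the restriction map $\cmeas(B_{k+1}(0))\to\cmeas(B_k(0))$ under $\gamma$-convergence. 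I would prove this continuity by using the Mosco-convergence characterization of $\gamma$-convergence on the energy functionals $v\mapsto \int|\nabla v|^p + \int|v|^p\,d\mu$: any $v\in W^{1,p}_0(B_k(0))$, extended by zero, vanishes quasi-everywhere outside $B_k(0)$, and by Remark~\ref{rem:intequiv} (together with the $p$-capacitary property) its $\mu$-energy depends only on $\mu|_{\bor(B_k(0))}$. Hence Mosco-convergence on $W^{1,p}_0(B_{k+1}(0))$ restricts to Mosco-convergence on $W^{1,p}_0(B_k(0))$, giving the required $\gamma$-convergence of restrictions. This is the main obstacle.

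Once consistency is in hand, I would glue the outer regular representatives. By Proposition~\ref{prop:equivout}, each $\nu_k$ has a unique outer regular representative $\tilde\nu_k$. For $B\in\bor(B_k(0))$, every outer approximating $p$-quasi open $A\subseteq B_{k+1}(0)$ gives an outer approximating $p$-quasi open $A\cap B_k(0)\subseteq B_k(0)$ with smaller measure, so $\tilde\nu_{k+1}|_{\bor(B_k(0))}$ is outer regular on $B_k(0)$; uniqueness then forces $\tilde\nu_{k+1}|_{\bor(B_k(0))}=\tilde\nu_k$ as Borel measures. Define
\[
\tilde\mu(B):=\lim_{k\to\infty}\tilde\nu_k(B\cap B_k(0)) \qquad\text{for } B\in\bor(\R^N),
\]
which is a non-decreasing limit by consistency, hence well-defined. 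Countable additivity follows by switching a monotone limit with a countable sum, and the $p$-capacitary property passes from each $\tilde\nu_k$ to $\tilde\mu$. For any Borel $p$-quasi open $A\subseteq B_k(0)$ one has $A\cap B_j(0)=A$ for $j\geq k$, so $\tilde\mu(A)=\tilde\nu_k(A)$; consequently the class $\mu$ of $\tilde\mu$ satisfies $\mu|_{\bor(B_k(0))}=\nu_k$ for every $k$, so $(\nu_k)$ lies in the image.
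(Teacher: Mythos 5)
Your overall architecture coincides with the paper's: injectivity via $\mu(A)=\sup_k\mu(A\cap B_k(0))$ over $p$-quasi open sets, and closedness of the image via (i) consistency of the limits $\nu_k$ on nested balls followed by (ii) gluing of the outer regular representatives. Your gluing step (ii) is correct and in fact spelled out in more detail than in the paper, which defines the glued measure only on $p$-quasi open sets and then passes to the outer envelope.

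The genuine gap is exactly at the step you call ``the main obstacle''. The paper obtains the consistency $\nu_{k+1}\bigl|_{\bor(B_k(0))}=\nu_k$ by invoking the localization theorem for $\gamma$-convergence, namely \cite[Theorem~6.12]{dmmu} together with Proposition~\ref{prop:equivout}; it does not attempt to prove localization from scratch. Your substitute argument --- that Mosco-convergence of the energies on $W^{1,p}_0(B_{k+1}(0))$ ``restricts'' to Mosco-convergence on the closed subspace $W^{1,p}_0(B_k(0))$ because the functionals on that subspace depend only on $\mu\bigl|_{\bor(B_k(0))}$ --- fails in the recovery-sequence (limsup) half. For $v\in W^{1,p}_0(B_k(0))$, the recovery sequence provided by Mosco-convergence in the larger space consists of functions of $W^{1,p}_0(B_{k+1}(0))$ that need not vanish outside $B_k(0)$, so it says nothing about the $\Gamma$-limsup computed within $W^{1,p}_0(B_k(0))$. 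What your reasoning does yield, by feeding a recovery sequence taken in the small space into the liminf inequality of the large space, is the one-sided bound $\int|v|^p\,d\nu_{k+1}\le\int|v|^p\,d\nu_k$ for all $v\in W^{1,p}_0(B_k(0))$, i.e.\ $\nu_{k+1}\bigl|_{\bor(B_k(0))}\le\nu_k$. The reverse inequality is precisely the nontrivial content of the localization theorem, whose proof requires constructing recovery sequences supported in the smaller set (a cut-off/truncation argument that does not follow from abstract restriction of Mosco limits). Either cite that theorem, as the paper does, or supply such a construction.
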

\begin{proof}
For every $\mu\in\cmeas(\R^N)$ and $A\in \bor(\R^N)$ with
$A$ $p$-quasi open, we have
\[
\mu(A) = \sup_k \mu(A\cap B_k(0))\,.
\]
Therefore the map is injective.
\par
If $(\mu^{(n)})$ is a sequence in $\cmeas(\R^N)$ such that
$(\mu^{(n)}\bigl|_{\bor(B_k(0))})$ is $\gamma$-convergent to 
$\nu_k$ in $\cmeas(B_k(0))$ for all $k\in\N$, it follows 
from Proposition~\ref{prop:equivout} and~\cite[Theorem~6.12]{dmmu}
that $\nu_{k+1}\bigl|_{\bor(B_k(0))}=\nu_k$.
If we set
\begin{equation*}
\mu(A) = \sup_k \nu_k(A\cap B_k(0))
\qquad\text{for all $A\in\bor(\R^N)$ with
$A$ $p$-quasi open}
\end{equation*}
and we denote by $\mu$ the equivalence class of 
\[
\tilde \mu(B) =\inf \left\{\mu(A):\,\, 
\text{$A\in\bor(\Om)$, $A\supseteq B$ and $A$ is
$p$-quasi open}\right\} 
\qquad\text{for all $B\in\bor(\R^N)$}\,,
\]
it is easily seen that $\mu\in\cmeas(\R^N)$ and
$\mu\bigl|_{\bor(B_k(0))}=\nu_k$ for all $k\in\N$.
Therefore the map has closed image.
\end{proof}
Then $\cmeas(\R^N)$ is endowed with the topology that
makes such a map a homeomorphism between $\cmeas(\R^N)$ 
and its image.
Therefore, $\cmeas(\R^N)$ also is a compact and metrizable
topological space.
\begin{definition}
\label{defn:locconv}
A sequence $(\mu^{(n)})$ in $\cmeas(\R^N)$ is said to be 
\emph{locally $\gamma$-convergent} to $\mu$ if it is 
convergent to $\mu$ with respect to the topology we have 
just defined.
Taking into account 
Proposition~\ref{prop:equivout} and~\cite[Theorem~6.12]{dmmu}, 
this means that $(\mu^{(n)}\bigl|_{\bor(\Om)})$ is 
$\gamma$-convergent to $\mu\bigl|_{\bor(\Om)}$ in $\cmeas(\Om)$ 
for all bounded and open subset $\Om$ of $\R^N$.
\par
In particular, if $A^{(n)}, A$ are $p$-quasi open subsets of $\R^N$,
the sequence $(A^{(n)})$ is said to be 
\emph{locally $\gamma$-convergent} to $A$ if
$\mu^{(n)}=\infty_{\R^N\setminus A^{(n)}}$ is 
locally $\gamma$-convergent to $\mu=\infty_{\R^N\setminus A}$.
If $A^{(n)}, A \subseteq \Omega$ for some bounded and open set 
$\Omega$,  then this is equivalent to the classical notion of 
$\gamma$-convergence of sets.
\end{definition}

\subsection{Lower estimate and asymptotic equicoercivity
for a sequence of functionals}
For every $\mu\in \cmeas(\R^N)$, we define a first lower 
semicontinuous and convex functional
\[
f_\mu:L^p_{loc}(\R^N)\rightarrow[0,+\infty]
\]
by
\[
f_\mu(u) =
\begin{cases}
\displaystyle{
\frac{1}{p}\,\int |\nabla u|^p\,d\leb^N +
\frac{1}{p}\,\int |u|^p\,d\mu} 
&\qquad\text{if $u\in W^{1,p}_{loc}(\R^N)$}\,,\\
\noalign{\medskip}
+\infty
&\qquad\text{otherwise}\,.
\end{cases}
\]
\begin{proposition}
\label{mu-lsc}
If $\mu\in\cmeas(\R^N)$ and $(u^{(n)})$ is a sequence in 
$W^{1,p}_{loc}(\R^N)\cap L^p(\R^N,\mu)$ satisfying
\[
\sup_n \left(\int |\nabla u^{(n)}|^p\,d\leb^N 
+ \int |u^{(n)}|^p\,d\mu\right) < +\infty
\]
and converging to some $u$ in $L^p_{loc}(\R^N)$, then 
$u\in W^{1,p}_{loc}(\R^N)\cap L^p(\R^N,\mu)$ and  $(u^{(n)})$ 
is weakly convergent to $u$ in $L^p(\R^N,\mu)$.
\end{proposition}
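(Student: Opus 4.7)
The plan is to prove the two assertions in order: first that $u$ belongs to $W^{1,p}_{loc}(\R^N) \cap L^p(\R^N,\mu)$, then that the full sequence converges weakly to $u$ in $L^p(\R^N,\mu)$.

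For the first assertion, the uniform bound on $\int |\nabla u^{(n)}|^p\,d\leb^N$, combined with the strong $L^p_{loc}(\R^N)$ convergence, yields by standard weak compactness that $u\in W^{1,p}_{loc}(\R^N)$ and that $(u^{(n)})$ converges weakly to $u$ in $W^{1,p}_{loc}(\R^N)$. I would then apply Mazur's lemma to produce a sequence of finite convex combinations $v^{(n)}=\sum_{k\geq n}\lambda_k^{(n)}u^{(k)}$ converging to $u$ strongly in $W^{1,p}(B_R(0))$ for every $R>0$. By convexity of the functional $v\mapsto \int|v|^p\,d\mu$ (and recalling from Remark~\ref{rem:intequiv} that this integral is well defined for elements of $W^{1,p}_{loc}(\R^N)$ independently of the representative of $\mu$), these combinations still satisfy $\int|v^{(n)}|^p\,d\mu\leq C$. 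Invoking the standard fact that strong $W^{1,p}_{loc}$ convergence implies quasi-everywhere convergence of the $p$-quasi continuous representatives along a subsequence (a consequence of the capacity estimate $\cp_p(\{|v^{(n_k)}-u|>2^{-k/2}\})\leq C\,2^{-kp/2}$ combined with Borel--Cantelli), and using that $\mu$ is $p$-capacitary, we upgrade to $\mu$-a.e. convergence; Fatou's lemma then gives $\int|u|^p\,d\mu\leq C<+\infty$.

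For the weak convergence, the hypothesis shows that $(u^{(n)})$ is bounded in the reflexive space $L^p(\R^N,\mu)$, so any subsequence admits a further weakly convergent sub-subsequence, with limit some $w\in L^p(\R^N,\mu)$. It suffices to show $w=u$ $\mu$-almost everywhere. I would apply Mazur's lemma once more, this time jointly to the pair $(u^{(n)},u^{(n)})$ viewed in $W^{1,p}_{loc}(\R^N)\times L^p(\R^N,\mu)$, where the respective weak limits are $u$ and $w$; this yields convex combinations $\tilde v^{(k)}$ converging strongly to $u$ in $W^{1,p}_{loc}(\R^N)$ and strongly to $w$ in $L^p(\R^N,\mu)$. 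Extracting a common subsequence, we obtain q.e.\ (hence $\mu$-a.e.) convergence to $u$ and $\mu$-a.e.\ convergence to $w$, so $w=u$ $\mu$-a.e. Since every weak limit point of $(u^{(n)})$ in $L^p(\R^N,\mu)$ coincides with $u$, the full sequence converges weakly to $u$.

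The main obstacle is bridging the Lebesgue-theoretic convergence in $L^p_{loc}(\R^N)$ and the capacity-theoretic object $L^p(\R^N,\mu)$, which cannot see Lebesgue-null sets of positive $p$-capacity. This is overcome exactly by the refinement of strong $W^{1,p}_{loc}$ convergence to q.e.\ convergence of $p$-quasi continuous representatives; together with the fact that $\mu$ does not charge sets of zero $p$-capacity, this is what legitimizes applying Fatou and extracting $\mu$-a.e. subsequential limits throughout the argument.
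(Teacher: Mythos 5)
Your proposal is correct and follows essentially the same route as the paper: the paper also passes from weak to strong convergence via convexity (Mazur), applied to the triple $(w\bigl|_{B_1(0)},\nabla w,w)$ in $L^p(B_1(0))\times L^p(\R^N;\R^N)\times L^p(\R^N,\mu)$, and then identifies the weak $L^p(\R^N,\mu)$-limit with $u$ through q.e.\ convergence of a subsequence and the fact that $\mu$ vanishes on sets of zero $p$-capacity. Your two-step organization (Fatou for $u\in L^p(\R^N,\mu)$, then a second Mazur argument to identify the weak limit) is a cosmetic variant of the paper's single product-space argument.
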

\begin{proof}
The sequence $(\nabla u^{(n)})$ is weakly convergent to 
$\nabla u$ in $L^p(\R^N;\R^N)$ and, up to a subsequence, 
$(u^{(n)})$ is weakly convergent to some $v$ in $L^p(\R^N,\mu)$.
If we consider
\[
C = \left\{(w\bigl|_{B_1(0)},\nabla w,w):\,\,
\text{$w\in W^{1,p}_{loc}(\R^N)\cap L^p(\R^N,\mu)$
and $\nabla w \in L^p(\R^N;\R^N)$}\right\}
\]
as a convex subset of 
$L^p(B_1(0))\times L^p(\R^N;\R^N)\times L^p(\R^N,\mu)$, we have 
that $(u\bigl|_{B_1(0)},\nabla u,v)$ belongs to the weak closure 
of~$C$, as $(u^{(n)}\bigl|_{B_1(0)},\nabla u^{(n)},u^{(n)})\in C$.
Then there exists a sequence 
$(w^{(n)}\bigl|_{B_1(0)},\nabla w^{(n)},w^{(n)})$ in $C$
strongly converging to $(u\bigl|_{B_1(0)},\nabla u,v)$.
Up to a subsequence, $w^{(n)}\to u$ q.e. in $\R^N$,
hence $\mu$-a.e. in~$\R^N$.
Then $v=u$ $\mu$-a.e. in $\R^N$.
\end{proof}
\begin{theorem}
\label{Gammalimthm}
If $(\mu^{(n)})$ is locally $\gamma$-convergent to $\mu$ in 
$\cmeas(\R^N)$, then 
\[
f_\mu(u)\leq\left(\Gamma-\liminf_{n\to\infty} 
f_{\mu^{(n)}}\right)(u)
\qquad \text{for all $u\in L^p_{loc}(\R^N)$}\,.
\]
\end{theorem}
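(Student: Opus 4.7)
The plan is to reduce the $\Gamma$-liminf inequality on $L^p_{loc}(\R^N)$ to the Mosco convergence of the Dirichlet-type functionals on $W^{1,p}_0(\Omega)$ for each bounded open $\Omega$, which is equivalent to local $\gamma$-convergence of capacitary measures via~\cite{dmmu, dmmo}. Given a sequence $u^{(n)}\to u$ in $L^p_{loc}(\R^N)$, the inequality is trivial when $\liminf_n f_{\mu^{(n)}}(u^{(n)}) = +\infty$, so (after passing to a subsequence) I assume $f_{\mu^{(n)}}(u^{(n)}) \to L < +\infty$. The uniform bound $\sup_n \int|\nabla u^{(n)}|^p < +\infty$ combined with $L^p_{loc}$ convergence forces $u\in W^{1,p}_{loc}(\R^N)$ and $\nabla u^{(n)}\rightharpoonup \nabla u$ weakly in $L^p(\R^N;\R^N)$; weak lower semicontinuity then yields $\int|\nabla u|^p \leq \liminf_n\int|\nabla u^{(n)}|^p$.

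For the measure part, for each $R>0$ I would choose a smooth cutoff $\varphi_R$ with $\varphi_R\equiv 1$ on $B_R(0)$, supported in $B_{R+1}(0)$. On $\Omega_R = B_{R+2}(0)$ the functions $\varphi_R u^{(n)}$ lie in $W^{1,p}_0(\Omega_R)$ with uniformly bounded gradients and converge strongly in $L^p(\Omega_R)$ to $\varphi_R u$. The $\Gamma$-liminf property of Mosco convergence on $W^{1,p}_0(\Omega_R)$ yields
\[
\int|\nabla(\varphi_R u)|^p + \int \varphi_R^p |u|^p\,d\mu \leq \liminf_n \Big[\int|\nabla(\varphi_R u^{(n)})|^p + \int \varphi_R^p |u^{(n)}|^p\,d\mu^{(n)}\Big].
\]
By monotone convergence in $R$, the second term on the left tends to $\int|u|^p\,d\mu$, and on the right the bound $\int \varphi_R^p |u^{(n)}|^p\,d\mu^{(n)} \leq \int|u^{(n)}|^p\,d\mu^{(n)}$ controls the measure contribution.

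The main obstacle is to handle the gradient term on the right as $R\to\infty$. Writing $\nabla(\varphi_R u^{(n)})=\varphi_R\nabla u^{(n)} + u^{(n)}\nabla\varphi_R$ and applying Young's inequality with parameter $\eta>0$ gives
\[
\int|\nabla(\varphi_R u^{(n)})|^p \leq (1+\eta)\int|\nabla u^{(n)}|^p + C(\eta)\int|u^{(n)}|^p|\nabla\varphi_R|^p,
\]
and by $L^p_{loc}$ convergence the annular remainder $\int|u^{(n)}|^p|\nabla\varphi_R|^p$ passes, as $n\to\infty$, to $\int|u|^p|\nabla\varphi_R|^p$. The delicate point is to choose cutoffs (for instance, $|\nabla\varphi_R|\leq C/R$ supported in $B_{2R}\setminus B_R$) so that this annular contribution vanishes as $R\to\infty$, perhaps by exploiting the finiteness of $\int|u|^p\,d\mu$ and the structural decomposition of $\mu$ via the $\sigma$-finiteness set $A_\mu$ of Proposition~\ref{prop:Amu}. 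Once this vanishing is secured, sending $\eta\to 0$ and combining with the gradient lower semicontinuity yields $f_\mu(u)\leq L$, completing the proof.
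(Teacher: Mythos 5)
Your proposal has two genuine gaps, one structural and one technical.

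The structural gap is a circularity. You invoke ``the $\Gamma$-liminf property of Mosco convergence on $W^{1,p}_0(\Omega_R)$'' as if it were available from the definition of local $\gamma$-convergence. But in this paper (following~\cite{dmmu}) $\gamma$-convergence is \emph{defined} via weak convergence of the torsion functions $w_{\mu^{(n)}}(\Omega)$, and what \cite[Theorems~6.3 and~6.11]{dmmu} provide is convergence of \emph{solutions} of the associated Dirichlet problems, not a $\Gamma$-liminf inequality for the energies along arbitrary sequences. Passing from convergence of solutions to the $\Gamma$-liminf of the energies is precisely the content of Theorem~\ref{Gammalimthm} (its localized version included), and for $p\neq 2$ it cannot simply be cited. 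The paper bridges this gap by a Moreau--Yosida type penalization: it builds a weight $b$ adapted to the sequence $(u^{(n)})$, defines $u^{(n)}_k$ as the minimizer of $v\mapsto \frac{k}{p}\int|u^{(n)}-v|^p b\,d\leb^N + f_{\mu^{(n)}}(v)$, passes to the limit in $n$ in the Euler--Lagrange equations using~\cite{dmmu}, obtains $\liminf_n f_{\mu^{(n)}}(u^{(n)}) \geq \frac{k}{p}\int|u-v_k|^p b\,d\leb^N + f_\mu(v_k)$, and then lets $k\to\infty$ so that $v_k\to u$ and lower semicontinuity of $f_\mu$ concludes. That penalization step is the missing idea in your argument.

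The technical gap is that the annular error term cannot be made to vanish. The functional $f_\mu$ imposes no global integrability on $u$ itself: for instance $u\equiv 1$ and $\mu=0$ give $f_\mu(u)=0<+\infty$. For any admissible cutoffs with $|\nabla\varphi_R|\leq C/R$ supported in $B_{2R}(0)\setminus B_R(0)$ one then gets $\int|u|^p|\nabla\varphi_R|^p\,d\leb^N\sim R^{N-p}$, which does not tend to $0$ when $p\leq N$, and the finiteness of $\int|u|^p\,d\mu$ is of no help when $\mu$ vanishes (or is small) on the annuli. So even granting the localized $\Gamma$-liminf, the $R\to\infty$ limit in your scheme does not close. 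Note that the paper's proof avoids cutoffs entirely: the test functions $\varphi\in C_c(\R^N)$ with $0\leq\varphi\leq 1$ enter only to discard part of the (nonnegative) right-hand side, never to truncate $u$, which is why no error term at infinity ever appears.
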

\begin{proof}
By Proposition~\ref{prop:equivout} we may assume, without
loss of generality, that we have chosen for each $\mu^{(n)}$ 
and for $\mu$ the outer regular representative.
\par
Let $(u^{(n)})$ be a sequence converging to $u$ in 
$L^{p}_{loc}(\R^N)$ with
\[
\liminf_{n\to\infty} f_{\mu^{(n)}}(u^{(n)}) =
\left(\Gamma-\liminf_{n\to\infty} 
f_{\mu^{(n)}}\right)(u) \,.
\]
Without loss of generality, we may assume that this value
is not $+\infty$.
Up to a subsequence, it follows that 
$u^{(n)}, u\in W^{1,p}_{loc}(\R^N)$,
\[
\sup_n \left(\int |\nabla u^{(n)}|^p\,d\leb^N +
\int |u^{(n)}|^p\,d\mu^{(n)}\right) < +\infty
\]
and $(\nabla u^{(n)})$ is weakly convergent to $\nabla u$
in $L^p(\R^N;\R^N)$.
\par
If we define $b:\R^N\rightarrow ]0,+\infty[$ by
\[
b(x) = 
2^{-j}\,
\left(1+
\sup_n \int_{\{j\leq |x| < j+1\}} |u^{(n)}|^p\,d\leb^N\right)^{-1}
\quad\text{if $j\geq 0$ and $j\leq |x| < j+1$}\,,
\]
then $b\in L^\infty(\R^N)$, with $\essinf\limits_K b>0$ for 
all compact subsets $K$ of $\R^N$, and 
$|u^{(n)}|^p\, b, |u|^p\, b \in L^1(\R^N)$ with
\[
\lim_{n\to\infty} \int |u^{(n)}-u|^p\, b\,d\leb^N = 0\,.
\]
Now fix $k\in\N$ and define
\[
u^{(n)}_k=\argmin_{v\in W^{1,p}_{loc}(\R^N)}\left\{
\frac{k}{p}\,\int |u^{(n)}-v|^p\,b\,d\leb^N 
+ f_{\mu^{(n)}}(v)\right\}\,,
\]
as the above minimization problem admits one 
and only one minimizer.
Then $u^{(n)}_k\in L^p(\R^N,\mu^{(n)})$, 
\[
\frac{k}{p}\,\int |u^{(n)}-u^{(n)}_k|^p\,b\,d\leb^N
+ f_{\mu^{(n)}}(u^{(n)}_k) \leq f_{\mu^{(n)}}(u^{(n)})\,,
\]
up to a subsequence $(u^{(n)}_k)$ is convergent in 
$L^p_{loc}(\R^N)$ to some $v_k\in W^{1,p}_{loc}(\R^N)$ and
$(\nabla u^{(n)}_k)$ is weakly convergent to $\nabla v_k$ in
$L^p(\R^N;\R^N)$.
Moreover
\begin{multline*}
\int |\nabla u^{(n)}_k|^{p-2}\nabla u^{(n)}_k\cdot\nabla v\,d\leb^N
+ \int |u^{(n)}_k|^{p-2}u^{(n)}_k v\,d\mu^{(n)} \\
= k \int |u^{(n)}-u^{(n)}_k|^{p-2}(u^{(n)}-u^{(n)}_k)v\,b\,d\leb^N 
\qquad\text{for all $v\in W^{1,p}_{c}(\R^N)\cap 
L^p(\R^N,\mu^{(n)})$}\,.
\end{multline*}
Since $(|u^{(n)}-u^{(n)}_k|^{p-2}(u^{(n)}-u^{(n)}_k)b)$ is 
strongly convergent to $|u-v_{k}|^{p-2}(u-v_{k})b$ in 
$L^{p'}_{loc}(\R^N)$, from~\cite[Theorems~6.3 and~6.11]{dmmu} 
we infer that $v_k\in L^p_{loc}(\R^N,\mu)$ and
\[
\lim_{n\to\infty} \left[
\int |\nabla u^{(n)}_k|^p\varphi\,d\leb^n 
+ \int |u^{(n)}_k|^p \varphi\,d\mu^{(n)}\right] =
\int |\nabla v_{k}|^p\varphi\,d\leb^n 
+ \int |v_{k}|^p \varphi\,d\mu
\]
for all $k\in\N$ and $\varphi\in C_c(\R^N)$.
\par
In particular, if $\varphi\in C_c(\R^N)$ with 
$0\leq \varphi\leq 1$, we have
\[
\begin{split}
\liminf_{n\to\infty} f_{\mu^{(n)}}(u^{(n)}) &\geq
\liminf_{n\to\infty} \biggl[\frac{k}{p}\,
\int |u^{(n)}-u^{(n)}_k|^p\,b\,d\leb^N
+ f_{\mu^{(n)}}(u^{(n)}_k)\biggr]\\
&\geq
\liminf_{n\to\infty} \biggl[\frac{k}{p}\,
\int |u^{(n)}-u^{(n)}_k|^p\,b\,d\leb^N
+ \frac{1}{p}\,\int |\nabla u^{(n)}_k|^p\varphi\,d\leb^N \\
&\qquad\qquad\qquad\qquad\qquad\qquad\qquad\qquad
+ \frac{1}{p}\,\int |u^{(n)}_k|^p\varphi\,d\mu^{(n)}\biggr] \\
&\geq
\frac{k}{p}\,\int |u-v_{k}|^p\,b\,d\leb^N
+ \frac{1}{p}\,\int |\nabla v_{k}|^p\varphi\,d\leb^N +
\frac{1}{p}\,\int |v_{k}|^p\varphi\,d\mu\,.
\end{split}
\]
By the arbitrariness of $\varphi$, we infer that
$v_k \in L^p(\R^N,\mu)$ and
\[
\liminf_{n\to\infty} f_{\mu^{(n)}}(u^{(n)}) \geq
\frac{k}{p}\,\int |u-v_{k}|^p\,b\,d\leb^N + f_\mu(v_k)
\]
for all $k\in\N$, whence
\[
\lim_{k\to\infty} \int |u-v_{k}|^p\,b\,d\leb^N = 0\,.
\]
In particular, $(v_k)$ is convergent to $u$
in $L^p_{loc}(\R^N)$.
By the lower semicontinuity of $f_\mu$ we conclude that
\[
\liminf_{n\to\infty} f_{\mu^{(n)}}(u^{(n)}) \geq
\liminf_{k\to\infty} f_\mu(v_k) \geq f_\mu(u)
\]
and the proof is complete.
\end{proof}
\begin{example}
Let $p<N$ and let $\mu^{(n)}=\infty_{\R^N\setminus B_n(0)}$.
Then $(\mu^{(n)})$ is locally $\gamma$-convergent to $\mu=0$,
but it is false that
\[
f_{\mu}(u) = \left(\Gamma-\lim_{n\to\infty} 
f_{\mu^{(n)}}\right)(u)
\qquad \text{for all $u\in L^p_{loc}(\R^N)$}\,.
\]
Actually, if we take $u=1$, we have $f_\mu(u)=0$ but it is
impossible to find a sequence $(u^{(n)})$ converging to~$1$
in~$L^p_{loc}(\R^N)$ with $f_{\mu^{(n)}}(u^{(n)})\to 0$,
because each $u^{(n)}$ has compact support, which implies that
$(u^{(n)})$ is convergent to $0$ in~$L^{p^*}(\R^N)$.
\end{example}
If $p\geq N$, we will see by Proposition~\ref{carattfmu0}
and Theorem~\ref{GammalimthmW1pc} that the assertion is true.
\begin{proposition}
\label{coerc}
Let $(\mu^{(n)})$ be a sequence in $\cmeas(\R^N)$ and
$\mu \in \cmeas(\R^N)$ with $\mu(\R^N) > 0$ and
\[
f_\mu(u)\leq\left(\Gamma-\liminf_{n\to\infty} 
f_{\mu^{(n)}}\right)(u)
\qquad \text{for all $u\in L^p_{loc}(\R^N)$}\,.
\]
\indent
Then, for every sequence $(u^{(n)})$ in $W^{1,p}_{loc}(\R^N)$ 
such that
\[
\sup_n f_{\mu^{(n)}}(u^{(n)}) < +\infty \,,
\]
there exist $u\in W^{1,p}_{loc}(\R^N)$ and a subsequence 
$(u^{(n_k)})$ converging to $u$ in $L^p_{loc}(\R^N)$.
\end{proposition}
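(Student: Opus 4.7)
The plan is to combine the energy bound with a rescaling argument based on the $\Gamma$-liminf hypothesis and the nontriviality $\mu(\R^N)>0$. From $\sup_n f_{\mu^{(n)}}(u^{(n)})<+\infty$ we obtain $\sup_n \|\nabla u^{(n)}\|_{L^p(\R^N;\R^N)}<+\infty$. Let $c_n:=|B_1(0)|^{-1}\int_{B_1(0)} u^{(n)}\,d\leb^N$. By the Poincar\'e--Wirtinger inequality on $B_R(0)$ and the standard chaining of mean values
\[
|(u^{(n)})_{B_R(0)}-c_n|\leq C_R\,\|\nabla u^{(n)}\|_{L^p(B_R(0))},
\]
whenever $(c_n)$ is bounded the sequence $(u^{(n)})$ is bounded in $W^{1,p}(B_R(0))$ for every $R>0$, and a diagonal application of Rellich--Kondrachov produces a subsequence converging in $L^p(B_R(0))$ for every $R$. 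The limit $u$ belongs to $W^{1,p}_{loc}(\R^N)$ since $\nabla u^{(n)}$ is locally bounded in $L^p$. So everything reduces to preventing $|c_n|\to +\infty$.

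The key step is to exclude this alternative, and it is here that $\mu(\R^N)>0$ enters in an essential way. Suppose by contradiction that $|c_{n_k}|\to +\infty$ along a subsequence, with $c_{n_k}$ of constant sign (say positive, after a further extraction). Consider the rescaling $v^{(k)}:=u^{(n_k)}/c_{n_k}$, so that $(v^{(k)})_{B_1(0)}=1$ and
\[
f_{\mu^{(n_k)}}(v^{(k)})=\frac{1}{c_{n_k}^{\,p}}\,f_{\mu^{(n_k)}}(u^{(n_k)})\longrightarrow 0.
\]
In particular $\nabla v^{(k)}\to 0$ in $L^p(\R^N;\R^N)$. Another application of Poincar\'e--Wirtinger and of the mean-value chaining shows that $(v^{(k)})_{B_R(0)}\to 1$ and $v^{(k)}-(v^{(k)})_{B_R(0)}\to 0$ in $L^p(B_R(0))$ for every $R>0$; hence $v^{(k)}\to 1$ in $L^p_{loc}(\R^N)$. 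Since $1\in W^{1,p}_{loc}(\R^N)$, the standing $\Gamma$-liminf hypothesis gives
\[
\tfrac{1}{p}\,\mu(\R^N)=f_\mu(1)\leq \liminf_{k\to\infty} f_{\mu^{(n_k)}}(v^{(k)})=0,
\]
which contradicts $\mu(\R^N)>0$.

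The main obstacle is selecting the correct rescaling and test function for the $\Gamma$-liminf inequality: the rescaled sequence must have energy vanishing in the limit, yet still converge in $L^p_{loc}$ to a function on which the limit measure $\mu$ registers positive mass. The constant function $1$ is the natural candidate, and the whole argument works uniformly regardless of whether $p<N$, $p=N$, or $p>N$, since constants always belong to $W^{1,p}_{loc}(\R^N)$ and $\R^N$ is open, so $\mu(\R^N)$ is unambiguously defined at the level of equivalence classes in $\cmeas(\R^N)$.
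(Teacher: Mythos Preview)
Your argument is correct and follows essentially the same route as the paper: a rescaling argument showing that if the local $L^p$ mass blows up, the rescaled sequence has vanishing energy and converges in $L^p_{loc}$ to a nonzero constant, which via the $\Gamma$-liminf hypothesis forces $\mu(\R^N)=0$. The only cosmetic difference is that the paper normalizes by $\|u^{(n)}\|_{L^p(B_1(0))}$ and identifies the limit as some constant $v$ with $\|v\|_{L^p(B_1(0))}=1$, whereas you normalize by the mean $c_n$ and identify the limit directly as the constant~$1$; both lead to the same contradiction.
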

\begin{proof}
It is enough to prove that
\[
\sup_n \int_{B_1(0)} |u^{(n)}|^p\,d\leb^N < +\infty\,.
\]
Assume, for the sake of contradiction, that, up to a subsequence,
we have
\[
\lim_{n\to\infty} \int_{B_1(0)} |u^{(n)}|^p\,d\leb^N = +\infty\,.
\]
Then a suitably rescaled sequence $(v^{(n)})$ satisfies
\[
\int_{B_1(0)} |v^{(n)}|^p\,d\leb^N = 1\,,\qquad
\lim_{n\to\infty} \left(\int |\nabla v^{(n)}|^p\,d\leb^N +
\int |v^{(n)}|^p\,d\mu^{(n)} \right) = 0\,.
\]
It follows that, up to a subsequence, $(v^{(n)})$ is convergent 
to some $v$ in $W^{1,p}_{loc}(\R^N)$, whence
\[
\int |\nabla v|^p\,d\leb^N + \int |v|^p\,d\mu = 0 \,,
\]
so that $v$ is a constant with $v=0$, as $\mu(\R^N)>0$.
On the other hand
\[
\int_{B_1(0)} |v|^p\,d\leb^N = 1
\]
and a contradiction follows.
\end{proof}

\subsection{Convergence of functionals}
In order to relate the local $\gamma$-convergence of 
measures in $\cmeas(\R^N)$ with the $\Gamma$-convergence
of functionals on $\R^N$, we need to introduce, roughly 
speaking, a homogeneous Dirichlet-type condition at infinity.
\par
For every $\mu\in \cmeas(\R^N)$, we first define the convex 
functional
\[
\widetilde f_{\mu,0}:L^p_{loc}(\R^N)\rightarrow[0,+\infty]
\]
by
\[
\widetilde f_{\mu,0}(u) =
\begin{cases}
\displaystyle{
\frac{1}{p}\,\int |\nabla u|^p\,d\leb^N +
\frac{1}{p}\,\int |u|^p\,d\mu} 
&\qquad\text{if } u\in W^{1,p}_{c}(\R^N)\,,\\
\noalign{\medskip}
+\infty
&\qquad\text{otherwise}\,,
\end{cases}
\]
then we denote by $f_{\mu,0}$ its lower semicontinuous
envelope.
\begin{lemma}
\label{pgeqN}
If $p\geq N$, there exists a sequence $(\vartheta_n)$
in $C^\infty_c(\R^N)$ such that $0\leq \vartheta_n 	\leq 1$,
\begin{alignat*}{3}
&\lim_{n\to\infty} \vartheta_n = 1
&&\qquad\text{uniformly on compact subsets of $\R^N$}\,,\\
&\lim_{n\to\infty} \nabla \vartheta_n = 0
&&\qquad\text{strongly in $L^p(\R^N;\R^N)$}\,.
\end{alignat*}
\end{lemma}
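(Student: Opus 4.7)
The plan is to construct the sequence explicitly as a radial cutoff, splitting into two cases depending on whether $p>N$ or $p=N$; the case $p=N=1$ is excluded by the standing assumption $p>1$.

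In the easy case $p>N$, I would take the standard linear cutoff: pick $\eta\in C^\infty(\R)$ with $0\leq \eta\leq 1$, $\eta\equiv 1$ on $(-\infty,1]$, $\eta\equiv 0$ on $[2,+\infty)$, and set $\vartheta_n(x)=\eta(|x|/n)$. Since $\vartheta_n\equiv 1$ on $B_n(0)$, uniform convergence to $1$ on compact subsets of $\R^N$ is immediate; and since $|\nabla\vartheta_n|\leq \|\eta'\|_\infty/n$ is supported on $B_{2n}(0)\setminus B_n(0)$, we get
\[
\int |\nabla\vartheta_n|^p\,d\leb^N\leq C\,n^{N-p}\longrightarrow 0,
\]
precisely because $p>N$.

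The genuine obstacle is the borderline case $p=N$ (so $N\geq 2$), where the linear cutoff above produces a quantity bounded away from $0$. Here I would use the conformally natural logarithmic cutoff: choose $R_n\to +\infty$ with $\log(R_n/n)\to +\infty$ (e.g.\ $R_n=ne^n$) and set $\widetilde\vartheta_n(x)=\eta_n(|x|)$, where
\[
\eta_n(r)=
\begin{cases}
1 & \text{if } 0\leq r\leq n,\\
\noalign{\medskip}
\dfrac{\log(R_n/r)}{\log(R_n/n)} & \text{if } n\leq r\leq R_n,\\
\noalign{\medskip}
0 & \text{if } r\geq R_n.
\end{cases}
\]
A direct computation in polar coordinates, using $|\eta_n'(r)|=1/(r\log(R_n/n))$ on the transition annulus, gives
\[
\int |\nabla\widetilde\vartheta_n|^N\,d\leb^N
 = \frac{c_N}{(\log(R_n/n))^N}\int_n^{R_n}\frac{dr}{r}
 = \frac{c_N}{(\log(R_n/n))^{N-1}}\longrightarrow 0,
\]
where $c_N$ is the surface measure of the unit sphere in $\R^N$. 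The key cancellation is that $r^{N-1-N}=1/r$ integrates to exactly one logarithm on $[n,R_n]$, so exactly one logarithmic factor survives in the denominator; this is the conformal invariance of the $N$-Dirichlet energy of radial logarithms, and is precisely the feature that fails for the linear profile.

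The last step is a routine smoothing: mollifying either $\eta(|x|/n)$ or $\widetilde\vartheta_n$ with a compactly supported kernel of radius $\ll n$ produces $\vartheta_n\in C^\infty_c(\R^N)$ with $0\leq\vartheta_n\leq 1$, still identically $1$ on (say) $B_{n/2}(0)$, and with $\|\nabla\vartheta_n\|_{L^p(\R^N;\R^N)}$ arbitrarily close to the $L^p$-norm of the gradient of the Lipschitz cutoff. The only non-routine ingredient in the whole argument is the identification of the logarithmic profile in the critical case $p=N$; everything else is an elementary estimate.
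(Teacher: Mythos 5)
Your proof is correct, and it follows a genuinely different route from the one in the paper. The paper's argument is abstract: it places the dilated profiles $\hat\varphi(x/n)$ in the reflexive space $\mathcal X=\{u\in W^{1,p+1}_{loc}(\R^N):\nabla u\in L^p\cap L^{p+1},\ \int_{B_1(0)}u\,d\leb^N=0\}$, shows they converge weakly to $0$ there, and then invokes Mazur's lemma to extract convex combinations converging strongly; the auxiliary exponent $p+1>N$ is used to upgrade the convergence to uniform convergence on compact sets via the Morrey embedding. That argument treats $p>N$ and $p=N$ simultaneously and avoids any explicit profile, at the cost of the weak-compactness machinery. You instead exhibit the optimizing profiles directly: the linear cutoff for $p>N$ (where $n^{N-p}\to 0$ does the work) and the logarithmic cutoff for $p=N$, where the computation $\int_n^{R_n}(r\log(R_n/n))^{-N}r^{N-1}\,dr=(\log(R_n/n))^{1-N}\to 0$ requires $N\geq 2$ — which you correctly note is guaranteed by the standing assumption $p>1$ when $p=N$. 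The final mollification step is indeed routine, since convolution does not increase $\|\nabla\cdot\|_{L^p}$ and preserves the bounds $0\leq\vartheta_n\leq 1$ and the property of being identically $1$ on a slightly shrunken ball. Your construction is more elementary and self-contained (it is the classical capacity-type cutoff showing that $\R^N$ is $p$-parabolic for $p\geq N$); the paper's proof is shorter to write once the functional-analytic framework is accepted and requires no case distinction.
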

\begin{proof}
Consider the space
\[
\mathcal X =\left\{u\in W^{1,p+1}_{loc}(\R^N):\,\,
\nabla u\in L^p(\R^N;\R^N)\cap L^{p+1}(\R^N;\R^N)\,,\,\,
\int_{B_1(0)}u\,d\leb^N=0\right\}\,.
\]
Then $\mathcal X$ is a reflexive Banach space, when 
endowed with the norm
\[
\|u\| = \|\nabla u\|_p + \|\nabla u\|_{p+1}\,.
\]
Let $\hat\varphi\in C^{\infty}(\R^N)$ be such that
$0\leq \hat\varphi\leq 1$, $\hat\varphi (x)=0$ for 
$|x|\leq 1$ and $\hat\varphi(x)=-1$ for $|x|\geq 2$.
Then define $\hat\varphi_n(x)=\hat\varphi(x/n)$ for all 
$n\geq 1$.
Of course $\hat\varphi_n\in \mathcal{X}$ and it is easily seen 
that $(\hat\varphi_n)$ is weakly convergent to~$0$ in 
$\mathcal X$ (by the way, strongly if $p>N$).
Therefore $0$ belongs to the weak closure of the convex set
\[
\mathrm{conv} \{\hat\varphi_n:\,\, n\in\N\}\,.
\]
Then there exists a sequence $(\varphi_n)$ in such a convex 
set strongly converging  to $0$ in $\mathcal X$.
In particular, each $\varphi_n$ satisfies $\varphi_n=-1$ 
outside some compact subset of $\R^N$, $(\nabla \varphi_n)$ is 
strongly convergent to $0$ in $L^p(\R^N;\R^N)$ and $(\varphi_n)$ 
is convergent to $0$ uniformly on compact subsets of 
$\R^N$, as $p+1>N$.
\par
It follows that $\vartheta_n=1+\varphi_n$ has the
required properties.
\end{proof}
\begin{proposition}
\label{carattfmu0}
If $p < N$, we have 
\[
f_{\mu,0}(u) =
\begin{cases}
f_\mu(u) 
&\qquad\text{if } u\in W^{1,p}_{loc}(\R^N)\cap L^{p^*}(\R^N)\,,\\
\noalign{\medskip}
+\infty
&\qquad\text{otherwise}\,.
\end{cases}
\]
If $p\geq N$, we have 
\[
f_{\mu,0}(u) = f_\mu(u)
\qquad\text{for all $u\in L^p_{loc}(\R^N)$}\,.
\]
\end{proposition}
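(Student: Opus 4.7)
The strategy is to prove the inequality in both directions. The direction $f_{\mu,0}\geq f_\mu$ is the easy one: I would first check that $f_\mu$ is lower semicontinuous on $L^p_{loc}(\R^N)$. If $u^{(n)}\to u$ in $L^p_{loc}(\R^N)$ with $\liminf_n f_\mu(u^{(n)})<+\infty$, then along a subsequence $\sup_n f_\mu(u^{(n)})<+\infty$, so Proposition~\ref{mu-lsc} gives $u\in W^{1,p}_{loc}(\R^N)\cap L^p(\R^N,\mu)$ with $(u^{(n)})$ weakly convergent to $u$ in $L^p(\R^N,\mu)$; together with the weak lower semicontinuity of the $L^p$-norm on $L^p(\R^N;\R^N)$ this yields $f_\mu(u)\leq\liminf_n f_\mu(u^{(n)})$. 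Since $\widetilde f_{\mu,0}\geq f_\mu$ pointwise, the lower semicontinuous envelope $f_{\mu,0}$ of $\widetilde f_{\mu,0}$ dominates $f_\mu$.

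For the opposite inequality, the aim is to exhibit, for admissible $u$, a sequence $(u_n)\subseteq W^{1,p}_c(\R^N)$ with $u_n\to u$ in $L^p_{loc}(\R^N)$ and $\widetilde f_{\mu,0}(u_n)\to f_\mu(u)$. When $p<N$ and $u\in W^{1,p}_{loc}(\R^N)\cap L^{p^*}(\R^N)$ with $f_\mu(u)<+\infty$, I would pick $\eta\in C^\infty_c(\R^N)$ with $0\leq\eta\leq 1$, $\eta\equiv 1$ on $B_1(0)$, $\eta\equiv 0$ outside $B_2(0)$, and set $u_n(x)=\eta(x/n)\,u(x)$. Dominated convergence handles the $\mu$-part and the main term in the gradient, reducing the matter to the error bound
\[
\int|u|^p\,|n^{-1}(\nabla\eta)(\cdot/n)|^p\,d\leb^N\leq\frac{C}{n^p}\int_{\{n\leq|x|\leq 2n\}}|u|^p\,d\leb^N\leq C'\,\bigl\|u\,\mathbf{1}_{\{|x|\geq n\}}\bigr\|_{p^*}^p\,,
\]
where the last step uses H\"older with exponent $p^*/p$ together with $|\{n\leq|x|\leq 2n\}|^{p/N}\leq Cn^p$; the right-hand side vanishes as $n\to\infty$ because $u\in L^{p^*}(\R^N)$.

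When $p\geq N$ the standard cutoff fails because constants lie in the effective domain, so I would replace $\eta(\cdot/n)$ by the sequence $(\vartheta_n)$ provided by Lemma~\ref{pgeqN}. Since $u$ need not be bounded, the plan is first to truncate: setting $u_M:=(-M)\vee u\wedge M$, dominated convergence with $|\nabla u_M|\leq|\nabla u|$ and $|u_M|\leq|u|$ yields $f_\mu(u_M)\to f_\mu(u)$ as $M\to+\infty$. For fixed $M$ the function $u_{M,n}:=\vartheta_n u_M$ belongs to $W^{1,p}_c(\R^N)$, and $f_\mu(u_{M,n})\to f_\mu(u_M)$ because the error term $u_M\nabla\vartheta_n$ is bounded in $L^p$ by $M\,\|\nabla\vartheta_n\|_p\to 0$, the main gradient term $\vartheta_n\nabla u_M$ tends to $\nabla u_M$ in $L^p$, and $\vartheta_n u_M\to u_M$ in $L^p(\R^N,\mu)$ by dominated convergence (using $\vartheta_n\to 1$ q.e.). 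A diagonal extraction together with lower semicontinuity of $f_{\mu,0}$ then delivers $f_{\mu,0}(u)\leq f_\mu(u)$. Finally, when $p<N$ and $u\notin L^{p^*}(\R^N)$, I would show $f_{\mu,0}(u)=+\infty$ by contradiction: any approximating sequence $(u_n)\subseteq W^{1,p}_c(\R^N)$ with bounded $\widetilde f_{\mu,0}(u_n)$ has bounded $\|\nabla u_n\|_p$, hence by the Sobolev inequality (each $u_n$ has compact support) is bounded in $L^{p^*}(\R^N)$, and its weak $L^{p^*}$-limit must coincide with the $L^p_{loc}$-limit $u$. The main obstacle is the case $p\geq N$, where gradients in $L^p$ do not control the function itself and one must carefully combine truncation with the non-trivial cutoff of Lemma~\ref{pgeqN}.
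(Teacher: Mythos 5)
Your proof is correct and follows essentially the same route as the paper's: the lower bound via lower semicontinuity of $f_\mu$, the cutoff with a H\"older/Sobolev estimate of the error term for $p<N$, the Sobolev-inequality contradiction when $u\notin L^{p^*}(\R^N)$, and the cutoffs of Lemma~\ref{pgeqN} combined with truncation for $p\geq N$. The only cosmetic difference is that the paper merges your truncation at level $M$ and the cutoff into the single sequence $u_n=\min\{\max\{u,-c_n\vartheta_n\},c_n\vartheta_n\}$ with $c_n\|\nabla\vartheta_n\|_p\to 0$, which is exactly your diagonal argument carried out in one step.
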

\begin{proof}
Since $f_\mu$ is lower semicontinuous, we clearly have
\[
f_\mu(u) \leq f_{\mu,0}(u) \leq
\widetilde f_{\mu,0}(u)
\qquad\text{for all $u\in L^p_{loc}(\R^N)$}\,.
\]
Assume first that $p<N$.
Let $u\in W^{1,p}_{loc}(\R^N)\cap L^{p^*}(\R^N)$
and let $\vartheta\in C^\infty(\R)$ be such that
$0\leq \vartheta\leq 1$, $\vartheta'\leq 0$, $\vartheta (s)=1$ 
for $s\leq 1$ and $\vartheta(s)=0$ for $s\geq 2$.
If we set $\vartheta_n(x) = \vartheta(|x|/n)$, we have
$0\leq\vartheta_n \leq \vartheta_{n+1}\leq 1$,
$\vartheta_n u\in W^{1,p}_c(\R^N)$ and $(\vartheta_n u)$
is convergent to $u$ in $L^p_{loc}(\R^N)$.
We also have
\[
f_{\mu,0}(\vartheta_n u) = 
\frac{1}{p}\,\int 
|\vartheta_n \nabla u + u \nabla \vartheta_n|^p\,d\leb^N +
\frac{1}{p}\,\int |\vartheta_n u|^p\,d\mu\,.
\]
It is easily seen that $(\nabla \vartheta_n)$ is bounded
in $L^N(\R^N;\R^N)$ and convergent to $0$ a.e. in $\R^N$.
Moreover, for every $\varepsilon>0$ there exists
$C_\varepsilon>0$ such that
\[
|u(x) \xi| \leq 
C_\varepsilon |u(x)|^{p^*/p} + \varepsilon |\xi|^{N/p} 
\qquad\text{for a.a. $x\in\R^n$ and all $\xi\in\R^N$}\,.
\]
It follows (see 
e.g.~\cite[Lemma~4.2]{degiovanni_lancelotti2007})
that $(u\nabla \vartheta_n)$ is strongly convergent
to $0$ in $L^p(\R^N;\R^N)$.
By the lower semicontinuity of $f_{\mu,0}$ we infer that
\[
f_{\mu,0}(u) \leq \lim_{n\to\infty} f_{\mu,0}(\vartheta_n u) =
\frac{1}{p}\,\int |\nabla u|^p\,d\leb^N +
\frac{1}{p}\,\int |u|^p\,d\mu = f_\mu(u) \,.
\]
Now it remains only to show that $f_{\mu,0}(u)=+\infty$
whenever $u\in W^{1,p}_{loc}(\R^N)\setminus L^{p^*}(\R^N)$.
Assume, for the sake of contradiction, that $f_{\mu,0}(u)<+\infty$
and let $(u_n)$ be a sequence converging to $u$ in 
$L^p_{loc}(\R^N)$ with 
$\widetilde f_{\mu,0}(u)(u_n)\to f_{\mu,0}(u)$, whence
$u_n\in W^{1,p}_c(\R^N)$ eventually as $n\to\infty$.
Since $(\nabla u_n)$ is bounded in $L^p(\R^N;\R^N)$,
we have that $(u_n)$ is bounded in $L^{p^*}(\R^N)$.
Therefore, $u\in L^{p^*}(\R^N)$ and a contradiction follows.
\par
If $p\geq N$, let 
$u\in W^{1,p}_{loc}(\R^N)\cap L^p(\R^N,\mu)$ with
$\nabla u\in L^p(\R^N;\R^N)$, let
$(\vartheta_n)$ be a sequence as in Lemma~\ref{pgeqN} and 
let $(c_n)$ be sequence of positive numbers increasing to
$+\infty$ such that $\|c_n\nabla\vartheta_n\|_p\to 0$.
If we define
\[
u_n = \min\{\max\{u, - c_n\vartheta_n\},c_n\vartheta_n\}\,,
\]
we have
\[
|\nabla u_n|^p \leq |\nabla u|^p + |c_n \nabla\vartheta_n|^p
\qquad\text{a.e. in $\R^N$}\,.
\]
Then we have that $(u_n)$ is convergent to $u$ in
$L^p_{loc}(\R^N)$ with $u_n \in W^{1,p}_c(\R^N)$ and
\[
f_{\mu,0}(u) \leq \lim_{n\to\infty} f_{\mu,0}(u_n) =
\frac{1}{p}\,\int |\nabla u|^p\,d\leb^N +
\frac{1}{p}\,\int |u|^p\,d\mu = f_\mu(u) \,,
\]
whence the assertion.
\end{proof}
Before dealing with the main result of this subsection, 
we need the following.
\begin{proposition}
\label{uniqueness}
Let $\mu, \nu \in \cmeas(\R^N)$ be such that
$f_{\mu,0}\leq f_{\nu,0}$.
Then $\mu\leq \nu$.
\end{proposition}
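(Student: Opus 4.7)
\emph{Proof plan.} The idea is to first translate the functional inequality $f_{\mu,0}\leq f_{\nu,0}$ into the pointwise bound
\[
\int|u|^p\,d\mu\leq \int|u|^p\,d\nu \qquad\text{for every }u\in W^{1,p}_c(\R^N),
\]
and then apply this to a monotone approximation of $\chi_A$ for an arbitrary Borel and $p$-quasi open set $A\subseteq\R^N$.

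For the first step I would invoke Proposition~\ref{carattfmu0}: every $u\in W^{1,p}_c(\R^N)$ automatically lies in $W^{1,p}_{loc}(\R^N)\cap L^{p^*}(\R^N)$ when $p<N$ (by the Sobolev embedding), so the envelope $f_{\mu,0}$ agrees there with $\frac{1}{p}\int|\nabla u|^p\,d\leb^N+\frac{1}{p}\int|u|^p\,d\mu$, and likewise for $\nu$. Cancelling the common gradient term in $f_{\mu,0}(u)\leq f_{\nu,0}(u)$ yields the displayed integral inequality (which is meaningful even when the right-hand side is $+\infty$).

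For the second step, I would fix a Borel and $p$-quasi open $A$ and use Proposition~\ref{prop:proppfine} to write $A=\{u>0\}$ for some $u\in W^{1,p}_{loc}(\R^N)$; after replacing $u$ with $u^+$, I may assume $u\geq 0$. Choosing cutoffs $\eta_R\in C^\infty_c(\R^N)$ with $0\leq\eta_R\leq\eta_{R+1}\leq 1$ and $\eta_R\equiv 1$ on $B_R(0)$, I set
\[
\varphi_{n,R}:=\eta_R\,\min(nu,1).
\]
Composition with the Lipschitz map $t\mapsto\min(nt,1)$ keeps the truncation in $W^{1,p}_{loc}$, while $\eta_R$ provides the compact support, so $\varphi_{n,R}\in W^{1,p}_c(\R^N)$; moreover $\varphi_{n,R}^p$ is non-negative, increases in both $n$ and $R$, and tends q.e.\ on~$\R^N$ to $\chi_A$.

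Finally, applying the inequality of the first step to $\varphi_{n,R}$ and passing to the limit first in $n$ and then in $R$ by monotone convergence, valid for both $\mu$ and $\nu$ because $p$-capacitary measures ignore sets of zero $p$-capacity, I arrive at $\mu(A)\leq\nu(A)$ (the case $\nu(A)=+\infty$ being trivial). Since this holds for every Borel and $p$-quasi open $A\subseteq\R^N$, the definition of the partial order in $\cmeas(\R^N)$ gives $\mu\leq\nu$. No serious obstacle is expected; the only delicate points are the membership $\varphi_{n,R}\in W^{1,p}_c(\R^N)$ and the q.e.\ monotone passage to the limit, both of which are routine.
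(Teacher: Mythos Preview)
Your proposal is correct and follows essentially the same route as the paper: reduce $f_{\mu,0}\le f_{\nu,0}$ to $\int|u|^p\,d\mu\le\int|u|^p\,d\nu$ on compactly supported Sobolev functions, then represent a Borel $p$-quasi open $A$ as $\{u>0\}$ via Proposition~\ref{prop:proppfine} and pass to the limit through the truncations $\min\{ku^+,1\}$. The only cosmetic difference is that the paper first extends the integral inequality to all of $W^{1,p}_{loc}(\R^N)$ (which absorbs your cutoff $\eta_R$ into that extension step) and then runs a single limit in $k$, whereas you keep the cutoff explicit and take a double limit; also note that your appeal to Proposition~\ref{carattfmu0} should cover the case $p\ge N$ as well, where that proposition gives $f_{\mu,0}=f_\mu$ outright.
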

\begin{proof}
We have
\[
\int |u|^p\,d\mu\leq \int |u|^p\,d\nu
\qquad\text{for all $u\in W^{1,p}_c(\R^N)$}\,,
\]
whence
\[
\int |u|^p\,d\mu\leq \int |u|^p\,d\nu
\qquad\text{for all $u\in W^{1,p}_{loc}(\R^N)$}\,.
\]
For every Borel and $p$-quasi open subset $A$ of $\Omega$, there 
exists $u\in W^{1,p}_{loc}(\R^N)$ such that $A=\{u>0\}$ by 
Proposition~\ref{prop:proppfine}.
It follows
\[
\int_\Omega (\min\{k\,u^+,1\})^p\,d\mu \leq
\int_\Omega (\min\{k\,u^+,1\})^p\,d\nu
\qquad\text{for all $k\in\N$}\,,
\]
whence $\mu(A)\leq \nu(A)$ going to the limit as $k\to\infty$.
\end{proof}
\begin{corollary}
Let $\mu, \nu \in \cmeas(\R^N)$ be such that
$f_{\mu}\leq f_{\nu}$.
Then $\mu\leq \nu$.
\end{corollary}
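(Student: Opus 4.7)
The plan is to reduce the statement to Proposition~\ref{uniqueness}, which gives the conclusion $\mu \leq \nu$ from the hypothesis $f_{\mu,0} \leq f_{\nu,0}$. Thus it suffices to show that the inequality $f_\mu \leq f_\nu$ transfers to the corresponding relaxed functionals, namely $f_{\mu,0} \leq f_{\nu,0}$.

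The key observation is that Proposition~\ref{carattfmu0} provides an explicit formula for $f_{\mu,0}$ whose \emph{effective domain} does not depend on the measure: it is $W^{1,p}_{loc}(\R^N)\cap L^{p^*}(\R^N)$ when $p<N$, and all of $L^p_{loc}(\R^N)$ when $p\geq N$. Call this common set $D$. On $D$, both $f_{\mu,0}$ and $f_{\nu,0}$ coincide with $f_\mu$ and $f_\nu$, respectively, while off $D$ both are identically $+\infty$.

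Consequently, for any $u \in L^p_{loc}(\R^N)$, if $u \in D$ then
\[
f_{\mu,0}(u) = f_\mu(u) \leq f_\nu(u) = f_{\nu,0}(u),
\]
while if $u \notin D$ both sides equal $+\infty$, so the inequality is trivial. This yields $f_{\mu,0} \leq f_{\nu,0}$ pointwise on $L^p_{loc}(\R^N)$, and Proposition~\ref{uniqueness} closes the argument.

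There is essentially no obstacle here: the whole content lies in Proposition~\ref{carattfmu0}, which guarantees that the relaxation procedure defining $f_{\mu,0}$ does not create any $\mu$-dependent ``hidden'' domain; once that is granted, the corollary is a one-line consequence of the already established Proposition~\ref{uniqueness}.
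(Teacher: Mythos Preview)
Your proof is correct and follows exactly the approach indicated by the paper, which simply cites Propositions~\ref{carattfmu0} and~\ref{uniqueness}. The minor terminological slip in calling $D$ the ``effective domain'' (it is rather the $\mu$-independent set on which $f_{\mu,0}$ agrees with $f_\mu$) does not affect the argument.
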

\begin{proof}
It follows from Propositions~\ref{carattfmu0}
and~\ref{uniqueness}.
\end{proof}
The main purpose of this subsection is to show that a 
sequence of measures $(\mu^{(n)})$ is convergent to $\mu$ 
in $\cmeas(\R^N)$ if and only if $f_{\mu^{(n)},0}$ is 
$\Gamma-$convergent to $f_{\mu,0}$ in $L^p_{loc}(\R^N)$ .
In the case $p=2$ a similar result was obtained by Bucur 
in~\cite[Appendix]{bucc}; our more general case requires a 
more involved proof.
\begin{theorem}
\label{GammalimthmW1pc}
A sequence $(\mu^{(n)})$ is locally $\gamma$-convergent to 
$\mu$ in $\cmeas(\R^N)$ if and only if 
\[
f_{\mu,0}(u)=
\left(\Gamma-\lim_{n\to\infty} f_{\mu^{(n)},0}\right)(u)
\qquad\text{for all $u\in L^p_{loc}(\R^N)$}\,.
\]
\end{theorem}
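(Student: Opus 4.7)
The plan is to prove the two implications separately, with the necessity direction (local $\gamma$-convergence implies $\Gamma$-convergence of $f_{\mu^{(n)},0}$) being the substantial half; the converse will follow by a compactness-plus-uniqueness argument based on Proposition~\ref{uniqueness}.

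For the $\Gamma$-liminf inequality in the necessity direction, I would rely on Theorem~\ref{Gammalimthm} together with the characterization of $f_{\mu,0}$ provided by Proposition~\ref{carattfmu0}. If $(u^{(n)})$ converges to $u$ in $L^p_{loc}(\R^N)$ with $\liminf_n f_{\mu^{(n)},0}(u^{(n)})<+\infty$, then, up to subsequences, each $u^{(n)}$ lies in $W^{1,p}_c(\R^N)$ and $(\nabla u^{(n)})$ is bounded in $L^p(\R^N;\R^N)$. When $p<N$, the Sobolev embedding $W^{1,p}\hookrightarrow L^{p^*}$ yields a uniform bound of $(u^{(n)})$ in $L^{p^*}(\R^N)$, so $u\in L^{p^*}(\R^N)$ by weak compactness and Fatou; when $p\geq N$ the identity $f_{\mu,0}=f_\mu$ is immediate from Proposition~\ref{carattfmu0}. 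In either case Proposition~\ref{carattfmu0} gives $f_{\mu,0}(u)=f_\mu(u)$, so Theorem~\ref{Gammalimthm} yields $f_{\mu,0}(u)=f_\mu(u)\leq\liminf_n f_{\mu^{(n)}}(u^{(n)})\leq\liminf_n f_{\mu^{(n)},0}(u^{(n)})$.

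The construction of a recovery sequence (the $\Gamma$-limsup inequality) is the main technical obstacle. By lower semicontinuity of $f_{\mu,0}$ and a diagonal procedure, it suffices to build recovery sequences for a $u\in W^{1,p}_c(\R^N)\cap L^p(\R^N,\mu)$ whose support lies in some ball $B_R(0)$: an arbitrary $u$ in the finiteness domain of $f_{\mu,0}$ can first be approximated in this class by multiplying by suitable cutoffs (the radial cutoff from the proof of Proposition~\ref{carattfmu0} in the case $p<N$, or Lemma~\ref{pgeqN} in the case $p\geq N$). For such a compactly supported $u$, the local $\gamma$-convergence of $\mu^{(n)}\bigl|_{\bor(B_{R+1}(0))}$ to $\mu\bigl|_{\bor(B_{R+1}(0))}$ in $\cmeas(B_{R+1}(0))$ allows to invoke the well-known equivalence between $\gamma$-convergence and $\Gamma$-convergence of the associated Dirichlet functionals on $W^{1,p}_0(B_{R+1}(0))$ from~\cite[Theorems~6.3 and~6.11]{dmmu}. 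This supplies $u^{(n)}\in W^{1,p}_0(B_{R+1}(0))\subseteq W^{1,p}_c(\R^N)$ with $u^{(n)}\to u$ in $L^p(B_{R+1}(0))$ and $f_{\mu^{(n)},0}(u^{(n)})\to f_{\mu,0}(u)$. The delicate point is keeping the two approximations (cutoff at infinity and $\gamma$-recovery on $B_{R+1}(0)$) compatible under the diagonal extraction, which is where most of the care will be needed.

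For the sufficiency direction, assume $(f_{\mu^{(n)},0})$ $\Gamma$-converges to $f_{\mu,0}$ in $L^p_{loc}(\R^N)$. Since $\cmeas(\R^N)$ is compact and metrizable, every subsequence of $(\mu^{(n)})$ admits a further subsequence locally $\gamma$-converging to some $\nu\in\cmeas(\R^N)$. By the necessity direction just established, the corresponding $(f_{\mu^{(n_k)},0})$ $\Gamma$-converges to $f_{\nu,0}$, while by hypothesis it $\Gamma$-converges to $f_{\mu,0}$, so uniqueness of the $\Gamma$-limit forces $f_{\nu,0}=f_{\mu,0}$. Two applications of Proposition~\ref{uniqueness} give $\mu\leq\nu\leq\mu$, whence $\mu=\nu$ in $\cmeas(\R^N)$. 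Uniqueness of the accumulation point in the compact metric space $\cmeas(\R^N)$ then forces the entire sequence $(\mu^{(n)})$ to locally $\gamma$-converge to~$\mu$.
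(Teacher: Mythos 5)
Your $\Gamma$-liminf step and your treatment of the converse implication (subsequence compactness in $\cmeas(\R^N)$ plus Proposition~\ref{uniqueness}) coincide with the paper's argument and are fine. The gap is in the $\Gamma$-limsup step: you invoke a ``well-known equivalence between $\gamma$-convergence and $\Gamma$-convergence of the associated Dirichlet functionals on $W^{1,p}_0(B_{R+1}(0))$'' citing~\cite[Theorems~6.3 and~6.11]{dmmu}, but for $p\neq 2$ no such equivalence is available there — it is essentially the content of the theorem you are proving (restricted to a ball). In this paper $\gamma$-convergence is \emph{defined} via weak convergence of the torsion functions $w_{\mu^{(n)}}$, and the cited results of Dal Maso--Murat only give convergence of \emph{solutions} of the equations $-\Delta_p u+|u|^{p-2}u\,\mu^{(n)}=h_n$ (with strongly convergent right-hand sides) together with convergence of their localized energies. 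They do not directly produce, for an arbitrary $z\in W^{1,p}_c(\R^N)\cap L^p(\R^N,\mu)$, a sequence $u^{(n)}\to z$ with $f_{\mu^{(n)},0}(u^{(n)})\to f_{\mu,0}(z)$. The authors explicitly remark that the $p=2$ analogue is due to Bucur and that ``our more general case requires a more involved proof''; assuming the bounded-domain equivalence as known makes the argument circular.

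The way the paper bridges this is the step your outline is missing: given $z\in U\cap W^{1,p}_c(\R^N)$ with $f_{\mu,0}(z)\leq\beta$ and $\supt z\subseteq B_R(0)$, one first replaces $z$ by the minimizer $u_k$ of the penalized functional $v\mapsto \frac{k}{p}\int_{B_R(0)}|z-v|^p\,d\leb^N+f_{\mu,0}(v)$ (so that $u_k\in U$ for $k$ large and $f_{\mu,0}(u_k)\leq\beta$), and then defines $u^{(n)}_k$ as the minimizer of $v\mapsto f_{\mu^{(n)},0}(v)-k\int_{B_R(0)}|z-u_k|^{p-2}(z-u_k)v\,d\leb^N$. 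Because $u_k$ and $u^{(n)}_k$ solve Euler--Lagrange equations with the \emph{same fixed} right-hand side $k|z-u_k|^{p-2}(z-u_k)$, the solution-convergence result~\cite[Theorem~6.3]{dmmu} applies and gives $u^{(n)}_k\rightharpoonup u_k$ in $W^{1,p}_0(B_R(0))$, while testing the equations with $u^{(n)}_k$ and $u_k$ converts weak convergence into convergence of the energies $f_{\mu^{(n)},0}(u^{(n)}_k)\to f_{\mu,0}(u_k)\leq\beta$. This double minimization is precisely the device that replaces the ``well-known equivalence'' you assumed; without it (or an equivalent construction) your recovery sequence is not justified. Incidentally, the preliminary cutoff reduction you describe is unnecessary: since $f_{\mu,0}$ is by definition the lower semicontinuous envelope of a functional finite only on $W^{1,p}_c(\R^N)$, a compactly supported $z\in U$ with $f_{\mu,0}(z)\leq\beta$ exists automatically.
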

\begin{proof}
Again, by Proposition~\ref{prop:equivout} we may assume, without
loss of generality, that we have chosen for each $\mu^{(n)}$ 
and for $\mu$ the outer regular representative.
\par
Assume first that $(\mu^{(n)})$ is locally $\gamma$-convergent 
to $\mu$.
\par\noindent
{\it Step 1. $\Gamma-$liminf inequality.}
By Proposition~\ref{carattfmu0} and Theorem~\ref{Gammalimthm},
we have to treat only the case $p<N$.
We take a sequence $(u^{(n)})$ converging to $u$ in 
$L^{p}_{loc}(\R^N)$ with
\[
\liminf_{n\to\infty} f_{\mu^{(n)}}(u^{(n)}) =
\left(\Gamma-\liminf_{n\to\infty} 
f_{\mu^{(n)},0}\right)(u)
\]
and, without loss of generality, we may 
assume that $u^{(n)}\in W^{1,p}_{loc}(\R^N)\cap L^{p^*}(\R^N)$ 
with
\[
\sup_n \left(\int |\nabla u^{(n)}|^p\,d\leb^N +
\int |u^{(n)}|^p\,d\mu^{(n)}\right) < +\infty \,.
\]
Since $(\nabla u^{(n)})$ is bounded in $L^p(\R^N;\R^N)$,
we have that $(u^{(n)})$ is bounded in $L^{p^*}(\R^N)$.
Therefore $u\in L^{p^*}(\R^N)$ and the assertion follows again 
from Proposition~\ref{carattfmu0} and Theorem~\ref{Gammalimthm}.
\par\noindent
{\it Step 2. $\Gamma-$limsup inequality.}
Let $u\in L^p_{loc}(\R^N)$ with $f_{\mu,0}(u)<+\infty$, let
$\beta > f_{\mu,0}(u)$ and let $U$ be an open neighborhood of 
$u$ in $L^p_{loc}(\R^N)$.
Let $z\in U\cap W^{1,p}_c(\R^N)$ with $f_{\mu,0}(z)\leq \beta$
and let $R>0$ be such that $z=0$ a.e. 
in~$\R^N\setminus B_R(0)$.
For every $k\in\N$, define
\[
u_k=\argmin_{v\in W^{1,p}_{0}(B_R(0))}\left\{
\frac{k}{p}\int_{B_R(0)} |z-v|^p\,d\leb^N
+f_{\mu,0}(v)\right\}\,,
\]
as the above minimization problem admits one 
and only one minimizer.
Then, testing with $v=z$ we obtain the upper bound
\[
\frac{k}{p}\int_{B_R(0)} |z-u_k|^p\,d\leb^N + f_{\mu,0}(u_k)
\leq f_{\mu,0}(z) \leq \beta\,,
\]
for all $k\in\N$. 
Thus $(u_k)$ is convergent to $z$ in~$L^{p}(B_R(0))$.
Let us fix $k$ large enough to have $u_k\in U$.
\par
Then $u_k\in W^{1,p}_{0}(B_R(0))\cap L^p(B_R(0),\mu)$ and the  
Euler-Lagrange equation for the minimization problem 
defining $u_k$ yields 
\begin{multline*}
\int_{B_R(0)} |\nabla u_{k}|^{p-2}\nabla u_{k}
\cdot\nabla v\,d\leb^N
+ \int_{B_R(0)} |u_{k}|^{p-2}u_{k}v\,d\mu \\
= k \int_{B_R(0)} |z-u_{k}|^{p-2}(z-u_{k})v\,d\leb^N 
\qquad
\text{for all $v\in W^{1,p}_{0}(B_R(0))\cap L^p(B_R(0),\mu)$}\,.
\end{multline*}
Now, for every $n\in\N$, let 
\[
u^{(n)}_k=\argmin_{v\in W^{1,p}_{0}(B_R(0))}\left\{
f_{\mu^{(n)},0}(v)
- k \int_{B_R(0)}|z-u_k|^{p-2}(z-u_k)v\,d\leb^N
\right\}\,,
\]
as again this problem has one and only one minimizer.
Then we have 
$u^{(n)}_k\in W^{1,p}_{0}(B_R(0))\cap L^p(B_R(0),\mu^{(n)})$
and
\begin{multline*}
\int_{B_R(0)} |\nabla u^{(n)}_k|^{p-2}\nabla u^{(n)}_k
\cdot\nabla v\,d\leb^N
+ \int_{B_R(0)} |u^{(n)}_k|^{p-2}u^{(n)}_k v\,d\mu^{(n)} \\
= k \int_{B_R(0)} |z-u_{k}|^{p-2}(z-u_{k})v\,d\leb^N 
\qquad\text{for all 
$v\in W^{1,p}_{0}(B_R(0))\cap L^p(B_R(0),\mu^{(n)})$}\,.
\end{multline*}
From~\cite[Theorem~6.3]{dmmu} we infer that $(u^{(n)}_k)$
is weakly convergent to $u_k$ in $W^{1,p}_{0}(B_R(0))$.
In particular, we have $u^{(n)}_k\in U$ eventually as
$n\to\infty$.
Moreover, it is
\begin{alignat*}{3}
&f_{\mu^{(n)},0}(u^{(n)}_k) &&=\frac{1}{p}\,
\int_{B_R(0)} |\nabla u^{(n)}_k|^p\,d\leb^N
+ \frac{1}{p}\,\int_{B_R(0)} |u^{(n)}_k|^p\,d\mu^{(n)} \\
&&&= \frac{k}{p}\, 
\int_{B_R(0)} |z-u_{k}|^{p-2}(z-u_{k})u^{(n)}_k\,d\leb^N\,, \\
&f_{\mu,0}(u_{k}) &&=
\frac{1}{p}\,\int_{B_R(0)} |\nabla u_{k}|^p\,d\leb^N
+ \frac{1}{p}\,\int_{B_R(0)} |u_{k}|^p\,d\mu \\
&&&= \frac{k}{p}\, 
\int_{B_R(0)} |z-u_{k}|^{p-2}(z-u_{k})u_{k}\,d\leb^N \,.
\end{alignat*}
Therefore, having in mind the topological definition of 
$\Gamma-$limsup, we obtain
\[
\limsup_{n\to\infty}\left(\inf_{v\in U}f_{\mu^{(n)},0}(v)\right)
\leq \lim_{n\to\infty} f_{\mu^{(n)},0}(u^{(n)}_k)
= f_{\mu,0}(u_{k})\leq \beta
\]
and the assertion follows from the arbitrariness of 
$\beta$ and $U$.
\par
Assume now that
\[
f_{\mu,0}(u)=
\left(\Gamma-\lim_{n\to\infty} f_{\mu^{(n)},0}\right)(u)
\qquad\text{for all $u\in L^p_{loc}(\R^N)$}\,.
\]
Up to a subsequence, $(\mu^{(n)})$ is locally $\gamma$-convergent
to some $\nu$ in $\cmeas(\R^N)$.
By the previous step, we infer that
\[
f_{\nu,0}(u)=
\left(\Gamma-\lim_{n\to\infty} f_{\mu^{(n)},0}\right)(u)
\qquad\text{for all $u\in L^p_{loc}(\R^N)$}\,,
\]
whence $f_{\nu,0}=f_{\mu,0}$.
By Proposition~\ref{uniqueness} we have $\nu=\mu$
and the assertion follows.
\end{proof}
We conclude the section by highlighting some further 
consequences of the local $\gamma$-convergence.
\begin{corollary}
\label{cor:leq}
Let $(\mu^{(n)})$ be locally $\gamma$-convergent to $\mu$
and $(\nu^{(n)})$ be locally $\gamma$-convergent to $\nu$
in $\cmeas(\R^N)$ with $\mu^{(n)} \leq \nu^{(n)}$ for
all $n\in\N$.
\par
Then $\mu\leq\nu$.
\end{corollary}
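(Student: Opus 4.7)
The plan is to reduce the inequality $\mu \leq \nu$ to the functional inequality $f_{\mu,0} \leq f_{\nu,0}$ and then apply Proposition~\ref{uniqueness}. The bridge between the pointwise measure order and the functional inequality is supplied by Theorem~\ref{GammalimthmW1pc}, which characterizes local $\gamma$-convergence as $\Gamma$-convergence of the functionals $f_{\mu^{(n)},0}$.

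First I would observe that the hypothesis $\mu^{(n)} \leq \nu^{(n)}$ passes to the level of the functionals $f_{\mu^{(n)}}$ and hence of $f_{\mu^{(n)},0}$. Indeed, for any $u \in W^{1,p}_{loc}(\R^N)$ the slicing formula recalled in Remark~\ref{rem:intequiv} gives
\[
\int |u|^p\,d\mu^{(n)} = \int_0^{+\infty} \mu^{(n)}\!\left(\{|u|>y^{1/p}\}\right)\,d\leb^1(y),
\]
and since each sublevel set is Borel and $p$-quasi open, the order $\mu^{(n)} \leq \nu^{(n)}$ yields the pointwise inequality $f_{\mu^{(n)}} \leq f_{\nu^{(n)}}$ on $L^p_{loc}(\R^N)$. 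Restricting to $W^{1,p}_c(\R^N)$ gives $\widetilde{f}_{\mu^{(n)},0} \leq \widetilde{f}_{\nu^{(n)},0}$, and taking lower semicontinuous envelopes preserves the inequality, so $f_{\mu^{(n)},0} \leq f_{\nu^{(n)},0}$.

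Next, the plan is to exploit the recovery sequences guaranteed by $\Gamma$-convergence. Fix $u \in L^p_{loc}(\R^N)$. By Theorem~\ref{GammalimthmW1pc} applied to $(\nu^{(n)})$, there exists a sequence $(u^{(n)})$ converging to $u$ in $L^p_{loc}(\R^N)$ such that
\[
\lim_{n\to\infty} f_{\nu^{(n)},0}(u^{(n)}) = f_{\nu,0}(u).
\]
Applying Theorem~\ref{GammalimthmW1pc} to $(\mu^{(n)})$ along this same sequence gives the $\Gamma$-liminf inequality
\[
f_{\mu,0}(u) \leq \liminf_{n\to\infty} f_{\mu^{(n)},0}(u^{(n)}) \leq \liminf_{n\to\infty} f_{\nu^{(n)},0}(u^{(n)}) = f_{\nu,0}(u),
\]
where the middle step uses the functional comparison established above.

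Since $u$ was arbitrary, $f_{\mu,0} \leq f_{\nu,0}$ on $L^p_{loc}(\R^N)$, and Proposition~\ref{uniqueness} then yields $\mu \leq \nu$, concluding the proof. There is no real obstacle here: the argument is a clean combination of the monotonicity of the integral functional in the measure, the symmetric roles played by the limsup/liminf halves of $\Gamma$-convergence (here only the liminf together with one recovery sequence is needed), and the injectivity-type statement Proposition~\ref{uniqueness}.
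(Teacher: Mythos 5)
Your proof is correct and follows exactly the route the paper intends: the paper's own proof simply cites Theorem~\ref{GammalimthmW1pc} and Proposition~\ref{uniqueness}, and your argument fills in precisely the expected details (monotonicity of $f_{\mu^{(n)},0}$ in the measure via Remark~\ref{rem:intequiv}, a recovery sequence for $(\nu^{(n)})$ combined with the $\Gamma$-liminf inequality for $(\mu^{(n)})$, then the comparison Proposition~\ref{uniqueness}).
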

\begin{proof}
It follows from Theorem~\ref{GammalimthmW1pc} and 
Proposition~\ref{uniqueness}.
\end{proof}
\begin{corollary}
\label{cor:lsc}
If $(\mu^{(n)})$ is locally $\gamma$-convergent to $\mu$ in 
$\cmeas(\R^N)$, then 
\begin{gather*}
\leb^N(A_\mu) \leq \liminf_{n\to\infty}\,
\leb^N(A_{\mu^{(n)}})\,,\\
\int \Psi(V_\mu)\,d\leb^N \leq \liminf_{n\to\infty}\,
\int\Psi(V_{\mu^{(n)}})\,d\leb^N\,,
\end{gather*}
whenever $\Psi:[0,+\infty]\rightarrow[0,+\infty]$ is a strictly
decreasing and continuous function such that there exists 
$\alpha>1$ with $\left\{s\mapsto \Psi^{-1}(s^\alpha)\right\}$ 
convex on 
$\left\{s\geq 0:\,\,s^\alpha \in\Psi([0,+\infty])\right\}$.
\end{corollary}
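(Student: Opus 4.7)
My plan splits into the two inequalities. For the first, I would fix $k\in\N$ and set $\Omega=B_k(0)$. Local $\gamma$-convergence yields weak convergence of the torsion functions $w_{\mu^{(n)}}(\Omega)\rightharpoonup w_\mu(\Omega)$ in $W^{1,p}_0(\Omega)$, hence, up to a subsequence, pointwise a.e.\ convergence. Since by Remark~\ref{rem:Amuw} the set $A_\mu\cap\Omega$ coincides with $\{w_\mu(\Omega)>0\}$ up to $p$-capacity zero (and likewise for $\mu^{(n)}$), the pointwise bound $\chi_{\{w_\mu(\Omega)>0\}}\leq\liminf_n\chi_{\{w_{\mu^{(n)}}(\Omega)>0\}}$ and Fatou would give $\leb^N(A_\mu\cap B_k(0))\leq\liminf_n\leb^N(A_{\mu^{(n)}})$; letting $k\to\infty$ and using monotone convergence on the left would produce the first inequality.

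For the second inequality, the strategy is to exploit the convexity of $\Phi(s):=\Psi^{-1}(s^\alpha)$. Assuming without loss of generality that $L:=\liminf_n\int\Psi(V_{\mu^{(n)}})\,d\leb^N<+\infty$ and passing to a subsequence along which this limit is attained, I would set $h_n:=\Psi(V_{\mu^{(n)}})^{1/\alpha}$, so that $V_{\mu^{(n)}}=\Phi(h_n)$ and $\|h_n\|_{L^\alpha(\R^N)}^\alpha\to L$. Since $\alpha>1$ makes $L^\alpha$ reflexive, pass to a further subsequence with $h_n\rightharpoonup h$ weakly in $L^\alpha$, and by compactness of $\cmeas(\R^N)$ to yet another subsequence so that $V_{\mu^{(n)}}\leb^N$ locally $\gamma$-converges to some $\nu\in\cmeas(\R^N)$. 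Since $V_{\mu^{(n)}}\leb^N\leq\mu^{(n)}$ by Proposition~\ref{prop:AVmu}\ref{haAVmu}, Corollary~\ref{cor:leq} then gives $\nu\leq\mu$.

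The central step of the plan is to show $\Phi(h)\leb^N\leq\nu$. By Theorem~\ref{GammalimthmW1pc}, $f_{V_{\mu^{(n)}}\leb^N,0}$ $\Gamma$-converges to $f_{\nu,0}$. For each $z\in W^{1,p}_c(\R^N)$ with $f_{\nu,0}(z)<+\infty$, I would pick a $\Gamma$-limsup recovery sequence $u_n\to z$ in $L^p_{loc}$; as in the proof of Theorem~\ref{GammalimthmW1pc}, the $u_n$ may be taken in $W^{1,p}_0(B_R(0))$ for some fixed $R$, so $u_n\to z$ strongly in $L^p(B_R(0))$ by Rellich. Since $(v,\xi)\mapsto|v|^p\Phi(\xi)$ is nonnegative, lower semicontinuous in $v$, and convex in $\xi$, the Ioffe lower semicontinuity theorem would yield
\[
\int|z|^p\Phi(h)\,d\leb^N\leq\liminf_n\int|u_n|^p\Phi(h_n)\,d\leb^N.
\]
Combined with the lower semicontinuity of the gradient term and the $\Gamma$-limsup bound $\limsup_n f_{V_{\mu^{(n)}}\leb^N,0}(u_n)\leq f_{\nu,0}(z)$, this would give $\int|z|^p\Phi(h)\,d\leb^N\leq\int|z|^p\,d\nu$ for every such $z$; passing to lower semicontinuous envelopes and applying Proposition~\ref{uniqueness} yields $\Phi(h)\leb^N\leq\nu$.

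Finally, Proposition~\ref{prop:AVmu}\ref{hdAVmu} gives $\Phi(h)\leq V_{\Phi(h)\leb^N}$ a.e., and Proposition~\ref{prop:AVmu}\ref{hbAVmu} applied twice gives $V_{\Phi(h)\leb^N}\leq V_\nu\leq V_\mu$ a.e. Since $\Psi$ is strictly decreasing, $\Psi(V_\mu)\leq\Psi(\Phi(h))=h^\alpha$ a.e., and weak $L^\alpha$-lower semicontinuity of the norm gives
\[
\int\Psi(V_\mu)\,d\leb^N\leq\int h^\alpha\,d\leb^N\leq\liminf_n\int h_n^\alpha\,d\leb^N=L,
\]
as required. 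The main technical obstacle is the Ioffe step: one must ensure that the $\Gamma$-limsup recovery sequence can be built with uniformly bounded support so that its strong $L^p$-convergence pairs cleanly with the weak $L^\alpha$-convergence of $(h_n)$ in the Ioffe framework.
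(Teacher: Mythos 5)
Your argument is correct, and both halves rest on the same pillars as the paper's proof: for the first inequality, the identification of $A_\mu$ with $\{w_\mu>0\}$ from Remark~\ref{rem:Amuw} together with restriction to balls (the paper writes $\leb^N(A_\nu)=\sup_k\int\min\{k\,w_\nu,1\}\,d\leb^N$, a supremum of weakly continuous functionals, which is just a repackaging of your Fatou-on-indicators step); for the second, weak $L^\alpha$ compactness of $\Psi(V_{\mu^{(n)}})^{1/\alpha}$, Ioffe's strong--weak lower semicontinuity theorem, Proposition~\ref{uniqueness}, and parts \ref{hbAVmu}, \ref{hdAVmu} of Proposition~\ref{prop:AVmu}. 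The one structural difference is your detour through the intermediate measure $\nu$: you extract a further subsequence so that $V_{\mu^{(n)}}\leb^N$ locally $\gamma$-converges to some $\nu\leq\mu$ and then prove $\Phi(h)\leb^N\leq\nu$. The paper avoids this entirely by working with the recovery sequence $(u^{(n)})$ for $\mu^{(n)}\to\mu$ itself and inserting the pointwise bound $\int|u^{(n)}|^p\,d\mu^{(n)}\geq\int|u^{(n)}|^p V_{\mu^{(n)}}\,d\leb^N$ (from \ref{haAVmu} of Proposition~\ref{prop:AVmu}) before applying Ioffe, which yields $f_{\mu,0}\geq f_{\Psi^{-1}(v^\alpha)\leb^N,0}$ on $W^{1,p}_c(\R^N)$ directly and hence $V_\mu\geq\Psi^{-1}(v^\alpha)$ a.e.; this saves the extra compactness extraction and the comparison $\nu\leq\mu$. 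Finally, the ``technical obstacle'' you flag at the end is not a real one: since the integrand $|v|^p\Phi(\xi)$ is nonnegative, you can apply Ioffe on each ball $B_R(0)$, where $u_n\to z$ strongly in $L^p$ by Rellich, and then take the supremum over $R$; no uniformly bounded supports are needed.
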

\begin{proof}
If $\Omega$ is a bounded and open subset of $\R^N$ and
$(\mu^{(n)})$ is $\gamma$-convergent to $\mu$ in 
$\cmeas(\Omega)$, then
\[
\leb^N(A_\mu) \leq \liminf_{n\to\infty}\,
\leb^N(A_{\mu^{(n)}})\,,
\]
as
\[
\leb^N(A_\nu) = \sup_{k\in\N}\,
\int_\Omega \,\min\left\{k\,w_\nu(\Om),1\right\}\,d\leb^N
\]
for all $\nu\in\cmeas(\Om)$ by Remark~\ref{rem:Amuw}.
\par
On the other hand, for every $\nu\in\cmeas(\R^N)$ we have
\[
\leb^N(A_\nu) = \sup_{k\in\N}\,\leb^N(A_\nu\cap B_k(0))
= \sup_{k\in\N}\,\leb^N\left(A_{\nu\bigl|_{\bor(B_k(0))}}\right)
\]
and the first assertion follows.
\par
When dealing with the second assertion, we follow an argument 
inspired by~\cite[Theorem~4.1]{bugeruve}.
Without loss of generality, we assume that 
\[
\sup_n\int\Psi(V_{\mu^{(n)}})\,d\leb^N<+\infty
\] 
and we set $v_n=(\Psi(V_{\mu^{(n)}}))^{1/\alpha}$, so that
$(v_n)$ is a bounded sequence in $L^{\alpha}(\R^N)$, thus 
(up to subsequences) weakly convergent to some $v$ in 
$L^\alpha(\R^N)$. 
On the other hand, by Theorem~\ref{GammalimthmW1pc}, for every 
$u\in W^{1,p}_{c}(\R^N)$ there exists a sequence $(u^{(n)})$ in 
$W^{1,p}_{loc}(\R^N)$  converging to $u$ in $L^p_{loc}(\R^N)$ 
such that 
\[
\int |\nabla u|^p\,d\leb^N + \int |u|^p\,d\mu
=
\limsup_{n\rightarrow \infty}\,\left(
\int |\nabla u^{(n)}|^p\,d\leb^N 
+ \int |u^{(n)}|^p \,d\mu^{(n)}\right) \,.
\]
Combining assertion~\ref{haAVmu} of Proposition~\ref{prop:AVmu} 
with the strong-weak lower semicontinuity theorem 
of~\cite{ioffelsc}, we infer that
\[
\begin{split}
\int |\nabla u|^p\,d\leb^N + \int |u|^p\,d\mu
&\geq
\limsup_{n\rightarrow \infty}\,\left(
\int |\nabla u^{(n)}|^p\,d\leb^N 
+ \int |u^{(n)}|^p V_{\mu^{(n)}}\,d\leb^N\right) \\
&=
\limsup_{n\rightarrow \infty}\,\left(
\int |\nabla u^{(n)}|^p\,d\leb^N 
+ \int|u^{(n)}|^p\Psi^{-1}(v_n^\alpha)\,d\leb^N\right) \\
&\geq 
\int |\nabla u|^p\,d\leb^N
+ \int|u|^p\Psi^{-1}(v^\alpha)\,d\leb^N\,,
\end{split}
\]   
as the function $\left\{s\mapsto\Psi^{-1}(s^\alpha)\right\}$ 
is convex.
\par
By Proposition~\ref{uniqueness}, we infer
\[
\mu(A) \geq 
\int_A \Psi^{-1}(v^\alpha)\,d\leb^N
\qquad\text{for all $A\in \bor(\R^N)$ with $A$ $p$-quasi open}\,,
\]
whence $V_\mu\geq \Psi^{-1}(v^\alpha)$ $\leb^N$-a.e. in~$\R^N$
by~\ref{hbAVmu} and~\ref{hdAVmu} of Proposition~\ref{prop:AVmu}.
\par
Since $\Psi$ is strictly decreasing, we infer that
$\Psi(V_\mu)\leq v^\alpha$ $\leb^N$-a.e. in $\R^N$, whence
\[
\int \Psi(V_\mu)\,d\leb^N \leq \int v^\alpha\,d\leb^N 
\leq \liminf_{n\to \infty}\,\int v_n^\alpha\,d\leb^N
= \liminf_{n\to\infty}\,\int\Psi(V_{\mu^{(n)}})\,d\leb^N
\]
and the second assertion also follows.
\end{proof}
%


\section{Towards variational eigenvalues for sign-changing 
capacitary measures}
\label{sect:preeig}
Let $\mu, \mup, \mum\in\cmeas(\R^N)$.
In this section we introduce the candidate 
``variational eigenvalues'' for the problem
\[
\begin{cases}
-\Delta_pu+|u|^{p-2}u\,\mu = \la |u|^{p-2}u(\nu_1-\nu_2)
\qquad\text{in $\R^N$}\,,\\
\noalign{\medskip}
\displaystyle{
\int |u|^p\,d\nu_2 < \int |u|^p\,d\nu_1} \,,
\end{cases}
\]
and prove some basic properties.
\par
Consider an index $i$ as in Section~\ref{sect:convminmax} and 
the related families $\mathcal{K}_m$  and $\mathcal{K}_m^{fin}$ 
with respect to the metrizable and locally convex topological 
vector space $\mathcal{X}=L^p_{loc}(\R^N)$.
Let $f_{\mu,0}:L^p_{loc}(\R^N)\rightarrow[0,+\infty]$ be the 
functional introduced in Section~\ref{sect:convergence}
and define $g_1,g_2,R:L^p_{loc}(\R^N)\rightarrow[0,+\infty]$ by
\[
g_j(u) =
\begin{cases}
\displaystyle{\frac{1}{p}\,\int |u|^p\,d\nu_j}
&\qquad\text{if $u\in W^{1,p}_{loc}(\R^N)$}\,,\\
\noalign{\medskip}
+\infty
&\qquad\text{otherwise}\,,
\end{cases}
\]
\[
R(u) =
\begin{cases}
f_{\mu,0}(u)
&\qquad\text{if $1+g_2(u)\leq g_1(u)<+\infty$}\,,\\
\noalign{\medskip}
+\infty
&\qquad\text{otherwise}\,.
\end{cases}
\]
Then, for every integer $m\geq 1$, set
\[
\lambda_m^p(\mu,\mup,\mum) =
\inf_{K\in\mathcal{K}_m} 
\sup_{u\in K}\, R(u) \,.
\]
\begin{remark}\label{rem:monot}
It is immediate from the definition to note that, if 
$\mu_1,\mu_2\in\cmeas(\R^N)$ with $\mu_1\leq \mu_2$, then 
\[
\la_m^p(\mu_1,\nu_1,\nu_2)\leq \la_m^p(\mu_2,\nu_1,\nu_2)
\qquad\text{for all $\nu_1,\nu_2\in\cmeas(\R^N)$ and $m\geq 1$}\,.
\] 
\end{remark}
\begin{proposition}
\label{strongappr}
For every $u\in W^{1,p}_{loc}(\R^N)$ with
\[
f_{\mu,0}(u) < +\infty\,,\qquad
\int |u|^p\,d\mup < +\infty\,,\qquad
\int |u|^p\,d\mum < +\infty\,,
\]
there exists a sequence $(u_n)$ in 
$W^{1,p}_c(\R^N)\cap L^p(\R^N,\mu)\cap L^p(\R^N,\mup)
\cap L^p(\R^N,\mum)$ converging to $u$ in $L^p_{loc}(\R^N)$
such that
\[
\lim_{n\to\infty} \biggl(
\int |\nabla u_n-\nabla u|^p\,d\leb^N 
+ \int |u_n-u|^p\,d\mu
+ \int |u_n-u|^p\,d\mup
+ \int |u_n-u|^p\,d\mum\biggr) = 0\,.
\]
\end{proposition}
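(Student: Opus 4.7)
The plan is to construct $(u_n)$ via the two cutoff procedures already employed in the proof of Proposition~\ref{carattfmu0}, depending on whether $p<N$ or $p\geq N$, and to verify that the approximations converge simultaneously with respect to the three measures $\mu$, $\nu_1$ and $\nu_2$ by a single dominated convergence argument.

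When $p<N$, Proposition~\ref{carattfmu0} together with the hypothesis $f_{\mu,0}(u)<+\infty$ guarantees $u\in W^{1,p}_{loc}(\R^N)\cap L^{p^*}(\R^N)$, and I would set $u_n(x)=\vartheta(|x|/n)\,u(x)$ with $\vartheta\in C^\infty(\R)$ a standard cutoff such that $0\leq\vartheta\leq 1$, $\vartheta=1$ on $(-\infty,1]$ and $\vartheta=0$ on $[2,+\infty)$. Then $u_n\in W^{1,p}_c(\R^N)$ and $|u_n|\leq |u|$; the estimate $|u\,\xi|\leq C_\eps|u|^{p^*/p}+\eps|\xi|^{N/p}$ (as recorded in the proof of Proposition~\ref{carattfmu0}) combined with the fact that $(\nabla\vartheta_n)$ is bounded in $L^N(\R^N;\R^N)$ and tends to $0$ a.e.\ already yields $\|\nabla u_n-\nabla u\|_p\to 0$.

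When $p\geq N$, I would follow instead the scheme of the second part of the proof of Proposition~\ref{carattfmu0}: fix $(\vartheta_n)$ as in Lemma~\ref{pgeqN} and $c_n\to+\infty$ with $\|c_n\nabla\vartheta_n\|_p\to 0$, and set $u_n=\min\{\max\{u,-c_n\vartheta_n\},c_n\vartheta_n\}$. On the ``good'' set $E_n=\{|u|<c_n\vartheta_n\}$ one has $u_n=u$ and $\nabla u_n=\nabla u$, whereas $|\nabla u_n|=c_n|\nabla\vartheta_n|$ on $E_n^c$; hence
\[
\int|\nabla u_n-\nabla u|^p\,d\leb^N\leq 2^{p-1}\int_{E_n^c}\bigl(c_n^p|\nabla\vartheta_n|^p+|\nabla u|^p\bigr)\,d\leb^N,
\]
where the first summand vanishes by the choice of $c_n$; for the second, at every point where the $p$-quasi continuous representative of $u$ is finite one has $c_n\vartheta_n(x)>|u(x)|$ eventually, since $\vartheta_n(x)\to 1$ and $c_n\to+\infty$, so $\chi_{E_n^c}\to 0$ q.e.\ and Lebesgue's theorem applies.

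For the measure integrals the same $(u_n)$ works in both cases: by construction $|u_n|\leq |u|$ and $u_n\to u$ q.e.\ in $\R^N$ (by $\vartheta_n\to 1$ everywhere in the first case, by the argument just given in the second), and since $\mu,\nu_1,\nu_2$ are $p$-capacitary, q.e.\ convergence translates into $\sigma$-a.e.\ convergence for $\sigma\in\{\mu,\nu_1,\nu_2\}$. The bound $|u_n-u|^p\leq 2^p|u|^p$ together with the hypothesis $\int|u|^p\,d\sigma<+\infty$ then gives $\int|u_n-u|^p\,d\sigma\to 0$ by dominated convergence, so in particular each $u_n$ lies in $L^p(\R^N,\mu)\cap L^p(\R^N,\nu_1)\cap L^p(\R^N,\nu_2)$. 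The only mildly delicate point is the q.e.\ convergence $u_n\to u$ when $p\geq N$, which is the main (minor) obstacle and reduces to the elementary fact that $c_n\vartheta_n$ grows pointwise to $+\infty$ at every point of $\R^N$.
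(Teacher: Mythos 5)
Your construction is correct, but it takes a genuinely different route from the paper's. The paper's proof is a two-line soft argument: since $f_{\mu,0}(u)<+\infty$ forces $u\in L^{p^*}(\R^N)$ when $p<N$, Proposition~\ref{carattfmu0} applied to the single combined measure $\mu+\nu_1+\nu_2$ gives $f_{\mu+\mup+\mum,0}(u)<+\infty$, hence a sequence $(u_n)$ in $W^{1,p}_c(\R^N)$ converging to $u$ in $L^p_{loc}(\R^N)$ for which the \emph{sum} of the four $p$-th power integrals converges to the corresponding sum for $u$; Proposition~\ref{mu-lsc} then yields weak convergence of each piece, the lower semicontinuity of each norm combined with convergence of the sum forces convergence of each individual norm, and uniform convexity (Radon--Riesz) upgrades this to the stated strong convergence. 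You instead unfold the explicit truncations hidden inside the proof of Proposition~\ref{carattfmu0} ($\vartheta_n u$ for $p<N$, $\min\{\max\{u,-c_n\vartheta_n\},c_n\vartheta_n\}$ for $p\geq N$) and verify convergence directly: the gradient term by the $L^N$--$L^{p^*}$ interpolation estimate, respectively the splitting over $E_n$ and $E_n^c$, and the three measure terms in one stroke by dominated convergence, using $|u_n|\leq|u|$, the q.e.\ convergence $u_n\to u$ (which indeed reduces to $c_n\vartheta_n\to+\infty$ pointwise), and the fact that capacitary measures do not charge sets of zero $p$-capacity. Your argument is longer and case-dependent but more elementary — it avoids the uniform convexity step and produces an explicit approximating sequence dominated by $|u|$ — whereas the paper's buys brevity by treating the three measures as one and reusing Propositions~\ref{carattfmu0} and~\ref{mu-lsc} as black boxes. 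The only points you leave implicit, both harmless, are that $f_{\mu,0}(u)<+\infty$ gives $\nabla u\in L^p(\R^N;\R^N)$ globally (needed for the dominated convergence of $(\vartheta_n-1)\nabla u$ and of $\chi_{E_n^c}|\nabla u|^p$) and that the truncations are performed on the $p$-quasi continuous representative of $u$.
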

\begin{proof}
By Proposition~\ref{carattfmu0} we also have
$f_{\mu+\mup+\mum,0}(u)<+\infty$.
Therefore, there exists a sequence $(u_n)$ in 
$W^{1,p}_c(\R^N)\cap L^p(\R^N,\mu)\cap L^p(\R^N,\mup)
\cap L^p(\R^N,\mum)$ converging to $u$ in $L^p_{loc}(\R^N)$
such that
\begin{multline*}
\lim_{n\to\infty} \left(
\int |\nabla u_n|^p\,d\leb^N 
+ \int |u_n|^p\,d\mu
+ \int |u_n|^p\,d\mup
+ \int |u_n|^p\,d\mum\right) \\
= \int |\nabla u|^p\,d\leb^N 
+ \int |u|^p\,d\mu
+ \int |u|^p\,d\mup
+ \int |u|^p\,d\mum\,.
\end{multline*}
Taking into account Proposition~\ref{mu-lsc}, we have that 
$(\nabla u_n)$ is weakly convergent to $\nabla u$ 
in $L^p(\R^N;\R^N)$ and $(u_n)$ is weakly convergent to $u$
in $L^p(\R^N,\mu)$, $L^p(\R^N,\mup)$ and $L^p(\R^N,\mum)$ with
\begin{multline*}
\lim_{n\to\infty}\, \int |\nabla u_n|^p\,d\leb^N 
= \int |\nabla u|^p\,d\leb^N\,,\qquad
\lim_{n\to\infty}\,\int |u_n|^p\,d\mu
= \int |u|^p\,d\mu\,,\\
\lim_{n\to\infty}\, \int |u_n|^p\,d\mup
= \int |u|^p\,d\mup\,,\qquad
\lim_{n\to\infty}\, \int |u_n|^p\,d\mum
= \int |u|^p\,d\mum\,.
\end{multline*}
Then the assertion follows.
\end{proof}
\begin{proposition}
\label{prop:lambdafin}
The following facts hold:
\begin{enumerate}[label={\upshape\alph*)}, align=parleft, 
widest=iii, leftmargin=*]
\item[\mylabel{halambdafin}{\ha}]
if
\[
\left\{u\in W^{1,p}_c(\R^N):\,\,
\int |u|^p\,d\mu<+\infty\,,\,\,
\int |u|^p\,d\mum < \int |u|^p\,d\mup < +\infty\right\} 
= \emptyset\,,
\]
then we have
$\lambda_m^p(\mu,\mup,\mum)=+\infty$ for all $m\geq 1$;
\item[\mylabel{hblambdafin}{\hb}]
if 
\[
\left\{u\in W^{1,p}_c(\R^N):\,\,
\int |u|^p\,d\mu<+\infty\,,\,\,
\int |u|^p\,d\mum < \int |u|^p\,d\mup < +\infty\right\} 
\neq \emptyset \,,
\]
then we have $\lambda_1^p(\mu,\mup,\mum)<+\infty$;
\item[\mylabel{hclambdafin}{\hc}]
if 
\begin{multline*}
~\qquad
\biggl\{u\in W^{1,p}_c(\R^N):\,\,
\int |u|^p\,d\mu<+\infty\,,\,\,
\int |u|^p\,d\mum < \int |u|^p\,d\mup < +\infty\,,\,\,
\biggr. \\ \biggl.
\lim_{r\to 0}\,\int_{B_r(x)} |u|^p\,d\mup=0
\quad\text{for all $x\in\R^N$}
\biggr\} 
\neq \emptyset\,,
\end{multline*}
then we have
$\lambda_m^p(\mu,\mup,\mum)<+\infty$ for all $m\geq 1$.
\end{enumerate}
\end{proposition}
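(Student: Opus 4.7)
The three parts progress from an approximation-based contrapositive to a more delicate cut-off construction.

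\emph{Part~\ref{halambdafin}.} I argue by contrapositive. If $\la_m^p(\mu,\mup,\mum) < +\infty$ for some $m\geq 1$, then $\la_1^p \leq \la_m^p < +\infty$ (since $\mathcal{K}_m \subseteq \mathcal{K}_1$), so there exists $u \in L^p_{loc}(\R^N)\setminus\{0\}$ with $R(u)<+\infty$, namely $u \in W^{1,p}_{loc}(\R^N)$ with $f_{\mu,0}(u)<+\infty$ and $\int|u|^p\,d\mum < \int|u|^p\,d\mup < +\infty$. Proposition~\ref{strongappr} then produces a sequence $(u_n)\subseteq W^{1,p}_c(\R^N)\cap L^p(\R^N,\mu)\cap L^p(\R^N,\mup)\cap L^p(\R^N,\mum)$ with $\int|u_n-u|^p\,d\nu_\ell\to 0$ for $\ell=1,2$, so $\int|u_n|^p\,d\mum < \int|u_n|^p\,d\mup$ eventually, placing some $u_n$ in the set of \ref{halambdafin} and contradicting emptiness.

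\emph{Part~\ref{hblambdafin}.} Let $u$ belong to the set and set $\delta := \int|u|^p\,d\mup - \int|u|^p\,d\mum > 0$. Rescale to $tu$ with $t := (p/\delta)^{1/p}$, so that $g_1(tu) - g_2(tu) = 1$; since $u\in W^{1,p}_c(\R^N)$ with $\int|u|^p\,d\mu<+\infty$, also $R(tu) = f_{\mu,0}(tu) \leq \widetilde f_{\mu,0}(tu) < +\infty$. The symmetric pair $K := \{tu,-tu\}$ is compact and contained in $L^p_{loc}(\R^N)\setminus\{0\}$, and axioms~\hd~and~\he~applied to the one-dimensional space $\mathrm{span}(u)$ yield $i(K)\geq 1$, so $K\in\mathcal{K}_1$ and $\la_1^p \leq R(tu) < +\infty$.

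\emph{Part~\ref{hclambdafin}.} Let $u$ satisfy the additional hypothesis, which asserts exactly that the finite Borel measure $\sigma := |u|^p\mup$ is non-atomic. A routine compactness argument upgrades this to a uniform statement: for every $\eta>0$ there exists $r>0$ with $\sigma(B_r(x))<\eta$ for all $x$ in a compact neighborhood of $\mathrm{supp}(u)$; otherwise, after extraction, a limit point $x_\infty$ would satisfy $\sigma(\{x_\infty\})\geq\eta$. Fix $m\geq 1$, set $\delta := \int|u|^p\,(d\mup - d\mum) > 0$ and $\eta := \delta/(2m)$, and subdivide $\R^N$ into cubes of diameter $<r$, shifted generically so that each cube boundary is null for both $|u|^p\mup$ and $|u|^p\mum$. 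Let $Q_1,\dots,Q_N$ be the cubes meeting $\mathrm{supp}(u)$ and set $a_k := \int_{Q_k}|u|^p\,(d\mup - d\mum)$; then $\sum_k a_k = \delta$, each $a_k \leq \sigma(Q_k) < \eta$, and writing $\mathcal I := \{k : a_k > 0\}$ gives $\delta \leq \sum_{k\in\mathcal I} a_k < |\mathcal I|\,\eta$, so $|\mathcal I| > \delta/\eta = 2m$. Pick indices $k_1,\dots,k_m \in \mathcal I$ and select $\phi_j \in C_c^\infty(\mathrm{int}(Q_{k_j}))$ with $0\leq\phi_j\leq 1$ approximating $\mathbf{1}_{Q_{k_j}}$ closely enough (via dominated convergence) that $u_j := \phi_j u \in W^{1,p}_c(\R^N)$ still satisfies $\int|u_j|^p\,d\mum < \int|u_j|^p\,d\mup < +\infty$ and $\int|u_j|^p\,d\mu < +\infty$. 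Because the $u_j$ have pairwise disjoint supports, any nonzero $v = \sum_j c_j u_j$ in $V := \mathrm{span}(u_1,\dots,u_m)$ satisfies $\int|v|^p\,d\nu_\ell = \sum_j |c_j|^p \int|u_j|^p\,d\nu_\ell$, and the strict inequality and finiteness pass to $v$. On the $m$-dimensional space $V$, the positive, positively $1$-homogeneous continuous function $N(v) := (g_1(v)-g_2(v))^{1/p}$ is bounded above and below by any norm, hence $K := \{v \in V : N(v) = 1\}$ is symmetric and compact in $L^p_{loc}(\R^N)\setminus\{0\}$; the odd continuous map $w \mapsto w/N(w)$ from the unit sphere of $V$ onto $K$ combined with axioms~\hd~and~\he~yields $i(K) \geq m$. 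Since $f_{\mu,0}$ is continuous on the finite-dimensional $V$, I conclude $\la_m^p \leq \sup_{v\in K} R(v) < +\infty$.

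\emph{Main obstacle.} The delicate step in~\ref{hclambdafin} is the simultaneous control of the cut-offs: the supports of the $\phi_j$ must lie strictly inside the disjoint $Q_{k_j}$ in order to keep the $u_j$ independent and compactly supported, yet the approximation $\phi_j \to \mathbf{1}_{Q_{k_j}}$ must be close enough to preserve the strict sign relation $\int|u_j|^p\,d\mum < \int|u_j|^p\,d\mup$. This is resolved by the generic grid shift, which ensures that $\partial Q_{k_j}$ carries no $|u|^p\mup$- or $|u|^p\mum$-mass, together with dominated convergence on each cube.
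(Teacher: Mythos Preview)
Your proof is correct. Parts~\ref{halambdafin} and~\ref{hblambdafin} are essentially the same as the paper's: the paper dispatches~\ref{halambdafin} in one line by noting that Proposition~\ref{strongappr} forces $R\equiv+\infty$, and~\ref{hblambdafin} by the same rescaling $R(tu)<+\infty$.

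For~\ref{hclambdafin} your route genuinely differs from the paper's. After a preliminary truncation $u\leadsto(|u|-1/k)^+$ to make $\mup,\mum$ finite on $\{u>0\}$, the paper introduces the reference Radon measure $\hat\nu=(\mup+\mum)\lfloor\{u>0\}$, writes $\nu_j=\eta_j\,\hat\nu$ there, and invokes the Lebesgue differentiation theorem for Radon measures to find $m$ Lebesgue points $x_1,\dots,x_m$ at which the density $u^p(\eta_1-\eta_2)$ is strictly positive; the non-atomic hypothesis is used only to conclude $\hat\nu(\{x_j\})=0$, so that such points exist in abundance and one can take disjoint balls $B_r(x_j)$ with $\int_{B_r(x_j)}u^p(\eta_1-\eta_2)\,d\hat\nu>0$, then cut off inside each ball. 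Your argument instead upgrades non-atomicity of $|u|^p\mup$ to a uniform smallness bound on balls, uses a cube decomposition with a generic shift (to make boundaries null for both $|u|^p\mup$ and $|u|^p\mum$), and runs a pigeonhole count to locate $m$ cubes with $\int_{Q_k}|u|^p(d\mup-d\mum)>0$. The paper's approach is shorter and avoids the grid-shift/dominated-convergence step, but relies on differentiation of measures; yours is more elementary and self-contained, at the price of that extra bookkeeping. (One cosmetic point: you reuse the letter $N$ for the number of cubes, which clashes with the ambient dimension.)
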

\begin{proof}
\par\noindent
\par\noindent
$(a)$~From Proposition~\ref{strongappr} it follows that
$R(u)=+\infty$ for all $u\in L^p_{loc}(\R^N)$, 
whence the assertion.
\par\noindent
$(b)$~If $u\in W^{1,p}_c(\R^N)$ satisfies
\[
\int |u|^p\,d\mu<+\infty\,,\,\,
\int |u|^p\,d\mum < \int |u|^p\,d\mup < +\infty\,,
\]
it is easily seen that $R(tu) < +\infty$ for some
$t>0$, whence the assertion.
\par\noindent
$(c)$~Let $u\in W^{1,p}_c(\R^N)$ with
\[
\int |u|^p\,d\mu<+\infty\,,\,\,
\int |u|^p\,d\mum < \int |u|^p\,d\mup < +\infty\,,\,\,
\lim_{r\to 0}\,\int_{B_r(x)} |u|^p\,d\mup=0
\quad\text{for all $x\in\R^N$}
\]
and let us choose a representative for $u$, $\mup$ and 
$\mum$.
\par
By substituting $u$ with
\[
u_k = \left(|u|-\frac{1}{k}\right)^+
\]
with $k$ large enough, we may assume that
$u\geq 0$ a.e. in $\R^N$ and that
\[
\mup\left(\left\{x\in\R^N:\,\,u(x)>0\right\}\right) 
< +\infty\,,\qquad
\mum\left(\left\{x\in\R^N:\,\,u(x)>0\right\}\right)
< +\infty \,.
\]
If we set
\[
\hat\nu(B) = 
\mup\left(B\cap\left\{x\in\R^N:\,\,u(x)>0\right\}\right) +
\mum\left(B\cap\left\{x\in\R^N:\,\,u(x)>0\right\}\right)
\qquad\text{for all $B\in\bor(\R^N)$}\,,
\]
we have that $\hat\nu$ is a positive Radon measure
on $\R^N$ and there exist two Borel functions
$\eta_1,\eta_2:\R^N\rightarrow[0,1]$ such that
\[
\int_B \eta_j\,d\hat\nu = \nu_j\left(
B\cap\left\{x\in\R^N:\,\,u(x)>0\right\}\right)
\qquad\text{for all $B\in\bor(\R^N)$}\,,
\]
whence
\[
\int u^p(\eta_1 - \eta_2)\,d\hat\nu >0 \,.
\]
We have
\[
\lim_{r\to 0^+}\,
\int_{B_r(x)} u^p(\eta_1 - \eta_2)\,d\hat\nu \leq
\lim_{r\to 0^+}\,
\int_{B_r(x)} u^p\,d\mup = 0
\qquad\text{for all $x\in \R^N$}\,.
\]
Therefore, if $x\in\R^N$ is a Lebesgue point of 
$u^p\,(\eta_1 - \eta_2)$ with respect to $\hat\nu$ such that 
$u(x)^p(\eta_1(x) - \eta_2(x))>0$
(see~\cite[Corollary~2.23]{ambrosio_fusco_pallara2000}),
we have $\hat\nu(\{x\})=0$.
Then, for every $m\geq 1$, we can find~$m$ Lebesgue points
$x_1,\ldots,x_m$ of $u^p(\eta_1 - \eta_2)$ with respect
to $\hat\nu$ such that
\[
u^p(x_j)(\eta_1(x_j) - \eta_2(x_j)) > 0
\qquad\text{for all $j=1,\ldots,m$}\,.
\]
Let $r>0$ be such that $B_r(x_i)\cap B_r(x_j) = \emptyset$
whenever $i\neq j$ and such that
\[
\int_{B_r(x_j)} u^p(\eta_1 - \eta_2)d\hat\nu >0
\qquad\text{for all $j=1,\ldots,m$}\,.
\]
For every $j=1,\ldots,m$, let 
$\vartheta_j\in C^\infty_c(B_r(x_j))$ be such that
\[
\int (\vartheta_j u)^p(\eta_1 - \eta_2)d\hat\nu >0
\qquad\text{for all $j=1,\ldots,m$}\,.
\]
If we set
\begin{gather*}
\pi(\xi) = \sum_{j=1}^m \xi_j \vartheta_j u\,,\\
K = \left\{
\frac{\pi(\xi)}{\displaystyle{
\left(\int |\pi(\xi)|^p\,d\mup - \int |\pi(\xi)|^p\,d\mum
\right)^{1/p}}}:\,\,\xi\in S^{m-1}\right\}\,,
\end{gather*}
we have $K\in\mathcal{K}_m$ and 
$\sup\limits_{u\in K}R(u)<+\infty$, whence
$\lambda_m^p(\mu,\mup,\mum)<+\infty$.
\end{proof}
\begin{example} 
Let $p>N$, $\mu=\infty_{\R^N\setminus B_1(0)}$, $\mup=\delta_0$
and $\mum=0$.
Since
\[
\left\{u\in W^{1,p}_0(B_1):\,\,u(0)\neq 0 \right\} 
= \left\{u\in W^{1,p}_0(B_1):\,\,u(0) < 0 \right\} \cup
\left\{u\in W^{1,p}_0(B_1):\,\,u(0) > 0 \right\} \,,
\]
we have $i(K)=1$ for all nonempty, compact and symmetric subset 
$K$ of $L^p_{loc}(\R^N)\setminus\{0\}$ with
\[
\sup_{u\in K}\,R(u) < +\infty\,.
\]
Therefore, it follows that
\[
\lambda_m^p(\mu,\mup,\mum) = +\infty
\quad\text{for all $m\geq 2$}\,.
\]
By the way, a direct computation shows that
\[
\lambda_1^p(\mu,\mup,\mum) = \frac{1}{w(0)^{p-1}}\,,
\]
where $w\in W^{1,p}_0(B_1)$ satisfies $-\Delta_p w = \delta_0$.
\end{example}
\begin{proposition}
\label{minimaxfinLp}
Assume one of the following conditions:
\begin{enumerate}[label={\upshape\alph*)}, align=parleft, 
widest=iii, leftmargin=*]
\item[\mylabel{haKfin}{\ha}]
if $(u_n)$ is a sequence in $W^{1,p}_{c}(\R^N)$ 
satisfying
\[
\sup_n \left(
\int |\nabla u_n|^p\,d\leb^N + \int |u_n|^p\,d\mu
+ \int |u_n|^p\,d\mup
+ \int |u_n|^p\,d\mum\right) < +\infty
\]
and converging in $L^p_{loc}(\R^N)$ to some 
$u\in W^{1,p}_{loc}(\R^N)$, then
\[
\lim_{n\to\infty} \int |u_n|^p\,d\mup =
\int |u|^p\,d\mup\,;
\]
\item[\mylabel{hbKfin}{\hb}]
we have $\mum=0$.
\end{enumerate}
\par\indent
Then, for every integer $m\geq 1$, we have 
\[
\lambda_m^p(\mu,\mup,\mum) =
\inf_{K\in\mathcal{K}_m^{fin}}\,\sup_{u\in K}\,R(u) \,.
\]
\end{proposition}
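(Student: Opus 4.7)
The plan is to apply Theorem \ref{minimaxfin} directly, with $f = f_{\mu,0}$ and with $g_1,g_2$ the functionals defined at the start of this section. Convexity of the three is immediate (for $f_{\mu,0}$, as the lower semicontinuous envelope of the convex functional $\widetilde f_{\mu,0}$), and each is positively homogeneous of the common degree $\alpha = p \geq 1$; the only nontrivial task is therefore to verify assumption \ref{ha43a} of that theorem, for which we intend to invoke Remark \ref{rem:minimaxfin}. Case \hb{} of the proposition will be handled by part \hc{} of the remark, while case \ha{} will be handled by part \hb{}.

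Suppose first that $\mum = 0$. By Proposition \ref{carattfmu0}, $f_{\mu,0}(u) < +\infty$ forces $u \in W^{1,p}_{loc}(\R^N)$, hence $g_2(u) = 0$ on every sublevel set of $f$; the requirement ``$g_2 = 0$'' of Remark \ref{rem:minimaxfin}\hc{} is thereby met on the relevant set. For lower semicontinuity of $g_1$ on $\{u \in L^p_{loc}(\R^N)\setminus\{0\} : f(u) \leq b,\, g_1(u) \leq b\}$, let $(u_n)$ be a sequence in this set converging to $u$ in $L^p_{loc}(\R^N)$: the bounds control $\int |\nabla u_n|^p\,d\leb^N$ (through $f$) and $\int |u_n|^p\,d\mup$ (through $g_1$), so Proposition \ref{mu-lsc} applied with $\mup$ in the role of $\mu$ gives $u \in L^p(\R^N,\mup)$ together with the weak convergence $u_n \rightharpoonup u$ in $L^p(\R^N,\mup)$, whence $g_1(u) \leq \liminf_n g_1(u_n)$ by weak lower semicontinuity of the norm.

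Suppose now that hypothesis \ha{} of the proposition is in force. The same argument shows that $g_1$ is lower semicontinuous on $\{f \leq b,\, g_1 \leq b,\, g_2 \leq b\}$, so it remains only to prove the matching upper estimate to obtain the continuity demanded by Remark \ref{rem:minimaxfin}\hb{}. Given $(u_n)$ in this set converging to $u$ in $L^p_{loc}(\R^N)$, one would like to invoke hypothesis \ha{} directly; however, that hypothesis is phrased for sequences in $W^{1,p}_c(\R^N)$, while our $u_n$ lie only in $W^{1,p}_{loc}(\R^N)$. The bridge is provided by Proposition \ref{strongappr}: for each fixed $n$ we obtain an approximating sequence $u_n^{(k)} \in W^{1,p}_c(\R^N)$ converging to $u_n$ in $L^p_{loc}$ and strongly in each of the $\nabla, \mu, \mup, \mum$ integrals; a diagonal extraction then yields $\tilde u_n \in W^{1,p}_c(\R^N)$ with $\tilde u_n \to u$ in $L^p_{loc}(\R^N)$, satisfying the same uniform bounds as $(u_n)$ and obeying $\left|\int |\tilde u_n|^p\,d\mup - \int |u_n|^p\,d\mup\right| \to 0$. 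Applying hypothesis \ha{} to the companion sequence $(\tilde u_n)$ gives $\int |\tilde u_n|^p\,d\mup \to \int |u|^p\,d\mup$, and hence $g_1(u_n) \to g_1(u)$, as required. This diagonal bridge between $W^{1,p}_c$ and $W^{1,p}_{loc}$ is the sole delicate point in the proof; once it is in place, Theorem \ref{minimaxfin} delivers the claimed identity in both cases.
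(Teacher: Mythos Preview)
Your proposal is correct and follows essentially the same route as the paper: both apply Theorem~\ref{minimaxfin} via Remark~\ref{rem:minimaxfin}, handling case~\ha{} through part~\hb{} of the remark (using Proposition~\ref{strongappr} to pass from $W^{1,p}_{loc}$ to $W^{1,p}_c$) and case~\hb{} through part~\hc{} of the remark (using Proposition~\ref{mu-lsc}). Your write-up simply makes explicit the diagonal extraction and the subtlety that $g_2$ is only zero on $\{f_{\mu,0}<+\infty\}$, both of which the paper leaves implicit in its one-line proof.
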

\begin{proof}
We aim to apply Theorem~\ref{minimaxfin} and 
Remark~\ref{rem:minimaxfin}.
Actually, assumption~\ref{hbsuff} of Remark~\ref{rem:minimaxfin} 
follows from assumption~\ref{haKfin} and 
Proposition~\ref{strongappr}, while assumption~\ref{hcsuff} of 
Remark~\ref{rem:minimaxfin} follows from Proposition~\ref{mu-lsc} 
and assumption~\ref{hbKfin}.
\end{proof}
\begin{proposition}
If we set $\mu^{(n)}=\mu + \infty_{\R^N\setminus B_n(0)}$ and 
define $R^{(n)}$ accordingly, then we have
\[
\inf_{K\in\mathcal{K}_m^{fin}} 
\sup_{u\in K}\, R(u) =
\lim_{n\to\infty}\,\left(\inf_{K\in\mathcal{K}_m^{fin}} 
\sup_{u\in K}\, R^{(n)}(u)\right) 
\qquad\text{for all $m\geq 1$}\,.
\]
If either assumption~\ref{haKfin} or assumption~\ref{hbKfin} of
Proposition~\ref{minimaxfinLp} is satisfied, then we also have
\[
\lambda_m^p(\mu,\mup,\mum) =
\lim_{n\to\infty}\,\lambda_m^p(\mu^{(n)},\mup,\mum)
\qquad\text{for all $m\geq 1$}\,.
\]
\end{proposition}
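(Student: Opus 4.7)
The strategy is to first establish the identity restricted to $\mathcal{K}_m^{fin}$ and then derive the identity for $\lambda_m^p$ by invoking Proposition~\ref{minimaxfinLp}. The monotonicity $\R^N\setminus B_n(0)\supseteq \R^N\setminus B_{n+1}(0)$ gives $\mu^{(n)}\geq\mu^{(n+1)}\geq\mu$, so $f_{\mu^{(n)},0}\geq f_{\mu^{(n+1)},0}\geq f_{\mu,0}$ (lower semicontinuous envelopes preserve pointwise inequalities), whence $R^{(n)}\geq R^{(n+1)}\geq R$. Therefore $n\mapsto \inf_{K\in\mathcal{K}_m^{fin}}\sup_{u\in K} R^{(n)}(u)$ is nonincreasing and bounded below by $\inf_{K\in\mathcal{K}_m^{fin}}\sup_K R$.

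For the reverse direction, fix $\lambda > \inf_{K\in\mathcal{K}_m^{fin}}\sup_K R$ and choose $K\in\mathcal{K}_m^{fin}$ with $\sup_K R<\lambda$. The odd continuous map $v\mapsto v/(g_1(v)-g_2(v))^{1/p}$ is well-defined on $K$ (as $g_1-g_2\geq 1$ there), has image of index $\geq m$ by property~\ref{hd} of the index, and does not increase $R$ by positive $p$-homogeneity of $f_{\mu,0},g_1,g_2$; hence we may assume $g_1-g_2=1$ on $K$. Let $Y$ be a finite-dimensional subspace containing $K$. By convexity and $p$-homogeneity, $Y\subseteq\{f_{\mu,0}<+\infty\}\cap\{g_1<+\infty\}\cap\{g_2<+\infty\}$, so each element of a basis $v_1,\ldots,v_d$ of $Y$ satisfies the hypotheses of Proposition~\ref{strongappr}. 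Choose approximations $v_i^{(k)}\in W^{1,p}_c(\R^N)\cap L^p(\R^N,\mu)\cap L^p(\R^N,\nu_1)\cap L^p(\R^N,\nu_2)$ converging to $v_i$ in all the relevant norms, and let $T^{(k)}\colon Y\to L^p_{loc}(\R^N)$ be the linear map with $T^{(k)}v_i=v_i^{(k)}$. Minkowski's inequality and the finite-dimensionality of $Y$ yield $f_{\mu,0}(T^{(k)}v)\to f_{\mu,0}(v)$ and $g_j(T^{(k)}v)\to g_j(v)$ uniformly for $v\in K$, hence $g_1(T^{(k)}v)-g_2(T^{(k)}v)\to 1$ uniformly on $K$.

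Define the odd continuous normalization
\[
\tilde T^{(k)}(v):=\frac{T^{(k)}(v)}{\bigl(g_1(T^{(k)}v)-g_2(T^{(k)}v)\bigr)^{1/p}},
\]
which is well-defined and nowhere zero on $K$ for $k$ large enough. Then $K^{(k)}:=\tilde T^{(k)}(K)$ is compact, symmetric, contained in the finite-dimensional subspace spanned by $v_1^{(k)},\ldots,v_d^{(k)}$, and by property~\ref{hd} satisfies $i(K^{(k)})\geq i(K)\geq m$, so $K^{(k)}\in\mathcal{K}_m^{fin}$. By $p$-homogeneity, $R(\tilde T^{(k)}v)=f_{\mu,0}(T^{(k)}v)/(g_1(T^{(k)}v)-g_2(T^{(k)}v))\to R(v)$ uniformly on $K$, so $\sup_{K^{(k)}}R<\lambda$ for $k$ large. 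Since each $v_i^{(k)}$ is compactly supported, the union of their supports lies in some $B_{R_k}(0)$; for $n\geq R_k$ every element of $K^{(k)}$ is supported in $B_n(0)$, forcing $R^{(n)}=R$ on $K^{(k)}$, and therefore $\inf_{K'\in\mathcal{K}_m^{fin}}\sup_{K'}R^{(n)}\leq \sup_{K^{(k)}}R<\lambda$. The first identity follows by the arbitrariness of $\lambda$.

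For the second identity, note that if either \ref{haKfin} or \ref{hbKfin} of Proposition~\ref{minimaxfinLp} holds for $\mu$, it also holds for each $\mu^{(n)}$: assumption~\ref{hbKfin} is independent of $\mu$, while for \ref{haKfin} any bound on $\int|u_k|^p\,d\mu^{(n)}$ is stronger than the corresponding bound on $\int|u_k|^p\,d\mu$, so the conclusion for $\mu$ applies. Proposition~\ref{minimaxfinLp} thus gives $\lambda_m^p(\mu,\nu_1,\nu_2)=\inf_{K\in\mathcal{K}_m^{fin}}\sup_K R$ and $\lambda_m^p(\mu^{(n)},\nu_1,\nu_2)=\inf_{K\in\mathcal{K}_m^{fin}}\sup_K R^{(n)}$, and the second identity follows from the first. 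The main delicate step is the rescaling $\tilde T^{(k)}$: without it the approximation would not preserve the constraint $1+g_2\leq g_1$ defining $R$, whereas preserving the index $\geq m$ requires the map to be odd, continuous, and nowhere zero on $K$.
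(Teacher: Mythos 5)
Your proof is correct. For the second identity your route is essentially the paper's, except that you apply Proposition~\ref{minimaxfinLp} to every $\mu^{(n)}$ (after the correct observation that assumptions~\ref{haKfin}/\ref{hbKfin} transfer from $\mu$ to $\mu^{(n)}\geq\mu$), whereas the paper applies it only to $\mu$ and gets the lower bound for free from the monotonicity $\lambda_m^p(\mu,\mup,\mum)\leq\lambda_m^p(\mu^{(n)},\mup,\mum)$ of Remark~\ref{rem:monot}; both work.

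For the first identity the paper simply invokes the abstract upper‑semicontinuity result, Theorem~\ref{thm4.1dema}, verifying its hypothesis~\ref{hb41} via Proposition~\ref{strongappr} and~\ref{hc41} via Proposition~\ref{mu-lsc}. You instead give a direct construction: since Proposition~\ref{strongappr} provides \emph{strong} (norm) approximation of each basis vector of $\mathrm{span}(K)$ by elements of $W^{1,p}_c(\R^N)$, a single linear map $T^{(k)}$ on that span, together with Minkowski's inequality and compactness of $K$, yields uniform convergence of $f_{\mu,0}$, $g_1$, $g_2$, after which the odd renormalization $\tilde T^{(k)}$ restores the constraint $g_1-g_2=1$ and property~\ref{hd} of the index preserves $i(K)\geq m$. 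This bypasses the partition‑of‑unity machinery of Lemma~\ref{pi} and the convexity interpolation inside Theorem~\ref{thm4.1dema}, which are needed there only because that theorem assumes a mere $\Gamma$-limsup inequality (pointwise recovery sequences) rather than strong approximation; your argument is more elementary but tailored to this special case, while the paper's factors through a reusable abstract statement. Two cosmetic points: you should take $Y=\mathrm{span}(K)$ explicitly (an arbitrary finite-dimensional subspace containing $K$ need not lie in $\{f_{\mu,0}<+\infty\}\cap\{g_1<+\infty\}\cap\{g_2<+\infty\}$; the span does, by symmetry of $K$, convexity and homogeneity), and the identification $R^{(n)}=R$ on $K^{(k)}$ uses that $f_{\mu^{(n)},0}$ coincides with $f_{\mu^{(n)}}$ on $W^{1,p}_c(\R^N)$ and that the constraint functionals $g_j$ do not depend on $n$ — both true, but worth saying.
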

\begin{proof}
Of course, we have
\[
\inf_{K\in\mathcal{K}_m^{fin}} 
\sup_{u\in K}\, R(u) \leq
\liminf_{n\to\infty}\,\left(\inf_{K\in\mathcal{K}_m^{fin}} 
\sup_{u\in K}\, R^{(n)}(u)\right) \,.
\]
To prove that
\[
\inf_{K\in\mathcal{K}_m^{fin}} 
\sup_{u\in K}\, R(u) \geq
\limsup_{n\to\infty}\,\left(\inf_{K\in\mathcal{K}_m^{fin}} 
\sup_{u\in K}\, R^{(n)}(u)\right) \,,
\]
we aim to apply Theorem~\ref{thm4.1dema}.
Assumption~\ref{ha41} is clearly satisfied, while
assumption~\ref{hb41} follows from Proposition~\ref{strongappr}
and assumption~\ref{hc41} follows from Proposition~\ref{mu-lsc}.
Therefore, the first claim is proved.
\par
Then we also have
\begin{gather*}
\lambda_m^p(\mu,\mup,\mum) \leq
\liminf_{n\to\infty}\,\lambda_m^p(\mu^{(n)},\mup,\mum)\,,\\
\limsup_{n\to\infty}\,\lambda_m^p(\mu^{(n)},\mup,\mum) \leq
\inf_{K\in\mathcal{K}_m^{fin}} \sup_{u\in K}\, R(u) \,.
\end{gather*}
By Proposition~\ref{minimaxfinLp} the second assertion follows.
\end{proof}
%


\section{Semicontinuity properties of inf-sup values of measures}
\label{sect:semicontinfsup}
Throughout this section, we consider three sequences
$(\mu^{(n)})$, $(\mup^{(n)})$, $(\mum^{(n)})$
in $\cmeas(\R^N)$, three measures 
$\mu, \mup, \mum \in \cmeas(\R^N)$,
an index $i$ as in Section~\ref{sect:convminmax} 
and the related inf-sup values 
$\lambda_m^p(\mu^{(n)},\mup^{(n)},\mum^{(n)})$ and
$\lambda_m^p(\mu,\mup,\mum)$ defined in Section~\ref{sect:preeig}
with respect to the metrizable and locally convex topological 
vector space $\mathcal{X}=L^p_{loc}(\R^N)$.
\par
We also consider the functionals $f_{\mu^{(n)},0}$,
$f_{\mu,0}$ defined in Section~\ref{sect:convergence}
and we define
\[
g_1^{(n)},g_2^{(n)}, R^{(n)}:L^p_{loc}(\R^N)\rightarrow[0,+\infty]
\]
by
\begin{alignat*}{3}
&g_j^{(n)}(u) &&=
\begin{cases}
\displaystyle{\frac{1}{p}\,\int |u|^p\,d\nu_j^{(n)}\,,}
&\qquad\text{if $u\in W^{1,p}_{loc}(\R^N)$}\,,\\
\noalign{\medskip}
+\infty
&\qquad\text{otherwise}\,,
\end{cases} \\
&R^{(n)}(u) &&=
\begin{cases}
f_{\mu^{(n)},0}(u)
&\qquad\text{if $1+g_2^{(n)}(u)\leq g_1^{(n)}(u)<+\infty$}\,,\\
\noalign{\medskip}
+\infty
&\qquad\text{otherwise}\,,
\end{cases}
\end{alignat*}
and
\[
g_1,g_2, R:L^p_{loc}(\R^N)\rightarrow[0,+\infty]
\]
in the analogous way with $\mu, \mup, \mum$ instead of
$\mu^{(n)}, \mup^{(n)}, \mum^{(n)}$.

\subsection{Lower semicontinuity of inf-sup values of measures}
Throughout this subsection we assume that:
\emph{
\begin{enumerate}[align=parleft]
\item[\mylabel{his}{\his}]
if $(n_k)$ is a strictly increasing sequence in $\N$ and
$(u^{(k)})$ is a sequence in $W^{1,p}_{c}(\R^N)$ satisfying
\[
\sup_{k} \biggl(
\int |\nabla u^{(k)}|^p\,d\leb^N
+ \int |u^{(k)}|^p\,d\mu^{(n_k)} 
+ \int |u^{(k)}|^p\,d\mup^{(n_k)}
+ \int |u^{(k)}|^p\,d\mum^{(n_k)}\biggr) < +\infty
\]
and converging in $L^p_{loc}(\R^N)$ to some 
$u\in W^{1,p}_{loc}(\R^N)$, then
\[
\limsup_{k\to\infty} \int |u^{(k)}|^p\,d\mup^{(n_k)} 
\leq \int |u|^p\,d\mup \,;
\]
\item[\mylabel{hiis}{\hiis}]
if $p\geq N$, we do not have 
$\mu(\R^N)=0$ and $\mum(\R^N) \leq \mup(\R^N) < +\infty$;
\item[\mylabel{hiiis}{\hiiis}]
we have
\begin{multline*}
~\qquad\qquad
f_{\mu,0}+\lambda g_2+I_{\{g_1<+\infty\}}
\leq\left(\Gamma-\liminf_{n\to\infty} 
\left(f_{\mu^{(n)},0}+\lambda g_2^{(n)}+I_{\{g_1^{(n)}<+\infty\}}
\right)\right)(u) \\
\qquad \text{for all $\lambda>0$ and $u\in L^p_{loc}(\R^N)$}\,.
\end{multline*}
\end{enumerate}
}
\begin{lemma}
\label{oldis}
If $(n_k)$ is a strictly increasing sequence in $\N$ and
$(u^{(k)})$ is a sequence in $W^{1,p}_{loc}(\R^N)$ satisfying
\[
\sup_k f_{\mu^{(n_k)},0}(u^{(k)}) <+\infty\,,\,\,
\sup_k\int |u^{(k)}|^p\,d\mum^{(n_k)} < +\infty\,,\,\,
\sup_k\int |u^{(k)}|^p\,d\mup^{(n_k)} < +\infty
\]
and converging in $L^p_{loc}(\R^N)$ to some 
$u\in W^{1,p}_{loc}(\R^N)$, then 
\begin{alignat*}{3}
&\liminf_{k\to\infty}\,f_{\mu^{(n_{k})},0}(u^{(k)}) 
&&\geq	f_{\mu,0}(u)\,,\\
&\liminf_{k\to\infty} \int |u^{(k)}|^p\,d\mum^{(n_{k})} 
&&\geq \int |u|^p\,d\mum\,,\\
&\limsup_{k\to\infty} \int |u^{(k)}|^p\,d\mup^{(n_{k})} 
&&\leq \int |u|^p\,d\mup < +\infty\,.
\end{alignat*}
\end{lemma}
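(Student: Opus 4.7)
The plan hinges on two tools: assumption \hiiis{}
(a $\Gamma$-liminf inequality, which I shall exploit at different
choices of $\lambda>0$) and assumption \his{} (an upper bound for
the $\mup$-integrals, to be coupled with the strong approximation
Proposition~\ref{strongappr}).
A standard interleaving argument, whereby the subsequence
$(u^{(k)})$ is completed to a full $L^p_{loc}$-converging sequence
by filling in the missing indices with the constant function~$u$,
shows that the $\Gamma$-liminf inequality \hiiis{} remains
applicable along $(n_k)$ tested on $(u^{(k)})$.

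Since $\int |u^{(k)}|^p\,d\mup^{(n_k)}<+\infty$ by hypothesis,
the indicator $I_{\{g_1^{(n_k)}<+\infty\}}(u^{(k)})$ vanishes
for every $k$.
Applying \hiiis{} with $\lambda=1$ then yields
\[
f_{\mu,0}(u) + g_2(u) + I_{\{g_1(u)<+\infty\}}(u)
\leq \liminf_{k\to\infty}\left(f_{\mu^{(n_k)},0}(u^{(k)})
+ g_2^{(n_k)}(u^{(k)})\right) < +\infty\,,
\]
which forces $f_{\mu,0}(u)$, $g_1(u)$, $g_2(u)$ all to be finite;
in particular $\int |u|^p\,d\mup<+\infty$.
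For general $\lambda>0$, combining \hiiis{} with the elementary
inequality $\liminf(a_k+b_k)\leq \liminf a_k + \limsup b_k$
gives
\[
f_{\mu,0}(u) + \lambda g_2(u) \leq
\liminf_{k\to\infty} f_{\mu^{(n_k)},0}(u^{(k)})
+ \lambda \limsup_{k\to\infty} g_2^{(n_k)}(u^{(k)})\,,
\]
and letting $\lambda\to 0^+$, legitimate because $g_2(u)$ and
$\limsup_k g_2^{(n_k)}(u^{(k)})$ are both finite, yields the
first stated inequality.
The symmetric variant $\liminf(a_k+b_k)\leq \limsup a_k
+ \liminf b_k$, divided by $\lambda$ and followed by
$\lambda\to+\infty$ (using that $f_{\mu,0}(u)$ and
$\limsup_k f_{\mu^{(n_k)},0}(u^{(k)})$ are both finite),
yields the second.

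The remaining $\limsup$ bound on
$\int |u^{(k)}|^p\,d\mup^{(n_k)}$ is the main technical obstacle,
because \his{} requires the test functions to lie in
$W^{1,p}_c(\R^N)$, whereas $u^{(k)}\in W^{1,p}_{loc}(\R^N)$.
I would bypass this via a diagonal approximation: for each $k$,
Proposition~\ref{strongappr} (applied to $u^{(k)}$ with measures
$\mu^{(n_k)}$, $\mup^{(n_k)}$, $\mum^{(n_k)}$) provides
$v^{(k)}\in W^{1,p}_c(\R^N)$ such that
$\|v^{(k)}-u^{(k)}\|_{L^p(B_k(0))}<1/k$ and each of the quantities
$\int |\nabla v^{(k)}|^p\,d\leb^N$,
$\int |v^{(k)}|^p\,d\mu^{(n_k)}$,
$\int |v^{(k)}|^p\,d\mup^{(n_k)}$,
$\int |v^{(k)}|^p\,d\mum^{(n_k)}$ differs from its
$u^{(k)}$-counterpart by less than $1/k$.
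Then $(v^{(k)})$ has uniformly bounded energies and converges to
$u$ in $L^p_{loc}(\R^N)$, so \his{} yields
$\limsup_k\int |v^{(k)}|^p\,d\mup^{(n_k)}\leq \int |u|^p\,d\mup$;
the triangle inequality in $L^p(\R^N,\mup^{(n_k)})$ (absorbing the
$1/k$ error) transfers this estimate back to $(u^{(k)})$ and
completes the proof.
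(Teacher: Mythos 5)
Your proposal is correct and follows essentially the same route as the paper: the $\limsup$ bound on the $\nu_1$-integrals is obtained by replacing $u^{(k)}$ with a compactly supported approximation $v^{(k)}$ via Proposition~\ref{strongappr} and then invoking assumption \his{}, while the other two inequalities come from assumption \hiiis{} combined with the splitting $\liminf(a_k+b_k)\leq\liminf a_k+\limsup b_k$ and the arbitrariness of $\lambda$ (sending $\lambda\to 0^+$ and $\lambda\to+\infty$, respectively). The only differences are cosmetic: you make the interleaving along the subsequence $(n_k)$ and the passage to the limits in $\lambda$ explicit, whereas the paper leaves them implicit.
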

\begin{proof}
Let $d$ be a compatible distance in $L^p_{loc}(\R^N)$.
By Proposition~\ref{strongappr}, for every $k\in\N$ there exists
$v^{(k)}\in W^{1,p}_c(\R^N)$ such that
\begin{multline*}
\biggl(d(v^{(k)},u^{(k)}) +
\int |\nabla v^{(k)}-\nabla u^{(k)}|^p\,d\leb^N 
+ \int |v^{(k)}-u^{(k)}|^p\,d\mu^{(n_k)} \biggr. \\
\biggl. 
+ \int |v^{(k)}-u^{(k)}|^p\,d\mup^{(n_k)}
+ \int |v^{(k)}-u^{(k)}|^p\,d\mum^{(n_k)}\biggr) < \frac{1}{k}\,.
\end{multline*}
From assumption~\ref{his} we infer that
\[
\limsup_{k\to\infty} \int |u^{(k)}|^p\,d\mup^{(n_{k})} 
= \limsup_{k\to\infty} \int |v^{(k)}|^p\,d\mup^{(n_{k})} 
\leq \int |u|^p\,d\mup\,.
\]
On the other hand, by assumption~\ref{hiiis} we have 
\[
\int |u|^p\,d\mup < +\infty
\]
and, for every $\lambda>0$,
\begin{alignat*}{5}
&f_{\mu,0}(u) 
&&\leq f_{\mu,0}(u)+\lambda \int |u|^p\,d\mum \\
&&&\leq \liminf_{k\to\infty} 
\left(f_{\mu^{(n_k)},0}(u^{(k)})
+ \lambda \int |u^{(k)}|^p\,d\mum^{(n_k)}\right) \\
&&&
\leq \liminf_{k\to\infty}\,f_{\mu^{(n_k)},0}(u^{(k)})
+\lambda \limsup_{k\to\infty}\,
\int |u^{(k)}|^p\,d\mum^{(n_k)}\,,\\
&\int |u|^p\,d\mum
&&\leq \frac{1}{\lambda}\,\left(f_{\mu,0}(u)
+ \lambda \int |u|^p\,d\mum\right) \\
&&&\leq \frac{1}{\lambda}\,\liminf_{k\to\infty} 
\left(f_{\mu^{(n_k)},0}(u^{(k)})
+ \lambda \int |u^{(k)}|^p\,d\mum^{(n_k)}\right) \\
&&&
\leq
\frac{1}{\lambda}\,\limsup_{k\to\infty}\,
f_{\mu^{(n_k)},0}(u^{(k)})
+ \liminf_{k\to\infty}\,\int |u^{(k)}|^p\,d\mum^{(n_k)}\,.
\end{alignat*}
By the arbitrariness of $\lambda$ the assertion follows.
\end{proof}
\begin{proposition}
\label{propcoercs}
If $(n_k)$ is a strictly increasing sequence in $\N$ and
$(u^{(k)})$ is a sequence in $W^{1,p}_{loc}(\R^N)$ satisfying
\[
\sup_k f_{\mu^{(n_k)},0}(u^{(k)}) <+\infty\,,\quad
\sup_k \int |u^{(k)}|^p\,d\mum^{(n_k)} < +\infty\,,
\quad \int |u^{(k)}|^p\,d\mup^{(n_k)} < +\infty
\quad\text{for all $k\in\N$}\,,
\]
then there exists a subsequence $(u^{(k_j)})$ converging 
in $L^p_{loc}(\R^N)$ to some $u\in W^{1,p}_{loc}(\R^N)$ with
\begin{alignat*}{3}
&\liminf_{j\to\infty}\,f_{\mu^{(n_{k_j})},0}(u^{(k_j)}) 
&&\geq	f_{\mu,0}(u)\,,\\
&\liminf_{j\to\infty} \int |u^{(k_j)}|^p\,d\mum^{(n_{k_j})} 
&&\geq \int |u|^p\,d\mum\,,\\
&\limsup_{j\to\infty} \int |u^{(k_j)}|^p\,d\mup^{(n_{k_j})} 
&&\leq \int |u|^p\,d\mup < +\infty\,.
\end{alignat*}
\end{proposition}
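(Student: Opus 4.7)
The plan is to produce an $L^p_{loc}(\R^N)$-convergent subsequence by two successive rescaling/contradiction arguments, and then invoke Lemma~\ref{oldis} to obtain the three inequalities. Concretely, I would (i) bound $(u^{(k)})$ in $L^p_{loc}(\R^N)$; (ii) show that, along the resulting subsequence, the integrals $\int |u^{(k_j)}|^p\,d\mup^{(n_{k_j})}$ are uniformly bounded; and (iii) apply Lemma~\ref{oldis} to the resulting subsequence.

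For step (i), when $p<N$ the Sobolev inequality together with Proposition~\ref{carattfmu0} yields $\|u^{(k)}\|_{L^{p^*}(\R^N)} \leq C\|\nabla u^{(k)}\|_{L^p(\R^N)}$, which immediately gives the local $L^p$ bound. When $p\geq N$ I would argue by contradiction: suppose, along a subsequence, $c_k := \|u^{(k)}\|_{L^p(B_R)} \to +\infty$ for some $R>0$, and rescale $v^{(k)} := u^{(k)}/c_k$. By Proposition~\ref{carattfmu0} the bounds on $f_{\mu^{(n_k)},0}(u^{(k)})$ and on $g_2^{(n_k)}(u^{(k)})$ translate into
\[
\int |\nabla v^{(k)}|^p\,d\leb^N \to 0, \quad \int |v^{(k)}|^p\,d\mu^{(n_k)} \to 0, \quad \int |v^{(k)}|^p\,d\mum^{(n_k)} \to 0,
\]
while $\|v^{(k)}\|_{L^p(B_R)}=1$. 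A Rellich-Kondrachov extraction then produces a nonzero constant $c$ with $v^{(k)} \to c$ in $L^p_{loc}(\R^N)$ (the limit is constant because $\nabla v^{(k)} \to 0$ strongly in $L^p(\R^N)$, and nonzero because $\|v^{(k)}\|_{L^p(B_R)}=1$). Since $g_1^{(n_k)}(v^{(k)})<+\infty$, assumption \hiiis applied along $(n_k)$ gives
\[
(f_{\mu,0} + \lambda g_2 + I_{\{g_1 < +\infty\}})(c) \leq 0 \qquad \text{for every } \lambda > 0,
\]
whence $\mup(\R^N)<+\infty$, $\mu(\R^N)=0$ and $\mum(\R^N)=0$, in violation of assumption \hiis. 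By Rellich-Kondrachov a subsequence $(u^{(k_j)})$ then converges in $L^p_{loc}(\R^N)$ to some $u \in W^{1,p}_{loc}(\R^N)$.

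For step (ii), I would again argue by contradiction. If $\beta_j := \int |u^{(k_j)}|^p\,d\mup^{(n_{k_j})} \to +\infty$ after passing to a further subsequence, set $w^{(j)} := \beta_j^{-1/p} u^{(k_j)}$. Then $w^{(j)} \to 0$ in $L^p_{loc}(\R^N)$, the three integrals $\int |\nabla w^{(j)}|^p\,d\leb^N$, $\int |w^{(j)}|^p\,d\mu^{(n_{k_j})}$, $\int |w^{(j)}|^p\,d\mum^{(n_{k_j})}$ vanish in the limit, while $\int |w^{(j)}|^p\,d\mup^{(n_{k_j})}=1$. Using Proposition~\ref{strongappr} I would approximate each $w^{(j)}$ in all four seminorms (and in $L^p_{loc}$) by some $z^{(j)} \in W^{1,p}_c(\R^N) \cap L^p(\mu^{(n_{k_j})}) \cap L^p(\mup^{(n_{k_j})}) \cap L^p(\mum^{(n_{k_j})})$ with errors smaller than $1/j$. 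Then $(z^{(j)})$ meets the four uniform bounds demanded by assumption \his relative to the subsequence $(n_{k_j})$ and converges to $0$ in $L^p_{loc}(\R^N)$, whence
\[
1 = \lim_j \int |z^{(j)}|^p\,d\mup^{(n_{k_j})} \leq \int |0|^p\,d\mup = 0,
\]
a contradiction.

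With steps (i) and (ii) in place, the subsequence $(u^{(k_j)})$ satisfies the four uniform bounds required by Lemma~\ref{oldis}, which then delivers the three advertised inequalities. I expect the principal difficulty to lie in step (i) for $p \geq N$, where Sobolev compactness is unavailable and the nondegeneracy encoded in \hiis is genuinely indispensable; step (ii) is a cleaner second application of the same rescaling scheme, rendered routine by the $W^{1,p}_c$-approximation of Proposition~\ref{strongappr}.
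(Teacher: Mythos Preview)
Your argument is correct and uses the same ingredients as the paper (two rescaling/contradiction arguments followed by Lemma~\ref{oldis}), but you organize them differently. The paper first proves a \emph{particular case}---compactness assuming the additional uniform bound $\sup_k\int |u^{(k)}|^p\,d\mup^{(n_k)}<+\infty$---by rescaling and invoking Lemma~\ref{oldis}, and then bootstraps: in the general case it rescales so that the $\mup$-integral is normalized to $1$, applies the particular case to extract a convergent subsequence, and reaches a contradiction via Lemma~\ref{oldis} and assumption~\ref{hiis}. You instead obtain $L^p_{loc}$-compactness first, without any $\mup$-bound, by appealing to~\ref{hiiis} directly (the indicator $I_{\{g_1^{(n_k)}<+\infty\}}$ requires only finiteness, not a uniform bound); this lets you then run the $\mup$-bound step along an already convergent subsequence, so that your rescaled functions $w^{(j)}$ tend to $0$ and the contradiction with~\ref{his} is immediate ($1\leq 0$) rather than requiring identification of a nonzero constant limit. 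Your route is a touch more streamlined in step~(ii); the paper's route has the mild convenience that its compactness step can quote Lemma~\ref{oldis} as a package rather than unpacking~\ref{hiiis}. One small expository point: in your step~(i) for $p\geq N$, the extraction of a constant limit is really a Poincar\'e--Wirtinger argument (the gradients go to zero \emph{strongly}, so $v^{(k)}$ is asymptotically equal to its average on each ball), not Rellich--Kondrachov per se, since the $L^p(B_{R'})$ bound for $R'>R$ is not given a priori.
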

\begin{proof}
Consider first the particular case in which
\[
\sup_k f_{\mu^{(n_k)},0}(u^{(k)}) <+\infty\,,\,\,
\sup_k \int |u^{(k)}|^p\,d\mum^{(n_k)} < +\infty\,,\,\,
\sup_k \int |u^{(k)}|^p\,d\mup^{(n_k)} < +\infty\,.
\]
By Lemma~\ref{oldis} it is enough to prove that
\[
\liminf_{k\to\infty}\,
\int_{B_1(0)} |u^{(k)}|^p\,d\leb^N < +\infty\,.
\]
If $p<N$, this fact follows from the boundedness of 
$(\nabla u_k)$ in $L^p(\R^N;\R^N)$.
If $p\geq N$, assume for the sake of contradiction that
\[
\lim_{k\to\infty}\,\int_{B_1(0)} |u^{(k)}|^p\,d\leb^N = +\infty\,.
\]
Then a suitably rescaled sequence $(v^{(k)})$ satisfies
\begin{multline*}
\lim_{k\to\infty} f_{\mu^{(n_k)},0}(v^{(k)}) = 
\lim_{k\to\infty} \int |v^{(k)}|^p\,d\mum^{(n_k)} = 
\lim_{k\to\infty} \int |v^{(k)}|^p\,d\mup^{(n_k)} = 0\,,\\
\int_{B_1(0)} |v^{(k)}|^p\,d\leb^N = 1
\quad\text{for all $k\in\N$}\,.
\end{multline*}
Up to a subsequence, $(v^{(k)})$ is convergent in 
$W^{1,p}_{loc}(\R^N)$ to some $v$ satisfying,
by Lemma~\ref{oldis},
\[
\lim_{n\to\infty} f_{\mu,0}(v) = \int |v|^p\,d\mum = 0\,,\qquad
\int |v|^p\,d\mup < +\infty\,,\qquad
\int_{B_1(0)} |v|^p\,d\leb^N = 1\,.
\]
Therefore $v$ is a nonzero constant and
$\mu(\R^N)=\mum(\R^N)=0$, while $\mup(\R^N)<+\infty$.
This fact contradicts assumption~\ref{hiis}.
\par
Now let us treat the general case and suppose, for the sake 
of contradiction, that up to a subsequence 
\[
\lim_{k\to\infty} \int |u^{(k)}|^p\,d\mup^{(n_k)} = +\infty \,.
\]
Then a suitably rescaled sequence $(v^{(k)})$ satisfies
\[
\lim_{k\to\infty} f_{\mu^{(n_k)},0}(v^{(k)}) = 0\,,\,\,
\lim_{k\to\infty} \int |v^{(k)}|^p\,d\mum^{(n_k)} = 0\,,\,\,
\int |v^{(k)}|^p\,d\mup^{(n_k)} = 1
\quad\text{for all $k\in\N$}\,.
\]
By the previous step, up to a subsequence $(v^{(k)})$ is 
convergent in $L^p_{loc}(\R^N)$ to some $v\in W^{1,p}_{loc}(\R^N)$
such that
\[
f_{\mu,0}(v) = 0\,,\qquad
\int |v|^p\,d\mum = 0\,,\qquad
1\leq \int |v|^p\,d\mup <+\infty\,.
\]
It follows that $v$ is a nonzero constant and that 
$\mu(\R^N)=\mum(\R^N)=0$, while $\mup(\R^N)<+\infty$.
If $p<N$, this is a contradiction, as $v\in L^{p^*}(\R^N)$.
If $p\geq N$, a contradiction follows from assumption~\ref{hiis}.
\end{proof}
\begin{theorem}\label{thmlsc}
For every integer $m\geq 1$, we have 
\[
\lambda_m^p(\mu,\mup,\mum) \leq \liminf_{n\to\infty} \,
\lambda_m^p(\mu^{(n)},\mup^{(n)},\mum^{(n)})\,.
\]
\end{theorem}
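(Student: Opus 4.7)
The plan is to apply Corollary~\ref{cor4.3dema} with $f^{(n)}=f_{\mu^{(n)},0}$, $f=f_{\mu,0}$ and with the $g_j^{(n)}, g_j$ already defined at the beginning of this section, so that $R^{(n)}$ and $R$ are precisely the functionals whose inf-sup over $\mathcal{K}_m$ yield $\lambda_m^p(\mu^{(n)},\mup^{(n)},\mum^{(n)})$ and $\lambda_m^p(\mu,\mup,\mum)$. Once the four hypotheses of that corollary are verified, its conclusion reads
\[
\lambda_m^p(\mu,\mup,\mum)=\inf_{K\in\mathcal{K}_m}\sup_{u\in K}R(u)\leq\liminf_{n\to\infty}\left(\inf_{K\in\mathcal{K}_m}\sup_{u\in K}R^{(n)}(u)\right)=\liminf_{n\to\infty}\lambda_m^p(\mu^{(n)},\mup^{(n)},\mum^{(n)}),
\]
which is exactly the claim.

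The first two hypotheses are immediate: hypothesis~\ref{ha43} holds because each of $f_{\mu,0}$, $f_{\mu^{(n)},0}$, $g_j$ and $g_j^{(n)}$ is convex and positively $p$-homogeneous by construction, and hypothesis~\ref{hb43} is literally assumption~\ref{hiiis} of the present subsection. The core of the proof is hypothesis~\ref{hc43}, which I expect to be the only step demanding genuine work, but it has already been packaged into Proposition~\ref{propcoercs}: indeed, given a strictly increasing $(n_k)$ and $(u^{(k)})$ in $L^p_{loc}(\R^N)\setminus\{0\}$ with $\sup_k f_{\mu^{(n_k)},0}(u^{(k)})<+\infty$, $\sup_k g_1^{(n_k)}(u^{(k)})<+\infty$ and $g_2^{(n_k)}(u^{(k)})<g_1^{(n_k)}(u^{(k)})$ for every $k$, the last inequality forces $\sup_k g_2^{(n_k)}(u^{(k)})<+\infty$ as well, so Proposition~\ref{propcoercs} supplies a subsequence converging in $L^p_{loc}(\R^N)$ to some $u\in W^{1,p}_{loc}(\R^N)$ with $\limsup_j g_1^{(n_{k_j})}(u^{(k_j)})\leq g_1(u)<+\infty$, precisely as~\ref{hc43} demands.

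Finally, for hypothesis~\ref{hd43}, the equality $g_1(0)=0$ is obvious, and it remains to show $f_{\mu,0}(u)>0$ whenever $u\neq 0$ satisfies $g_2(u)\leq g_1(u)<+\infty$. Suppose instead $f_{\mu,0}(u)=0$: by Proposition~\ref{carattfmu0} this forces $\nabla u=0$ a.e.\ and $\int|u|^p\,d\mu=0$, so $u$ must be a nonzero constant. When $p<N$, the same proposition additionally gives $u\in L^{p^*}(\R^N)$, forcing $u=0$, a contradiction. When $p\geq N$, the nonzero constant $u=c$ would satisfy $\mu(\R^N)=0$, $\mup(\R^N)<+\infty$ (from $g_1(c)<+\infty$) and $\mum(\R^N)\leq\mup(\R^N)$ (from $g_2(c)\leq g_1(c)$), which is ruled out by assumption~\ref{hiis}. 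All hypotheses of Corollary~\ref{cor4.3dema} being satisfied, the theorem follows.
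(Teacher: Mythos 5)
Your proposal is correct and follows exactly the paper's own proof: both apply Corollary~\ref{cor4.3dema} in $L^p_{loc}(\R^N)$, identify hypothesis~\ref{hb43} with assumption~\ref{hiiis}, derive hypothesis~\ref{hc43} from Proposition~\ref{propcoercs}, and verify hypothesis~\ref{hd43} by the constant-function argument ruled out by assumption~\ref{hiis} (or by $u\in L^{p^*}(\R^N)$ when $p<N$). Your write-up merely spells out the details the paper leaves implicit.
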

\begin{proof}
We aim to apply Corollary~\ref{cor4.3dema} with 
$\mathcal X=L^p_{loc}(\R^N)$.
Assumption~\ref{ha43} of Corollary~\ref{cor4.3dema} 
is obviously satisfied, while
assumption~\ref{hb43} of Corollary~\ref{cor4.3dema}
is just assumption~\ref{hiiis} and
assumption~\ref{hc43} of Corollary~\ref{cor4.3dema}
follows from Proposition~\ref{propcoercs}. 
\par
Finally, if $u\in W^{1,p}_{loc}(\R^N)\setminus\{0\}$ and
$f_{\mu,0}(u)=0$, we infer that $u$ is constant, $p\geq N$, 
$\mu(\R^N)=0$ and we cannot have 
\[
\int |u|^p\,d\mum \leq \int |u|^p\,d\mup < +\infty
\]
by assumption~\ref{hiis}.
Therefore assumption~\ref{hd43} of Corollary~\ref{cor4.3dema}
is satisfied and the assertion follows.
\end{proof}

\subsection{Upper semicontinuity of inf-sup values of measures}
Throughout this subsection we assume that:
\emph{
\begin{enumerate}[align=parleft]
\item[\mylabel{hivs}{\hivs}]
if $(n_k)$ is a strictly increasing sequence in $\N$ and 
$(u^{(k)})$ a sequence converging to $u$ in 
$L^p_{loc}(\R^N)\setminus\{0\}$ with
\begin{multline*}
~\qquad\qquad\qquad
\sup_k f_{\mu^{(n_k)},0}(u^{(k)})<+\infty\,,\qquad
\sup_k \int |u^{(k)}|^p\,d\mup^{(n_k)}<+\infty\,,\\
\int |u^{(k)}|^p\,d\mum^{(n_k)}<\int |u^{(k)}|^p\,d\mup^{(n_k)}
\qquad\text{for all $k\in\N$}\,,
\end{multline*}
then
\[
\int |u|^p\,d\mup \leq 
\liminf_{k\to\infty}\,\int |u^{(k)}|\,d\mup^{(n_k)}\,;
\]
\item[\mylabel{hvs}{\hvs}]
we have
\begin{multline*}
~\qquad\qquad
f_{\mu,0}+\lambda g_2+I_{\{g_1<+\infty\}}
\geq\left(\Gamma-\limsup_{n\to\infty} 
\left(f_{\mu^{(n)},0}+\lambda g_2^{(n)}+I_{\{g_1^{(n)}<+\infty\}}
\right)\right)(u) \\
\qquad \text{for all $\lambda>0$ and $u\in L^p_{loc}(\R^N)$}\,.
\end{multline*}
\end{enumerate}
}
\begin{theorem}\label{thmusc}
Assume one of the following conditions:
\begin{enumerate}[label={\upshape\alph*)}, align=parleft, 
widest=iii, leftmargin=*]
\item[\mylabel{hausc}{\ha}]
if $(u_n)$ is a sequence in $W^{1,p}_{c}(\R^N)$ 
satisfying
\[
\sup_n \left(
\int |\nabla u_n|^p\,d\leb^N + \int |u_n|^p\,d\mu
+ \int |u_n|^p\,d\mup
+ \int |u_n|^p\,d\mum\right) < +\infty
\]
and converging in $L^p_{loc}(\R^N)$ to some 
$u\in W^{1,p}_{loc}(\R^N)$, then
\[
\lim_{n\to\infty} \int |u_n|^p\,d\mup =
\int |u|^p\,d\mup\,;
\]
\item[\mylabel{hbusc}{\hb}]
we have $\mum=0$.
\end{enumerate}
\par\indent
Then, for every integer $m\geq 1$, we have 
\[
\lambda_m^p(\mu,\mup,\mum) \geq \limsup_{n\to\infty} \,
\lambda_m^p(\mu^{(n)},\mup^{(n)},\mum^{(n)})\,.
\]
\end{theorem}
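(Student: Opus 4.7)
The plan is to derive the upper semicontinuity as a direct application of Proposition~\ref{minimaxfinLp} combined with Theorem~\ref{thm4.1dema}, taking $\mathcal{X}=L^p_{loc}(\R^N)$ and the functionals $f^{(n)}=f_{\mu^{(n)},0}$, $g_j^{(n)}$, $R^{(n)}$ and their limit counterparts $f_{\mu,0}$, $g_j$, $R$. All of these are convex and positively homogeneous of the common degree $\alpha=p\geq 1$, so assumption~\ref{ha41} of Theorem~\ref{thm4.1dema} is immediate.

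First, I would invoke Proposition~\ref{minimaxfinLp}: under either of the two alternatives~\ref{hausc} or~\ref{hbusc}, which are precisely the hypotheses~\ref{haKfin}, \ref{hbKfin} of that proposition, one obtains the key reduction
\[
\lambda_m^p(\mu,\mup,\mum) = \inf_{K\in\mathcal{K}_m^{fin}}\sup_{u\in K} R(u)\,.
\]
This is the main point where assumption~\ref{hausc}/\ref{hbusc} is used: without it the inf-sup on the right is only an upper bound for $\lambda_m^p(\mu,\mup,\mum)$, which would be the wrong direction.

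Next, I would verify the remaining hypotheses of Theorem~\ref{thm4.1dema}. Hypothesis~\ref{hb41} is exactly assumption~\ref{hvs}, and hypothesis~\ref{hc41} is the lower semicontinuity of $g_1$ along sequences with bounded $f^{(n_k)}$ and bounded $g_2^{(n_k)}$, which is provided by assumption~\ref{hivs}. Thus Theorem~\ref{thm4.1dema} gives
\[
\inf_{K\in\mathcal{K}_m^{fin}}\sup_{u\in K} R(u) \geq
\limsup_{n\to\infty}\left(\inf_{K\in\mathcal{K}_m^{fin}}\sup_{u\in K} R^{(n)}(u)\right)\,.
\]
Since $\mathcal{K}_m^{fin}\subseteq\mathcal{K}_m$, the trivial inequality
\[
\inf_{K\in\mathcal{K}_m^{fin}}\sup_{u\in K} R^{(n)}(u) \geq
\inf_{K\in\mathcal{K}_m}\sup_{u\in K} R^{(n)}(u) =
\lambda_m^p(\mu^{(n)},\mup^{(n)},\mum^{(n)})
\]
then completes the chain and yields the stated upper semicontinuity.

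The only subtlety I anticipate is in matching assumption~\ref{hivs} with~\ref{hc41} at the application of Theorem~\ref{thm4.1dema}. The assumption~\ref{hivs} as stated carries the extra constraint $g_2^{(n_k)}<g_1^{(n_k)}$, whereas~\ref{hc41} requires only bounded $f^{(n_k)}$ and $g_2^{(n_k)}$; however, in the situation where~\ref{hc41} is used inside the proof of Theorem~\ref{thm4.1dema} this constraint can be arranged, since if $\liminf_k g_1^{(n_k)}(u^{(k)})=+\infty$ the liminf inequality is automatic, and otherwise $g_1^{(n_k)}(u^{(k)})$ is bounded along a subsequence and one may compare with the rescaled sequences that already satisfy the normalization built into $R^{(n)}$. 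Apart from this bookkeeping, the argument is a mechanical concatenation of the three displayed inequalities.
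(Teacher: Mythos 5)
Your argument is correct and essentially identical to the paper's proof, which likewise combines Theorem~\ref{thm4.1dema} (checking \ref{ha41} trivially, \ref{hb41} via \hvs{} and \ref{hc41} via \hivs), the inclusion $\mathcal{K}_m^{fin}\subseteq\mathcal{K}_m$, and Proposition~\ref{minimaxfinLp}, only in a different order. The mismatch you flag between \hivs{} and hypothesis \ref{hc41} (the extra constraint $g_2^{(n_k)}<g_1^{(n_k)}$ in \hivs) is not addressed in the paper either, which simply asserts that \ref{hc41} ``is implied by'' \hivs.
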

\begin{proof}
First of all we claim that, by Theorem~\ref{thm4.1dema},
we have
\[
\inf_{K\in\mathcal{K}_m^{fin}} \sup_{u\in K} \, R(u) 
\geq \limsup_{n\to\infty} \left(
\inf_{K\in\mathcal{K}_m^{fin}} 
\sup_{u\in K} \, R^{(n)}(u)\right)\,.
\]
Actually, assumption~\ref{ha41} of Theorem~\ref{thm4.1dema} 
is obviously satisfied, while assumption~\ref{hb41} is 
assumption~\ref{hvs} and assumption~\ref{hc41} is implied by
assumption~\ref{hivs}.
\par
\emph{A fortiori} we have
\[
\inf_{K\in\mathcal{K}_m^{fin}} \sup_{u\in K} \, R(u) 
\geq \limsup_{n\to\infty} \,
\lambda_m^p(\mu^{(n)},\mup^{(n)},\mum^{(n)})
\]
and the assertion follows from Proposition~\ref{minimaxfinLp}.
\end{proof}
%


\section{Existence of nonlinear eigenvectors for sign-changing 
capacitary measures}
\label{sect:eigmeas}
Let $\mu, \mup, \mum\in\cmeas(\R^N)$.
In this section we want to show that, under suitable
assumptions, the inf-sup values $\lambda_m^p(\mu,\mup,\mum)$
introduced in Section~\ref{sect:preeig} are true eigenvalues
of the problem
\[
\begin{cases}
-\Delta_pu+|u|^{p-2}u\,\mu = \la |u|^{p-2}u(\nu_1-\nu_2)
\qquad\text{in $\R^N$}\,,\\
\noalign{\medskip}
\displaystyle{
\int |u|^p\,d\nu_2 < \int |u|^p\,d\nu_1} \,,
\end{cases}
\]
with corresponding eigenvectors.
To this aim, we will relate the inf-sup values
$\lambda_m^p(\mu,\mup,\mum)$ with the inf-sup values 
$\hat{\lambda}_m^p(\mu,\mup,\mum)$ defined in a functional 
setting where standard variational methods apply.
\par
Throughout this section we assume that:
\emph{
\begin{enumerate}[align=parleft]
\item[\mylabel{hi}{\hi}]
if $(u_n)$ is a sequence in $W^{1,p}_{c}(\R^N)$ 
satisfying
\[
\sup_n \left(
\int |\nabla u_n|^p\,d\leb^N + \int |u_n|^p\,d\mu
+ \int |u_n|^p\,d\mup
+ \int |u_n|^p\,d\mum\right) < +\infty
\]
and converging in $L^p_{loc}(\R^N)$ to some 
$u\in W^{1,p}_{loc}(\R^N)$, then
\[
\lim_{n\to\infty} \int |u_n|^p\,d\mup =
\int |u|^p\,d\mup\,;
\]
\item[\mylabel{hii}{\hii}]
if $p\geq N$, we do not have 
$\mu(\R^N)=0$ and $\mum(\R^N) \leq \mup(\R^N) < +\infty$.
\end{enumerate}
}
Taking into account Proposition~\ref{mu-lsc}, these assumptions 
turn out to be hypotheses~\ref{his} and~\ref{hiis}
of Section~\ref{sect:semicontinfsup}, in the case in which
$\mu^{(n)}=\mu$, $\mup^{(n)}=\mup$ and $\mum^{(n)}=\mum$.
\begin{proposition}
\label{propcoerc}
If $(u_n)$ is a sequence in $W^{1,p}_{loc}(\R^N)$ satisfying
\[
\sup_n f_{\mu,0}(u_n) <+\infty\,,\,\,
\sup_n \int |u_n|^p\,d\mum < +\infty\,,\,\,
\int |u_n|^p\,d\mup < +\infty
\quad\text{for all $n\in\N$}\,,
\]
then there exists a subsequence $(u^{(n_j)})$ converging 
in $L^p_{loc}(\R^N)$ to some $u\in W^{1,p}_{loc}(\R^N)$ with
\begin{alignat*}{3}
&\liminf_{j\to\infty} f_{\mu,0}(u_{n_j}) 
&&\geq f_{\mu,0}(u)\,,\\
&\liminf_{j\to\infty} \int |u^{(n_j)}|^p\,d\mum 
&&\geq \int |u|^p\,d\mum\,,\\
&\lim_{j\to\infty} \int |u^{(n_j)}|^p\,d\mup 
&&= \int |u|^p\,d\mup < +\infty \,.
\end{alignat*}
\end{proposition}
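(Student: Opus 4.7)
This proposition is the specialization of Proposition~\ref{propcoercs} to constant sequences of measures, with our hypotheses~\ref{hi} and~\ref{hii} playing the roles of~\ref{his} and~\ref{hiis} respectively. In place of the $\Gamma$-liminf hypothesis~\ref{hiiis}, whose verification in the constant case is awkward because of the indicator $I_{\{g_1<\infty\}}$, the plan is to mimic the proof of Proposition~\ref{propcoercs} directly and to invoke~\ref{hi} in its place, exploiting the lower semicontinuity of $f_{\mu,0}$ (by construction) and of $u\mapsto\int|u|^p\,d\mum$ on $W^{1,p}_{loc}(\R^N)$ (via Proposition~\ref{mu-lsc}).

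The first step is the \emph{bounded case}, where one assumes additionally $\sup_n\int|u_n|^p\,d\mup<+\infty$, so that all four quantities $\int|\nabla u_n|^p\,d\leb^N$, $\int|u_n|^p\,d\mu$, $\int|u_n|^p\,d\mup$, $\int|u_n|^p\,d\mum$ are uniformly bounded. The global Sobolev inequality delivers $L^p_{loc}$-boundedness of $(u_n)$ when $p<N$, via Proposition~\ref{carattfmu0} (which ensures $u_n\in L^{p^*}(\R^N)$); for $p\geq N$, one combines the Poincar\'e-Wirtinger inequality on $B_1(0)$ with an auxiliary rescaling argument: if the averages $(u_n)_{B_1(0)}$ were unbounded, normalizing by them would produce a nonzero constant limit forcing $\mu(\R^N)=\mup(\R^N)=\mum(\R^N)=0$, contradicting~\ref{hii}. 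Extracting a subsequence $u_{n_j}\to u$ in $L^p_{loc}(\R^N)$ (and noting $u\in W^{1,p}_{loc}$ by weak compactness of the gradients), the first \emph{liminf} bound is the lower semicontinuity of $f_{\mu,0}$; the second is Proposition~\ref{mu-lsc} applied to $\mum$; and for $\mup$ one approximates both $u_{n_j}$ and $u$ by $W^{1,p}_c$ functions via Proposition~\ref{strongappr} and invokes~\ref{hi}, controlling the approximation error by the smallness of the $L^p(\mup)$-distances provided in that statement.

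The second step reduces the general case to the bounded one. Supposing, for contradiction, that $\int|u_n|^p\,d\mup\to+\infty$ along a subsequence, rescale by $t_n=\bigl(\int|u_n|^p\,d\mup\bigr)^{-1/p}\to 0$ and set $v_n=t_n u_n$, so that $\int|v_n|^p\,d\mup=1$ while $f_{\mu,0}(v_n)\to 0$ and $\int|v_n|^p\,d\mum\to 0$. Applying the bounded case to $(v_n)$ yields a subsequential $L^p_{loc}$-limit $v\in W^{1,p}_{loc}(\R^N)$ with $f_{\mu,0}(v)=0$, $\int|v|^p\,d\mum=0$ and $\int|v|^p\,d\mup=1$. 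These conditions force $v$ to be constant with $\int|v|^p\,d\mu=0$; if $p<N$, Proposition~\ref{carattfmu0} gives $v\in L^{p^*}(\R^N)$, so $v=0$, contradicting $\int|v|^p\,d\mup=1$; if $p\geq N$, $v$ is a nonzero constant, whence $\mu(\R^N)=\mum(\R^N)=0$ and $\mup(\R^N)=|v|^{-p}<+\infty$, in violation of~\ref{hii}. The subtlest technical point throughout is the $L^p_{loc}$-compactness in the bounded case for $p\geq N$ when $\mu(\R^N)=0$, where Proposition~\ref{coerc} is not directly applicable and one must rely on the Poincar\'e-Wirtinger decomposition together with the inner rescaling and~\ref{hii} as sketched above.
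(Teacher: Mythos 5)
Your proof is correct and follows essentially the same route as the paper: the paper disposes of this proposition in one line as the constant-sequence case of Proposition~\ref{propcoercs}, and your two-step argument (bounded case via Sobolev or Poincar\'e--Wirtinger compactness plus a first rescaling, then reduction to it by a second rescaling contradicting~\ref{hii}) is precisely the proof of that proposition specialized to $\mu^{(n)}=\mu$, $\mup^{(n)}=\mup$, $\mum^{(n)}=\mum$. Your replacement of hypothesis~\ref{hiiis} by the lower semicontinuity of $f_{\mu,0}$ together with Proposition~\ref{mu-lsc} is exactly what the paper's phrase ``taking into account Proposition~\ref{mu-lsc}'' is meant to supply, so no genuinely different idea is involved.
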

\begin{proof}
Taking into account Proposition~\ref{mu-lsc},
it is a particular case of Proposition~\ref{propcoercs}.
\end{proof}
Now we set
\[
X = \left\{u\in W^{1,p}_{loc}(\R^N)\cap L^p(\R^N,\mup)
\cap L^p(\R^N,\mum):\,\,
f_{\mu,0}(u) < +\infty\right\}\,.
\]
\begin{proposition}
We have that $X$ is a vector subspace of $W^{1,p}_{loc}(\R^N)$
and
\[
\|u\| := \left(\int |\nabla u|^p\,d\leb^N +
\int |u|^p\,d\mu + \int |u|^p\,d\mup
+ \int |u|^p\,d\mum\right)^{1/p}
\]
is a norm on $X$ which makes $X$ a uniformly convex
Banach space.
\par
Moreover, $X\cap W^{1,p}_c(\R^N)$ is dense in $X$ and 
the linear maps
\[
\begin{array}{cccccc}
X & \longrightarrow & L^p_{loc}(\R^N)
&\qquad\qquad X & \longrightarrow & L^p(\R^N,\mup) \\
\noalign{\medskip}
u & \mapsto & u
&\qquad\qquad u & \mapsto & u
\end{array}
\]
are completely continuous.
\end{proposition}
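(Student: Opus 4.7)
The plan is to treat the four assertions in turn, exploiting the compactness provided by Proposition~\ref{propcoerc} and the approximation given by Proposition~\ref{strongappr}. First I would verify that $X$ is a vector subspace and that $\|\cdot\|$ is a norm. The set $X$ is the intersection of the finiteness domain of $f_{\mu,0}$ (a vector subspace since $f_{\mu,0}$ is convex and positively homogeneous of degree $p$) with $L^p(\R^N,\mup)$ and $L^p(\R^N,\mum)$, and the triangle inequality for $\|\cdot\|$ follows componentwise from the standard $L^p$ triangle inequalities. Positive definiteness is the only delicate point: if $\|u\|=0$ then $\nabla u=0$ a.e., so $u$ is a constant. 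If $p<N$, Proposition~\ref{carattfmu0} forces $u\in L^{p^*}(\R^N)$, hence $u=0$. If $p\geq N$, a nonzero constant value would require $\mu(\R^N)=\mup(\R^N)=\mum(\R^N)=0$, contradicting assumption~\ref{hii}.

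Next I would establish completeness and uniform convexity simultaneously via the linear isometric embedding
\[
T\colon X\longrightarrow L^p(\R^N;\R^N)\times L^p(\R^N,\mu)\times L^p(\R^N,\mup)\times L^p(\R^N,\mum)\,,\qquad u\mapsto (\nabla u,u,u,u)\,,
\]
where the target is endowed with the natural product norm obtained by summing $p$-th powers and taking $p$-th roots. Since this target is uniformly convex for $p>1$ (as a finite product of uniformly convex spaces), it suffices to verify that $T(X)$ is closed. Given a Cauchy sequence $(u_n)$ in $X$, boundedness and Proposition~\ref{propcoerc} yield a subsequence $u_{n_j}\to u$ in $L^p_{loc}(\R^N)$ with $u\in W^{1,p}_{loc}(\R^N)$. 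The Cauchy property yields strong limits of $(\nabla u_n)$, $(u_n)$, $(u_n)$, $(u_n)$ in the respective $L^p$ spaces, and each is identified with the corresponding component of $T(u)$ via Proposition~\ref{mu-lsc} and uniqueness of weak limits. Membership $u\in X$ then follows since $\int|\nabla u|^p\,d\leb^N<+\infty$ and $\int|u|^p\,d\mu<+\infty$ force $f_{\mu,0}(u)<+\infty$ through Proposition~\ref{carattfmu0} (when $p<N$ one notes that $(u_n)$ is uniformly bounded in $L^{p^*}(\R^N)$ by the Sobolev inequality, so Fatou's lemma gives $u\in L^{p^*}(\R^N)$).

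The density of $X\cap W^{1,p}_c(\R^N)$ in $X$ is exactly the content of Proposition~\ref{strongappr}, noting that any $u_n\in W^{1,p}_c(\R^N)$ with $\int|u_n|^p\,d\mu<+\infty$ automatically has $f_{\mu,0}(u_n)\leq \widetilde f_{\mu,0}(u_n)<+\infty$, hence lies in $X$. Complete continuity of $X\hookrightarrow L^p_{loc}(\R^N)$ follows at once from Proposition~\ref{propcoerc}. For $X\hookrightarrow L^p(\R^N,\mup)$, given a bounded sequence $(u_n)$ in $X$, Proposition~\ref{propcoerc} produces a subsequence $u_{n_j}\to u$ in $L^p_{loc}(\R^N)$ together with the crucial norm convergence $\int|u_{n_j}|^p\,d\mup\to\int|u|^p\,d\mup<+\infty$, while Proposition~\ref{mu-lsc} (applied with $\mup$ in place of $\mu$) yields weak convergence $u_{n_j}\rightharpoonup u$ in $L^p(\R^N,\mup)$; uniform convexity of $L^p(\R^N,\mup)$ then upgrades this to strong convergence. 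The main subtlety throughout is the identification of the various strong $L^p$-limits with the single $L^p_{loc}$-limit, which leans on the weak-convergence conclusions of Proposition~\ref{mu-lsc}; without the norm-convergence conclusion of Proposition~\ref{propcoerc} the complete continuity into $L^p(\R^N,\mup)$ could not be obtained from weak compactness alone.
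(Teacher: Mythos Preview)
Your proposal is correct and follows essentially the same route as the paper: the isometric embedding into the product of $L^p$ spaces together with Propositions~\ref{propcoerc} and~\ref{mu-lsc} for closedness and complete continuity, and Proposition~\ref{strongappr} for density. Your account is in fact more explicit than the paper's, which simply cites those two propositions for the complete continuity into $L^p(\R^N,\mup)$; you spell out the weak-convergence-plus-norm-convergence argument via uniform convexity, which is exactly what is implicitly intended.
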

\begin{proof}
It is easily seen that $X$ is a vector subspace of 
$W^{1,p}_{loc}(\R^N)$ and that $\|u\|$ is a norm in $X$.
In particular, assumption~\ref{hii} guarantees that $\|u\|=0$
only if $u=0$.
\par
Of course
\[
\begin{array}{ccc}
X & \longrightarrow &
L^p(\R^N;\R^N) \times L^p(\R^N,\mu)
\times L^p(\R^N,\mup) \times L^p(\R^N,\mum) \\
\noalign{\medskip}
u & \mapsto &
(\nabla u,u,u,u)
\end{array}
\]
is a linear isometry.
We claim that its image is closed.
Actually, if $(u_n)$ is a sequence in $X$ such that
$((\nabla u_n,u_n,u_n,u_n))$ is convergent to
$(U,v_1,v_2,v_3)$, from 
Propositions~\ref{propcoerc} and~\ref{mu-lsc}
we infer that, up to a subsequence, $(u_n)$ is convergent
in $L^p_{loc}(\R^N)$ to some $u\in X$ with 
$(\nabla u,u,u,u) = (U,v_1,v_2,v_3)$ and the claim follows.
\par
Therefore, $X$ is a uniformly convex Banach space.
By Proposition~\ref{strongappr} we have that 
$X\cap W^{1,p}_c(\R^N)$ is dense in~$X$, while the linear maps
\[
\begin{array}{cccccc}
X & \longrightarrow & L^p_{loc}(\R^N)
&\qquad\qquad X & \longrightarrow & L^p(\R^N,\mup) \\
\noalign{\medskip}
u & \mapsto & u
&\qquad\qquad u & \mapsto & u
\end{array}
\]
are completely continuous by Propositions~\ref{propcoerc}
and~\ref{mu-lsc}.
\end{proof}
\begin{remark}
\label{rem:X}
We will see that in $X$ standard variational methods apply.
On the other hand $X$ depends on $\mu$, $\mup$ and $\mum$,
while $L^p_{loc}(\R^N)$ is a fixed space, hence more suitable
for $\Gamma$-convergence.
\par
If $\mu=\infty_{\R^N\setminus A}$, where $A$ is $p$-quasi open,
$\mup = \leb^N$ and $\mum=0$, then $X=W^{1,p}_0(A)$ endowed
with the usual structure of uniformly convex Banach space.
\end{remark}
We also define $g_1,g_2:L^p_{loc}(\R^N)\rightarrow[0,+\infty]$ by
\[
g_j(u) =
\begin{cases}
\displaystyle{\frac{1}{p}\,\int |u|^p\,d\nu_j}
&\qquad\text{if $u\in W^{1,p}_{loc}(\R^N)$}\,,\\
\noalign{\medskip}
+\infty
&\qquad\text{otherwise}\,,
\end{cases}
\]
and set
\[
\varphi = f_{\mu,0}\bigl|_X\,,\,\,
\psi_1 = {g_1}\bigl|_X\,,\,\,
\psi_2 = {g_2}\bigl|_X\,,\,\,
\widehat M = 
\left\{u\in X:\,\,\psi_1(u) - \psi_2(u) = 1\right\}\,.
\]
Of course, $\varphi$, $\psi_1$ and $\psi_2$ are even, convex, 
positively homogeneous of degree $p$ and of class $C^1$.
According to Section~\ref{sect:nep}, we denote by 
$\widehat{\mathcal{K}}_m$ the family
of nonempty, compact and symmetric subsets $K$ of
$\widehat{M}$ (with respect to the topology of $X$)
such that $\idx{K}\geq m$ and we set
\[
\hat{\lambda}_m^p(\mu,\mup,\mum) =
\inf_{K\in\widehat{\mathcal{K}}_m}\,\max_{u\in K}\,\varphi(u) \,,
\]
where we agree that 
$\hat{\lambda}_m^p(\mu,\mup,\mum) = +\infty$ if there is no 
$K$ included in $\widehat{M}$ with $\idx{K}\geq m$.
\begin{theorem}
\label{thm:lambdaequal}
For every integer $m\geq 1$, we have
\[
\hat{\lambda}_m^p(\mu,\mup,\mum) =
\lambda_m^p(\mu,\mup,\mum)  \,.
\]
\end{theorem}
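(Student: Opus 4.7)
The plan is to establish the two inequalities $\lambda_m^p(\mu,\mup,\mum)\le\hat\lambda_m^p(\mu,\mup,\mum)$ and $\hat\lambda_m^p(\mu,\mup,\mum)\le\lambda_m^p(\mu,\mup,\mum)$, exploiting two key facts: on $\widehat{M}$ we have $1+g_2=g_1<+\infty$, so $R$ coincides with $\varphi$ there, and the three functionals $f_{\mu,0}$, $g_1$, $g_2$ are all $p$-homogeneous.

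For the first inequality, I would transport admissible sets from the $X$-world to the $L^p_{loc}$-world. Given $K\in\widehat{\mathcal K}_m$, the inclusion $X\hookrightarrow L^p_{loc}(\R^N)$ is continuous, linear, and odd, so $K$ is still nonempty, compact and symmetric when viewed in $L^p_{loc}(\R^N)\setminus\{0\}$. Property~\ref{hd} of the index applied to this inclusion yields $\idx{K}\ge m$ also in $L^p_{loc}(\R^N)$, so $K\in\mathcal K_m$. Since $R=\varphi$ on $\widehat{M}$, we obtain
\[
\lambda_m^p(\mu,\mup,\mum)\le\sup_{u\in K}R(u)=\max_{u\in K}\varphi(u),
\]
and taking the infimum over $K$ yields $\lambda_m^p\le\hat\lambda_m^p$.

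The reverse inequality is more delicate because the $X$-topology is strictly finer than the one induced from $L^p_{loc}(\R^N)$, and compact sets in the latter need not be compact in the former. My strategy is to bypass this by using Proposition~\ref{minimaxfinLp}, whose hypothesis~\ref{haKfin} coincides with our standing assumption~\ref{hi}: this reduces $\lambda_m^p(\mu,\mup,\mum)$ to an infimum over $\mathcal K_m^{fin}$. Fix $K\in\mathcal K_m^{fin}$ with $\sup_K R<+\infty$ (otherwise there is nothing to prove). Every $u\in K$ then satisfies $f_{\mu,0}(u)<+\infty$, $g_1(u)<+\infty$, $g_2(u)<+\infty$, so $u\in X$. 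Let $Y$ be the finite-dimensional linear span of $K$: since $X$ is a vector subspace of $W^{1,p}_{loc}(\R^N)$, we have $Y\subseteq X$. All Hausdorff topological vector space topologies on a finite-dimensional space coincide, hence the $L^p_{loc}(\R^N)$- and $X$-topologies agree on $Y$, and $K$ is compact in $X$ as well.

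Next I would rescale via the odd map $\rho(u)=u/(g_1(u)-g_2(u))^{1/p}$, which is well-defined on $K$ because $g_1-g_2\ge 1$ there, and continuous because the convex functionals $g_1$, $g_2$ are finite, hence continuous, on the finite-dimensional space~$Y$. Applying property~\ref{hd} to $\rho$ viewed as a continuous odd map from $K\subseteq L^p_{loc}(\R^N)\setminus\{0\}$ to $X\setminus\{0\}$ gives $\idx{\rho(K)}\ge\idx{K}\ge m$, so $\rho(K)\in\widehat{\mathcal K}_m$. By $p$-homogeneity of $f_{\mu,0}$ together with the bound $g_1(u)-g_2(u)\ge 1$,
\[
\varphi(\rho(u))=\frac{f_{\mu,0}(u)}{g_1(u)-g_2(u)}\le f_{\mu,0}(u)=R(u),
\]
whence $\hat\lambda_m^p\le\sup_{u\in K}R(u)$; taking the infimum over $\mathcal K_m^{fin}$ concludes the proof. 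The main obstacle here is precisely the mismatch between the two ambient topological vector spaces (the fixed $L^p_{loc}(\R^N)$ versus the $(\mu,\mup,\mum)$-dependent Banach space $X$), which is resolved by the finite-dimensional reduction supplied by Proposition~\ref{minimaxfinLp} together with the elementary equivalence of Hausdorff TVS topologies in finite dimension.
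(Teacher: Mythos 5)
Your proof is correct and follows essentially the same route as the paper: both reduce $\lambda_m^p(\mu,\mup,\mum)$ to finite-dimensional sets via Proposition~\ref{minimaxfinLp}, exploit that the $X$- and $L^p_{loc}(\R^N)$-topologies coincide on finite-dimensional subspaces, and pass between $\{1+g_2\leq g_1<+\infty\}$ and $\widehat{M}$ by the $p$-homogeneous rescaling $u\mapsto u/(g_1(u)-g_2(u))^{1/p}$. The only (valid, and slightly more economical) deviation is in the inequality $\lambda_m^p\leq\hat{\lambda}_m^p$, which you obtain directly by transporting $X$-compact sets into $L^p_{loc}(\R^N)$ through the continuous embedding and property~\ref{hd} of the index, whereas the paper instead applies Theorem~\ref{minimaxfin} (via Remark~\ref{rem:minimaxfin}) a second time, inside the space $X$, to reduce the $X$-side inf-sup to finite-dimensional sets.
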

\begin{proof}
Let
\[
\widetilde{M} = 
\left\{u\in X:\,\,1+\psi_2(u) \leq \psi_1(u)\right\} 
\]
and denote by $\widetilde{\mathcal{K}}_m$ the family
of nonempty, compact and symmetric subsets $K$ of
$\widetilde{M}$ such that $\idx{K}\geq m$.
\par
Of course, the topologies of $X$ and of 
$L^p_{loc}(\R^N)$ agree on finite dimensional subspaces.
Moreover, assumption~\ref{hbsuff} of Remark~\ref{rem:minimaxfin}
is satisfied by $\varphi$, $\psi_1$ and $\psi_2$ in the space $X$,
while assumption~\ref{haKfin} of Proposition~\ref{minimaxfinLp}
is just assumption~\ref{hi}.
Combining Theorem~\ref{minimaxfin} with
Proposition~\ref{minimaxfinLp}, we infer that
\[
\inf_{K\in\widetilde{\mathcal{K}}_m}\,\max_{u\in K}\,\,\varphi(u) 
= \lambda_m^p(\mu,\mup,\mum)  \,.
\]
Of course, we have
\[
\inf_{K\in\widetilde{\mathcal{K}}_m}\,\max_{u\in K}\,\,\varphi(u)
\leq \inf_{K\in\widehat{\mathcal{K}}_m}\,
\max_{u\in K}\,\varphi(u)\,,
\]
as $\widehat{M}\subseteq \widetilde{M}$.
On the other hand, if $K\in \widetilde{\mathcal{K}}_m$,
we have that
\[
\widehat{K} = 
\left\{\frac{u}{(\psi_1(u)-\psi_2(u))^{1/p}}:\,\,u\in K\right\}
\]
satisfies $\widehat{K}\in \widehat{\mathcal{K}}_m$ and
$\max\limits_{u\in\widehat{K}} \varphi(u) 
\leq \max\limits_{u\in K} \varphi(u)$, 
whence 
\[
\inf_{K\in\widehat{\mathcal{K}}_m}\,
\max_{u\in K}\,\varphi(u) \leq
\inf_{K\in\widetilde{\mathcal{K}}_m}\,\max_{u\in K}\,\varphi(u) 
\]
and the assertion follows.
\end{proof}
\begin{corollary}
\label{cor:lambdafin}
If there exists $u\in W^{1,p}_c(\R^N)$ such that
\[
\int |u|^p\,d\mu<+\infty\,,\qquad
\int |u|^p\,d\mum < \int |u|^p\,d\mup < +\infty\,,\qquad
\lim_{r\to 0}\,\int_{B_r(x)} |u|^p\,d\mup=0
\quad\text{for all $x\in\R^N$}\,,
\]
then we have
$\hat{\lambda}_m^p(\mu,\mup,\mum)<+\infty$ for all $m\geq 1$.
\end{corollary}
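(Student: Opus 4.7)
The plan is to observe that this corollary is essentially an immediate combination of two results already established in the paper. The hypothesis on $u$ is literally the hypothesis of assertion~\ref{hclambdafin} of Proposition~\ref{prop:lambdafin}, so the first step is simply to invoke that proposition to conclude that
\[
\lambda_m^p(\mu,\mup,\mum) < +\infty \qquad\text{for all $m\geq 1$}\,.
\]
Then I would apply Theorem~\ref{thm:lambdaequal}, which asserts the identity
\[
\hat{\lambda}_m^p(\mu,\mup,\mum) = \lambda_m^p(\mu,\mup,\mum)
\qquad\text{for all $m\geq 1$}\,,
\]
to transfer the finiteness from the inf-sup value $\lambda_m^p$ (defined on $L^p_{loc}(\R^N)$) to the inf-sup value $\hat{\lambda}_m^p$ (defined in the Banach space $X$), yielding the desired conclusion.

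There is essentially no obstacle: the hypothesis of the corollary was tailored to match the hypothesis of Proposition~\ref{prop:lambdafin}\ref{hclambdafin}, and Theorem~\ref{thm:lambdaequal} was proved precisely to allow passage between the two formulations of the minimax values. The only thing worth double-checking is that the standing assumptions~\ref{hi} and~\ref{hii} of Section~\ref{sect:eigmeas}, required for Theorem~\ref{thm:lambdaequal} to apply, are in force here; they are, since they are blanket assumptions throughout Section~\ref{sect:eigmeas}, in which the present corollary is stated. Thus the proof is a two-line composition: Proposition~\ref{prop:lambdafin}\ref{hclambdafin} followed by Theorem~\ref{thm:lambdaequal}.
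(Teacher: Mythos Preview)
Your proposal is correct and matches the paper's own proof exactly: the paper simply writes ``It follows from Proposition~\ref{prop:lambdafin} and Theorem~\ref{thm:lambdaequal}.'' Your additional remark about the standing assumptions~\ref{hi} and~\ref{hii} being in force is a sensible sanity check.
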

\begin{proof}
It follows from Proposition~\ref{prop:lambdafin}
and Theorem~\ref{thm:lambdaequal}.
\end{proof}
\begin{theorem}
\label{thm:phipsi}
The functionals $\varphi$, $\psi_1$ and $\psi_2$ satisfy the 
assumptions~\ref{hae} and~\ref{hbe} of Section~\ref{sect:nep}.
In particular, the assertions of Theorem~\ref{generaleig} hold true.
\end{theorem}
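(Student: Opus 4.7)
The plan is to split the verification into three independent parts: the positivity condition~\ref{hbe}, the complete continuity of $\psi_1'$, and the $(S)_+$ property of $\varphi'+\lambda\psi_2'$.

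For~\ref{hbe}, I would argue by contradiction. If $u\in X\setminus\{0\}$ satisfied $\psi_1(u)\geq\psi_2(u)$ and $\varphi(u)=0$, then $\nabla u=0$ and $\int|u|^p\,d\mu=0$, so $u$ would be a nonzero constant on $\R^N$. In the case $p<N$, Proposition~\ref{carattfmu0} forces $u\in L^{p^*}(\R^N)$, which rules out nonzero constants. In the case $p\geq N$, the constancy forces $\mu(\R^N)=0$; moreover $u\in L^p(\R^N,\mup)$ gives $\mup(\R^N)<+\infty$ and $\psi_1(u)\geq\psi_2(u)$ yields $\mum(\R^N)\leq\mup(\R^N)$, directly contradicting assumption~\ref{hii}. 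This step is routine.

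For the complete continuity of $\psi_1'$, I would rely on the already-established complete continuity of the inclusion $X\hookrightarrow L^p(\R^N,\mup)$. Given a bounded sequence $(u_n)\subset X$, I extract a weakly convergent subsequence $u_n\rightharpoonup u$; the embedding then delivers $u_n\to u$ strongly in $L^p(\R^N,\mup)$, continuity of the Nemytskii operator $v\mapsto|v|^{p-2}v$ transfers this to strong convergence in $L^{p'}(\R^N,\mup)$, and duality against the continuous embedding $X\hookrightarrow L^p(\R^N,\mup)$ yields $\psi_1'(u_n)\to\psi_1'(u)$ strongly in $X'$. The same chain along strongly convergent sequences gives continuity of $\psi_1'$ itself.

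The main work, and where I expect the real obstacle, is the $(S)_+$ property of $\Phi':=\varphi'+\lambda\psi_2'$. The plan is the classical convexity argument: $\Phi:=\varphi+\lambda\psi_2$ is convex and continuous on $X$, hence weakly lower semicontinuous, and the subgradient inequality $\Phi(u_n)\leq\Phi(u)+\langle\Phi'(u_n),u_n-u\rangle$ combined with $\limsup_n\langle\Phi'(u_n),u_n-u\rangle\leq 0$ yields $\lim_n\Phi(u_n)=\Phi(u)$. Writing $\Phi(u_n)$ as the sum of three separately nonnegative and weakly lower semicontinuous terms $\frac{1}{p}\int|\nabla u_n|^p\,d\leb^N$, $\frac{1}{p}\int|u_n|^p\,d\mu$ and $\frac{\lambda}{p}\int|u_n|^p\,d\mum$ (the latter two being weakly lower semicontinuous on $X$ thanks to the continuous embeddings $X\hookrightarrow L^p(\R^N,\mu)$ and $X\hookrightarrow L^p(\R^N,\mum)$) then forces each summand to converge to its value at $u$. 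Combined with $\int|u_n|^p\,d\mup\to\int|u|^p\,d\mup$ from the preceding paragraph, this produces $\|u_n\|\to\|u\|$; together with $u_n\rightharpoonup u$ and uniform convexity of $X$, one concludes $u_n\to u$ strongly in $X$. Once~\ref{hae} and~\ref{hbe} are in place, the assertions of Theorem~\ref{generaleig} follow immediately.
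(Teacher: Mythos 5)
Your proposal is correct and follows the same overall structure as the paper's proof: condition~\ref{hbe} is checked exactly as in the paper (a zero of $\varphi$ in $X\setminus\{0\}$ would be a nonzero constant, excluded for $p<N$ by the $L^{p^*}$ constraint built into $f_{\mu,0}$ via Proposition~\ref{carattfmu0}, and for $p\geq N$ by assumption~\ref{hii}), and the complete continuity of $\psi_1'$ is obtained from the identical factorization $X\to L^p(\R^N,\nu_1)\to L^{p'}(\R^N,\nu_1)\to X'$. The only divergence is the $(S)_+$ step. The paper adds and subtracts $\lambda\psi_1'$: since $p(\varphi+\lambda\psi_2+\lambda\psi_1)$ is the $p$-th power of an equivalent uniformly convex norm on $X$, its derivative is of class $(S)_+$ by the standard result cited from Browder, and subtracting the completely continuous term $\lambda\psi_1'$ preserves the class. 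You instead prove the $(S)_+$ property of $\varphi'+\lambda\psi_2'$ directly: the convexity/weak lower semicontinuity argument applied to $\Phi=\varphi+\lambda\psi_2$ yields convergence of each of its three nonnegative summands, the $\nu_1$-part of the norm converges by the completely continuous embedding, and uniform convexity (Radon--Riesz) concludes. The mathematical content is the same --- your version essentially unpacks the proof of the cited standard fact and recovers the missing $\nu_1$-piece of the norm through complete continuity rather than through the add-and-subtract decomposition --- so yours is more self-contained, while the paper's is shorter. Both are valid.
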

\begin{proof}
Since $\psi_1'$ is the composition
\[
\begin{array}{ccccccc}
X & \longrightarrow & L^p(\R^N,\mup) 
& \longrightarrow & L^{p'}(\R^N,\mup)
& \longrightarrow & X' \\
\noalign{\medskip}
u & \mapsto & u & \mapsto 
& |u|^{p-2}u & \mapsto &\psi_1'(u)
\end{array}
\]
the complete continuity of $\psi_1'$ follows from the
complete continuity of the first map and the continuity
of the other maps.
\par
Given $\lambda>0$, it is standard (see e.g.\cite{browder1983})
that $(\varphi' + \lambda\psi_2' + \lambda\psi_1')$ is of 
class $(S)_+$.
Then also
\[
\varphi' + \lambda\psi_2' =
(\varphi' + \lambda\psi_2' + \lambda\psi_1') - \lambda\psi_1'
\]
is of class $(S)_+$.
\par
Finally, if $u\in X\setminus\{0\}$ satisfies $\varphi(u)=0$,
then $u$ is a nonzero constant, $p\geq N$, $\mu(\R^N)=0$ and
we cannot have 
\[
\int |u|^p\,d\mum \leq \int |u|^p\,d\mup < +\infty
\]
by assumption~\ref{hii}.
\end{proof}
\begin{example}
Let $N=1$, $p=2$, $\mu=0$ and
\[
\nu_1(B) = \leb^1(B\cap ]0,1[)\,,\qquad
\nu_2(B) = \leb^1(B\cap ]-1,0[)\,,\qquad
\text{for all $B\in\bor(\R)$}\,.
\]
Then we have
\[
\inf \left\{\varphi(u):\,\,\psi_1(u)-\psi_2(u)=1\right\}=0\,,
\qquad
\text{$\varphi(u)>0$ for all $u$ with $\psi_1(u)-\psi_2(u)>0$}\,.
\]
On the other hand, assumption~\ref{hii} is not satisfied.
\end{example}
%


\section{On the existence of optimal capacitary measures}
\label{sect:potentials}
Let $\underline{\mu}, \nu\in\cmeas(\R^N)$ and 
$W:\R^N \rightarrow [0,+\infty[$ be a $\leb^N$-measurable function.
Let also $V_{\underline{\mu}+\nu}$ be the $\leb^N$-measurable
function introduced in Definition~\ref{def:Vmu} and set
$\mup = W \,\leb^N$, $\mum=\nu$ and
\[
\lambda_m^p(\mu) = \lambda_m^p(\mu,\mup,\mum)
\qquad\text{for all $\mu\in\cmeas(\R^N)$ and $m\geq 1$}\,.
\]
\par
If $N\geq 2$, we define a convex function $\tau:\R\rightarrow\R$ by
\[
\tau(s) = \sum_{k=N}^\infty \frac{|s|^{\frac{k}{N-1}}}{k!}
\]
and denote by $\tau^*:\R\rightarrow\R$ its conjugate function, 
namely
\[
\tau^*(t) = \sup_{s\in\R} \left(ts - \tau(s)\right)\,.
\]
Since
\[
\tau(2^{N-1}\,s) \geq 2^N\,\tau(s) 
\qquad\text{for all $s\in\R$}\,,
\]
we have
\begin{equation}
\label{eq:tau*}
\tau^*\left(2\,t\right) \leq 2^N\,\tau^*(t)
\qquad\text{for all $t\in\R$}\,.
\end{equation}
Throughout this section, we assume that:
\emph{
\begin{enumerate}[align=parleft]
\item[\mylabel{muW}{\muW}]
\begin{itemize}
\item
if $p< N$, we have
\[
\int_{\{V_{\underline{\mu}+\nu} < R W\}}
W^{N/p}\,d\leb^N < +\infty
\qquad\text{for all $R>0$}\,;
\]
\item
if $p=N$, we have
\[
\inf_{\varepsilon > 0}\,
\leb^N\left(\left\{V_{\underline{\mu}+\nu} 
< \varepsilon\right\}\right) < +\infty
\]
and
\[
\int_{\{V_{\underline{\mu}+\nu} < R W\}}
\tau^*\left(W\right)\,d\leb^N < +\infty
\qquad\text{for all $R>0$}\,;
\]
\item
if $p>N$, we have
\[
\inf_{\varepsilon > 0}\,
\leb^N\left(\left\{V_{\underline{\mu}+\nu} 
< \varepsilon\right\}\right) < +\infty
\]
and there exists $q\in[1,\infty[$ such that
\[
\int_{\{V_{\underline{\mu}+\nu} < R W\}}
W^q\,d\leb^N < +\infty
\qquad\text{for all $R>0$}\,.
\]
\end{itemize}
\end{enumerate}
}
\begin{proposition}
\label{prop:muW}
The following facts hold:
\begin{enumerate}[label={\upshape\alph*)}, align=parleft, 
widest=iii, leftmargin=*]
\item[\mylabel{hamuW}{\ha}]
if $(\mu^{(n)})$ is locally $\gamma$-convergent to $\mu$
in $\cmeas(\R^N)$ and $\mu^{(n)} \geq \underline{\mu}$
for all $n\in\N$, then $\mu \geq \underline{\mu}$ and
\[
\lambda_m^p(\mu) \leq \liminf_{n\to\infty} \,
\lambda_m^p(\mu^{(n)})
\qquad\text{for all $m\geq 1$}\,;
\]
\item[\mylabel{hbmuW}{\hb}]
for every $\mu\in\cmeas(\R^N)$ with $\mu\geq\underline{\mu}$, 
the assumptions~\ref{hi} and~\ref{hii} of 
Section~\ref{sect:eigmeas} are satisfied by
$(\mu,W\,\leb^N,\nu)$, in particular \[
\hat{\lambda}_m^p(\mu,W\,\leb^N,\nu) =
\lambda_m^p(\mu,W\,\leb^N,\nu),
\qquad\text{ for all }m\geq 1\,;
\]
\item[\mylabel{hcmuW}{\hc}]
for every $\mu\in\cmeas(\R^N)$, we have
\[
\lambda_m^p(\mu) < +\infty \qquad\text{for all $m\geq 1$}
\]
if and only if there exists $u\in W^{1,p}_c(\R^N)$ such that
\[
\int |u|^p\,d\mu < +\infty\,,\qquad
\int |u|^p\,d\nu < \int |u|^p\,W\,d\leb^N < +\infty\,.
\]
\end{enumerate}
\end{proposition}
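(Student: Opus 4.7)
For part~\ref{hamuW}, the inequality $\underline{\mu}\leq\mu$ is immediate from Corollary~\ref{cor:leq} applied with the constant sequence $\underline{\mu}$ on one side and $(\mu^{(n)})$ on the other. For the semicontinuity, I would apply Theorem~\ref{thmlsc} with the constant sequences $\mup^{(n)}=W\,\leb^N$ and $\mum^{(n)}=\nu$. Its hypothesis~\hiiis\ is immediate from Theorem~\ref{GammalimthmW1pc}, since only $f_{\mu^{(n)},0}$ varies. Hypothesis~\hiis\ could fail only if $p\geq N$, $\mu(\R^N)=0$, and $\nu(\R^N)<+\infty$; but $\mu(\R^N)=0$ forces $\underline{\mu}(\R^N)=0$ by $\underline{\mu}\leq\mu$, hence $V_{\underline{\mu}+\nu}=V_\nu\in L^1(\R^N)$, and Markov's inequality gives $\leb^N(\{V_\nu<\varepsilon\})=+\infty$ for every $\varepsilon>0$, contradicting the $p\geq N$ clause of~\muW. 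The deep point is hypothesis~\his.

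For~\his, let $(u^{(k)})$ in $W^{1,p}_c(\R^N)$ satisfy the listed bounds and converge to $u$ in $L^p_{loc}(\R^N)$; the goal is $\limsup_k\int W|u^{(k)}|^p\,d\leb^N\leq\int W|u|^p\,d\leb^N$. Since $\mu^{(n_k)}\geq\underline{\mu}$, one obtains the uniform bound $\int|u^{(k)}|^p\,V_{\underline{\mu}+\nu}\,d\leb^N\leq C$. On the region $\{V_{\underline{\mu}+\nu}\geq RW\}$ the integral of $W|u^{(k)}|^p$ is dominated by $C/R$. On $\{V_{\underline{\mu}+\nu}<RW\}$ the plan is to extract an $L^1$ majorant uniform in $k$ and apply Vitali's theorem with the a.e.~convergence (along a subsequence) coming from $L^p_{loc}$~convergence: for $p<N$ one combines the integrability of $W^{N/p}$ with the $L^{p^*}$ bound on $(u^{(k)})$ via H\"older's inequality; for $p>N$ one combines the integrability of $W^q$ with the continuous embedding $W^{1,p}(\R^N)\hookrightarrow L^\infty(\R^N)$, where the finite measure of $\{V_{\underline{\mu}+\nu}<\varepsilon_0\}$ provided by~\muW\ is exactly what upgrades the local bound on $(u^{(k)})$ to a uniform $L^p(\R^N)$ bound; for $p=N$ one uses Young's inequality $W|u|^N\leq\tau^*(W)+\tau(|u|^N)$ together with the Moser--Trudinger inequality and the scaling property~\eqref{eq:tau*} to get the required equi-integrable majorant.

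Part~\ref{hbmuW} reduces to checking assumptions~\hi\ and~\hii\ of Section~\ref{sect:eigmeas} for the triple $(\mu,W\,\leb^N,\nu)$: these are the specializations of~\his\ and~\hiis\ above to the constant sequence $\mu^{(n)}=\mu$, and follow by the very same arguments. Theorem~\ref{thm:lambdaequal} then yields $\hat\lambda_m^p(\mu,W\,\leb^N,\nu)=\lambda_m^p(\mu,W\,\leb^N,\nu)$ for every $m\geq 1$. Part~\ref{hcmuW} is a direct application of Proposition~\ref{prop:lambdafin}: if no $u\in W^{1,p}_c(\R^N)$ satisfies the three integrability conditions, then~\halambdafin\ forces $\lambda_m^p(\mu)=+\infty$ for all $m$; conversely, any such $u$ has $|u|^p W\in L^1(\R^N)$, so the measure $|u|^p W\,d\leb^N$ is absolutely continuous with respect to $\leb^N$ and the extra condition $\lim_{r\to 0}\int_{B_r(x)}|u|^p\,d\mup=0$ required by~\hclambdafin\ holds automatically at every $x\in\R^N$, giving $\lambda_m^p(\mu)<+\infty$ for all $m\geq 1$.

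The main obstacle is the equi-integrability step in~\his\ on the region $\{V_{\underline{\mu}+\nu}<RW\}$. The decomposition along the threshold $V_{\underline{\mu}+\nu}=RW$ is the common entry point, but the integrable majorant has to be built regime by regime; the borderline case $p=N$ is the most delicate because it requires pairing the integrability of $\tau^*(W)$ given by~\muW\ with a Moser--Trudinger bound on $\tau(|u^{(k)}|^N)$ uniform in $k$, and this uniform bound in turn rests on using $\leb^N(\{V_{\underline{\mu}+\nu}<\varepsilon_0\})<+\infty$ to control $(u^{(k)})$ in $L^N(\R^N)$ so that Moser--Trudinger can be applied globally.
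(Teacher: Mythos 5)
Your proof is correct and follows essentially the same route as the paper's: the same reduction of part~\ref{hamuW} to Theorem~\ref{thmlsc} via Corollary~\ref{cor:leq}, the same splitting along $\{V_{\underline{\mu}+\nu}<RW\}$ with the three regimes $p<N$ ($L^{p^*}$ bound plus $W^{N/p}\in L^1$), $p=N$ (Li--Ruf plus the Young inequality for $\tau,\tau^*$ and~\eqref{eq:tau*}), and $p>N$, and the same reductions of parts~\ref{hbmuW} and~\ref{hcmuW} to Theorem~\ref{thm:lambdaequal} and Proposition~\ref{prop:lambdafin}. The only place you are a touch too quick is in calling assumption~\ref{hiiis} ``immediate'' from Theorem~\ref{GammalimthmW1pc}: since the fixed summands need not be continuous on $L^p_{loc}(\R^N)$, one must still invoke Proposition~\ref{mu-lsc} for the lower semicontinuity of $g_2$ along the relevant sequences and your own equi-integrability claim to guarantee $g_1(u)<+\infty$ --- exactly the two extra ingredients the paper cites at that point.
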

\begin{proof}
\par\noindent
\par\noindent
\ref{hamuW}~By Corollary~\ref{cor:leq} we have 
$\mu \geq \underline{\mu}$.
The second assertion follows from~Theorem~\ref{thmlsc} as 
soon as the assumptions~\ref{his}-\ref{hiis}-\ref{hiiis} are 
verified.
We deal first with assumption~\ref{his}. 
Actually, we prove a stronger statement, which will be useful
also in the verification of~\ref{hiiis}.
\par
Let us consider a strictly increasing sequence $(n_k)$ in $\N$ and 
a sequence $(u^{(k)})$ in $W^{1,p}_c(\R^N)$ converging in 
$L^p_{loc}(\R^N)$ to $u\in W^{1,p}_{loc}(\R^N)$ with
\[
\sup_k\left(\int |\nabla u^{(k)}|^p\,d\leb^N
+ \int |u^{(k)}|^p\,d\mu^{(n_k)}
+ \int |u^{(k)}|^p\,d\nu \right) < +\infty \,.
\]
We claim that
\[
\limsup_{k\to\infty} \int |u^{(k)}|^p\,W\,d\leb^N 
\leq \int |u|^p\,W\,d\leb^N < +\infty\,.
\]
Up to a subsequence, $(u^{(k)})$ is convergent to $u$ 
$\leb^N$-a.e. in $\R^N$ and we have
\[
\sup_k\left(\int |\nabla u^{(k)}|^p\,d\leb^N
+ \int |u^{(k)}|^p\,d\underline{\mu}
+ \int |u^{(k)}|^p\,d\nu \right) < +\infty\,.
\]
Since for every $R>0$ it is
\[
\int_{\{R W \leq V_{\underline{\mu}+\nu} \}} 
|u^{(k)}|^p\,W\, d\leb^N \leq
\frac{1}{R}\,\left(\int |u^{(k)}|^p\,d\underline{\mu} 
+ \int |u^{(k)}|^p\,d\nu \right) \,,
\]
it is enough to show that
\begin{equation}
\label{eq:muW}
\limsup_{k\to\infty}\,
\int_{\{V_{\underline{\mu}+\nu} < R W\}}
|u^{(k)}|^p\,W\, d\leb^N
\leq
\int_{\{V_{\underline{\mu}+\nu} < R W\}}
|u|^p\,W\, d\leb^N < +\infty
\qquad\text{for all $R>0$}\,.
\end{equation}
In the case $p<N$, the sequence $(u^{(k)})$ is bounded in 
$L^{p^*}(\R^N)$, so that~\eqref{eq:muW} follows from 
assumption~\ref{muW}.
\par
If $p\geq N$, first of all by assumption~\ref{muW} there exists 
$\varepsilon>0$ such that
\[
\leb^N\left(\left\{V_{\underline{\mu}+\nu}  
< \varepsilon\right\}\right) < +\infty \,.
\]
If we set
$C = \left\{V_{\underline{\mu}+\nu} \geq \varepsilon\right\}$,
we have
\[
\sup_k\left(\int_{\R^N} |\nabla u^{(k)}|^p\,d\leb^N
+ \varepsilon \int_C |u^{(k)}|^p\,d\leb^N 
\right) < +\infty
\]
and $\leb^N(\R^N\setminus C)<+\infty$.
Since
\[
\sup_k\,\leb^N\left(\left\{|u^{(k)}| \geq 1\right\}\right)
< +\infty \,,
\]
it follows that
\[
\sup_k\left(\int_{\R^N} |\nabla u^{(k)}|^p\,d\leb^N
+ \int_{\R^N} |u^{(k)}|^p\,d\leb^N 
\right) < +\infty\,.
\]
Now, in the case $p=N$, according 
to~\cite[Theorem~1.1]{li_ruf2008} there exist $d_N,\alpha_N>0$ 
such that
\[
\int \tau\left(\alpha_N |v|^N\right)\,d\leb^N \leq d_N 
\qquad\text{whenever $v\in W^{1,N}_c(\R^N)$ and
$\int |\nabla v|^N\,d\leb^N + \int |v|^N\,d\leb^N \leq 1$}\,.
\]
Therefore, for every $\sigma>0$ there exists $j\geq 1$ such that
\[
2^{-j}\,\,\int \tau\left(2^{-j}\,|u^{(k)}|^N\right)\,d\leb^N 
< \sigma \qquad\text{for all $k\in\N$}\,.
\]
On the other hand, we have
\[
|u^{(k)}|^N\,W \leq 
2^{-j}\,\tau\left(2^{-j}\,|u^{(k)}|^N\right)
+ 2^{-j}\tau^*\left(2^{2j}\, W\right)
\qquad\text{a.e. in $\R^N$}
\]
and
\[
\int_{\{V_{\underline{\mu}+\nu} < R W\}}
\tau^*\left(2^{2j}\, W\right)\,d\leb^N < +\infty
\qquad\text{for all $R, j$}
\]
by assumption~\ref{muW} and~\eqref{eq:tau*}.
Therefore~\eqref{eq:muW} follows.
\par
In the case $p>N$, we have that $(u^{(k)})$ is bounded in each
$L^r(\R^N)$ with $p\leq r\leq\infty$ and ~\eqref{eq:muW} follows 
again from assumption~\ref{muW}.
Therefore assumption~\ref{his} is satisfied.
\par
Assumption~\ref{hiiis} follows from the previous step,
Theorem~\ref{GammalimthmW1pc} and Proposition~\ref{mu-lsc}.
\par
Finally, in the case $p \geq N$ also assumption~\ref{hiis} is 
satisfied, as $\leb^N(C) = +\infty$ implies that
$(\underline{\mu}+\nu)(\R^N) = +\infty$.
\par\noindent
\ref{hbmuW}~We argue as in the previous step, noting that
assumption~\ref{hi} is a special case of~\ref{his}.
\par\noindent
\ref{hcmuW}~The assertion follows from 
Proposition~\ref{prop:lambdafin}.
\end{proof}
\begin{theorem}
\label{thm:existmuV}
Let $\Psi:[0,+\infty]\rightarrow[0,+\infty]$ be a function as in 
Corollary~\ref{cor:lsc} and let
\[
0 < c \leq\int \Psi(V_{\underline{\mu}})\,d\leb^N \,.
\]
Denote by $\mathcal{M}$ the set of 
$\mu$'s in $\cmeas(\R^N)$ such that 
\[
\mu\geq\underline{\mu}\,,\qquad
\int \Psi(V_\mu)\,d\leb^N \leq c
\]
and such that there exists $u\in W^{1,p}_c(\R^N)$ satisfying
\[
\int |u|^p\,d\mu < +\infty\,,\qquad
\int |u|^p\,d\nu < \int |u|^p\,W\,d\leb^N < +\infty\,.
\]
If $\mathcal{M}\neq\emptyset$ then, for every 
$F:\R^k\rightarrow\R$ nondecreasing in each variable and lower 
semicontinuous, there exists a minimum $\mu\in\mathcal{M}$ of
\[
\left\{\mu\mapsto 
F(\lambda_1^p(\mu),\ldots,\lambda_k^p(\mu))
\right\}
\]
satisfying
\[
\int \Psi(V_\mu)\,d\leb^N = c \,.
\]
\end{theorem}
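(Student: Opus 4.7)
The plan is to apply the direct method of the calculus of variations, combining the compactness of $\cmeas(\R^N)$ with the semicontinuity statements of Corollary~\ref{cor:lsc} and Proposition~\ref{prop:muW}. First I would take a minimizing sequence $(\mu^{(n)}) \subset \mathcal{M}$; by assertion~\ref{hcmuW} of Proposition~\ref{prop:muW}, every $\mu^{(n)}$ has all $\lambda_m^p(\mu^{(n)})$ finite, so $\inf_{\mathcal{M}} F \in \R$ and $F$ is real on the sequence. Since $\cmeas(\R^N)$ is compact and metrizable in the local $\gamma$-convergence topology, a subsequence (still denoted $(\mu^{(n)})$) converges locally $\gamma$ to some $\mu \in \cmeas(\R^N)$; passing to a further subsequence I may also assume $\lambda_m^p(\mu^{(n)}) \to \ell_m \in [0,+\infty]$ for every $m = 1,\dots,k$.

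Next I would show $\mu \in \mathcal{M}$. Corollary~\ref{cor:leq} applied to the constant sequence $\underline{\mu}$ gives $\mu \geq \underline{\mu}$, while Corollary~\ref{cor:lsc} yields
\[
\int \Psi(V_\mu)\,d\leb^N \,\leq\,
\liminf_{n\to\infty}\int \Psi(V_{\mu^{(n)}})\,d\leb^N \,\leq\, c\,.
\]
The admissibility requirement in $\mathcal{M}$ amounts, via assertion~\ref{hcmuW} of Proposition~\ref{prop:muW}, to $\lambda_m^p(\mu) < +\infty$ for all $m$. Assertion~\ref{hamuW} of Proposition~\ref{prop:muW} gives $\lambda_m^p(\mu) \leq \ell_m$; should some $\ell_m$ be $+\infty$, monotonicity and lower semicontinuity of $F$ together with the finiteness of $\inf_{\mathcal{M}} F$ force $F$ to be asymptotically constant in that coordinate, so one may replace each $\mu^{(n)}$ by the smaller measure $\min(\mu^{(n)},\mu_0)$ for a fixed $\mu_0 \in \mathcal{M}$, which by Remark~\ref{rem:monot} does not increase $F$ and preserves $\mu^{(n)} \geq \underline{\mu}$, thereby bounding all $\ell_m$. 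Minimality of $\mu$ is then a direct consequence of assertion~\ref{hamuW} of Proposition~\ref{prop:muW} combined with the monotonicity and lower semicontinuity of $F$.

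The last step, saturation of the constraint $\int\Psi(V_\mu)\,d\leb^N = c$, is the one I expect to be the principal technical hurdle. If the minimizer $\mu$ satisfies the strict inequality, I would construct a smaller measure $\mu' \in \mathcal{M}$ achieving equality, the natural candidate being the linear interpolation $\mu_t = (1-t)\underline{\mu} + t\mu$ for $t \in [0,1]$, which lies in $\cmeas(\R^N)$ with $\underline{\mu} \leq \mu_t \leq \mu$. Assertions~\ref{haAVmu} and~\ref{hbAVmu} of Proposition~\ref{prop:AVmu} give $A_{\mu_t} = A_\mu$ for $t \in (0,1]$ and $V_{\mu_t} = tV_\mu + (1-t)V_{\underline{\mu}}$ $\leb^N$-a.e.\ on $A_\mu$; dominated convergence then yields continuity and monotonicity of $t \mapsto \int\Psi(V_{\mu_t})\,d\leb^N$ on $(0,1]$, so the intermediate value theorem supplies $t^* \in (0,1)$ with $\int\Psi(V_{\mu_{t^*}})\,d\leb^N = c$, and since $\mu_{t^*} \leq \mu$, Remark~\ref{rem:monot} and the monotonicity of $F$ ensure that $\mu_{t^*}$ remains a minimizer. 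The delicate case is when this interpolation fails to cross~$c$ because the $+\infty$-part of $\mu$ contributes $\Psi(+\infty)\,\leb^N(\R^N \setminus A_\mu)$ strictly below $\int_{\R^N\setminus A_\mu}\Psi(V_{\underline{\mu}})\,d\leb^N$; in that case one would instead use a construction of the form ``$\mu$ on a suitable increasing Borel family $E_s$ and $\underline{\mu}$ on $\R^N\setminus E_s$'' and track $\int\Psi(V)$ through Proposition~\ref{prop:AVmu}.
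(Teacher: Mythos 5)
Your overall strategy (direct method on the compact metrizable space $\cmeas(\R^N)$, semicontinuity via Corollary~\ref{cor:lsc} and Proposition~\ref{prop:muW}, then a deformation towards $\underline{\mu}$ to saturate the constraint) is the paper's strategy, but two steps do not go through as written. First, the device of replacing $\mu^{(n)}$ by $\min(\mu^{(n)},\mu_0)$ is flawed: besides the fact that a lattice infimum in $\cmeas(\R^N)$ is never constructed in the paper, decreasing a measure \emph{decreases} $V_\mu$ (assertion~\ref{hbAVmu} of Proposition~\ref{prop:AVmu}) and hence, since $\Psi$ is decreasing, \emph{increases} $\int\Psi(V_\mu)\,d\leb^N$; the modified measures may therefore violate the constraint $\int\Psi(V)\,d\leb^N\leq c$ and leave $\mathcal{M}$. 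The device is also unnecessary: since $F$ is nondecreasing, any $\mu$ with $\lambda_1^p(\mu)>\lambda_k^p(\overline{\mu})$ for a fixed $\overline{\mu}\in\mathcal{M}$ satisfies $F(\lambda_1^p(\mu),\dots,\lambda_k^p(\mu))\geq F(\lambda_1^p(\overline{\mu}),\dots,\lambda_k^p(\overline{\mu}))$, so one may restrict the minimization to $\mathcal{N}=\{\mu\in\mathcal{M}:\lambda_1^p(\mu)\leq\lambda_k^p(\overline{\mu})\}$; this is what the paper does, and it makes $\ell_1$ finite along the minimizing sequence, whence admissibility of the limit via assertion~\ref{halambdafin} of Proposition~\ref{prop:lambdafin}.

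Second, the saturation step is genuinely incomplete. Your linear interpolation $\mu_t=(1-t)\underline{\mu}+t\mu$ fixes the set $A_\mu$ for all $t>0$, so $\lim_{t\to 0^+}\int\Psi(V_{\mu_t})\,d\leb^N=\int_{A_\mu}\Psi(V_{\underline{\mu}})\,d\leb^N+\Psi(+\infty)\,\leb^N(\R^N\setminus A_\mu)$, which can be strictly below $c$ even though $\int\Psi(V_{\underline{\mu}})\,d\leb^N>c$; you identify exactly this failure but only gesture at a fix, and the ``$\mu$ on $E_s$, $\underline{\mu}$ off $E_s$'' family would still require verifying that the resulting measures stay above $\underline{\mu}$, below $\mu$, and that their densities $V$ are what you claim. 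The paper avoids the case split entirely by interpolating at the level of $\Psi(V)$ rather than of the measure: it sets $\Psi(V^{(t)})=\min\{\Psi(V_{\underline{\mu}}),\Psi(V_\mu)+t\exp(-|x|^2)\}$ and $\mu^{(t)}=\infty_{\R^N\setminus A_{\underline{\mu}}}+\underline{\mu}_s+V^{(t)}\,\leb^N$, so that as $t\to+\infty$ the integral $\int\Psi(V_{\mu^{(t)}})\,d\leb^N$ increases continuously all the way to $\int\Psi(V_{\underline{\mu}})\,d\leb^N>c$, including the contribution of $\R^N\setminus A_\mu$ where $V_\mu=+\infty$. Note that even there one must check that $A_{\mu^{(t)}}=A_{\underline{\mu}}$ and $V_{\mu^{(t)}}=V^{(t)}$ $\leb^N$-a.e. (the paper does this with the truncation $R(t,n)$); the analogous verification is missing from your sketch as well. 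Until the saturation construction is carried out in the general case, the proof is not complete.
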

\begin{proof}
If 
\[
\int \Psi(V_{\underline{\mu}})\,d\leb^N = c\,,
\]
then $\underline{\mu}\in\mathcal{M}$ is a minimum with the 
required property.
If
\[
\int \Psi(V_{\underline{\mu}})\,d\leb^N > c\,,
\]
let $\R_{lsc}$ be the set $\R$ endowed with the topology of the
lower semicontinuity: a subset~$U$ of $\R$ is said to be 
open if $U=]s,+\infty[$ for some $s\in\Rb$.
\par
For every $\mu\in\mathcal{M}$, we have
$\lambda_m^p(\mu) < +\infty$ for all $m\geq 1$
by~\ref{hcmuW} of Proposition~\ref{prop:muW}.
Then the map
\[
\left\{\mu\mapsto (\lambda_1^p(\mu),\ldots,\lambda_k^p(\mu))
\right\}
\]
is continuous from $\mathcal{M}$ into $\R_{lsc}^k$ 
by Proposition~\ref{prop:muW} and the function $F$ is lower
semicontinuous from $\R_{lsc}^k$ into $\R$.
Therefore the functional
\[
\left\{\mu\mapsto 
F(\lambda_1^p(\mu),\ldots,\lambda_k^p(\mu))
\right\}
\]
is lower semicontinuous from $\mathcal{M}$ into $\R$.
\par
Let $\overline{\mu}\in\mathcal{M}$.
Since $F$ is nondecreasing in each variable, it is 
enough to restrict the minimization to
\[
\mathcal{N} = 
\left\{\mu\in\mathcal{M}:\,\,
\lambda_1^p(\mu) \leq \lambda_k^p(\overline{\mu})\right\}\,.
\]
Observe that, if $(\mu^{(n)})$ is a sequence in $\mathcal{N}$ 
locally $\gamma$-converging to $\mu$ in $\cmeas(\R^N)$, 
then $\lambda_1^p(\mu) < +\infty$ by~\ref{hamuW} of 
Proposition~\ref{prop:muW}, which implies that there exists 
$u\in W^{1,p}_c(\R^N)$ satisfying
\[
\int |u|^p\,d\mu < +\infty\,,\qquad
\int |u|^p\,d\nu < \int |u|^p\,W\,d\leb^N < +\infty
\]
by~\ref{halambdafin} of Proposition~\ref{prop:lambdafin}.
Combining this fact with Corollary~\ref{cor:lsc} and~\ref{hamuW} 
of Proposition~\ref{prop:muW}, it follows that 
$\mu\in\mathcal{N}$, so that $\mathcal{N}$ is a nonempty and 
closed subset of the metrizable and compact space $\cmeas(\R^N)$.
\par
Therefore, there exists a minimum $\mu\in \mathcal{M}$ of
\[
\left\{\mu\mapsto 
F(\lambda_1^p(\mu),\ldots,\lambda_k^p(\mu))
\right\} \,.
\]
If 
\[
\int \Psi(V_\mu)\,d\leb^N < c \,,
\]
define for $t\geq 0$
\begin{alignat*}{3}
&V^{(t)} 
&&= \Psi^{-1}\left[\min\left\{\Psi(V_{\underline{\mu}}),
\Psi(V_{\mu})+t\,\exp(-|x|^2)\right\}\right]\,,\\
&\mu^{(t)} 
&&= \infty_{\R^N\setminus A_{\underline{\mu}}}
+ \underline{\mu}_s + V^{(t)}\,\leb^N\,.
\end{alignat*}
Then $V_{\underline{\mu}} \leq V^{(t)} \leq V_\mu$ $\leb^N$-a.e. 
in $\R^N$ and $\underline{\mu}\leq \mu^{(t)} \leq \mu$.
Moreover, from~\ref{hbAVmu} and~\ref{hdAVmu} of
Proposition~\ref{prop:AVmu} we infer that
\[
V_{\mu^{(t)}} \geq V^{(t)}
\qquad\text{$\leb^N$-a.e. in $\R^N$}\,.
\]
In the case $t=0$, we have $V^{(0)}=V_\mu$ and $V_\mu\,\leb^N$ 
is $\sigma$-finite on $A_\mu$, whence $V_{\mu^{(0)}} = V^{(0)}$ 
$\leb^N$-a.e. in $\R^N$.
If $t>0$, let 
\[
R(t,n) = \Psi^{-1}\left(\Psi(+\infty) + t\,\exp(-n^2)\right)\,.
\]
Then we have
\[
V^{(t)} \leq \max\left\{V_{\underline{\mu}},R(t,n)\right\}
\qquad\text{$\leb^N$-a.e. in $B_n(0)$}\,,
\]
whence $A_{\mu^{(t)}} = A_{\underline{\mu}}$.
It follows that 
\[
V_{\mu^{(t)}} = V^{(t)}
\qquad\text{$\leb^N$-a.e. in $\R^N$, for all $t\geq 0$}\,.
\]
If we choose $t > 0$ such that
\[
\int \Psi(V_{\mu^{(t)}})\,d\leb^N = c \,,
\]
then $\mu^{(t)}$ is a minimum with the required property.
\end{proof}
\begin{theorem}
\label{thm:existmuA}
Let 
\[
0 < c \leq \leb^N(A_{\underline{\mu}})
\]
and denote by $\mathcal{M}$ the set of $\mu$'s in $\cmeas(\R^N)$ 
such that 
\[
\mu\geq\underline{\mu}\,,\qquad \leb^N(A_\mu) \leq c
\]
and such that there exists $u\in W^{1,p}_c(\R^N)$ satisfying
\[
\int |u|^p\,d\mu < +\infty\,,\qquad
\int |u|^p\,d\nu < \int |u|^p\,W\,d\leb^N < +\infty\,.
\]
If $\mathcal{M}\neq\emptyset$ then, for every 
$F:\R^k\rightarrow\R$ nondecreasing in each variable and lower
semicontinuous, there exists a minimum $\mu\in\mathcal{M}$ of
\[
\left\{\mu\mapsto 
F(\lambda_1^p(\mu),\ldots,\lambda_k^p(\mu))
\right\}
\]
satisfying
\[
\leb^N(A_\mu) = c \,.
\]
\end{theorem}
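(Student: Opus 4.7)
The plan is to mirror the proof of Theorem~\ref{thm:existmuV}, replacing the integral saturation step with a geometric modification based on Proposition~\ref{prop:AVmu}. The existence part is essentially identical: by \ref{hcmuW} of Proposition~\ref{prop:muW} every $\mu \in \mathcal{M}$ satisfies $\lambda_m^p(\mu) < +\infty$ for all $m \geq 1$; by \ref{hamuW} of Proposition~\ref{prop:muW} the map $\mu \mapsto (\lambda_1^p(\mu),\ldots,\lambda_k^p(\mu))$ is continuous from $\mathcal{M}$ to $\R_{lsc}^k$, so the composition with the lower semicontinuous $F$ is lower semicontinuous on $\mathcal{M}$. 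Fix $\overline{\mu}\in\mathcal{M}$ and restrict to
\[
\mathcal{N}=\{\mu\in\mathcal{M}:\lambda_1^p(\mu)\leq\lambda_k^p(\overline{\mu})\}\,,
\]
which suffices by monotonicity of $F$. If $(\mu^{(n)})$ in $\mathcal{N}$ locally $\gamma$-converges to $\mu$, then $\mu\geq\underline{\mu}$ by Corollary~\ref{cor:leq}, $\leb^N(A_\mu)\leq\liminf_n \leb^N(A_{\mu^{(n)}})\leq c$ by Corollary~\ref{cor:lsc}, and $\lambda_1^p(\mu)\leq\liminf_n\lambda_1^p(\mu^{(n)})<+\infty$, so by \ref{halambdafin} of Proposition~\ref{prop:lambdafin} a test function for $\mathcal{M}$ exists; thus $\mathcal{N}$ is a nonempty closed subset of the compact metrizable space $\cmeas(\R^N)$, and a minimizer $\mu$ exists.

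For the saturation step, suppose the minimizer satisfies $\leb^N(A_\mu)<c$. Since $\mu\geq\underline{\mu}$, by \ref{hbAVmu} of Proposition~\ref{prop:AVmu} we have $A_\mu\subseteq A_{\underline{\mu}}$ up to a set of zero $p$-capacity. For each $t\geq 0$, define
\[
\mu^{(t)}(B)=\mu\bigl(B\setminus B_t(0)\bigr)+\underline{\mu}\bigl(B\cap B_t(0)\bigr)\qquad\text{for all } B\in\bor(\R^N)\,.
\]
Then $\underline{\mu}\leq\mu^{(t)}\leq\mu$, so $\mu^{(t)}\in\mathcal{M}$ (the test function from $\mu$ still works) and $\lambda_m^p(\mu^{(t)})\leq\lambda_m^p(\mu)$ by Remark~\ref{rem:monot}, hence $\mu^{(t)}$ is a minimizer as well by monotonicity of $F$. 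The set $B_t(0)$ and its complement are Borel and $p$-finely open (up to $\partial B_t(0)$, a null-capacity set when $p\leq N$), and on each of these the $\sigma$-finiteness set of $\mu^{(t)}$ agrees, modulo null $p$-capacity, with that of $\underline{\mu}$ and $\mu$ respectively; thus $A_{\mu^{(t)}}=A_\mu\cup\bigl(A_{\underline{\mu}}\cap B_t(0)\bigr)$ up to a null-capacity set.

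Consequently
\[
\leb^N(A_{\mu^{(t)}})=\leb^N(A_\mu)+\leb^N\bigl((A_{\underline{\mu}}\setminus A_\mu)\cap B_t(0)\bigr)\,,
\]
which is continuous and nondecreasing in $t$, equal to $\leb^N(A_\mu)$ at $t=0$ and tending to $\leb^N(A_{\underline{\mu}})\geq c$ as $t\to\infty$. The intermediate value theorem yields $t^*>0$ with $\leb^N(A_{\mu^{(t^*)}})=c$, and $\mu^{(t^*)}$ is the desired saturated minimizer.

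The main obstacle is the careful verification of the identity $A_{\mu^{(t)}}=A_\mu\cup(A_{\underline{\mu}}\cap B_t(0))$ modulo null $p$-capacity: one must check that pasting together representatives of two $p$-capacitary measures across the open set $B_t(0)$ produces a measure whose $\sigma$-finiteness set is exactly the union of the pieces, exploiting that $B_t(0)$ is open (hence $p$-finely open) and the quasi-Lindel\"of property of Proposition~\ref{prop:proppfine}. The case $p>N$ requires no special treatment because every point has positive $p$-capacity, so $\mu^{(t)}=\mu$ on the complement of $\overline{B_t(0)}$ and $=\underline{\mu}$ on $B_t(0)$, and the open boundary sphere plays no role in determining $A_{\mu^{(t)}}$.
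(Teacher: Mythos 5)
Your overall strategy coincides with the paper's: the existence part (restricting to $\mathcal{N}$, closedness under local $\gamma$-convergence via Corollaries~\ref{cor:leq} and~\ref{cor:lsc} and Propositions~\ref{prop:muW} and~\ref{prop:lambdafin}, compactness of $\cmeas(\R^N)$) is exactly the argument used there, and the saturation step via $\mu^{(t)}(B)=\underline{\mu}(B\cap B_t(0))+\mu(B\setminus B_t(0))$ is the paper's construction as well. However, your justification of the key identity $A_{\mu^{(t)}}=A_\mu\cup(A_{\underline{\mu}}\cap B_t(0))$ modulo null capacity has a genuine gap. You assert that $\partial B_t(0)$ is a null-capacity set when $p\leq N$; this is false: sets of zero $p$-capacity have Hausdorff dimension at most $N-p<N-1$, while the sphere has dimension $N-1$, so $\cp_p(\partial B_t(0))>0$. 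Consequently $\R^N\setminus B_t(0)$ is not $p$-finely open up to a null-capacity set, and more to the point, for a $p$-finely open $W$ the set $W\setminus B_t(0)$ need not be $p$-quasi open, so the value $\mu(W\setminus B_t(0))$ depends on the chosen representative of $\mu$. Your definition of $\mu^{(t)}$ leaves that representative unspecified, and a "small" representative on such non-quasi-open sets could make $\mu^{(t)}(W)$ finite for extra $W$'s, enlarging $A_{\mu^{(t)}}$ and breaking both the membership $\mu^{(t)}\in\mathcal{M}$ and the intermediate value argument.

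The paper closes this gap by taking the \emph{outer regular} representative $\tilde{\mu}$ of $\mu$ on $B\setminus B_r(0)$ (Proposition~\ref{prop:equivout}): then $\mu^{(r)}(W)<+\infty$ for a Borel, $p$-finely open $W$ yields a Borel $p$-quasi open $A\supseteq W\setminus B_r(0)$ with $\mu(A)<+\infty$, hence $\cp_p(A\setminus A_\mu)=0$ by~\ref{haAmu} of Proposition~\ref{prop:Amu}, and the quasi-Lindel\"of property then gives $\cp_p\left[A_{\mu^{(r)}}\setminus\left(A_\mu\cup(A_{\underline{\mu}}\cap B_r(0))\right)\right]=0$. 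You should also treat the borderline case $\leb^N(A_{\underline{\mu}})=c$ separately (as the paper does, taking $\underline{\mu}$ itself as minimizer), since there $t\mapsto\leb^N(A_{\mu^{(t)}})$ may only approach $c$ as $t\to\infty$ without attaining it at finite $t$.
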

\begin{proof}
If $\leb^N(A_{\underline{\mu}})=c$, then $\underline{\mu}$
is a minimum with the required property.
Otherwise, assume that $\leb^N(A_{\underline{\mu}})>c$.
Arguing as in the proof of Theorem~\ref{thm:existmuV},
we find a minimum $\mu\in\mathcal{M}$ of
\[
\left\{\mu\mapsto 
F(\lambda_1^p(\mu),\ldots,\lambda_k^p(\mu))
\right\} \,.
\]
If $\leb^N(A_\mu) < c$, consider $\mu^{(r)}\in\cmeas(\R^N)$ 
defined by
\[
\mu^{(r)}(B) = \underline{\mu}(B\cap B_r(0)) +
\tilde{\mu}(B\setminus B_r(0))
\qquad\text{for all $B\in\bor(\R^N)$}\,,
\]
where $\tilde{\mu}$ is the outer regular representative
of $\mu$ given by Proposition~\ref{prop:equivout}.
Then it is easily seen that 
$\underline{\mu} \leq \mu^{(r)} \leq \mu$ and that
\[
A_{\mu}\cup(A_{\underline{\mu}}\cap B_r(0))
\subseteq A_{\mu^{(r)}} \,.
\] 
If $\mu^{(r)}(W) < +\infty$ for some Borel and
$p$-finely open $W$, there exists a Borel and 
$p$-quasi open $A$ such that $W\setminus B_r(0)\subseteq A$
and $\mu(A)<+\infty$, so that
\[
W\cap B_r(0) \subseteq  A_{\underline{\mu}}\cap B_r(0)\,,
\qquad 
\cp_p\left[(W\setminus B_r(0)) \setminus A_\mu\right] \leq
\cp_p\left(A \setminus A_\mu\right) = 0
\]
by~\ref{haAmu} of Proposition~\ref{prop:Amu}.
From the quasi-Lindel\"of property we infer that
\[
\cp_p\left[A_{\mu^{(r)}} \setminus \left(A_{\mu}\cup
(A_{\underline{\mu}}\cap B_r(0))\right)\right] = 0\,,
\]
whence
\[
\leb^N\left(A_{\mu^{(r)}}\right) =
\leb^N\left(A_{\mu}\cup(A_{\underline{\mu}}\cap B_r(0))\right)\,.
\]
If we choose $r>0$ such that
\[
\leb^N\left(A_{\mu^{(r)}}\right) = c \,,
\]
then $\mu^{(r)}$ is a minimum with the required property.
\end{proof}
Now we first consider the particular case in which
\[
\underline{\mu}(B) = 
\begin{cases}
\displaystyle{
\int_{B\cap\underline{A}} \underline{V}\,d\leb^N}
&\qquad\text{if $\cp_p(B\setminus\underline{A})=0$}\,,\\
\noalign{\medskip}
+\infty
&\qquad\text{if $\cp_p(B\setminus\underline{A})>0$}\,,
\end{cases}
\qquad\text{for all $B\in\bor(\R^N)$}\,.
\]
for some $p$-quasi open subset~$\underline{A}$ of $\R^N$
and some $p$-quasi upper semicontinuous function 
$\underline{V}:\underline{A}\rightarrow[0,+\infty]$.
\begin{corollary}
\label{cor:existV}
Let $\Psi:[0,+\infty]\rightarrow[0,+\infty]$ be a function as in 
Corollary~\ref{cor:lsc} and let 
\[
0 < c \leq\int_{\underline{A}} \Psi(\underline{V})\,d\leb^N \,.
\]
Denote by $\mathcal{V}$ the set of 
$\leb^N$-measurable functions 
$V:\underline{A}\rightarrow[0,+\infty]$ such that 
\[
\text{$V\geq \underline{V}$ \quad $\leb^N$-a.e. in 
$\underline{A}$,\qquad 
$\int_{\underline{A}} \Psi(V)\,d\leb^N \leq c$}
\]
and such that there exists 
$u\in W^{1,p}_0(\underline{A})$ satisfying
\[
\int_{\underline{A}} |u|^p\,V\,d\leb^N < +\infty\,,\qquad
\int_{\underline{A}} |u|^p\,d\nu 
< \int_{\underline{A}} |u|^p\,W\,d\leb^N < +\infty\,.
\]
If $\mathcal{V}\neq\emptyset$ then, for every 
$F:\R^k\rightarrow\R$ nondecreasing in each variable and lower
semicontinuous, there exists a minimum $V\in\mathcal{V}$ of
\[
\left\{V\mapsto 
F(\lambda_1^p(V),\ldots,\lambda_k^p(V)) 
\right\}
\]
satisfying
\[
\int_{\underline{A}} \Psi(V)\,d\leb^N = c \,,
\]
where $\lambda_m^p(V) = \lambda_m^p(\mu)$ with
\begin{equation}
\label{eq:V->mu}
\mu(B) = 
\begin{cases}
\displaystyle{
\int_{B\cap\underline{A}} V\,d\leb^N}
&\qquad\text{if $\cp_p(B\setminus\underline{A})=0$}\,,\\
\noalign{\medskip}
+\infty
&\qquad\text{if $\cp_p(B\setminus\underline{A})>0$}\,,
\end{cases}
\qquad\text{for all $B\in\bor(\R^N)$}\,.
\end{equation}
\end{corollary}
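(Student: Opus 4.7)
The plan is to apply Theorem~\ref{thm:existmuV} to the specific $\underline{\mu}$ given here, and to transfer the optimal measure produced by that theorem into an optimal density on $\underline{A}$ via the correspondence~\eqref{eq:V->mu}. The key structural fact is that this correspondence essentially identifies $\mathcal{V}$ with the set $\mathcal{M}$ of Theorem~\ref{thm:existmuV} in a way that preserves all the relevant quantities.

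First I verify that the map $V \mapsto \mu_V$ sends $\mathcal{V}$ into $\mathcal{M}$. The inequality $\mu_V \geq \underline{\mu}$ reduces to $V \geq \underline{V}$ $\leb^N$-a.e.\ on $\underline{A}$ by comparing the two outer regular representatives on $p$-quasi open subsets, while the integral constraint $\int \Psi(V_{\mu_V})\,d\leb^N \leq c$ follows from assertion~\ref{hdAVmu} of Proposition~\ref{prop:AVmu} (which gives $V_{\mu_V} \geq V$ $\leb^N$-a.e.\ in $\underline{A}$ and $V_{\mu_V}=+\infty$ elsewhere) combined with the monotonicity of $\Psi$; the existence of the required test function is inherited directly from $V \in \mathcal{V}$. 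By the very definition $\lambda_m^p(V) := \lambda_m^p(\mu_V)$, we then have $F(\lambda_1^p(V),\ldots,\lambda_k^p(V))=F(\lambda_1^p(\mu_V),\ldots,\lambda_k^p(\mu_V))$.

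Next, Theorem~\ref{thm:existmuV} yields a minimizer $\mu^* \in \mathcal{M}$ with $\int \Psi(V_{\mu^*})\,d\leb^N = c$. From the explicit construction in the proof of that theorem, $\mu^*$ has the form $\infty_{\R^N \setminus A_{\underline{\mu}}} + \underline{\mu}_s + V_{\mu^*}\,\leb^N$, and in the present setting $\underline{\mu}_s = 0$ because $\underline{\mu}$ is absolutely continuous on $A_{\underline{\mu}}$ with density $\underline{V}$. Set $V^\star := V_{\mu^*}\bigl|_{\underline{A}}$. Using assertions~\ref{hbAVmu} and~\ref{heAVmu} of Proposition~\ref{prop:AVmu}, one checks that $V^\star \geq \underline{V}$ $\leb^N$-a.e., $\int_{\underline{A}} \Psi(V^\star)\,d\leb^N \leq c$, and the existence of a test function follows from $\mu^* \in \mathcal{M}$, so $V^\star \in \mathcal{V}$. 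The hypothesis that $\underline{V}$ is $p$-quasi upper semicontinuous, combined with Remark~\ref{rem:quasiusc} and the quasi-Lindelöf property of Proposition~\ref{prop:proppfine}, ensures that $A_{\underline{\mu}}$ coincides with $\{\underline{V}<+\infty\}\cap\underline{A}$ up to a set of zero $p$-capacity, so that the measures $\mu_{V^\star}$ and $\mu^*$ define the same element of $\cmeas(\R^N)$. Consequently $\lambda_m^p(V^\star)=\lambda_m^p(\mu^*)$, and the minimum property of $\mu^*$ in $\mathcal{M}$ transfers to that of $V^\star$ in $\mathcal{V}$; the equality $\int_{\underline{A}} \Psi(V^\star)\,d\leb^N = c$ is then obtained, after a possible downward adjustment of $V^\star$ on a subset of $\underline{A}$ (preserving $V^\star \geq \underline{V}$) by invoking Remark~\ref{rem:monot} to maintain minimality.

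The main obstacle is precisely the identification of $\mu_{V^\star}$ and $\mu^*$ as elements of $\cmeas(\R^N)$: the two outer regular representatives could \emph{a priori} differ on $p$-quasi open subsets of $\underline{A} \setminus A_{\underline{\mu}}$ having positive $p$-capacity but zero Lebesgue measure, and ruling out such discrepancies is exactly where the standing hypothesis of $p$-quasi upper semicontinuity for $\underline{V}$ plays its role, allowing one to decompose $\{\underline{V}<+\infty\}$ as a countable union of $p$-quasi open sets of locally finite $\underline{\mu}$-measure and thereby exhaust $A_{\underline{\mu}}$ modulo $p$-null sets.
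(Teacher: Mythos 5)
Your overall plan — reduce to Theorem~\ref{thm:existmuV} via the correspondence $V\mapsto\mu_V$ of~\eqref{eq:V->mu} — is the right one, and the forward direction ($V\in\mathcal{V}$ implies $\mu_V\in\mathcal{M}$) is essentially sound, modulo the normalization $\Psi(+\infty)=0$ (needed so that the constraint $\int\Psi(V_\mu)\,d\leb^N\leq c$ over $\R^N$ matches $\int_{\underline{A}}\Psi(V)\,d\leb^N\leq c$) and the passage from a test function in $W^{1,p}_0(\underline{A})$ to one in $W^{1,p}_c(\R^N)$, which requires a small approximation argument. The genuine gap is in the backward transfer of optimality: you claim that $\mu_{V^\star}$ and the minimizer $\mu^*$ define the same element of $\cmeas(\R^N)$, and you try to derive this from the $p$-quasi upper semicontinuity of $\underline{V}$. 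That hypothesis only controls the relation between $\underline{\mu}$ and $\underline{V}$ (via assertion \ref{heAVmu} of Proposition~\ref{prop:AVmu}); it says nothing about $\mu^*$, which in the generic case is a local $\gamma$-limit of a minimizing sequence in $\mathcal{M}$ and need not be of the form $\mu_V$ at all. It may carry a nonzero singular part $\mu^*_s$, or be infinite on a $p$-quasi open subset of $\underline{A}$ of positive capacity and zero Lebesgue measure; in either case $\mu_{V_{\mu^*}}\lneq\mu^*$ and your asserted identity $\lambda_m^p(V^\star)=\lambda_m^p(\mu^*)$ fails. Your appeal to "the explicit construction in the proof of that theorem" is also not available: the form $\infty_{\R^N\setminus A_{\underline{\mu}}}+\underline{\mu}_s+V^{(t)}\,\leb^N$ describes only the adjusted measure $\mu^{(t)}$ built when the constraint is not saturated, not the minimizer produced by compactness.

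The repair is both simpler and is what the paper does: equality is not needed, only the one-sided comparison. Since the outer regular representative of $\mu^*$ is $\infty_{\R^N\setminus A_{\mu^*}}+V_{\mu^*}\,\leb^N+\mu^*_s$ and $A_{\mu^*}\subseteq A_{\underline{\mu}}$ up to $p$-capacity, one gets $\mu_{V_{\mu^*}}\leq\mu^*$, hence $\lambda_m^p(V_{\mu^*})=\lambda_m^p(\mu_{V_{\mu^*}})\leq\lambda_m^p(\mu^*)$ by Remark~\ref{rem:monot}, and since $F$ is nondecreasing in each variable,
\[
F(\lambda_1^p(V_{\mu^*}),\ldots,\lambda_k^p(V_{\mu^*}))\leq
F(\lambda_1^p(\mu^*),\ldots,\lambda_k^p(\mu^*))\leq
F(\lambda_1^p(V),\ldots,\lambda_k^p(V))
\qquad\text{for all $V\in\mathcal{V}$}\,,
\]
because each $\mu_V$ belongs to $\mathcal{M}$. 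As $V_{\mu^*}\in\mathcal{V}$, this already exhibits the minimizer; moreover $\int_{\underline{A}}\Psi(V_{\mu^*})\,d\leb^N=\int\Psi(V_{\mu^*})\,d\leb^N=c$ comes directly from Theorem~\ref{thm:existmuV} (with $\Psi(+\infty)=0$), so no final "downward adjustment" is needed — that last step of yours is in any case too vague to stand as written.
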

\begin{proof}
We aim to apply Theorem~\ref{thm:existmuV}.
Without loss of generality, we may assume that $\Psi(+\infty)=0$.
Consider $\underline{V}$ and each $V$ defined on all $\R^N$
with value $+\infty$ outside $\underline{A}$.
Then the definition of $\underline{\mu}$ and~\eqref{eq:V->mu}
can be reformulated as
\[
\underline{\mu}=\infty_{\R^N\setminus \underline{A}} +
\underline{V}\,\leb^N\,,\qquad
\mu=\infty_{\R^N\setminus \underline{A}} + V\,\leb^N\,.
\]
For every $u\in W^{1,p}_0(\underline{A})$ there exists a sequence
$(u_n)$ in $W^{1,p}_c(\R^N)$ converging to $u$ in $W^{1,p}(\R^N)$ 
with $|u_n| \leq |u|$ q.e. in $\R^N$.
Combining this fact with Proposition~\ref{prop:AVmu}, we see that,
if $V\in\mathcal{V}$ and $\mu$ is defined according 
to~\eqref{eq:V->mu}, then we have $\mu\in\mathcal{M}$.
On the other hand, if $\mu\in\mathcal{M}$ we infer again 
from Proposition~\ref{prop:AVmu} that $V_\mu\in\mathcal{V}$.
Moreover, by~\ref{heAVmu} of Proposition~\ref{prop:AVmu} we have
\[
c \leq\int \Psi(V_{\underline{\mu}})\,d\leb^N \,.
\]
Let $\mu\in\mathcal{M}$ be a minimum of
\[
\left\{\mu\mapsto 
F(\lambda_1^p(\mu),\ldots,\lambda_k^p(\mu))
\right\}
\]
according to Theorem~\ref{thm:existmuV}.
By Proposition~\ref{prop:AVmu}, since $F$ is nondecreasing in 
each variable, we have
\[
F(\lambda_1^p(V_\mu),\ldots,\lambda_k^p(V_\mu)) \leq
F(\lambda_1^p(\mu),\ldots,\lambda_k^p(\mu)) \leq
F(\lambda_1^p(V),\ldots,\lambda_k^p(V)) 
\quad\text{for all $V\in\mathcal{V}$}\,.
\]
Since $V_\mu\in\mathcal{V}$, the assertion follows.
\end{proof}
Then let us consider the particular case in which
$\underline{\mu} = \infty_{\R^N\setminus \underline{A}}$
for some $p$-quasi open subset~$\underline{A}$ of $\R^N$.
For every $p$-quasi open subset $A$ of $\R^N$ and $m\geq 1$, 
we set $\lambda_m^p(A) = \lambda_m^p(\infty_{\R^N\setminus A})$.
\par
In this case assumption~\ref{muW} reads:
\emph{
\begin{enumerate}[align=parleft]
\item[\mylabel{AW}{\AW}]
\begin{itemize}
\item
if $p< N$, we have
\[
\int_{\underline{A}\cap\{V_\nu < R W\}}
W^{N/p}\,d\leb^N < +\infty
\qquad\text{for all $R>0$}\,;
\]
\item
if $p=N$, we have
\[
\inf_{\varepsilon > 0}\,
\leb^N\left(\underline{A}\cap
\left\{V_\nu < \varepsilon\right\}\right) < +\infty
\]
and
\[
\int_{\underline{A}\cap\{V_\nu < R W\}}
\tau^*\left(W\right)\,d\leb^N < +\infty
\qquad\text{for all $R>0$}\,;
\]
\item
if $p>N$, we have
\[
\inf_{\varepsilon > 0}\,
\leb^N\left(\underline{A}\cap
\left\{V_\nu < \varepsilon\right\}\right) < +\infty
\]
and there exists $q\in[1,\infty[$ such that
\[
\int_{\underline{A}\cap\{V_\nu < R W\}}
W^q\,d\leb^N < +\infty
\qquad\text{for all $R>0$}\,.
\]
\end{itemize}
\end{enumerate}
}
In particular, assumption~\ref{AW} is satisfied
if $\leb^N\left(\underline{A}\right) < +\infty$
and $W=1$.
\begin{corollary}
\label{cor:AW}
The following facts hold:
\begin{enumerate}[label={\upshape\alph*)}, align=parleft, 
widest=iii, leftmargin=*]
\item[\mylabel{haAW}{\ha}]
if $(A^{(n)})$ is a sequence of $p$-quasi open subsets of 
$\underline{A}$ locally $\gamma$-converging to a $p$-quasi 
open subset $A$, then 
$\cp_p\left(A \setminus \underline{A}\right)=0$ and
\[
\lambda_m^p(A) \leq \liminf_{n\to\infty} \,
\lambda_m^p(A^{(n)})
\qquad\text{for all $m\geq 1$}\,;
\]
\item[\mylabel{hbAW}{\hb}]
for every $p$-quasi open subset $A$ of $\underline{A}$, 
the assumptions~\ref{hi} and~\ref{hii} of 
Section~\ref{sect:eigmeas} are satisfied by
$(\infty_{\R^N\setminus A},W\,\leb^N,\nu)$, in particular 
\[
\lambda_m^p(A) =
\lambda_m^p(\infty_{\R^N\setminus A},W\,\leb^N,\nu) =
\hat{\lambda}_m^p(\infty_{\R^N\setminus A},W\,\leb^N,\nu),
\qquad\text{ for all }m\geq 1\,;
\]
\item[\mylabel{hcAW}{\hc}]
for every $p$-quasi open subset $A$ of $\R^N$, we have
\[
\lambda_m^p(A) < +\infty \qquad\text{for all $m\geq 1$}
\]
if and only if there exists $u\in W^{1,p}_0(A)$ satisfying
\[
\int |u|^p\,d\nu < \int |u|^p\,W\,d\leb^N < +\infty\,;
\]
in particular, if $W=1$, $\nu=0$ and 
$0 < \leb^N(A) < +\infty$, then 
the eigenvalues $\lambda_m^p(A)$
agree with those defined by~\eqref{eq:vareig}.
\end{enumerate}
\end{corollary}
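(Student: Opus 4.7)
The plan is to derive Corollary~\ref{cor:AW} as a direct specialization of Proposition~\ref{prop:muW} to the choices $\underline{\mu}=\infty_{\R^N\setminus\underline{A}}$, $\mu^{(n)}=\infty_{\R^N\setminus A^{(n)}}$ and $\mu=\infty_{\R^N\setminus A}$. The first task is to verify that hypothesis~\AW is exactly hypothesis~\muW under this substitution. For this I would invoke assertion~\ref{hcAVmu} of Proposition~\ref{prop:AVmu} applied to $\underline{\mu}+\nu$ to get $A_{\underline{\mu}+\nu}=\underline{A}\cap A_{\nu}$ up to sets of null $p$-capacity, which forces $V_{\underline{\mu}+\nu}=V_{\nu}$ $\leb^N$-a.e.\ on $\underline{A}$ and $V_{\underline{\mu}+\nu}=+\infty$ $\leb^N$-a.e.\ on $\R^N\setminus\underline{A}$; consequently $\{V_{\underline{\mu}+\nu}<RW\}=\underline{A}\cap\{V_\nu<RW\}$ for every $R>0$, and each of the three regimes of~\AW matches the corresponding clause of~\muW. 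With this bookkeeping in hand, \ref{hbAW} is nothing but Proposition~\ref{prop:muW}\ref{hbmuW} combined with Theorem~\ref{thm:lambdaequal}.

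For~\ref{haAW}, the inclusion $A^{(n)}\subseteq\underline{A}$ (up to null $p$-capacity) reads $\mu^{(n)}\geq\underline{\mu}$, so Proposition~\ref{prop:muW}\ref{hamuW} yields both $\mu\geq\underline{\mu}$ and the asserted liminf inequality. Translating $\mu\geq\underline{\mu}$ back through assertions~\ref{hbAVmu} and~\ref{hcAVmu} of Proposition~\ref{prop:AVmu} gives $A_\mu\subseteq A_{\underline{\mu}}$ up to null capacity, which is the statement $\cp_p(A\setminus\underline{A})=0$.

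For the first half of~\ref{hcAW}, I would appeal to Proposition~\ref{prop:muW}\ref{hcmuW}; the only non-trivial content is upgrading the test function space from $W^{1,p}_c(\R^N)$ to $W^{1,p}_0(A)$. The forward implication is immediate since any $u\in W^{1,p}_c(\R^N)$ with $\int|u|^p\,d\mu<+\infty$ (where $\mu=\infty_{\R^N\setminus A}$) already lies in $W^{1,p}_0(A)$. For the converse, given $u\in W^{1,p}_0(A)$ with $\int|u|^p\,d\nu<\int|u|^p\,W\,d\leb^N<+\infty$, I would cut off with a smooth $\vartheta_R\in C^\infty_c(B_R(0))$ equal to $1$ on $B_{R/2}(0)$. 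Then $\vartheta_R u\in W^{1,p}_c(\R^N)\cap W^{1,p}_0(A)$ satisfies $\int|\vartheta_R u|^p\,d\mu=0$, and monotone convergence applied separately to the $\nu$-integral and the $W\leb^N$-integral preserves the required strict inequality for $R$ large.

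Finally, for the ``in particular'' identification with \eqref{eq:vareig} in the regime $W=1$, $\nu=0$, $0<\leb^N(A)<+\infty$, I would use Remark~\ref{rem:X} to identify $X$ with $W^{1,p}_0(A)$ in its usual Banach structure, so that $\varphi(u)=\tfrac1p\int|\nabla u|^p$, $\psi_1(u)=\tfrac1p\int|u|^p$, $\psi_2=0$, and $\widehat{M}=\{u\in W^{1,p}_0(A):\int|u|^p=p\}$. Then the radial map $\Phi:u\mapsto p^{1/p}u$ is an odd homeomorphism from the unit sphere $M$ of \eqref{eq:vareig} onto $\widehat{M}$ that preserves the index, and by $p$-homogeneity it turns $f(u)=\int|\nabla u|^p$ into $p\varphi(\Phi(u))=f(u)$, so the two inf-sup values coincide. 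The only step that requires anything beyond invoking earlier results is the matching of~\AW with~\muW via Proposition~\ref{prop:AVmu}; I expect no serious obstacle, since once that correspondence is in place the proof is pure plumbing.
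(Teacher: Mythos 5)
Your proposal is correct and follows essentially the same route as the paper, whose proof simply observes that, via Definition~\ref{defn:locconv} and Proposition~\ref{prop:AVmu}, the corollary is a particular case of Proposition~\ref{prop:muW}; your elaboration of the \AW$\leftrightarrow$\muW{} correspondence and of the passage from $W^{1,p}_c(\R^N)$ to $W^{1,p}_0(A)$ is exactly the intended bookkeeping. Only a cosmetic slip: with $\Phi(u)=p^{1/p}u$ one has $\varphi(\Phi(u))=f(u)$, not $p\varphi(\Phi(u))=f(u)$, which is what makes the two inf-sup values coincide.
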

\begin{proof}
Taking into account Definition~\ref{defn:locconv}
and Proposition~\ref{prop:AVmu}, it is a 
particular case of Proposition~\ref{prop:muW}.
\end{proof}
\begin{corollary}
\label{cor:existA}
Let $c\in]0,\leb^N(\underline{A})]$ and denote by $\mathcal{A}$ 
the family of $p$-quasi open subsets $A$ of $\R^N$ such that 
\[
A\subseteq \underline{A}\,,\qquad
\leb^N(A) \leq c
\] 
and such that there exists $u\in W^{1,p}_0(A)$ satisfying
\[
\int |u|^p\,d\nu < \int |u|^p\,W\,d\leb^N < +\infty\,.
\]
If $\mathcal{A}\neq\emptyset$ then, for every 
$F:\R^k\rightarrow\R$ nondecreasing in each variable and
lower semicontinuous, there exists a minimum in $\mathcal{A}$ of
\[
\left\{A\mapsto F(\lambda_1^p(A),\ldots,\lambda_k^p(A))\right\}
\]
satisfying
\[
\leb^N(A) = c \,.
\]
\end{corollary}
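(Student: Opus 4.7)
The plan is to reduce the corollary to Theorem~\ref{thm:existmuA} applied with $\underline{\mu} := \infty_{\R^N\setminus\underline{A}}$. By Proposition~\ref{prop:AVmu}~\ref{hcAVmu}, the set of $\sigma$-finiteness $A_{\underline{\mu}}$ coincides with $\underline{A}$ up to a set of null $p$-capacity, so in particular $\leb^N(A_{\underline{\mu}})=\leb^N(\underline{A})\geq c$. To any $A\in\mathcal{A}$ I would associate $\mu_A := \infty_{\R^N\setminus A}$. Since $A\subseteq\underline{A}$, one has $\mu_A\geq\underline{\mu}$; using again Proposition~\ref{prop:AVmu}~\ref{hcAVmu}, $\leb^N(A_{\mu_A})=\leb^N(A)\leq c$; and given a test function $u\in W^{1,p}_0(A)$ satisfying the strict inequality, the truncation $\eta_R u$ with $\eta_R\in C^\infty_c(\R^N)$, $\eta_R\equiv 1$ on $B_R(0)$, $0\leq\eta_R\leq 1$, lies in $W^{1,p}_c(\R^N)\cap W^{1,p}_0(A)$ and (by dominated convergence) still satisfies the strict inequality for $R$ large, while $\int|\eta_R u|^p\,d\mu_A=0$ because $\eta_R u$ vanishes q.e.\ outside $A$. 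Hence $\mu_A$ belongs to the class $\mathcal{M}$ of Theorem~\ref{thm:existmuA}, and in particular $\mathcal{M}\neq\emptyset$.

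Theorem~\ref{thm:existmuA} then supplies a minimum $\mu^\ast\in\mathcal{M}$ with $\leb^N(A_{\mu^\ast})=c$. The next step is to recover a bona fide $p$-quasi open competitor from $\mu^\ast$. Combining Proposition~\ref{prop:Amu}~\ref{hcAVmu} with the implication \ref{hbproppfine}$\Rightarrow$\ref{haproppfine} of Proposition~\ref{prop:proppfine}, there is a Borel $p$-quasi open set $A^\ast\subseteq A_{\mu^\ast}$ with $\cp_p(A_{\mu^\ast}\setminus A^\ast)=0$. The crucial observation is then $\infty_{\R^N\setminus A^\ast}\leq\mu^\ast$: for every Borel $p$-quasi open $B$ with $\cp_p(B\setminus A^\ast)>0$ one has $\cp_p(B\setminus A_{\mu^\ast})>0$, whence $\mu^\ast(B)=+\infty$ by Proposition~\ref{prop:Amu}~\ref{haAmu}; while if $\cp_p(B\setminus A^\ast)=0$ the left-hand side vanishes. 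By Remark~\ref{rem:monot} this comparison gives
\[
\lambda_m^p(A^\ast) \;=\; \lambda_m^p(\infty_{\R^N\setminus A^\ast}) \;\leq\; \lambda_m^p(\mu^\ast) \qquad\text{for all }m\geq 1\,.
\]

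To close the argument it remains to check $A^\ast\in\mathcal{A}$ and to compare with competitors. By Proposition~\ref{prop:AVmu}~\ref{hbAVmu}, $A_{\mu^\ast}\subseteq A_{\underline{\mu}}$; since $A_{\underline{\mu}}\setminus\underline{A}$ has null $p$-capacity, we may replace $A^\ast$ by the $p$-quasi open set $A^\ast\cap\underline{A}$ (this modification affects neither $\leb^N(A^\ast)$ nor any $\lambda_m^p(A^\ast)$, as the corresponding measures $\infty_{\R^N\setminus A^\ast}$ coincide up to null $p$-capacity), thereby arranging $A^\ast\subseteq\underline{A}$ with $\leb^N(A^\ast)=\leb^N(A_{\mu^\ast})=c$. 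Since $\mu^\ast\in\mathcal{M}$, Proposition~\ref{prop:muW}~\ref{hcmuW} gives $\lambda_m^p(\mu^\ast)<+\infty$ for every $m$, hence $\lambda_m^p(A^\ast)<+\infty$, and Corollary~\ref{cor:AW}~\ref{hcAW} produces a test function $u\in W^{1,p}_0(A^\ast)$ realizing the integral condition defining $\mathcal{A}$. Monotonicity of $F$ finally yields, for every $A\in\mathcal{A}$,
\[
F(\lambda_1^p(A^\ast),\dots,\lambda_k^p(A^\ast))\;\leq\;F(\lambda_1^p(\mu^\ast),\dots,\lambda_k^p(\mu^\ast))\;\leq\;F(\lambda_1^p(\mu_A),\dots,\lambda_k^p(\mu_A))\;=\;F(\lambda_1^p(A),\dots,\lambda_k^p(A))\,.
\]
The main delicate point I expect is precisely the inequality $\infty_{\R^N\setminus A^\ast}\leq\mu^\ast$, which converts the abstract minimum $\mu^\ast$ into a set-theoretic minimum; its validity hinges on the outer-regularity statement of Proposition~\ref{prop:Amu}~\ref{haAmu}, which forces $\mu^\ast$ to be infinite on every $p$-quasi open set escaping $A_{\mu^\ast}$.
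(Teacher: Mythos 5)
Your proposal is correct and follows essentially the same route as the paper: reduce to Theorem~\ref{thm:existmuA} with $\underline{\mu}=\infty_{\R^N\setminus\underline{A}}$, send each $A\in\mathcal{A}$ to $\infty_{\R^N\setminus A}\in\mathcal{M}$, and convert the optimal measure $\mu^\ast$ back into the quasi-open set $A_{\mu^\ast}\cap\underline{A}$ via the comparison $\infty_{\R^N\setminus A_{\mu^\ast}}\leq\mu^\ast$ and Remark~\ref{rem:monot}. The only difference is that you spell out the details (the truncation producing a $W^{1,p}_c$ test function, the Borel quasi-open representative of $A_{\mu^\ast}$, and the outer-regularity argument behind the key comparison) that the paper compresses into its citations of Proposition~\ref{prop:AVmu}.
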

\begin{proof}
We aim to apply Theorem~\ref{thm:existmuA}.
By Proposition~\ref{prop:AVmu}, if $A\in\mathcal{A}$ we have 
$\infty_{\R^N\setminus A}\in\mathcal{M}$.
On the other hand, if $\mu\in\mathcal{M}$ we infer again 
from Proposition~\ref{prop:AVmu} that 
$\cp_p(A_\mu\setminus\underline{A})=0$, whence
$A_\mu\cap\underline{A}\in\mathcal{A}$.
\par
Let $\mu\in\mathcal{M}$ be a minimum of
\[
\left\{\mu\mapsto 
F(\lambda_1^p(\mu),\ldots,\lambda_k^p(\mu))
\right\}
\]
according to Theorem~\ref{thm:existmuA}.
By Proposition~\ref{prop:AVmu}, since $F$ is nondecreasing in 
each variable, we have
\[
F(\lambda_1^p(A_\mu\cap\underline{A}),\ldots,
\lambda_k^p(A_\mu\cap\underline{A})) \leq
F(\lambda_1^p(\mu),\ldots,\lambda_k^p(\mu)) \leq
F(\lambda_1^p(A),\ldots,\lambda_k^p(A)) 
\quad\text{for all $A\in\mathcal{A}$}\,.
\]
Since $A_\mu\cap\underline{A}\in\mathcal{A}$, the assertion follows.
\end{proof}
\begin{proof}[Proof of Theorem~\ref{thm:main1}]
Let $\underline{A}=\Omega$, $W=1$ and $\nu=0$, so that
assumption~\ref{AW} is satisfied.
According to Corollary~\ref{cor:AW}, for every $p$-quasi open 
subset $A$ of $\Omega$ with $\leb^N(A)>0$, the eigenvalues 
$\lambda_m^p(A)$ agree with those defined by~\eqref{eq:vareig}.
Then the assertion follows from Corollary~\ref{cor:existA}.
\end{proof}
\begin{proof}[Proof of Theorem~\ref{thm:main2}]
We aim to apply Corollary~\ref{cor:existV}.
Without loss of generality, we may assume that
$\nu\in\cmeas(\R^N)$.
Let $\underline{A}=\Omega$, $\underline{V}=0$ and $W=1$,
so that $\underline{\mu} = \infty_{\R^N\setminus \underline{A}}$.
\emph{A fortiori} we have
$V_{\underline{\mu}+\nu}=+\infty$ on $\R^N\setminus\Omega$.
Since $\Omega$ has finite measure, it is easily seen that
assumption~\ref{muW} is satisfied.
Then the assertion follows.
\end{proof}


\end{document}